\newcommand{\pic}[2][0]{\raisebox{-0.5\height + 2.5pt + #1pt}{\includegraphics{#2.pdf}}}
\newtheorem{theorem}{Theorem}[section]
\newtheorem{lemma}[theorem]{Lemma}
\newtheorem{coro}[theorem]{Corollary}
\newtheorem{proposition}[theorem]{Proposition}
\newtheorem{definition}[theorem]{Definition}
\newtheorem{example}[theorem]{Example}
\newtheorem{notation}{Notation}
\newtheorem{remark}[theorem]{Remark}
\newtheorem{question}[theorem]{Question}
\tikzstyle cross=[preaction={draw=white, -, line width=6pt}]
\tikzstyle normal=[thick]
\newcommand{\calB}{\mathcal{B}}
\newcommand{\calG}{\mathcal{G}}
\newcommand{\calH}{\mathcal{H}}
\newcommand{\bfa}{\boldsymbol{a}}
\newcommand{\bfb}{\boldsymbol{b}}
\newcommand{\C}{\mathbb{C}}
\newcommand{\N}{\mathbb{N}}
\newcommand{\Q}{\mathbb{Q}}
\newcommand{\R}{\mathbb{R}}
\newcommand{\Z}{\mathbb{Z}}
\newcommand{\Homeo}{\mathop{Homeo}}
\newcommand{\Mod}{\operatorname{Mod}}
\newcommand{\bapp}{\left. \begin{array}{ccc}}
\newcommand{\eapp}{\end{array} \right.}
\newcommand{\bfct}{\left\lbrace \begin{array}{ccc}}
\newcommand{\efct}{\end{array} \right.}
\newcommand{\bred}{\begin{center} \color{red}}
\newcommand{\ered}{\end{center}}
\newcommand{\eredfigure}{\vspace{3cm} \end{center}}
\newcommand{\Conf}{\operatorname{Conf}}
\newcommand{\HBM}{\operatorname{H} ^{\mathrm{BM}}}
\newcommand{\Hnot}{\operatorname{H}}
\newcommand{\Aut}{\operatorname{Aut}}
\newcommand{\End}{\operatorname{End}}
\newcommand{\Ker}{\operatorname{Ker}}
\newcommand{\im}{\operatorname{Im}}
\newcommand{\GL}{\operatorname{GL}}
\newcommand{\ab}{\operatorname{ab}}
\newcommand{\slt}{{\mathfrak{sl}(2)}}
\newcommand{\Uq}{{U_q\slt}}
\newcommand{\Heis}{\mathbb{H}}
\newcommand{\Heisg}{\Heis_g}
\newcommand{\Heismod}{\overline{\Heis}}
\newcommand{\Heismodmodr}{\overline{\Heis}_{g,r}}
\newcommand{\phiHeis}{\varphi^{\Heisg}}
\newcommand{\calHmodr}{\overline{\calH}}
\newcommand{\rhoext}{\widetilde{\rho}}
\newcommand{\bfGamma}{\boldsymbol{\Gamma}}
\newcommand{\bfsimp}{\bfGamma}
\newcommand\arxiv[2]{\href{https://arXiv.org/abs/#1}{\texttt{arXiv:\allowbreak #1} #2}}
\newcommand\doi[2]{\href^{https://doi.org/#1}{#2}}
\newcommand{\bfk}{\boldsymbol{k}}
\newcommand{\thread}{\tilde{\boldsymbol{\gamma}}}
\newcommand{\BM}{{\mathrm{BM}}}
\newcommand{\simp}{\Gamma}
\newcommand{\dualH}{\calH^{\dagger}}
\newcommand{\push}{\operatorname{push}}
\newcommand{\Span}{{\mathrm{Span}}}
\newcommand{\kerarc}{{\mathrm{\mathfrak{Ker}}}}
\def\namedlabel#1#2{\begingroup
    #2%
    \def\@currentlabel{#2}%
    \phantomsection\label{#1}\endgroup
}
\begin{document}

\raggedbottom

\title{On kernels of homological representations of mapping class groups}

\author[R. Detcherry]{Renaud Detcherry} 
\address{Institut de Mathématiques de Bourgogne, UMR 5584 CNRS, Université Bourgogne Franche-Comté, F-2100 Dijon, France} 
\email{renaud.detcherry@u-bourgogne.fr}

\author[J. Martel]{Jules Martel} 
\address{Université de Cergy-Pontoise} 
\email{jules.martel-tordjman@cyu.fr}

\setcounter{tocdepth}{3}

\begin{abstract}
We study the kernels of representations of mapping class groups of surfaces  on twisted homologies of configuration spaces. We relate them with the kernel of a natural twisted intersection pairing: if the latter kernel is trivial then the representation is faithful. As a main example, we study the representations $\rho_{n}$ of $\mathrm{Mod}(\Sigma_{g,1})$ based on a Heisenberg local system on the $n$ points configuration space of $\Sigma_{g,1}$, introduced in \cite{BPS21}, and some of their specializations. In the one point configuration case, or when the Heisenberg group is quotiented by an element of its center, we find kernel elements in the twisted intersection form. On the other hand, for $n>2$ configuration points and the full Heisenberg local system, we identify subrepresentations of subgroups of $\Mod(\Sigma_{g,1})$ with Lawrence representations. In particular, we find one of these subgroups, which is isomorphic to a pure braid group on $g$ strands, on which the representations $\rho_n$ are faithful.   
\end{abstract}

\maketitle
\setcounter{tocdepth}{3}

\section{Introduction}

\subsection{History of homological representations for mapping class groups and of their kernels}

Let $S$ be an orientable surface, with zero or one boundary component. Its \textit{mapping class group} denoted $\Mod(S)$ is the group of isotopy classes of orientation preserving diffeomorphisms of $S$. If $S$ has \textit{punctures}, diffeomorphisms have to preserve the set of punctures, and isotopies can move them. The seminal books introducing these groups are \cite{Bir,FarbMargalit}, and they showcase how this object is at the interplay between group theory and topology, since it is involved in many topological constructions due to its topological nature while their group-theoretic properties are mysterious and their representation theory very rich. The present paper is interested in \textit{homological representations} of mapping class groups. Since the homology of a manifold is (usually) functorial regarding diffeomorphisms and invariant under isotopies, many representations of mapping class groups can be constructed using homology. The canonical example is the following morphism:
\begin{equation}\label{E:Torelli}
	\Mod(S) \to \Aut_{\Z} (H_1(S)).
\end{equation}
When nothing is indicated in homology groups, we assume that it refers to the standard $\Z$-homology. When $S$ is of genus $g>1$, this morphism has kernel \textit{the Torelli group} of $S$, usually denoted $\mathcal{I}(S)$. When $S$ has zero or one boundary component, its homology is endowed with a perfect symplectic pairing derived from the algebraic intersection between curves in $S$. In this case, there is the following short exact sequence:
\[
1 \to \mathcal{I}(S) \to \Mod(S) \to \mathop{Sp}(2g,\Z) \to 1,
\]
see all the details in \cite{FarbMargalit}. 

Linear representations of $\Mod(S)$ have many possible uses, for instance in topology for building invariants of topological manifolds of low dimensions. This raises the following question:
\begin{question}[{\cite[Problem~30]{Bir},\cite[Question~1.1]{Margalit}}]\label{Q:the_question}
	Let $S$ be a compact oriented surface. Is $\Mod(S)$ linear ? That is, is there a faithful representation $\rho: \Mod(S) \longrightarrow \mathrm{GL}_d(\C)$ for some integer $d ?$
\end{question}
To tackle this question from a homological perspective, one can generalize the representation \eqref{E:Torelli} by letting $\Mod(S)$ act on other manifolds, and by using twisted homologies. The goal of the present work is to study the kernel of such homological representations and some related aspects of their constructions. 

Historically, a first improvement of homological representations arose from the use of \textit{twisted homology}. In the present paper, a twisted homology of a manifold $X$ is built out of the following input:
\begin{equation}\label{E:local_system}
	\varphi : \pi_1(X) \to G,
\end{equation}
where $G$ is a group and $\varphi$ is a surjective morphism. One can then consider the $\varphi$-twisted homology of $X$ denoted $H_{\bullet}(X;\varphi)$, which carries a natural $\Z[G]$-module structure. When $X$ is a surface $S$ with one boundary component and $\varphi$ is the abelianization morphism, the group $\mathcal{I}(S)$ acts on $H_{1}(S;\varphi)$ and the action is $\Z[H_1(S)]$-linear. Note that $\Z[H_1(S)]$ is isomorphic to the ring of Laurent polynomials in $2g$ variables. Those representations are called  \textit{Magnus representations} \cite{Magnus} (see also \cite{Bir}). The action does not extend to a $\Z[H_1(S)]$-linear action on the whole mapping class group but rather to a \textit{twisted action} where the action of $\Mod(S)$ on the group of coefficients $H_1(S)$ enters the picture. The twisted homology construction and its inherent relation with twisted representations of mapping class group in a very broad sense is precisely discussed in Section \ref{S:constructions_of_reps}. It is often convenient to assume that $S$ has one boundary component since then $\pi_1(S)$ is a free group, and $\Mod(S)$ is isomorphic to a subgroup of $\Aut(\pi_1(S))$ (by the Dehn--Nielsen--Baer theorem \cite[Theorem~8.1]{FarbMargalit}). One gets an explicit construction of $H_{1}(S;\varphi)$ from \textit{Fox differential calculus} on automorphisms of free groups. In Section~\ref{S:closed_surfaces} we introduce a construction of \textit{Magnus representations} but \textit{for closed surfaces}, which is a new construction, to the authors' knowledge. 

When $S$ is a disk with $m$ punctures, denoted $D_m$, the group $\Mod(D_m)$ is well known to be the \textit{braid group on $m$ strands} denoted $\calB_m$ and $\mathcal{I}(D_m)$ to be the \textit{the pure braid group} denoted $\mathcal{PB}_m$. In that case the \textit{Magnus representation} of $\mathcal{PB}_m$ is named \textit{the Gassner representation} (see \cite{Bir}) and takes coefficients in $\Z[H_1(D_m)] = \Z[t_1^{\pm 1},\ldots,t_m^{\pm 1}]$. By evaluating variables $t_i$'s to the same variable $t$, the representation extends to $\calB_m$ and recovers the famous (reduced) \textit{Burau representation}. Notice that the Burau representation was originally defined by assigning matrices to the Artin generators of the braid groups, instead of the previous homologic construction. The Burau representation is faithful for the groups $\calB_2$ and $\calB_3$, which can be checked directly from matrices, and the question for higher cases remained open for a very long time. It is thanks to its homological interpretation that kernel elements were discovered by Long and Paton \cite{LongPaton} for $m\ge 6$ and by Bigelow \cite{Big99} for $m=5$, while the question of the faithfulness for $m=4$ is still open. The kernel elements were exhibited thanks to a \textit{twisted homological intersection pairing}:
\begin{equation}\label{E:twisted_intersection_pairing}
	\langle \cdot , \cdot \rangle_G: H_{\bullet}(X;\varphi) \times (H_{\bullet}(X;\varphi))^\dagger \to \Z[G]
\end{equation}
where $(H_{\bullet}(X;\varphi))^\dagger$ is a dual homology space regarding this pairing, and in the Burau case, $X=D_m$, $\varphi$ is the abelianization morphism, $G$ the abelianization of $\pi_1(S)$. We note that it is not known if Gassner representations are faithful, see e.g. \cite{Knu}, however, their kernels are strictly smaller than that of the Burau representations. For the higher genus case, namely when $S= \Sigma_{g,1}$ is the surface of genus $g$ with $1$ boundary component, the Magnus representations were studied from the homological perspective by Suzuki. He has shown, using again the corresponding twisted homological pairing \eqref{E:twisted_intersection_pairing}, that these representations are unfaithful \cite{S02,S05a,S05b}. Roughly speaking, kernel elements can be associated to pairs of simple closed curves in $\Sigma_{g,1}$ with twisted pairing $0$. Magnus representations enjoy \textit{transvections formulas} for actions of Dehn twists (see for instance \cite[Theorem~4.2]{S05b}). \textit{We generalize these transvection formulas to representations based on the Heisenberg local system in Section \ref{S:twisted_transvections}}. 
\\
\\
There is another direction to construct more powerful homological representations: the group $\Mod(S)$ happens to naturally act on homologies of higher dimensional manifolds: \textit{configuration spaces of $S$}. For an integer $n\in \N$ we denote by $\Conf_n(S)$ the unordered configuration space of $n$-points in $S$, and $\Conf_n^{o}(S)$ the ordered configuration space. A rigorous definition is provided in Section~\ref{sec_uncross}. For the surface $\Sigma_{g,1}$ with positive genus, Moriyama introduced in \cite{Mo07} the family of representations:
\begin{equation}\label{E:Moriyama}
	\mathrm{Mor}_n:\Mod(\Sigma_{g,1}) \to H_n(\Conf_n^o(\Sigma_{g,1}))
\end{equation}
Moriyama showed that the kernel of \eqref{E:Moriyama} is the $n$-th term of \textit{the Johnson filtration} of the mapping class group denoted $\lbrace J_n \rbrace_{n\in\N}$ where $J_i \subset J_{i-1} \subset \Mod(\Sigma_{g,1})$, and $J_1 = \mathcal{I}(\Sigma_{g,1})$. 

All of the Johnson subgroups are non trivial, but they have the property that $\bigcap_{n\in\N} J_n = \lbrace 1 \rbrace$. The sequence of representations on ordered configuration spaces is therefore \textit{asymptotically faithful}.

The most powerful homological representations arise from using a mix of both twisted homologies and configuration spaces. This was first introduced by R. Lawrence in the case of punctured disks and braid groups in \cite{L90}. The input $\varphi_n$ to twist the homology (Eq. \eqref{E:local_system}) is the abelianization morphism of $\pi_1(\Conf_n(D_m))$ composed with a specialization of variables corresponding to the passage from Gassner to Burau in the case $n=1$. The result is known as the Lawrence representations and consists of the following sequence of representations:
\begin{equation}
	\mathcal{L}_{m,n} : \calB_m \to \End_{\Z[q^{\pm 1}, s^{\pm 1}]} \left( H_n(\Conf_n(D_m),\varphi_n) \right),
\end{equation}\label{Lawrence}
We recall precise definitions of the Lawrence representations in Section~\ref{sec:Lawrence}. The term $n=1$ is the Burau representation. The representations $\mathcal{L}_{m,n}$ have coefficients in the ring $\Z[q^{\pm 1}, s^{\pm 1}]$ encoding the twisted structure. Those representations lead to the following spectacular result: 

\begin{theorem}[{Bigelow \cite[Theorem~1.1]{Big00}, Krammer \cite[Theorem~B]{K02}}]
	For all $m \ge 0$, the kernel of $\mathcal{L}_{m,2}$ is trivial. In particular, braid groups are linear.
\end{theorem}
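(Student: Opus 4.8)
The plan is to follow Bigelow's geometric proof, which is the one most in the spirit of the present paper; Krammer's purely algebraic argument, via a cleverly chosen ordering of the natural basis, is an alternative. Recall from Section~\ref{sec:Lawrence} that $\mathcal{L}_{m,2}$ is the action of $\calB_m = \Mod(D_m)$ on a free $\Z[q^{\pm 1},s^{\pm 1}]$-module of rank $\binom{m}{2}$ built from $H_2$ of the $\Z^2$-cover $\widetilde{C} \to \Conf_2(D_m)$ classified by $\varphi_2$. The first step is to assemble the \emph{noodle--fork pairing}: a \emph{fork} is an embedded tree in $D_m$ with a distinguished ``tine edge'' $\bar F$ joining two punctures, which lifts to a class in $H_2(\Conf_2(D_m);\varphi_2)$, while a \emph{noodle} is an embedded arc $N$ based at $\partial D_m$, which lifts to a class in the locally finite homology dual to it; feeding these into the twisted intersection pairing \eqref{E:twisted_intersection_pairing} (with $X = \Conf_2(D_m)$, $\varphi = \varphi_2$, $G = \Z^2$) yields $\langle N, F \rangle \in \Z[q^{\pm 1}, s^{\pm 1}]$, computed explicitly as a finite sum over $N \cap \bar F$ with monomial weights read off from winding/linking data. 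Since this pairing is defined topologically, it is $\calB_m$-equivariant: $\langle hN, hF\rangle = \langle N, F\rangle$ for all $h \in \calB_m$.

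Now let $\beta \in \ker \mathcal{L}_{m,2}$; a standard argument (the representation detects the underlying permutation of the punctures) shows $\beta \in \mathcal{PB}_m$, so $\beta$ fixes every puncture. Since $\beta$ acts trivially on $H_2(\Conf_2(D_m);\varphi_2)$ we have $[\beta F] = [F]$ for every fork $F$, hence $\langle N, \beta F\rangle = \langle N, F\rangle$ for every noodle $N$: no noodle distinguishes the arc $\bar F$ from $\beta \bar F$. The heart of the argument is the \emph{key lemma}: if a noodle $N$ and a tine edge $\bar F$ are in minimal position and meet, then $\langle N, F\rangle \neq 0$; equivalently $\langle N, F\rangle = 0$ exactly when $N$ and $\bar F$ can be isotoped apart. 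This is proved by exhibiting a non-cancelling monomial: among the points of $N \cap \bar F$ one selects one that is extremal for a suitable order on the deck group $\Z^2$ and checks that its contribution cannot be killed by the others. I expect this to be the main obstacle — it is the single step that uses the specific local system $\varphi_2$ rather than an arbitrary twist, and the extremality bookkeeping is the delicate part.

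Granting the key lemma, suppose $\beta \neq 1$. Because a braid is determined by its action on the isotopy classes of arcs joining punctures (the Alexander method, equivalently the injection $\calB_m \hookrightarrow \Aut(\pi_1 D_m)$), there is an arc $a$ between two punctures with $\beta a \not\simeq a$. Two non-isotopic arcs with the same endpoints are separated by some simple closed curve in $D_m$, which we realize by a noodle $N$ with $N \cap a = \emptyset$ but $N$ meeting $\beta a$ essentially. Taking for $F$ the fork with tine edge $a$, the key lemma gives $\langle N, F\rangle = 0$ and $\langle N, \beta F\rangle \neq 0$, contradicting $\langle N, \beta F\rangle = \langle N, F\rangle$. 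Hence $\beta = 1$, i.e.\ $\mathcal{L}_{m,2}$ is faithful. Finally, to obtain linearity over $\C$, compose with any ring embedding $\Z[q^{\pm 1}, s^{\pm 1}] \hookrightarrow \C$ sending $q, s$ to algebraically independent complex numbers: this is injective, hence the induced map $\GL_{\binom{m}{2}}(\Z[q^{\pm 1},s^{\pm 1}]) \hookrightarrow \GL_{\binom{m}{2}}(\C)$ is injective, and $\mathcal{L}_{m,2}$ composed with it is a faithful finite-dimensional complex representation of $\calB_m$.
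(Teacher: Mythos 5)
The paper does not actually prove this theorem: it is imported wholesale from Bigelow and Krammer, and the statement in the text is a citation, not a claim the authors establish. So the only meaningful comparison is with Bigelow's published argument, and your outline does reproduce its architecture correctly: the noodle--fork pairing as an instance of the twisted intersection pairing, its $\calB_m$-equivariance, the reduction of faithfulness to the statement that the pairing detects geometric intersection, and the final contradiction via an arc moved by $\beta$. This is precisely the template the paper abstracts into its own faithfulness criterion (Theorem~\ref{T:definite_implies_faithful}), so you have identified the right skeleton.

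The genuine gap is that the key lemma --- $\langle N,F\rangle\neq 0$ whenever $N$ and $\bar F$ are in minimal position and intersect --- is where essentially all of the content of Bigelow's theorem lives, and you only gesture at it (``one selects one that is extremal for a suitable order on the deck group $\Z^2$''). The difficulty is not mere bookkeeping: one must show that among the monomials $\pm q^{a_i}s^{b_i}$ contributed by the points of $N\cap\bar F$, those of maximal total degree in a suitable sense all carry the \emph{same} sign, and this requires a genuinely geometric argument (Bigelow's analysis of the integers $a_i,b_i$ attached to pairs of intersection points, and the fact that $b$ is maximized only where the local picture forces the sign). Without this, nothing is proved; and indeed the analogous statement fails for the $n=1$ Burau pairing when $m\geq 5$, so the lemma cannot follow from soft equivariance considerations. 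Two smaller points: the reduction to $\beta\in\mathcal{PB}_m$ is unnecessary (Bigelow argues directly on the full braid group) and is not obviously supplied by $\mathcal{L}_{m,2}$ itself for small $m$; and the step producing a noodle $N$ disjoint from $a$ but essentially meeting $\beta a$ is itself a lemma about arcs in $D_m$ (a noodle is an arc with endpoints on $\partial D_m$, not a simple closed curve) that needs the change-of-coordinates argument, not just the Alexander method. The final passage to a $\C$-linear representation via an embedding $\Z[q^{\pm1},s^{\pm1}]\hookrightarrow\C$ with $q,s$ algebraically independent is correct and is exactly the kind of linearization discussed in Proposition~\ref{prop_linearRep}.
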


Therefore, braid groups form the first infinite family of positive answers to Question~\ref{Q:the_question}. This raises the questions whether some representations based on the twisted homology of configuration spaces of surfaces with one boundary component are faithful for larger genus. Indeed, Bigelow's proof proceeds by showing that the natural twisted homological pairing (from \eqref{E:twisted_intersection_pairing}) detects geometric intersection between arcs in $D_m$. Using the representations $\mathcal{L}_{m,2}$, it was later shown that mapping class groups of punctured spheres and of the closed genus $2$ surface are also linear \cite{BB01}. Notice that Bigelow's argument is easily generalized to representations $\mathcal{L}_{m,n}$ for $n > 2$ (see e.g. \cite{Jules2}). 

For higher genus surfaces, Question \ref{Q:the_question} is still open, and one would be tempted to imagine extending Bigelow's result to positive genus surfaces by studying representations of $\Mod(\Sigma_{g,1})$ on the twisted homologies of $\Conf_n(\Sigma_{g,1})$. A first attempt would be to twist by the abelianization of $\pi_1(\Conf_n(\Sigma_{g,1})$. In positive genus, contrary to genus zero, the specialization of variables allowing to pass from the Gassner case to the Burau case in punctured disks does not exist, and thus one must restrict representations to the Torelli subgroup $\mathcal{I}(\Sigma_{g,1})$. \textit{It is however possible to extend those representations the whole mapping class group, if one enlarges the ring of coefficients, as we will explain in Section \ref{sec_uncross}}. Even more disappointing: it is expected that homological representations built from $\Conf_2(\Sigma_{g,1})$ and twisted by the abelianization result in unfaithful representations of $\mathcal{I}(\Sigma_{g,1})$ see \cite[Question~1.2, Footnote]{Margalit}. It is not clear to the authors how to exhibit kernel elements of the described representations but \textit{we explain why Bigelow's strategy cannot work for the representation on the homology of $\Conf_2(\Sigma_{g,1})$ twisted by the abelianization in Section \ref{sec:kernel_sev_pts}}, confirming the expectation of \cite{Margalit}. For finding faithful representations and for successfully applying Bigelow's strategy one would need to twist more. 

In general, to construct representations of $\Mod(S)$ on twisted homologies $H_{\bullet}(X;\varphi)$, the morphism $\varphi$ from \eqref{E:local_system} has to satisfy some compatibility conditions. \textit{This is detailed in Section~\ref{S:constructions_of_reps}}. When $X=\Conf_n(\Sigma_{g,1})$ the compatibility is expressed as follows: elements of $\Mod(S)$ seen as automorphisms of $\pi_1(\Conf_n(S))$ need to preserve the kernel of $\varphi$ (Eq. \eqref{E:local_system}). We hence like to think about $\varphi$ being the quotient of $\pi_1(\Conf_n(S))$ by one of its characteristic subgroup. We are interested in quotients by terms of \textit{the lower central series} of $\pi_1(\Conf_n(S))$. The quotient by the first term corresponds to the abelianisation and was already discussed. One learns in \cite{BGG11,DPS} that the quotient by the next term is isomorphic to a \textit{Heisenberg group} denoted by $\mathbb{H}_g$ when $S=\Sigma_{g,1}$ (notice that it does not depend on $n$). Furthermore, that the lower central series stops at this term, and thus $\mathbb{H}_g$ is the largest nilpotent quotient one could obtain of $\pi_1(\Conf_n(\Sigma_{g,1}))$. One can construct the twisted homology of $\Conf_n(\Sigma_{g,1})$ with coefficients in $\Z[\mathbb{H}_g]$ and study how mapping classes act on it. In the end, one gets representations $\rho_n$ of (subgroups) of $\Mod(\Sigma_{g,1})$ from the action on such Heisenberg twisted homology groups. This construction was introduced in \cite{BPS21} and its study initiated in \cite{BPS21,JulesMarco}. We also deal with this very interesting example as we believe that these Heisenberg twisted homologies are the good level of twisting for Bigelow's strategy (\cite{Big00}) to work in arbitrary genus. Namely \textit{we show that with less configuration points or with less twisting, the corresponding twisted homological pairing cannot detect the geometric intersection of arcs in the initial surface for all genera}. One can therefore not apply Bigelow's strategy in those cases, although it does not necessarily mean that the associated mapping class group representation is unfaithful. 

\textit{As a positive sign, for the representations $\rho_n,$ with $n>1,$ we find a subgroup of the mapping class group, isomorphic to a pure braid group on $g$ strands, that acts faithfully (Sec.~\ref{S:Pg_faithful}, Corollary~\ref{C:Pg_is_faithful}).} 


Another family of linear representations for mapping class groups of surfaces is provided by \textit{topological quantum field theories (TQFTs)} of dimension $2+1$, we call them \textit{quantum representations}. They are very much related with topological invariants such as ones for closed $3$-manifolds produced by such a TQFT for instance with the well known Witten--Reshetikhin--Turaev (WRT) invariants. These quantum representations arising from the WRT theory are built using as input a semisimple modular category, and the byproduct representations present a systematic non trivial kernel made of powers of \textit{Dehn twists}. TQFTs in the same spirit but built out of non-semisimple category were constructed in \cite{L94} for surfaces with one boundary later extended in \cite{DGGPR20} to closed surfaces and in \cite{BCGP14} for closed surfaces equipped with a cohomology class (and representations restricted to Torelli groups), and finally generalized to surfaces with one boundary (equipped with a cohomology class) in \cite{JulesMarcoBangxin} in the spirit of \cite{L94}. These non-semisimple quantum representations don't have an obvious kernel like the semisimple WRT ones. They form another candidate to tackle the question of linearity of mapping class groups (Question~\ref{Q:the_question}). In \cite{JulesVerma} (resp. \cite{JulesMarco}) it was shown that homological representations studied in the current paper recover non-semisimple quantum representations for the case of $D_m$ (resp. $\Sigma_{g,1}$). For surfaces equipped with cohomology classes, homological representations are built in \cite[Sec.~6.3.1]{JulesMarco} and conjecturally related to quantum representations of \cite{JulesMarcoBangxin}. It is well known that if the image of a Dehn twist by a representation is diagonal, it has finite order. What made non semisimple quantum representations candidates for faithful linear representations was that one finds non diagonal Jordan blocks in the images of Dehn twists, making them of infinite order. \textit{The representations $\rho_n$ studied in this paper also satisfy this property, see Prop.~\ref{P:infinite_order}}.

\subsection{Content of the paper}

This paper is a study of homological representations of mapping class groups of surfaces. We start by presenting a very general framework in which such representations can be constructed. One typically obtains \emph{crossed} representations of $\mathrm{Mod}(S),$ but we discuss how to make them into genuine representations of the whole mapping class group. We study the role of the twisted homological intersection pairing and we express a faithfulness criterion in terms of the kernel of this pairing. We then study the kernel of this pairing for various setups. Doing so we figure out a few homological representations for which the Bigelow's strategy (\cite{Big00}) for proving faithfulness has no chance of being successful. Along the way we develop combinatorial tools for computing the pairing, and express the actions of Dehn twists as twisted transvections related the pairing, in the one point configuration case. To end up with positive result in the direction of faithfulness, we relate representations of braid subgroups in $\Mod(\Sigma_{g,1})$ to homological representations built from punctured disks à la Lawrence. For one of them, we find a pure braid subgroup of $\Mod(\Sigma_{g,1})$ that acts faithfully.

More precisely, Section~\ref{S:constructions_of_reps} is devoted to the construction of mapping class group representations from twisted homologies in general. In Sec.~\ref{sec_uncross}, we discuss the condition on the local system (see \eqref{E:local_system}) for defining them, first as \textit{crossed representations} of $\Mod(S)$. Then we explain how to uncross them by enlarging the ring of coefficients, and how to linearize them (Prop.~\ref{prop_linearRep}). At each step we pay attention to how the kernel of the representation could grow, and give criterions for the kernel to stay the same. In Sec.~\ref{S:twisted_homologies_structure} we recall the general structure of twisted homologies when surfaces have a boundary component, along the way presenting diagrammatic models for homology classes. We are particularly interested in the case where the local system is the Heisenberg group
$$\Heis_g:=\langle a_1,b_1,\ldots, a_g,b_g ,\sigma | [a_i,a_j],[b_i,b_j], [\sigma,a_i],[\sigma,b_i], [a_i,b_j]\sigma^{-2\delta_{ij}}\rangle.$$
where $\delta_{ij}$ stands for the Kronecker symbol.


For each $n\geq 2,$ $\pi_1(\Conf_n(\Sigma_{g,1}))$ admits a surjective morphism to $\Heis_g$ by \cite{BGG11}. This gives rise to homological representations $\rho_n^{\Heis_g}$ of (a subgroup of) $\Mod(\Sigma_{g,1})$, see Section \ref{sec_Heisenberg_case_on_surface_conf}. The coefficients of these representations lie in the group ring $\Z[\mathbb{H}_g]$. We are also interested in the local system $\Heis_{g,r}:=\Heis_g/\langle \sigma^r \rangle,$ which gives rise to representations $\rho_{n,r}$ with coefficients in $\Z[\Heis_{g,r}].$ We study linearizations of these group rings, showing:
\begin{proposition}
	\label{prop:linearization} For $r\geq 1,$ let $\Heis_g$ be the Heisenberg group, and $\Heis_{g,r}=\Heis_g/\langle \sigma^r \rangle.$
	\begin{enumerate}
		\item (Corollary \ref{coro:no_embedding}) There is no faithful algebra map $\Z[\Heis_g] \longrightarrow M_n(\C)$ for any integer $n$.
		\item (Proposition \ref{prop_faithfulRepHeisModr}) For any $r\geq 1,$ there is a faithful algebra map $\Z[\Heis_{g,r}] \longrightarrow M_n(\C),$ for some integer $n.$
	\end{enumerate}
\end{proposition}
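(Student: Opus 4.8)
Part~(1) and part~(2) require genuinely different methods: the first is a non-linearity obstruction coming from the infinite-order central element $\sigma$, and the second rests on the fact that $\Heis_{g,r}$ is virtually $\Z^{2g}$ together with a standard induction argument for group rings. For~(1), suppose $\phi\colon \Z[\Heis_g]\to M_n(\C)$ is an algebra map; after replacing $M_n(\C)$ by $\phi(1)M_n(\C)\phi(1)\cong M_k(\C)$ we may assume $\phi$ is unital, and it suffices to produce a nonzero element of $\Ker\phi$. Put $A=\phi(a_1)$, $B=\phi(b_1)$, $S=\phi(\sigma)$, all invertible; since $\sigma$ is central in $\Heis_g$ the matrix $S$ commutes with $A$ and $B$, and applying $\phi$ to the defining relation gives $ABA^{-1}B^{-1}=S^{\pm2}$. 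The key point is that every eigenvalue of $S$ is then a root of unity: decompose $\C^n=\bigoplus_\lambda V_\lambda$ into generalized eigenspaces of $S$; each $V_\lambda$ is preserved by $A$ and $B$ (which act on it invertibly), and $\det(S|_{V_\lambda})=\lambda^{\dim V_\lambda}$, so taking the determinant of $AB=S^{\pm2}BA$ restricted to $V_\lambda$ forces $\lambda^{\pm2\dim V_\lambda}=1$. Hence for a suitable $k\ge1$ the matrix $S^k$ is unipotent, so $(S^k-I)^n=0$; but $\sigma$ has infinite order in $\Heis_g$, so $(\sigma^k-1)^n=\sum_{j=0}^n(-1)^{n-j}\binom{n}{j}\sigma^{kj}$ is a nonzero element of $\Z[\Heis_g]$ (a $\Z$-combination of the $n+1$ distinct group elements $\sigma^0,\sigma^k,\dots,\sigma^{kn}$ with nonzero coefficients) lying in $\Ker\phi$. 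Thus $\phi$ is not faithful, which gives Corollary~\ref{coro:no_embedding}.

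For~(2), set $\Gamma:=\Heis_{g,r}$ and consider the subgroup $A=\langle a_i^{r},b_i^{r}:1\le i\le g\rangle$. First, $A$ is central: from $[a_i,b_i]=\sigma^2$ one computes $[a_i^{r},b_i]=\sigma^{2r}=(\sigma^r)^2=1$ in $\Gamma$ and likewise $[a_i,b_i^{r}]=1$, while the remaining commutators already vanish in $\Heis_g$. Second, $A\cong\Z^{2g}$ and $[\Gamma:A]<\infty$: the projection $\Gamma\to\Gamma/\langle\sigma\rangle\cong\Z^{2g}$ sends $a_i^{r},b_i^{r}$ to a basis of the finite-index sublattice $r\Z^{2g}$, and is injective on $A$ (an element of $A$ projecting to $0$ is trivial), so $A\cong r\Z^{2g}\cong\Z^{2g}$; and $\Gamma/A$ is generated by the images of $a_i,b_i,\sigma$, each of order dividing $r$, so using the commutation relations every element of $\Gamma/A$ can be written $\prod_i\bar a_i^{\alpha_i}\bar b_i^{\beta_i}\cdot\bar\sigma^{\gamma}$ with $0\le\alpha_i,\beta_i,\gamma<r$, whence $d:=[\Gamma:A]\le r^{2g+1}$ is finite.

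It remains to upgrade a faithful representation of $\Z[A]$ to one of $\Z[\Gamma]$. Since $[\Gamma:A]=d<\infty$, the ring $\Z[\Gamma]$ is a free right $\Z[A]$-module of rank $d$ on a set of left coset representatives, so the left regular representation gives an injective algebra map $\Z[\Gamma]\hookrightarrow\End_{\Z[A]}(\Z[\Gamma])\cong M_d(\Z[A])$, using that $\Z[A]$ is commutative. Moreover $\Z[A]\cong\Z[x_1^{\pm1},\dots,x_{2g}^{\pm1}]$ embeds in $\C$ by sending the $x_i$ to algebraically independent transcendentals (these exist since $\C$ has uncountable transcendence degree over $\Q$), inducing $M_d(\Z[A])\hookrightarrow M_d(\C)$. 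Composing yields the desired faithful algebra map $\Z[\Heis_{g,r}]\hookrightarrow M_d(\C)$, even with the explicit bound $d\le r^{2g+1}$, establishing Proposition~\ref{prop_faithfulRepHeisModr}.

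The main obstacle is the eigenvalue step in part~(1): one must justify that restricting $AB=S^{\pm2}BA$ to a generalized eigenspace of $S$ is legitimate — it is, because $A$ and $B$ commute with $S$ and are invertible, hence restrict to automorphisms of each $V_\lambda$ — and that $\det(S|_{V_\lambda})=\lambda^{\dim V_\lambda}$ exactly. Once this is in place the rest of~(1) is formal, and everything in~(2) is routine modulo checking that $a_i^{r},b_i^{r}$ are of infinite order and independent in $\Gamma$, which is precisely what tracking them through $\Gamma\to\Gamma/\langle\sigma\rangle$ provides.
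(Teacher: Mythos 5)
Your part~(1) is essentially the paper's own argument (Proposition~\ref{lem:noEmbedding} and Corollary~\ref{coro:no_embedding}): since $\phi(\sigma)$ commutes with $\phi(a_1)$ and $\phi(b_1)$, a determinant computation on the (generalized) eigenspaces of $\phi(\sigma)$ forces every eigenvalue to be a root of unity, and Cayley--Hamilton then exhibits the nonzero element $(\sigma^k-1)^n$ in the kernel; your use of generalized eigenspaces and your explicit verification that $(\sigma^k-1)^n\neq 0$ in $\Z[\Heis_g]$ are, if anything, slightly more careful than the published version.

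Part~(2) is correct but takes a genuinely different route. The paper writes down an explicit representation of $\Z[\Heis_{g,r}]$ on $V^{\otimes(g+1)}$ with $\dim V=r$ (cyclic shift operators in the last tensor factor, together with algebraically independent scalars $s_i,t_i$) and proves injectivity by hand using the normal form $\left(\prod a_i^{n_i}\right)\left(\prod b_i^{m_i}\right)\sigma^k$. You instead isolate the central subgroup $A=\langle a_i^r,b_i^r\rangle\cong\Z^{2g}$ of finite index $d\le r^{2g+1}$, embed $\Z[\Heis_{g,r}]$ into $M_d(\Z[A])$ via left multiplication on the free rank-$d$ right $\Z[A]$-module $\Z[\Heis_{g,r}]$, and then specialize the Laurent polynomial ring $\Z[A]$ at algebraically independent transcendentals. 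All the intermediate claims check out: $[a_i^r,b_i]=\sigma^{2r}=1$ in $\Heis_{g,r}$ so $A$ is central; the normal form shows $A\to\Gamma/\langle\sigma\rangle$ is injective with image $r\Z^{2g}$ and that the index is finite; and faithfulness of the regular representation is immediate by evaluating at $1$. Your argument is more conceptual — it proves in one stroke that the group ring of any group which is finite over a central free abelian subgroup embeds in a matrix algebra over $\C$ — whereas the paper's explicit model is smaller ($r^{g+1}$ versus $r^{2g+1}$) and is reused later (e.g.\ the faithful representation of $\Z[\Heismod_{g,r}]$ obtained by setting $s_i=t_i=1$). Either proof establishes Proposition~\ref{prop_faithfulRepHeisModr}.
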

The significance of this proposition comes from Proposition \ref{prop_linearRep}, which gives conditions under which linearizations of the rings of coefficients do not enlarge the kernel of homological representations.

We also find a linearization of $\Heis_g$ in Prop.~\ref{prop:suprataut} that allows to construct a linear representation of the full mapping class group, using the uncrossing protocol of Section \ref{sec_uncross}. 

In Section \ref{S:closed_surfaces} we discuss how to construct homological representations for the mapping class groups of closed surfaces (Proposition \ref{prop:closedcase} and Remark \ref{remark:Magnus}). We provide such representations for the action on the one point configuration space, but with local system either the Heisenberg local system or the Magnus local system. The authors are unaware of a similar constructions of homological representations in the litterature, even in the case of the Magnus local system. 

In Section~\ref{S:intersection_form} we add to the panorama one very important homological tool : the twisted intersection pairing (that is roughly presented in \eqref{E:twisted_intersection_pairing} in the introduction). We define it and give its classical properties in Sec.~\ref{S:intersection_form_def}. We then (Sec.~\ref{sec:inter_form_def}) present dual families of homology classes, associated to simple arcs in $S$ with ends on $\partial S.$ This allows another version of the homological representations construction as the quotient of a space of arcs by the kernel of a bilinear form (Prop.~\ref{P:Universal_construction_of_homol_reps}), similar to the so called ``universal construction'' used in the context of TQFTs. We then provide a faithfulness criterion for homological representations of mapping class groups fully inspired by Bigelow's strategy to obtain faithfulness of braid group representations in \cite{Big00}. 

\begin{theorem}[Theorem~{\ref{T:definite_implies_faithful}}]\label{theoremark_faithfulness_criterion}
	Let $\kerarc(\langle \cdot , \cdot \rangle_{n,G})$ be the set of pairs of isotopy classes of simple arcs with non-zero geometric intersection in $S=\Sigma_{g,1}$ that are sent to zero by the $G$-twisted intersection pairing $\langle \cdot, \cdot \rangle_{n,G}$ (see \eqref{E:twisted_intersection_pairing}) on $X=\Conf_n(S)$. If $\kerarc(\langle \cdot , \cdot \rangle_{n,G})$ is empty then $\rho_n^G$ is faithful. 
\end{theorem}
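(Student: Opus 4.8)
The plan is to contrapose: assuming $\rho_n^G$ is not faithful, I would produce a pair of simple arcs with non-zero geometric intersection sent to zero by $\langle\cdot,\cdot\rangle_{n,G}$, thereby exhibiting an element of $\kerarc(\langle\cdot,\cdot\rangle_{n,G})$. So let $\phi\in\Mod(\Sigma_{g,1})$ be a non-trivial mapping class in $\ker\rho_n^G$. The first step is to translate non-triviality of $\phi$ into the existence of a simple arc $\alpha$ in $S=\Sigma_{g,1}$, with endpoints on $\partial S$, such that $\phi(\alpha)$ is \emph{not} isotopic to $\alpha$; here I would invoke the change-of-coordinates principle and the Alexander method from \cite{FarbMargalit}, which guarantee that a mapping class fixing the isotopy class of every simple arc based on the boundary is trivial. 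Since $\phi(\alpha)\not\simeq\alpha$, there is at least one other simple arc $\beta$ (based on $\partial S$) with $i(\beta,\alpha)\neq i(\beta,\phi(\alpha))$ (equivalently, geometrically distinct arcs are detected by intersection with a test arc); I would choose such a $\beta$.

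The second step is to set up the homology classes dual to arcs. Using the diagrammatic model and the dual families of homology classes associated to simple arcs with endpoints on $\partial S$ (Sec.~\ref{sec:inter_form_def}, Prop.~\ref{P:Universal_construction_of_homol_reps}), each simple arc $\gamma$ in $S$ lifts to a homology class $\overline{\gamma}$ in $H_\bullet(\Conf_n(S);\varphi)$ (or a dual class in the $\dagger$-version), and the $G$-twisted intersection pairing $\langle\overline{\gamma_1},\overline{\gamma_2}\rangle_{n,G}\in\Z[G]$ is a Laurent-polynomial–valued geometric intersection invariant. The key equivariance property I would use is that $\rho_n^G(\phi)$ sends the class of $\gamma$ to the class of $\phi(\gamma)$, compatibly with the twisted action on coefficients, and that the pairing is (twisted-)equivariant: $\langle \rho_n^G(\phi)x,\ \rho_n^G(\phi)^\dagger y\rangle_{n,G}$ equals the image of $\langle x,y\rangle_{n,G}$ under the induced automorphism of $\Z[G]$.

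The third step combines these: since $\phi\in\ker\rho_n^G$ we have $\rho_n^G(\phi)\overline{\alpha}=\overline{\alpha}$ and $\rho_n^G(\phi)^\dagger\overline{\beta}=\overline{\beta}$ (the dual representation also being trivial on $\phi$, as its kernel contains that of $\rho_n^G$ — or one passes to $\phi$ or a power of $\phi$ lying in both kernels), so on one hand $\langle\overline{\phi(\alpha)},\overline{\beta}\rangle_{n,G}=\langle\overline{\alpha},\overline{\beta}\rangle_{n,G}$. On the other hand, $\phi(\alpha)\not\simeq\alpha$ with $i(\phi(\alpha),\beta)\neq i(\alpha,\beta)$, so the difference class $\overline{\phi(\alpha)}-\overline{\alpha}$ is a non-zero homology class whose pairing with $\overline{\beta}$ vanishes. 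Unwinding the dual family, this vanishing is exactly the statement that the pair of arcs $(\phi(\alpha),\beta)$ — or a suitable pair extracted from the support of $\overline{\phi(\alpha)}-\overline{\alpha}$ — has non-zero geometric intersection yet zero $G$-twisted pairing, contradicting $\kerarc(\langle\cdot,\cdot\rangle_{n,G})=\emptyset$.

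I expect the main obstacle to be the third step, specifically ensuring that the \emph{difference} $\overline{\phi(\alpha)}-\overline{\alpha}$ genuinely records a \emph{single} pair of geometrically-intersecting arcs rather than a cancelling combination: one must argue that if all pairs of arcs appearing have zero $G$-twisted pairing then the total intersection data cannot distinguish $\phi(\alpha)$ from $\alpha$, i.e.\ that the arc classes span enough of the homology that a non-trivial mapping class is detected. This is precisely where Prop.~\ref{P:Universal_construction_of_homol_reps} (the universal-construction description of $\rho_n^G$ as arcs modulo the radical of the form) is doing the real work, and where care is needed so that the argument does not secretly assume faithfulness. A secondary technical point is the compatibility of the crossed/uncrossed versions of the representation with the pairing, handled by the constructions of Sec.~\ref{sec_uncross} and Sec.~\ref{S:intersection_form_def}.
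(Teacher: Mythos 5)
Your overall shape (contrapositive, equivariance of the twisted pairing, arcs as test objects) is right, but there are two concrete gaps in the way you try to extract an element of $\kerarc(\langle\cdot,\cdot\rangle_{n,G})$ from a kernel element $\phi$. First, in your Step 1 it is not enough to find an arc $\alpha$ with $\phi(\alpha)\not\simeq\alpha$: you ultimately need a pair of arcs with \emph{non-zero geometric intersection} and zero pairing, and two non-isotopic arcs can be disjoint. The paper needs, and proves (its Lemma on non-trivial mapping classes), the stronger statement that a non-trivial $f$ admits an arc $\Gamma$ with $\Gamma\pitchfork f(\Gamma)\neq 0$; this is not the Alexander method but a curve-complex argument (unboundedness of $c\mapsto d(c,f(c))$ for non-central $f$, via Rafi--Schleimer). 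Second, and more seriously, your Step 3 does not close. From $\phi\in\Ker(\rho_n^G)$ and equivariance you get $\langle\overline{\phi(\alpha)},\overline{\beta}\rangle_{n,G}=\langle\overline{\alpha},\overline{\beta}\rangle_{n,G}$, but choosing $\beta$ with $i(\beta,\alpha)\neq i(\beta,\phi(\alpha))$ gives no contradiction: definiteness of the pairing only asserts that the pairing vanishes iff the geometric intersection vanishes; it does not assert that the pairing determines the intersection number. If both $i(\beta,\alpha)$ and $i(\beta,\phi(\alpha))$ are non-zero, both pairings may be non-zero and equal, and no element of $\kerarc$ is produced. You correctly flag that the difference class $\overline{\phi(\alpha)}-\overline{\alpha}$ does not record a single pair of arcs, but you do not resolve this, and the resolution is not in Prop.~\ref{P:Universal_construction_of_homol_reps}.

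The missing idea is the choice of test arc: take $\beta$ to be a \emph{disjoint parallel push-off} of $\alpha$ itself (pushed to the opposite part of the boundary, so that $\alpha$ lives rel $\partial^+$ and $\beta$ rel $\partial^-$, matching the two homology theories the pairing is defined on). Then $\langle\beta,\alpha\rangle_{n,G}=0$ \emph{exactly}, because the arcs are disjoint; equivariance plus $\rho_n^G(\phi)[\beta]=[\beta]$ gives $\langle\beta,\phi(\alpha)\rangle_{n,G}=0$; and since $\beta$ is parallel to $\alpha$ it has the same geometric intersection with $\phi(\alpha)$ as $\alpha$ does, which is non-zero by the first point. The single pair $(\beta,\phi(\alpha))$ is then an element of $\kerarc(\langle\cdot,\cdot\rangle_{n,G})$, which is the desired contradiction. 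With these two corrections your argument becomes the paper's proof.
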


In other words, if the twisted intersection pairing detects when simple arcs in the surface have non-zero geometric intersection then the corresponding twisted homological representation is faithful. This raises interest in studying the ``kernel'' of this intersection pairing (notice that this is not a kernel in the traditional sense, just a set of pairs of simple arcs).

Section~\ref{S:single_point} is interested in the $n=1$ case, namely the first level of the representation coming from single point configurations on $\Sigma_{g,1}.$ One of its conclusion is that Bigelow's strategy cannot be performed in this case. Namely, in the Heisenberg case for $n=1$ points of configuration, we show that $\kerarc(\langle \cdot , \cdot \rangle_{1,\Heis_g})$ is not empty, when the genus $g$ is at least $6$ (Theorem~\ref{T:kernel_n=1}). Indeed, we find an explicit pair of non-disjoint simple arcs with vanishing twisted intersection pairing. In Sec.~\ref{S:kernel_n=1_mod} we exhibit simpler elements of $\kerarc(\langle \cdot , \cdot \rangle_{1,\Heis_{g,2k}})$, and in this case also kernel elements of the representations $\rho_{1,2k}$. On that matter, we show how in the Heisenberg case, the usual \textit{transvection formula} for the homological action of a Dehn twist on a curve gets twisted by a power of $\sigma$ and why it does not allow to recover an element in $\Ker(\rho_1^{\Heis_g})$ from a pair in $\kerarc(\langle \cdot , \cdot \rangle_{1,\Heis_g})$ (Prop.~\ref{P:twisted_transvection_formula_1pt}). Other direct consequences of this twisted transvection formula are a criterion for two separating Dehn twists to be distinguished by the homological representation in terms of how they cut the surface into connected components (Prop.~\ref{prop:close_Dehn_twists}) and that the images of separating Dehn twists have infinite order (Prop.~\ref{P:infinite_order}). All this section relies on the nice behavior of the Heisenberg local system regarding separating loops (Prop.~\ref{prop:HeisenbergSimpleCurve}).  

In Section~\ref{sec:several_pts} we tackle the level of several configuration points ($n>1$). In Proposition~\ref{prop:kernel_inter_form_npt} we show that $\kerarc(\langle \cdot , \cdot \rangle_{n,\Heis_{g,2k}})$ is non empty for $g>k$. Along the way we propose a nice combinatorial formula for computing $\langle \cdot , \cdot \rangle_{n,\Heis_g}$ on the homology classes associated to simple arcs embedded in $\Sigma_{g,1}$. The formula depends only on $n$ and some simple combinatorial data regarding how the arcs intersect.  

Here is a summary of our contributions to $\kerarc(\langle \cdot , \cdot \rangle_{n,G})$ motivated by the above Theorem~\ref{theoremark_faithfulness_criterion}.

\begin{theorem}
	\begin{itemize}
		\item ($G=\mathbb{H}_g, n=1$) $\kerarc(\langle \cdot , \cdot \rangle_{1,\Heis_g})$ is non empty for $g\ge 6$ (Theorem~\ref{T:kernel_n=1}, we exhibit a pair of arcs).
		\item ($G=\mathbb{H}_{g,2k}, n=1$) $\kerarc(\langle \cdot , \cdot \rangle_{1,\Heis_{g,2k}})$ is non empty for $g>3k$. We exhibit some elements that lead to actual elements of $\Mod(\Sigma_{g,1})$ in $\Ker(\rho_{1,2k})$ (Prop.~\ref{prop:kernel_modsigma2k}).
		\item ($G=\mathbb{H}_g, n=1$) There are \textit{twisted transvection formulas} relating the homological actions of separating Dehn twists and the twisted intersection pairing (Prop.~\ref{P:twisted_transvection_formula_1pt}). They induce a criterion for images of two separating Dehn twists by the homological representation to be different, expressed in terms of how they cut the surface into connected components (Prop.~\ref{prop:close_Dehn_twists}). 
		\item ($G=\mathbb{H}_{g,2k} , n>1$) $\kerarc(\langle \cdot , \cdot \rangle_{n,\Heis_{g,2k}})$ is non empty for $g>k$. (Prop.~\ref{prop:kernel_inter_form_npt}).
	\end{itemize}
\end{theorem}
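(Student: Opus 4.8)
The statement is a recapitulation of four results proved in Sections~\ref{S:single_point} and~\ref{sec:several_pts}, so the ``proof'' is really a bookkeeping argument: each bullet is established by invoking the indicated proposition or theorem. I will organize the plan around the two genuinely new inputs on which all four bullets rest, namely the good behaviour of the Heisenberg local system on separating simple closed curves (Prop.~\ref{prop:HeisenbergSimpleCurve}, Prop.~\ref{prop:HeisenbergSimpleCurve}) and the combinatorial formula for the twisted intersection pairing on homology classes of embedded simple arcs. Granting those, the first bullet follows by exhibiting, for $g\ge 6$, an explicit pair of non-disjoint simple arcs in $\Sigma_{g,1}$ whose $\Heis_g$-twisted pairing vanishes; the verification is a direct computation using the diagrammatic model for homology classes of arcs (Sec.~\ref{S:twisted_homologies_structure}) together with the fact that, in the one-point configuration case, the pairing of two arcs is a sum over their geometric intersection points of monomials in $\Heis_g$, and one arranges the arcs so that the contributions cancel in $\Z[\Heis_g]$. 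This is Theorem~\ref{T:kernel_n=1}.

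For the second bullet, the key observation is that passing to the quotient $\Heis_{g,2k}=\Heis_g/\langle \sigma^{2k}\rangle$ makes $\sigma$ have finite order $2k$, which both enlarges $\kerarc$ (so non-emptiness now holds in the wider range $g>3k$, by reusing the arcs of the first bullet together with simpler configurations that only cancel modulo $\sigma^{2k}$) and, crucially, allows the twisted transvection formula of Prop.~\ref{P:twisted_transvection_formula_1pt} to actually produce mapping classes in the kernel: a Dehn twist acts on the homology class of an arc by a transvection whose ``defect'' is a power of $\sigma$, and once $\sigma$ is a root of unity of order dividing the relevant exponent, a suitable power of a separating Dehn twist (or a product of two such) acts trivially on every class while being non-trivial in $\Mod(\Sigma_{g,1})$. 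Here one must be careful that the element produced is genuinely non-trivial in the mapping class group — this is where one uses that separating twists have infinite order and are not proper powers in a way detected by some other invariant — and that it acts trivially on the \emph{whole} twisted homology, not just on the arc classes one started with; the latter requires knowing that arc classes generate (or at least that the pairing is non-degenerate enough), which is part of the structural results of Sec.~\ref{S:twisted_homologies_structure}. This gives Prop.~\ref{prop:kernel_modsigma2k}.

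The third bullet is a clean corollary of the twisted transvection formula: writing the homological action of a separating Dehn twist $T_\gamma$ explicitly in terms of $\langle\cdot,\cdot\rangle_{1,\Heis_g}$ and the class of $\gamma$, one reads off that the conjugacy-invariant data distinguishing $\rho_1^{\Heis_g}(T_\gamma)$ from $\rho_1^{\Heis_g}(T_{\gamma'})$ is precisely the $\Heis_g$-module isomorphism type of the decomposition of the relevant homology induced by cutting along $\gamma$ versus along $\gamma'$, which is combinatorial in how the curves partition the genus; infinite order (Prop.~\ref{P:infinite_order}) follows because the transvection has a non-diagonalizable Jordan block coming from the $\sigma$-twist being a non-torsion element of $\Z[\Heis_g]$ acting unipotently but non-trivially. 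Finally the fourth bullet, the $n>1$ case, uses the combinatorial intersection formula of Prop.~\ref{prop:kernel_inter_form_npt}: the $\Heis_g$-pairing of two arc classes in $\Conf_n(\Sigma_{g,1})$ is expressed as a polynomial in $n$ and in the finitely many combinatorial intersection parameters of the arcs, and after reducing modulo $\sigma^{2k}$ one solves for a pair of non-disjoint arcs making this polynomial vanish, which is possible as soon as $g>k$ because the extra genus provides enough independent $\sigma$-contributions to cancel. The main obstacle throughout is the fourth bullet: obtaining the combinatorial formula for the $n$-point pairing in closed form — disentangling the contributions of the $n$ configuration points and the Heisenberg monomials they pick up — is the substantive computation, whereas once that formula is in hand the vanishing for $g>k$ is elementary algebra.
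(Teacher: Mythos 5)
Your plan is broadly faithful for the first and third bullets (explicit pair of arcs with cancelling monomials for $g\ge 6$; the twisted transvection formula and the comparison of the $\sigma^{-2k}$-action on interior versus exterior pieces), but the mechanism you propose for the second bullet is wrong and would fail. You suggest that, once $\sigma$ has finite order in $\Heis_{g,2k}$, ``a suitable power of a separating Dehn twist'' acts trivially on the whole homology. This contradicts Proposition~\ref{P:infinite_order}: even after reducing mod $\sigma^{2k}-1$ (indeed even mod $\sigma-1$, i.e.\ in the Magnus representation) the transvection formula gives $\rho(\tau_\alpha^n)([\gamma])=[\gamma]+n\langle\alpha,\gamma\rangle[\alpha]$, so no nonzero power of a single separating twist is ever in the kernel. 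The kernel elements of $\rho_{1,2k}$ in Proposition~\ref{prop:kernel_modsigma2k} are obtained by Suzuki's commutator trick: one exhibits two separating curves $\alpha,\beta$ of genus $2k$ meeting in exactly two points whose complementary ``bigon'' has genus $k$, so that $\langle\alpha,\beta\rangle_{\Heis_{g,2k}}=1-\sigma^{2k}=0$; the untwisted transvection formula of Corollary~\ref{C:transvection_mod2k_formula} then shows $\rho_{1,2k}(\tau_\alpha)$ and $\rho_{1,2k}(\tau_\beta)$ commute, while $[\tau_\alpha,\tau_\beta]\neq 1$ in $\Mod(\Sigma_{g,1})$ because twists along curves with positive geometric intersection never commute. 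Your worry about the element ``not being a proper power'' is beside the point; the non-triviality input is this elementary fact about commuting Dehn twists.

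For the fourth bullet you also take a different (and unnecessary) route: the paper does not derive Proposition~\ref{prop:kernel_inter_form_npt} from the closed-form combinatorial formula of Section~\ref{sec:formula_sev_pts}, which is an independent side result. Instead it takes arcs meeting in two points whose bigon is a separating curve of genus $d$ with the right divisibility relative to $k$, and observes that replacing the bigon subarc by a parallel copy changes each monomial $\varphi_n(\delta_{\underline x})$ only by powers of $\sigma^{2d}$, which die in $\Heis_{g,2k}$; the modified arcs are disjoint, so the pairing vanishes. This is a short homotopy argument, not a polynomial-vanishing computation, so your identification of ``obtaining the $n$-point formula in closed form'' as the main obstacle mislocates where the work actually is.
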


In \cite{L90}, R. Lawrence has invented representations of braid groups (denoted $\mathcal{L}_{m,n}$ in \eqref{Lawrence}), seen as mapping class groups, on twisted homologies of punctured disks. They have very much inspired the present constructions, even more since Bigelow has proved their faithfulness in \cite{Big00} by analyzing their associated twisted intersection pairings. We note that there are subgroups of $\Mod(\Sigma_{g,1})$ isomorphic to braid groups, on which one can study the restriction of the representations $\rho_n^{\Heisg}$, or subrepresentations of those restrictions. Section~\ref{sec:Lawrence} aims at recognizing such braid subgroups representations as Lawrence representations. Theorem~\ref{thm:pure_braid_group} states that $\rho_n^{\Heisg}$ restricted to some (pure) braid subgroup with $g$ strands of $\Mod(\Sigma_{g,1})$ that is generated by separating Dehn twists  is isomorphic to a specialization of the Lawrence representation $\mathcal{L}_{g,n}$ built from homologies of $\Conf_n(D_g)$, with coefficients in a ring with one variable. 
Theorem \ref{thm:pure_braid_group2} shows that the restriction of $\rho_n^{\Heisg}$ to another pure braid subgroup of $\Mod(\Sigma_{g,1})$ with $g$ strands is isomorphic to a full Lawrence representation $\mathcal{L}_{g,n}$, and Corollary \ref{C:Pg_is_faithful} that $\rho_n^{\Heisg}$ restricted to this subgroup is faithful for $n>1.$ 
Finally, we also find a pure braid subgroup of $\Mod(\Sigma_{g,1})$ this time on $2g$ strands generated by both separating and non separating twists and we show that its action via a restriction of $\rho_n^{\Heisg}$ is isomorphic to an evaluation of Lawrence representation in a ring with $g+1$ variables (Theorem~\ref{thm:pure_braid_group3}). 


\begin{theorem}
	\begin{itemize}
		\item (Sec.~\ref{sec:pure_braid_Lawrence}) There is a subgroup of $\Mod(\Sigma_{g,1})$ isomorphic to a pure braid group on $g$ strands that is generated by separating Dehn twists, and such that restricting $\rho_n^{\Heis_g}$ to it recovers the representation $\mathcal{L}_{g,n}$ evaluated at $s=\sigma^{-2}$. 
		\item (Sec.~\ref{S:Pg_faithful}) There is a pure braid subgroup on $g$ strands of $\Mod(\Sigma_{g,1})$. The restriction of $\rho_n^{\Heis_g}$ to it is fully isomorphic to $\mathcal{L}_{g,m}$. For $n>1$ this restriction is faithful (Coro.~\ref{C:Pg_is_faithful}). 
		\item (Sec.~\ref{sec:P2g}) There is a subgroup of $\Mod(\Sigma_{g,1})$ isomorphic to a pure braid group on $2g$ strands, such that restricting $\rho_n^{\Heis_g}$ to it recovers the representation $\mathcal{L}_{g,n}$ evaluated in a ring of Laurent polynomials with $g+1$ variables (Theorem~\ref{T:P2g=evaluated_Lawrence}). 
	\end{itemize}

\end{theorem}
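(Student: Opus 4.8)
The plan is to treat all three items through a single mechanism: each (pure) braid subgroup comes from an embedding of a punctured disk $D\subset\Sigma_{g,1}$, the restriction of the Heisenberg local system to $\pi_1(\Conf_n(D))$ is computed, and the inclusion of configuration spaces is shown to realize an (evaluated) Lawrence representation as a subrepresentation of $\rho_n^{\Heisg}$. First I would set up the subgroups. For items~(1) and~(2) I fix a disk with $g$ open sub-disks removed inside $\Sigma_{g,1}$ and glue a one-handle into a neighbourhood of the $i$-th removed sub-disk, for $1\le i\le g$; this produces a subsurface $\Sigma\subset\Sigma_{g,1}$ from which $D_g$ is recovered by collapsing each handle. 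A mapping class supported in $D_g$ extends, by carrying the handles along, to a mapping class of $\Sigma_{g,1}$, giving $\calB_g\cong\Mod(D_g,\partial D_g)\to\Mod(\Sigma_{g,1})$, whose restriction to the pure subgroup is the desired $\PB_g\hookrightarrow\Mod(\Sigma_{g,1})$. Injectivity follows from the standard fact that a diffeomorphism supported in $\Sigma$ that is isotopic to the identity in $\Sigma_{g,1}$ is already trivial in $\Mod(\Sigma)$ (the inclusion $\Sigma\hookrightarrow\Sigma_{g,1}$ being $\pi_1$-injective with essential non-peripheral image), and the braid relations are checked directly on the defining curves. Each standard pure-braid generator $A_{ij}$ becomes the Dehn twist about a curve encircling the $i$-th and $j$-th handles; in the genus-$1$-piece attachment used for item~(1) this curve bounds a genus-$2$ subsurface, hence is separating, which yields the assertion that this $\PB_g$ is generated by separating twists. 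Items~(2) and~(3) use variants — a different handle attachment in~(2), and in~(3) a disk with $2g$ sub-disks realizing $2g$ ``half-handles'' — and the same reasoning identifies the relevant subgroup with $\PB_g$ (resp.\ $\PB_{2g}$) and records which of its twists are separating.

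Next I would compute the restricted local system. The inclusion $\iota\colon\Conf_n(D)\hookrightarrow\Conf_n(\Sigma_{g,1})$ induces $\iota_*$ on fundamental groups, and $\phiHeis\circ\iota_*$ is determined on the standard generators of $\pi_1(\Conf_n(D))$: a loop along which a single configuration point encircles the $i$-th sub-disk maps to the $\Heis_g$-image of the boundary of the $i$-th handle piece — the commutator $[a_i,b_i]=\sigma^2$ for the genus-$1$ attachment of item~(1), and the corresponding $a_i$-type class for the attachments of items~(2) and~(3) — while a loop swapping two configuration points maps to a power of the central generator $\sigma$ (read off from the \cite{BGG11} presentation). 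Hence $\phiHeis\circ\iota_*$ factors through an abelian subgroup of $\Heis_g$, and after renaming variables it coincides with the rank-one local system $\varphi_n$ defining the Lawrence representation, but specialized: with $q$ a power of $\sigma$ and $s=\sigma^{-2}$ in item~(1) (so the coefficient ring has the single variable $\sigma$); with both Lawrence variables retained in item~(2); and valued in the $(g+1)$-variable ring $\Z[a_1^{\pm1},\dots,a_g^{\pm1},\sigma^{\pm1}]$ in item~(3). This pins down the coefficient rings in the statement.

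With the local systems matched, $\iota$ induces a map from $\mathcal{L}_{g,n}$ (base-changed to the relevant coefficient ring $R$) to $H_n(\Conf_n(\Sigma_{g,1});\phiHeis)\otimes R$, which is equivariant for the braid subgroup action by functoriality of twisted homology under the subsurface inclusion together with the fact that the braid subgroup is supported inside $D$. I would then show that this map is injective with image a subrepresentation, by comparing with the explicit diagrammatic (``fork''-type) bases recalled in Section~\ref{S:twisted_homologies_structure}: the classes represented by diagrams lying entirely inside $D$ are precisely the standard basis of $H_n(\Conf_n(D);\varphi_n)$ and extend to part of a basis of the ambient Heisenberg homology. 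This establishes items~(1), (2), (3); in~(2) one additionally checks that these classes span the whole Heisenberg homology over $R$, so the restriction is not merely a subrepresentation containing $\mathcal{L}_{g,n}$ but isomorphic to it. For Corollary~\ref{C:Pg_is_faithful}: item~(2) exhibits inside the restriction of $\rho_n^{\Heisg}$ a subrepresentation isomorphic to the full Lawrence representation $\mathcal{L}_{g,n}$, which is faithful for $n\ge 2$ by the Bigelow--Krammer theorem and its extension to higher $n$ (see \cite{Jules2}); a faithful subrepresentation forces the whole restriction to be faithful, so $\rho_n^{\Heisg}$ restricted to this $\PB_g$ is faithful for $n>1$, and Proposition~\ref{prop_linearRep} ensures that replacing $\Z[\Heis_g]$ by a linearized ring of coefficients does not enlarge the kernel.

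The main obstacle is the homological comparison of the previous paragraph: one must understand $H_n(\Conf_n(\Sigma_{g,1});\phiHeis)$ precisely enough to know that the classes supported in $D$ stay linearly independent over the (generally non-principal, because specialized) coefficient ring $R$, and that the braid action genuinely restricts to them — i.e.\ that no ambient homology of $\Conf_n(\Sigma_{g,1})$ leaks in when the handles are braided. For item~(2) there is the extra difficulty of producing an isomorphism rather than a mere injection: the number of diagram classes visible from $D_g$ must equal the rank of the Heisenberg homology, and arranging this equality is exactly what the particular choice of the second embedding, with its $g+1$ handle/centre variables, is designed for. By contrast, the pure-braid-relation verifications of the first paragraph and the local-system computation of the second are routine once the defining curves are drawn.
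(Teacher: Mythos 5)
Your proposal is correct in its overall architecture and lands on the same three subsurfaces (a $g$-holed disk whose holes bound genus-one pieces; a $g$-holed disk realizing ``half'' of each handle; a $2g$-holed disk), the same local-system computations, and the same faithfulness mechanism, but it identifies the restricted representation with the Lawrence representation by a genuinely different route. You propose to use the map on twisted homology induced by the inclusion $\Conf_n(D)\hookrightarrow\Conf_n(\Sigma_{g,1})$ and to prove injectivity and equivariance by comparing diagrammatic bases --- and you correctly flag this homological comparison as the main obstacle. The paper avoids that comparison entirely: by Proposition~\ref{P:Universal_construction_of_homol_reps} each homological representation is reconstructed as the quotient of the free module on arc diagrams by the kernel of the twisted intersection pairing, and the pairing formula~\eqref{E:formula_for_the_pairing} depends only on the local system; hence once Proposition~\ref{prop:HeisenbergSimpleCurve} (for the separating case) or Propositions~\ref{P:action_of_Gg_on_Heis}/\ref{P:action_of_Imeta_on_Heis} (for the other two embeddings) identify the restriction of $\varphi_n^{\Heisg}$ to $\pi_1(\Conf_n(D))$ with the (evaluated) Lawrence local system, the isomorphism of representations is immediate, with no basis-tracking in the ambient module. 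Your route buys a concrete embedding of the Lawrence module into $\calH_n$; the paper's buys a one-line reduction to a $\pi_1$ computation. Note also that Proposition~\ref{P:action_of_Gg_on_Heis} does double duty in the paper: it shows the image of the restricted local system is the abelian subgroup $L=\langle b_1,\ldots,b_g,\sigma\rangle$ \emph{and} that $P_g$ acts trivially on $L$, so the subrepresentation is honestly $\Z[L]$-linear even though $P_g\not\subset K_{\Heisg}$ and $\rho_n^{\Heisg}$ itself is only a crossed representation there; your write-up does not address this point, and Corollary~\ref{C:Pg_is_faithful} is really a statement about $[P_g,P_g]=P_g\cap K_{\Heisg}$ together with the uncrossed extension of Proposition~\ref{prop_uncross_in_general} (your appeal to Proposition~\ref{prop_linearRep} is the wrong lemma for this).

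One sub-claim in your treatment of item (2) is false and should be deleted: the classes supported in the $g$-holed disk do \emph{not} span the whole of $\calH_n$ over the relevant ring --- they number $\binom{g+n-1}{n}$ against an ambient rank of $\binom{2g+n-1}{n}$. The word ``fully'' in the statement refers to recovering the \emph{full} (unevaluated, $(g+1)$-variable colored) Lawrence representation, as in Theorem~\ref{T:Pg=full_Lawrence}, not to exhausting the ambient Heisenberg homology. Fortunately nothing in your argument uses this claim: faithfulness of the restriction only requires a faithful subrepresentation, exactly as you argue in the following sentence.
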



\subsection*{Acknowledgments}

Both authors thank Q.~Faes for helpful discussions, and I. Agol for helping with the proof of Lemma \ref{lem_f_not_trivial_makes_arc_intersect}. Most of this work was done during the one year journey of the second author in Dijon. Both authors were supported by the ANR project AlMaRe (ANR-19-CE40-0001-01), and the first author also by the project "CLICQ" of R\'egion Bourgogne-Franche Comt\'e.

\section{Constructions of homological representations}\label{S:constructions_of_reps}

\subsection{Crossed and uncrossed representations on twisted homologies}
\label{sec_uncross}
\begin{definition}[Isotopic actions]\label{def_isotopic_actions}
Let $\mathcal{G}$ be a group, let $X$ be a manifold and let $x\in X$ be a base point. We say $\mathcal{G}$ \emph{isotopically acts} on $X$ if there is a morphism $\mathcal{G}\longmapsto \mathrm{Mod}(X,x),$ where $\mathrm{Mod}(X,x)$ is the group of homeomorphisms of $X$ fixing $x$ up to isotopies fixing $x.$ 
\end{definition}
Although we can state our constructions at this degree of generality, to be more concrete, we will later only consider the case $\mathcal{G}=\mathrm{Mod}(\Sigma)$ where $\Sigma$ is a surface possibly with boundary and punctures. We keep in mind the following examples of isotopic actions.
\begin{example}\label{ex_isotopic_actions}
\begin{enumerate}
\item The action of the mapping class group $\mathrm{Mod}(\Sigma_{g,n})$ of a compact oriented surface $\Sigma_{g,n}$ of genus $g$ with $n\geq 1$ boundary components on $X=\Sigma_{g,n}$ itself, with base point an arbitrary point $x\in \partial \Sigma_{g,n}.$
\item The action of $\mathrm{Mod}(\Sigma)$ (where $\Sigma$ has non-empty boundary) on the space of unordered configurations in $\Sigma$ : 
\begin{equation}
X=\mathrm{Conf}_n(\Sigma) := \left\lbrace \lbrace z_1, \ldots , z_n \rbrace \subset \Sigma | z_i \neq z_j \right\rbrace
\end{equation}\label{def_confspaces_in_ex_isotopic_actions}
with base point $x$ being the set $x=\lbrace x_1,\ldots ,x_n\rbrace$ where each $x_i \in \partial \Sigma$. Here, the action of $\Homeo(\Sigma)$ is extended coordinate by coordinate to $\Conf_n(\Sigma)$, as well as isotopies, which provides the isotopic action of $\Mod(\Sigma)$ on $\Conf_n(\Sigma)$. 
\item The action of $\mathrm{Mod}(\Sigma)$, where $\Sigma$ is a surface with non-empty boundary on either $U(\Sigma),T^*(\Sigma),PU(\Sigma),PT(\Sigma),$ where $U(\Sigma)$, $T^*(\Sigma)$ are respectively the unit tangent bundle and the non-zero tangent bundle of $\Sigma$ (the fiber over any point of $\Sigma$ is the set of non-zero tangent vectors at this point), and $PU(\Sigma), PT(\Sigma)$ are their respective projective versions. For the case of tangent bundles, we consider the smooth version of the mapping class group. Note that to consider the unit tangent bundle, one needs to make an arbitrary choice of a Riemann structure on $\Sigma,$ and the actions corresponding to different choices will be homotopic.
The base point can be taken to be any $\vec{x}=(x,v)\in U(\partial \Sigma),T^*(\partial \Sigma),  PU(\partial \Sigma) $ or $PT(\partial \Sigma)$. One could also replace $\Sigma$ by its configuration spaces in all of the above bundles. 
\end{enumerate}
\end{example}
The isotopic action of $\mathcal{G}$ on $X$ induces a map $\mathcal{G} \longmapsto \mathrm{Aut}(\pi_1(X,x))$. Let $N$ be a normal subgroup of $\pi_1(X,x)$. 
\[
\text{We make the assumption that } N \text{ is stable under the action of } \mathcal{G}. \tag{H}\label{H}
\]
We give examples for such an $N$.
\begin{example}\label{example_stabilized_subgroups_ofpi1}
\begin{enumerate}
\item $X$ is any $CW$-complex on which $\mathcal{G}$ isotopically acts and $N$ is a characteristic subgroup of $\pi_1(X,x)$ (hence stable by any automorphism). 
\item A special case of characteristic subgroup is $N=\Gamma_k \pi_1(X,x),$ where $\Gamma_k H=[H,[H,[\ldots,[H,H]]]]$ stands for the $k$-th element of the lower central series of a group $H$.
\item Let $\mathcal{G}=\mathrm{Mod}(\Sigma_{g,1})$ be the mapping class group of a compact oriented surface with genus $g$ and $1$ boundary component and $X= \Conf_n(\Sigma_{g,1})$. In \cite{BGG11,BPS21,DPS,JulesMarco} one finds morphisms $\pi_1(\Conf_n(\Sigma_{g,1})) \longmapsto \mathbb{H}_g$ where $\mathbb{H}_g$ is the Heisenberg group: 
\begin{equation}
\mathbb{H}_g=\langle a_1,b_1,\ldots, a_g,b_g ,\sigma | [a_i,a_j],[b_i,b_j], [\sigma,a_i],[\sigma,b_i], [a_i,b_j]\sigma^{-2\delta_{ij}}\rangle.
\end{equation}\label{eq_def_Heisenberg_group}
 The kernel is proved to be stable under the $\mathrm{Mod}(\Sigma_{g,1})$ action for any $n\geq 2$. This example is central in this paper and rigorously (re)constructed in Sec. \ref{sec_Heisenberg_case_on_surface_conf}.
\end{enumerate}
\end{example}
Let $G:=\pi_1(X,x)/N$ and $X_G$ be the regular cover of $X$ corresponding to the subgroup $N\triangleleft \pi_1(X,x)$. It can be thought as the universal cover $\widetilde{X}$ of $X$ endowed with a $\pi_1(X,x)$-action (by precomposition of paths) that is modded out by the action of $N$. The group of deck transformations for $X_G$ is then isomorphic to $G$. Since $N$ is stabilized by any $f\in \mathcal{G}$ (see \eqref{H}), any homeomorphism $f$ induces a homeomorphism $f_G$ of $X_G$ (it is precisely the lifting property for regular covers). Moreover, any two homotopic $f,g$ induce homotopic homeomorphisms of $X_G.$
Furthermore, looking at actions on $\pi_1$'s, we get a morphism:
$$ \begin{array}{rcl}
\mathcal{G} & \longmapsto  & \mathrm{Aut}(\pi_1(X)/N) \cong \mathrm{Aut}(G)
\\ f & \longmapsto & f_* \end{array}$$
By functoriality of the homology, $f_G$ induces a map on $H_*(X_G,\mathbb{Z})$ that we still denote $f_G$ by abuse of notation. The deck transformations give $H_*(X_G,\mathbb{Z})$ the structure of a $\mathbb{Z}[G]$-module, but the map $f_G$ is not a morphism of $\mathbb{Z}[G]$-modules in general, rather it satisfies:
\begin{equation}\label{eq_the_action_is_crossed}
f_G(g\cdot x)=f_*(g)\cdot f_G(x).
\end{equation}
Let $K_G:=\Ker(\calG \to \Aut(G))$. If $f\in K_G$ then $f_G$ yields a morphism of $\mathbb{Z}[G]$-modules on $H_*(X_G,\mathbb{Z})$ (straightforward from the above Property \eqref{eq_the_action_is_crossed}). Therefore, there is a map:
\begin{equation}
\begin{array}{rcl}
\rho_{G} : K_G \triangleleft \mathcal{G} & \longmapsto  & \mathrm{Aut}_{\mathbb{Z}[G]}(H_*(X_G,\mathbb{Z}))
\\ f & \longmapsto & f_G \end{array}
\end{equation}\label{eq_rep_of_K_G}
If furthermore $H_*(X_G,\mathbb{Z})$ is a free $\mathbb{Z}[G]$-module, up to a choice of basis the automorphism group on the right hand side is identified with $\mathrm{GL}_N(\mathbb{Z}[G])$. For $\mathcal{B}=\lbrace e_1,\ldots,e_N \rbrace$ a $\Z[G]$-basis of $H_*(X_G,\mathbb{Z}),$ the identification is
$$f \longrightarrow \mathrm{Mat}_{\mathcal{B}}(f)$$ 
where $\mathrm{Mat}_{\mathcal{B}}(f)=a_{ij}^*$ where $a_{ij}$ is the coefficient of $f(e_j)$ in $e_i,$ and $*$ is the anti-involution on $\Z[G]$ sending $g\in G$ to $g^{-1}.$  

 The above defined $\rho_G$ is a $\Z[G]$-representation of $K_G$ while the action of $\calG$ on $H_*(X_G,\mathbb{Z})$ is only a \emph{crossed representation}. The following proposition presents a tautological way to  \emph{uncross} these \emph{isotopic representations on twisted homologies} when the homology is a free module. 
\begin{proposition}[Uncross homological representations]\label{prop_uncross_in_general}
Assume $H_*(X_G,\mathbb{Z})$ is a free $\mathbb{Z}[G]$-module with a given basis $\mathcal{B}=\lbrace e_1,\ldots, e_N \rbrace.$ Let also $M_G=\mathrm{Im}(\calG \to \Aut(G)).$ For $f\in \calG$, we define $\widetilde{\rho_G}(f)\in \mathrm{GL}_N(\mathbb{Z}[G\rtimes M_G])$ by the following formula:
$$\widetilde{\rho_G}(f)=\mathrm{Mat}_{\mathcal{B}}(f_G) (f_* I_N).$$
Then $\widetilde{\rho_G}$ is a representation of $\mathcal{G}$ that coincides with $\rho_G$ on $K_G$. Furthermore, $\mathrm{Ker}(\widetilde{\rho_G})=\mathrm{Ker}(\rho_G)$
\end{proposition}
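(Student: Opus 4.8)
The plan is to verify directly that $\widetilde{\rho_G}$ is a homomorphism, then identify its kernel. Recall the basic relations: for $f\in\calG$ we have the crossed-equivariance \eqref{eq_the_action_is_crossed}, namely $f_G(g\cdot v)=f_*(g)\cdot f_G(v)$ for $v\in H_*(X_G,\Z)$ and $g\in G$; in matrix terms, if $A_f:=\mathrm{Mat}_{\mathcal{B}}(f_G)\in\GL_N(\Z[G])$, this says that conjugating the entries of $A_f$ by the automorphism $f_*$ corresponds to composing with the $G$-action twisted by $f_*$. The element $f_*I_N$ should be read inside $\GL_N(\Z[G\rtimes M_G])$ as the scalar matrix with the group-like element $f_*\in M_G\subset G\rtimes M_G$ on the diagonal; conjugation by it implements the automorphism $f_*$ on the $\Z[G]$-coefficients. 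So the content of \eqref{eq_the_action_is_crossed} is the commutation rule $(f_*I_N)\,B=(f_*\!\cdot\! B)\,(f_*I_N)$ in $M_N(\Z[G\rtimes M_G])$, where $f_*\!\cdot\! B$ denotes applying $f_*$ to every entry of $B$.

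First I would check multiplicativity. For $f,h\in\calG$ one has $(fh)_G=f_G\circ h_G$, hence $A_{fh}=A_f\,(f_*\!\cdot\! A_h)$ once one accounts for the anti-involution $*$ used in the identification $f\mapsto \mathrm{Mat}_{\mathcal{B}}(f)$ (the formula $\mathrm{Mat}_{\mathcal{B}}(f\circ g)=\mathrm{Mat}_{\mathcal{B}}(f)\cdot{}^{f_*}\mathrm{Mat}_{\mathcal{B}}(g)$ is the standard cocycle identity for matrices of a crossed action, and the $*$ only transposes the order consistently). Then
$$
\widetilde{\rho_G}(f)\,\widetilde{\rho_G}(h)=A_f\,(f_*I_N)\,A_h\,(h_*I_N)=A_f\,(f_*\!\cdot\! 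A_h)\,(f_*I_N)(h_*I_N)=A_{fh}\,((fh)_*I_N)=\widetilde{\rho_G}(fh),
$$
using the commutation rule and $(f_*I_N)(h_*I_N)=(f_*h_*)I_N=((fh)_*)I_N$. One also checks $\widetilde{\rho_G}(\mathrm{id})=I_N$ and that inverses go to inverses, so $\widetilde{\rho_G}$ is a homomorphism. That $\widetilde{\rho_G}$ restricts to $\rho_G$ on $K_G$ is immediate: for $f\in K_G$ we have $f_*=\mathrm{id}$, so $f_*I_N=I_N$ and $\widetilde{\rho_G}(f)=A_f=\mathrm{Mat}_{\mathcal{B}}(f_G)=\rho_G(f)$.

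Finally, the kernel. If $\widetilde{\rho_G}(f)=I_N$, look at the image in $\GL_N(\Z[M_G])$ under the ring map $\Z[G\rtimes M_G]\to\Z[M_G]$ induced by killing $G$ (equivalently, by the semidirect-product projection $G\rtimes M_G\to M_G$): $A_f$ has entries in $\Z[G]$, so it maps to $(\aug(A_f))\,I_N$ up to the $G$-augmentation, and more to the point the matrix $f_*I_N$ maps to the scalar matrix $f_*I_N\in\GL_N(\Z[M_G])$. One checks that the augmented image of $A_f$ is $I_N$ (the twisted homology, being a based free module, has the class of the basepoint covering $G$ mapping to $1$ under augmentation — or simply: $\mathrm{Mat}_{\mathcal{B}}(f_G)$ is a matrix over $\Z[G]$ whose $M_G$-component is trivial), so $\widetilde{\rho_G}(f)=I_N$ forces $f_*I_N=I_N$ in $\GL_N(\Z[M_G])$, hence $f_*=\mathrm{id}$, i.e. $f\in K_G$. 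But on $K_G$ we have $\widetilde{\rho_G}=\rho_G$, so $f\in\Ker(\rho_G)$. Conversely $\Ker(\rho_G)\subset\Ker(\widetilde{\rho_G})$ is clear since the two maps agree on $K_G\supset\Ker(\rho_G)$. Hence $\Ker(\widetilde{\rho_G})=\Ker(\rho_G)$.

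The main obstacle I anticipate is purely bookkeeping: getting the side on which $f_*$ acts — and the interaction with the anti-involution $*$ in $\mathrm{Mat}_{\mathcal{B}}(-)$ — exactly right, so that the cocycle identity $A_{fh}=A_f\,(f_*\!\cdot\! A_h)$ and the commutation rule $(f_*I_N)B=(f_*\!\cdot\! B)(f_*I_N)$ line up to give $\widetilde{\rho_G}(fh)=\widetilde{\rho_G}(f)\widetilde{\rho_G}(h)$ rather than the opposite-order product. Once the conventions are pinned down the rest is formal.
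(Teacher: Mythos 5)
Your argument is correct and follows essentially the same route as the paper: the cocycle identity $\mathrm{Mat}_{\mathcal{B}}((fh)_G)=\mathrm{Mat}_{\mathcal{B}}(f_G)\,\mathrm{Mat}_{\mathcal{B}}(h_G)^{f_*}$, the commutation rule implemented by conjugation with $f_*I_N$, and the observation that $\widetilde{\rho_G}(f)$ has entries in the coset $\Z[G]f_*$, so $\widetilde{\rho_G}(f)=I_N$ forces $f_*=1$ and hence $\mathrm{Ker}(\widetilde{\rho_G})\subset K_G$, where the two representations agree. (The intermediate claim that the augmented image of $A_f$ is $I_N$ is unnecessary and not true in general, but your parenthetical "or simply" clause already contains the correct reason, which is the one the paper uses.)
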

\begin{proof}
From $f_G(g\cdot x)=f_*(g)\cdot f_G(x)$, we have
$$\mathrm{Mat}_{\mathcal{B}}((fg)_G)=\mathrm{Mat}_{\mathcal{B}}(f_G)\mathrm{Mat}_{\mathcal{B}}(g_G)^{f_*}$$
where for $\psi \in \mathrm{Aut}(G)$ and $M\in \mathrm{GL}_N(\mathbb{Z}[G]),$ the matrix $M^{\psi}$ is obtained by applying $\psi$ to all coefficients of $M$ (this multiplication rule justifies the name of \emph{crossed representation}). Note that  
$$\mathrm{Mat}_{\mathcal{B}}(g_G)^{f_*}=(f_* I_N)\mathrm{Mat}_{\mathcal{B}}(g_G) (f_*^{-1} I_N)$$
as a composition of matrices in $\mathrm{GL}_N(\mathbb{Z}[G\rtimes M_G]).$
Hence we get
\begin{align*}
\widetilde{\rho}_G(fg)& = \mathrm{Mat}_{\mathcal{B}}(f_Gg_G)((fg)_* I_N)
\\ & =\mathrm{Mat}_{\mathcal{B}}(f_G)(f_* I_N)\mathrm{Mat}_{\mathcal{B}}(g_G)(f_*^{-1} I_N) ((fg)_* I_N)
\\ & = \widetilde{\rho}_G(f) \widetilde{\rho}_G(g).
\end{align*}
The definition clearly implies that $\rho_G=\widetilde{\rho_G}$ on $K_G,$ by definition of $K_G$.
Finally, to get that $\mathrm{Ker}(\widetilde{\rho_G})=\mathrm{Ker}(\rho_G),$ it suffices to show that $\mathrm{Ker}(\widetilde{\rho_G})$ is a subgroup of $K_G.$ Notice that $\widetilde{\rho_G}(f) \in \mathrm{GL}_N(\mathbb{Z}[G])$ if and only if $f_*=1,$ which proves the claim.
\end{proof}
The kind of representations we get here differ from the classical notion of linear representations of groups, since a linear representation of $\mathcal{G}$ is typically defined to be a representation $\mathcal{G}\to GL_N(\mathbb{K})$ where $\mathbb{K}$ is a field (often $\mathbb{K}=\C$). Here we allow the coefficients of the representation to lie in a group algebra $\Z[G],$ with the typical assumption that $G$ will be a simpler group than $\mathcal{G}.$ It is for example possible to recover linear representations over $\C$ of $\mathcal{G}$ from linear representations over $\C$ of $G$.
\begin{proposition}\label{prop_linearRep} Let $\widetilde{\rho_G}$ be a representation of $\mathcal{G}$ as described in Theorem \ref{prop_uncross_in_general}. Let $\iota: G\rtimes M_G\longmapsto \mathrm{GL}(V)$ be a finite dimensional $\C$-representation of $G\rtimes M_G,$ that is naturally extended to an algebra morphism $\iota: \mathbb{Z}[G\rtimes M_G]\longmapsto \mathrm{End}(V).$ Let us define 
$$\widetilde{\rho_V} : \begin{array}{ccc}
 \mathcal{G} & \longmapsto & GL(V^{\otimes N})
\\ g & \longmapsto & \iota\left( \rho_G(g)\right)\end{array},$$

hence $\rho_V$ is a $\C$-linear representation of $\mathcal{G}.$ Moreover:
\begin{itemize}
\item[(i)] If $\iota(m)\notin \iota(\Z[G])$ for any $m\neq 1 \in M_G,$ then $\mathrm{Ker}(\widetilde{\rho_V})$ is a subgroup of $K_G.$
\item[(ii)] If furthermore $\iota$ is a faithful representation $\Z[G]\mapsto \mathrm{End}(V),$ then $\mathrm{Ker}(\widetilde{\rho_V})=\mathrm{Ker}(\widetilde{\rho_G}).$ 
\end{itemize}
\end{proposition}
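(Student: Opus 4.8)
Here is how I would prove Proposition~\ref{prop_linearRep}.

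The plan is to recognize $\widetilde{\rho_V}$ as a composition of two homomorphisms and then to read off the ``crossed'' part of the coefficients entrywise. First I would observe that, since $\iota\colon\Z[G\rtimes M_G]\to\End(V)$ is a unital algebra morphism, applying $\iota$ to each matrix coefficient defines a unital algebra morphism $M_N(\Z[G\rtimes M_G])\to M_N(\End(V))$, which therefore carries units to units; restricting it to unit groups and precomposing with the homomorphism $\widetilde{\rho_G}\colon\calG\to\GL_N(\Z[G\rtimes M_G])$ of Proposition~\ref{prop_uncross_in_general} gives exactly $\widetilde{\rho_V}$, so $\widetilde{\rho_V}$ is a group homomorphism, i.e.\ a $\C$-linear representation of $\calG$. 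The same observation shows at once that $\widetilde{\rho_G}(f)=I_N$ implies $\widetilde{\rho_V}(f)=\mathrm{Id}$, hence $\Ker(\widetilde{\rho_G})\subseteq\Ker(\widetilde{\rho_V})$, which is one of the inclusions needed for (ii).

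For (i), I would use the explicit shape of $\widetilde{\rho_G}$ recalled in the proof of Proposition~\ref{prop_uncross_in_general}: $\widetilde{\rho_G}(f)=\mathrm{Mat}_{\mathcal{B}}(f_G)(f_*I_N)$, where $\mathrm{Mat}_{\mathcal{B}}(f_G)\in M_N(\Z[G])$ (because $f_G$ sends the given basis $\mathcal{B}$ into the $\Z[G]$-module it freely generates) and $f_*\in M_G$, and moreover $\widetilde{\rho_G}(f)\in\GL_N(\Z[G])$ if and only if $f_*=1$, i.e.\ if and only if $f\in K_G$. So let $f\in\Ker(\widetilde{\rho_V})$ and compare $(1,1)$-entries in $\widetilde{\rho_V}(f)=\mathrm{Id}$: this yields $\iota(b)\,\iota(f_*)=\mathrm{Id}_V$, where $b\in\Z[G]$ is the $(1,1)$-entry of $\mathrm{Mat}_{\mathcal{B}}(f_G)$. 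Since $f_*$ is a group element of $G\rtimes M_G$, $\iota(f_*)$ is invertible, so $\iota(f_*)^{-1}=\iota(b)\in\iota(\Z[G])$; and $\iota(f_*)^{-1}=\iota(f_*^{-1})$ because $\iota$ is a homomorphism on $G\rtimes M_G$. Thus $\iota(f_*^{-1})\in\iota(\Z[G])$ with $f_*^{-1}\in M_G$, and the hypothesis of (i) forces $f_*^{-1}=1$, that is $f\in K_G$. (Equivalently, one may run this computation on $f^{-1}\in\Ker(\widetilde{\rho_V})$, since a kernel is closed under inversion.)

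For (ii), I would keep the hypothesis of (i) and add that $\iota$ is faithful on $\Z[G]$. Given $f\in\Ker(\widetilde{\rho_V})$, part (i) already gives $f\in K_G$, hence $f_*=1$ and $\widetilde{\rho_G}(f)=\mathrm{Mat}_{\mathcal{B}}(f_G)$ has all its entries in $\Z[G]$. Then $\widetilde{\rho_V}(f)=\mathrm{Id}$ reads $\iota\big(\mathrm{Mat}_{\mathcal{B}}(f_G)_{ij}\big)=\delta_{ij}\,\mathrm{Id}_V$ for all $i,j$, and faithfulness of $\iota$ on $\Z[G]$ forces $\mathrm{Mat}_{\mathcal{B}}(f_G)_{ij}=\delta_{ij}$, i.e.\ $\widetilde{\rho_G}(f)=I_N$, so $f\in\Ker(\widetilde{\rho_G})$. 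Together with the inclusion from the first paragraph this gives $\Ker(\widetilde{\rho_V})=\Ker(\widetilde{\rho_G})$.

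I do not expect a genuine obstacle here; the argument is essentially bookkeeping with the semidirect-product coefficients. The two points that require a little care are that $\mathrm{Mat}_{\mathcal{B}}(f_G)$ really takes values in $M_N(\Z[G])$ even though $f_G$ is only crossed $\Z[G]$-linear and not $\Z[G]$-linear (precisely the fact already used in the proof of Proposition~\ref{prop_uncross_in_general}), and that in (i) the hypothesis must be applied to $f_*^{-1}$ rather than to $f_*$, since the entrywise identity directly exhibits the \emph{inverse} of $\iota(f_*)$ inside $\iota(\Z[G])$.
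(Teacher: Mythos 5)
Your proof is correct and follows essentially the same route as the paper's: view $\widetilde{\rho_V}$ as $\iota$ applied entrywise to $\widetilde{\rho_G}$, deduce (i) by reading off from $\widetilde{\rho_V}(f)=\mathrm{Id}$ that $\iota(f_*^{-1})$ lands in $\iota(\Z[G])$, and deduce (ii) from faithfulness of $\iota$ on $\Z[G]$ together with both kernels lying in $K_G$. Your version is in fact slightly more careful than the paper's (notably in applying the hypothesis of (i) to $f_*^{-1}$ rather than $f_*$, and in making explicit the inclusion $\Ker(\widetilde{\rho_G})\subseteq\Ker(\widetilde{\rho_V})$), but the substance is identical.
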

\begin{proof}
The fact that $\widetilde{\rho_V}$ is a $\C$ representation of $\mathcal{G}$ follows from the fact that $\widetilde{\rho_G}$ is a representation of $\mathcal{G}$ and $\iota$ is an algebra morphism $\Z[G]\mapsto \mathrm{End}(V).$

Next, assume that for $m\in M_G$, $\iota(m) \in \iota(\Z[G]) \Rightarrow m=1.$ Let $g\in \mathcal{G},$ we have $\widetilde{\rho_V}(g)=\iota(\mathrm{Mat}_{\mathcal{B}}((f_G)_*)(\iota(\psi_G(g))I_N).$ If $g\in \mathrm{Ker}(\widetilde{\rho_V}),$ then the coefficients of $\widetilde{\rho_V}(g)$ are in $\iota(\Z[G])$, so $\psi_G(g)=1$ and $g\in K_G.$

Finally, if furthermore $\iota$ is faithful over $\Z[G],$ then $\widetilde{\rho_V}$ restricted to $K_G$ has the same kernel as $\rho_G,$ so $\mathrm{Ker}(\widetilde{\rho_V})=\mathrm{Ker}(\widetilde{\rho_G})$ since both kernels are included in $K_G.$ 
\end{proof}

\begin{remark}
	(Persistence of the kernel)
\begin{enumerate}
	\item The condition (ii) in Proposition \ref{prop_linearRep} is stronger than asking that $\iota$ is faithful as a representation $G\rightarrow GL(V).$
	\item When (ii) is satisfied, we will say that the representation $\iota$ has the {\em natural persistence of the kernel} property. Note that it is a priori possible that a linearization of the representations $\rho_G$ keeps the same kernel even when this property is not satisfied.
\end{enumerate}
\end{remark}

\subsection{Twisted homologies of configuration spaces and structure}\label{S:twisted_homologies_structure}

Now we focus on the particular case where $X$ is the configuration space of a surface with one boundary. 
Let $\varphi: \pi_1(\Conf_n(\varSigma_{g,1})) \to G$ an onto group morphism. We will denote $\widehat{\Conf_n}(\varSigma_{g,1})$ the associated regular cover to simplify notation. In this section we study the structure of:
\[
\HBM_{\bullet} \left( \Conf_n( \varSigma_{g,1} ) , \Conf_n(\varSigma_{g,1})^- ; \varphi \right)
\]
as a $\Z\left[ G \right]$-module (where $\HBM_{\bullet}$ stands for the {\em twisted Borel--Moore relative homology}). The relative part is defined as follows:
\[
\Conf_n(\varSigma)^- := \lbrace \lbrace z_1, \ldots , z_n \rbrace \in \Conf_n(\varSigma), \exists i, z_i \in \partial^- \varSigma \rbrace
\]
where $\partial^- \varSigma$ is an embedded interval in $\partial \varSigma$.

We first define $\varphi$-twisted Borel--Moore homology classes in configuration spaces from graphs supported by a given family of non intersecting arcs of the surface $\varSigma_{g,1}$ with ends in $\partial^- \varSigma_{g,1}$. We define them in general as we will use them extensively in what follows. This section is a summary of \cite[Sec.~2.2]{JulesMarco} and we have adopted their conventions. 

First of all, let us denote by $I = [0,1]$ the unit interval, by $\mathring{I} = I \smallsetminus \partial I$ its interior. Our diagrams consist in three inputs that we define. 

\begin{definition}\label{def:diagrammatic_twisted_class}
A diagrammatic twisted class of $\varSigma_{g,1}$ is made of three elements:
\begin{enumerate}
\item For every integer $m \ge 0$, we define an \textit{$m$-multisimplex} $\bfsimp$ to be an ordered family $(\simp_1,\ldots,\simp_m)$ of disjoint proper embeddings $\simp_1,\ldots,\simp_m : I \to \varSigma_{g,1}$ such that $\simp_\ell$ embeds $\partial I$ into $\partial_- \varSigma_{g,1} $ for $1 \le \ell \le m$. 
\item For all integers $m,n \ge 0$, we say that an ordered family $\bfk=(k_1,\ldots,k_m) \in \N^m$ such that $k_1 + \ldots + k_m = n$ provides an \textit{$m$-partition} of $n$. We label components of the $m$-multi-simplex using this partition.
\item We define a \textit{thread} $\thread$ of a $\bfk$-labeled $m$-multisimplex $\bfsimp$ to be an ordered family $(\tilde{\gamma}_1,\ldots,\tilde{\gamma}_n)$ of disjoint embeddings of $I$ in $\varSigma_{g,1}$ such that $\tilde{x}_i(0) = \xi_i$ and $\tilde{x}_i(1) \in \Gamma_j$ for some $j$ in such a way that there are precisely $k_j$ of the $\gamma_i$'s ending in $\Gamma_j$. 
\end{enumerate}
We denote $\bfsimp^{(\bfk)}_{\thread}$ such a diagram to show the three inputs. 
\end{definition}

Here is an example of such diagram for $n=5$ and $g=1$:
\begin{equation}\label{fig:example_homology_class}
 \pic{homology_example_2}
\end{equation}

Now we explain how these diagrams describe a twisted relative homology class in $\HBM_{n} \left( \Conf_n( \varSigma_{g,1} ) , \Conf_n(\varSigma_{g,1})^- ; \varphi \right)$, for any $\varphi$. If one removes the red threads, the diagram still describes a relative homology class but non twisted, i.e. in $\HBM_{n} \left( \Conf_n( \varSigma_{g,1} ) , \Conf_n(\varSigma_{g,1})^- ; \Z \right)$. 
Indeed, every $m$-multi-simplex $\bfsimp = (\simp_1,\ldots,\simp_m)$, together with an $m$-partition $\bfk = (k_1,\ldots,k_m)$ of $n$, induces an embedding
\begin{equation*}
\begin{array}{cc}
\bfsimp^{\times \bfk} : 
\Delta^{k_1} \times \ldots \times \Delta^{k_m} & \xrightarrow{\Gamma_1^{k_1} \times \cdots \times \Gamma_m^{k_m}} \Conf_n(\varSigma_{g,1}), 
 \end{array}
\end{equation*}
where for an integer $k$:
\[
 \Delta^k := \{ (t_1,\ldots,t_k) \in \R^k \mid 0 < t_1 < \ldots < t_k < 1 \}
\]
is the standard open $k$-dimensional simplex in $\R^k$, 
and if $\Gamma$ is an embedded arc with ends in $\partial^- \varSigma_{g,1}$, there is a natural embedding for any integer $k$:
\[
\begin{array}{rcl}
\Gamma^k : \Delta^k & \to & \Conf_k(\varSigma_{g,1}) \\
(t_1, \ldots , t_k) & \mapsto & \lbrace \Gamma(t_1) , \ldots , \Gamma(t_n) \rbrace. 
\end{array}
\]
If one thinks about $\Delta^k$ being the (ordered) {\em configuration space of $k$ points in $I$} then the image of $\Gamma^k$ is that of $k$ points in the embedded interval $\Gamma(I)$ (disordered afterwards). The definition of an $m$-multi simplex ensures that $\Gamma_1^{k_1} \times \cdots \times \Gamma_m^{k_m}$ is well defined. As the faces of the simplices are either sent to infinity or to $\Conf_n(\varSigma_{g,1})^-$, then this embedding of a product of simplices defines a Borel--Moore cycle relative to $\Conf_n(\varSigma_{g,1})^-$. Indeed, in Borel--Moore homology, closed submanifold are cycles (even those going to infinity).

%
%
 
A labeled multi-simplex $\bfsimp^{\times \bfk}$ yields a homology class in $H^\BM_n(\Conf_n(\varSigma_{g,1}),\Conf_n(\varSigma_{g,1})^-)$, it remains to use the thread to choose a lift of it to $\widehat{\Conf}_n(\varSigma_{g,1})$ where $\hat{p} : \widehat{\Conf}_n(\varSigma_{g,1}) \to \Conf_n(\varSigma_{g,1})$ is the regular cover associated with $\Ker \varphi$. A thread is a path $\thread : I \to X_{n,g}$ from $\underline{\xi}$ to an element $x$ in the image of $\bfsimp^{\times \bfk}$. It therefore selects a point denoted $\hat{\gamma} \in \widehat{\Conf}_n(\varSigma_{g,1})$ in the fiber $\hat{p}^{-1}(x)$. 

The diagrammatic twisted class denoted $\hat{\bfsimp}^{(\bfk)}_{\thread}$ associated with the $\bfk$-labeled $m$-multisimplex $\bfsimp^{\times \bfk}$ threaded by $\thread$ is the homology class of the unique lift:  
\[
\hat{\bfsimp}^{\times \bfk} : \mathring{I}^n \to \hat{\Conf}_n(\varSigma_{g,1})
\]
of ${\bfsimp}^{\times \bfk}$ that contains $\hat{\gamma}$. We refer the reader to Sec.~2.2.1 of \cite{JulesMarco} for more details on these diagrammatic notations for homology classes. 



Using previous diagrammatic notation, we define the following particular classes in $\HBM_{\bullet} \left( \Conf_n( \varSigma_{g,1} ) , \Conf_n(\varSigma_{g,1})^- ; \varphi \right)$:
\[
\hat{\bfsimp}(\bfa,\bfb):=\pic{homology_basis}
\]
where $\bfa=(a_1,\ldots,a_g),\bfb=(b_1,\ldots,b_g) \in \N^{\times g}$ are such that $a_1+b_1+\ldots+a_g+b_g=n$. We can now derive the structure of twisted homologies. 

\begin{theorem}[Structure of twisted homologies, {\cite{Big04,JulesVerma,BPS21,JulesMarco}}]\label{thm_structure_twisted_homology}
The Borel--Moore twisted homology modules are free, concentrated in middle dimension. Namely $\HBM_{\bullet} \left( \Conf_n( \varSigma_{g,1} ) , \Conf_n(\varSigma_{g,1})^- ; \varphi \right)$ is free as a $\Z\left[G \right]$-module, and trivial in other dimensions than $n$. In dim. $n$, it admits the following set as basis
 \[
  \left\lbrace \hat{\bfsimp}(\bfa,\bfb) \Biggm| 
  \begin{array}{l}
   \bfa=(a_1,\ldots,a_g),\bfb=(b_1,\ldots,b_g) \in \N^{\times g} \\
   a_1+b_1+\ldots+a_g+b_g=n
  \end{array} \right\rbrace.
 \]
\end{theorem}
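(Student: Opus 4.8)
The plan is to reduce the computation to known results about twisted homology of configuration spaces of the disc (à la Lawrence) by slicing the surface into a handle decomposition, and to proceed by the following steps.

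\textbf{Step 1: Reduce to a local computation via a handle/cell decomposition.} First I would fix a decomposition of $\varSigma_{g,1}$ as a disc with $2g$ one-handles attached (equivalently, a model where $\varSigma_{g,1}$ deformation retracts onto a wedge of $2g$ arcs $\alpha_1,\beta_1,\ldots,\alpha_g,\beta_g$ with all endpoints on $\partial^-\varSigma_{g,1}$). The classes $\hat{\bfsimp}(\bfa,\bfb)$ are exactly the products of the simplex-embeddings $\alpha_i^{a_i}\times\beta_i^{b_i}$ threaded by the canonical thread that pushes the configuration points out to the basepoint along a fixed collar. The claim that these span $\HBM_n$ freely is then a statement that can be proved by an inductive stratification argument on the number of handles, using a Mayer--Vietoris / long exact sequence argument for Borel--Moore homology each time a handle is adjoined.

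\textbf{Step 2: Base case and the Lawrence model.} For $g=0$, $\Conf_n(\varSigma_{0,1})=\Conf_n(D^2)$ and the statement is exactly the classical fact (see \cite{Big04}, and \cite{JulesVerma,JulesMarco}) that the relative Borel--Moore twisted homology of $\Conf_n(D^2)$ relative to the configurations touching $\partial^-$ is free of rank one over $\Z[G]$, generated by the single multiforked class on one arc. More generally, for a disc with $p$ marked points (which is what each stage of the induction looks like after collapsing the previous handles onto arcs), the relative twisted Borel--Moore homology is free with basis indexed by the ways of distributing $n$ points among $p$ arcs; this is the Lawrence module, and I would invoke the Lawrence/Bigelow computation together with the handle-by-handle stratification rather than reprove it. The key input is that each open stratum is an open cell (a product of open simplices) times a contractible collar factor, so its Borel--Moore homology is concentrated in the expected degree and rank one over $\Z[G]$, and the boundary maps in the associated spectral sequence vanish for degree/orientation reasons.

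\textbf{Step 3: Concentration in degree $n$ and freeness.} I would run the stratification spectral sequence (or the iterated long exact sequences) and observe that every stratum contributes Borel--Moore homology only in top degree $n$, so there is no room for differentials: the spectral sequence degenerates, giving that $\HBM_\bullet$ is concentrated in degree $n$, free over $\Z[G]$, with one generator per stratum. Finally I would check that the strata are in bijection with the tuples $(\bfa,\bfb)$ with $a_1+b_1+\cdots+a_g+b_g=n$ and that the generator attached to a stratum is precisely $\hat{\bfsimp}(\bfa,\bfb)$ (this is a matter of matching the diagrammatic conventions of \cite{JulesMarco} with the cellular generators).

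The main obstacle I expect is \textbf{Step 3's vanishing of differentials}: one must argue carefully that the connecting maps between consecutive skeleta of the stratification are zero, which amounts to a local orientation/linking-number computation near each handle attachment — showing that a point being pushed across a handle changes the thread's homology label by a deck transformation but does not create a nontrivial cancellation in the chain complex. Equivalently, one needs that the twisted cellular chain complex of the stratification has zero differential, which is ultimately the reason the homology is free. This is exactly the technical heart of the cited references, so in practice I would cite \cite{Big04,JulesVerma,BPS21,JulesMarco} for this local model and only assemble the global statement, noting that the argument is identical to the genus-zero (Lawrence) case handle by handle, the Heisenberg (or any) local system $\varphi$ entering only through the deck group $G$ and never affecting the shape of the stratification.
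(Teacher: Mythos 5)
The paper does not actually prove Theorem~\ref{thm_structure_twisted_homology}: it is imported wholesale from the cited references \cite{Big04,JulesVerma,BPS21,JulesMarco}, and the only remark added is that the statement holds for an arbitrary local system $\varphi$. So there is no in-paper argument to compare yours against; what you have written is a reconstruction of the argument that lives in those references, and since you yourself end by deferring the technical heart to the same citations, your treatment is essentially consistent with the paper's. Your outline (retract $\varSigma_{g,1}$ onto $2g$ arcs with endpoints on $\partial^-\varSigma_{g,1}$, so that after killing configurations touching $\partial^-$ one is left with a disjoint union of products of open simplices indexed by the distributions $(\bfa,\bfb)$, each contributing a single free generator in degree $n$, whence degeneration and freeness) is the standard route. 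The one place where your wording papers over a real issue is the repeated appeal to homotopy-type reasoning: Borel--Moore homology is not invariant under non-proper homotopy equivalences, so ``an open cell times a contractible collar factor'' does not have the Borel--Moore homology of the cell --- one has $\HBM_{\ast}(X\times\R^k)\cong \HBM_{\ast-k}(X)$, a degree shift, and the retraction of the surface onto the wedge of arcs must be promoted to a proper map or replaced by an excision/compactification argument. Getting this right (and checking that the resulting generators are the threaded classes $\hat{\bfsimp}(\bfa,\bfb)$ rather than some unit multiple of them) is precisely the content of the cited computations, so your decision to cite rather than reprove is the same one the authors made.
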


Notice that the above theorem works for any such $\varphi$ without further assumption. 


\subsection{The Heisenberg twisted case}\label{sec_Heisenberg_case_on_surface_conf}

\subsubsection{Heisenberg local systems}

 
Now we focus on Example \ref{example_stabilized_subgroups_ofpi1} (3) where the morphism used to twist the homology involves the Heisenberg groups associated with a surface and explicitly defined in Equation \eqref{eq_def_Heisenberg_group} by its presentation. We detail this case and its quotients of particular interest in this paper. Notice that this particular case of local system is also considered in \cite{BPS21,JulesMarco}. 

\begin{notation}[Heisenberg local systems]\label{not_Heisenberg_local_systems}
We fix the following notation for the fundamental group of configuration spaces of positive genus surfaces, for $n,g \in \N$:
\[
\pi_{n,g} := \pi_1\left( \Conf_n(\varSigma_{g,1}) , \lbrace \xi_1 , \ldots , \xi_n \rbrace \right)
\]
where $\varSigma_{g,1}$ stands for the surface of genus $g$ with one boundary component, and $\lbrace \xi_1 , \ldots , \xi_n \rbrace$ is a base point of $\Conf_n(\varSigma_{g,1})$ chosen so that coordinates $\xi_i$'s lie in the boundary of the surface (see the picture below). This group is usually designated as the \emph{braid group with $n$ strands, of the surface of genus $g$}. We refer the reader to \cite{BGG11} for a proof that $\pi_{n,g}$ is generated by the following three types of braids, for $1\le k \le g$ and $1\le k < n$:
\begin{center}
	\def\svgwidth{0.8 \columnwidth}
\begingroup%
  \makeatletter%
  \providecommand\color[2][]{%
    \errmessage{(Inkscape) Color is used for the text in Inkscape, but the package 'color.sty' is not loaded}%
    \renewcommand\color[2][]{}%
  }%
  \providecommand\transparent[1]{%
    \errmessage{(Inkscape) Transparency is used (non-zero) for the text in Inkscape, but the package 'transparent.sty' is not loaded}%
    \renewcommand\transparent[1]{}%
  }%
  \providecommand\rotatebox[2]{#2}%
  \newcommand*\fsize{\dimexpr\f@size pt\relax}%
  \newcommand*\lineheight[1]{\fontsize{\fsize}{#1\fsize}\selectfont}%
  \ifx\svgwidth\undefined%
    \setlength{\unitlength}{350.21193857bp}%
    \ifx\svgscale\undefined%
      \relax%
    \else%
      \setlength{\unitlength}{\unitlength * \real{\svgscale}}%
    \fi%
  \else%
    \setlength{\unitlength}{\svgwidth}%
  \fi%
  \global\let\svgwidth\undefined%
  \global\let\svgscale\undefined%
  \makeatother%
  \begin{picture}(1,0.37125078)%
    \lineheight{1}%
    \setlength\tabcolsep{0pt}%
    \put(0,0){\includegraphics[width=\unitlength,page=1]{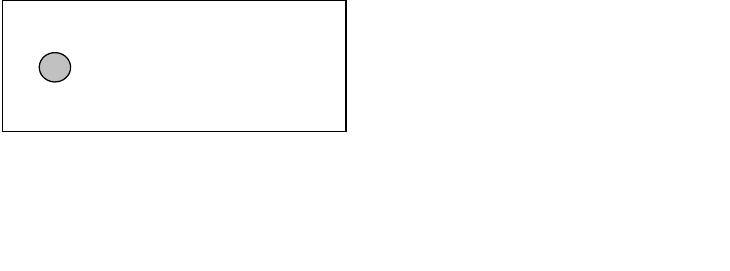}}%
    \put(0.06623689,0.27450726){\color[rgb]{0,0,0}\makebox(0,0)[lt]{\lineheight{1.25}\smash{\begin{tabular}[t]{l}$1$\end{tabular}}}}%
    \put(0,0){\includegraphics[width=\unitlength,page=2]{generateurs.pdf}}%
    \put(0.37221613,0.27555144){\color[rgb]{0,0,0}\makebox(0,0)[lt]{\lineheight{1.25}\smash{\begin{tabular}[t]{l}$g$\end{tabular}}}}%
    \put(0,0){\includegraphics[width=\unitlength,page=3]{generateurs.pdf}}%
    \put(0.2222737,0.27675155){\color[rgb]{0,0,0}\makebox(0,0)[lt]{\lineheight{1.25}\smash{\begin{tabular}[t]{l}$i$\end{tabular}}}}%
    \put(0,0){\includegraphics[width=\unitlength,page=4]{generateurs.pdf}}%
    \put(0.15227406,0.27485882){\color[rgb]{0,0,0}\makebox(0,0)[lt]{\lineheight{1.25}\smash{\begin{tabular}[t]{l}$\ldots$\end{tabular}}}}%
    \put(0.30894681,0.2766759){\color[rgb]{0,0,0}\makebox(0,0)[lt]{\lineheight{1.25}\smash{\begin{tabular}[t]{l}$\ldots$\end{tabular}}}}%
    \put(0,0){\includegraphics[width=\unitlength,page=5]{generateurs.pdf}}%
    \put(0.22524368,0.3118541){\color[rgb]{1,0.11372549,0.11372549}\makebox(0,0)[lt]{\lineheight{1.25}\smash{\begin{tabular}[t]{l}$a_i$\end{tabular}}}}%
    \put(0,0){\includegraphics[width=\unitlength,page=6]{generateurs.pdf}}%
    \put(0.54689437,0.27459612){\color[rgb]{0,0,0}\makebox(0,0)[lt]{\lineheight{1.25}\smash{\begin{tabular}[t]{l}$1$\end{tabular}}}}%
    \put(0,0){\includegraphics[width=\unitlength,page=7]{generateurs.pdf}}%
    \put(0.70973333,0.27664052){\color[rgb]{0,0,0}\makebox(0,0)[lt]{\lineheight{1.25}\smash{\begin{tabular}[t]{l}$i$\end{tabular}}}}%
    \put(0,0){\includegraphics[width=\unitlength,page=8]{generateurs.pdf}}%
    \put(0.63703408,0.27607943){\color[rgb]{0,0,0}\makebox(0,0)[lt]{\lineheight{1.25}\smash{\begin{tabular}[t]{l}$\ldots$\end{tabular}}}}%
    \put(0.79720674,0.2767647){\color[rgb]{0,0,0}\makebox(0,0)[lt]{\lineheight{1.25}\smash{\begin{tabular}[t]{l}$\ldots$\end{tabular}}}}%
    \put(0,0){\includegraphics[width=\unitlength,page=9]{generateurs.pdf}}%
    \put(0.61918309,0.24856632){\color[rgb]{1,0.11372549,0.11372549}\makebox(0,0)[lt]{\lineheight{1.25}\smash{\begin{tabular}[t]{l}$b_i$\end{tabular}}}}%
    \put(0,0){\includegraphics[width=\unitlength,page=10]{generateurs.pdf}}%
    \put(0.387608,0.08598036){\color[rgb]{0,0,0}\makebox(0,0)[lt]{\lineheight{1.25}\smash{\begin{tabular}[t]{l}$\ldots$\end{tabular}}}}%
    \put(0.55078815,0.08694864){\color[rgb]{0,0,0}\makebox(0,0)[lt]{\lineheight{1.25}\smash{\begin{tabular}[t]{l}$\ldots$\end{tabular}}}}%
    \put(0,0){\includegraphics[width=\unitlength,page=11]{generateurs.pdf}}%
    \put(0.25841861,0.05371844){\color[rgb]{1,0.11372549,0.11372549}\makebox(0,0)[lt]{\lineheight{1.25}\smash{\begin{tabular}[t]{l}$\sigma_i$\end{tabular}}}}%
    \put(0.11811039,0.27531036){\color[rgb]{0,0,0}\makebox(0,0)[lt]{\lineheight{1.25}\smash{\begin{tabular}[t]{l}$\reflectbox{1}$\end{tabular}}}}%
    \put(0.28254698,0.27729028){\color[rgb]{0,0,0}\makebox(0,0)[lt]{\lineheight{1.25}\smash{\begin{tabular}[t]{l}$\reflectbox{i}$\end{tabular}}}}%
    \put(0.42077556,0.27493385){\color[rgb]{0,0,0}\makebox(0,0)[lt]{\lineheight{1.25}\smash{\begin{tabular}[t]{l}$\reflectbox{g}$\end{tabular}}}}%
    \put(0,0){\includegraphics[width=\unitlength,page=12]{generateurs.pdf}}%
    \put(0.60181148,0.27594857){\color[rgb]{0,0,0}\makebox(0,0)[lt]{\lineheight{1.25}\smash{\begin{tabular}[t]{l}$\reflectbox{1}$\end{tabular}}}}%
    \put(0,0){\includegraphics[width=\unitlength,page=13]{generateurs.pdf}}%
    \put(0.4484626,0.08657961){\color[rgb]{0,0,0}\makebox(0,0)[lt]{\lineheight{1.25}\smash{\begin{tabular}[t]{l}$i$\end{tabular}}}}%
    \put(0,0){\includegraphics[width=\unitlength,page=14]{generateurs.pdf}}%
    \put(0.50672344,0.08701503){\color[rgb]{0,0,0}\makebox(0,0)[lt]{\lineheight{1.25}\smash{\begin{tabular}[t]{l}$\reflectbox{i}$\end{tabular}}}}%
    \put(0,0){\includegraphics[width=\unitlength,page=15]{generateurs.pdf}}%
    \put(0.30266802,0.08526124){\color[rgb]{0,0,0}\makebox(0,0)[lt]{\lineheight{1.25}\smash{\begin{tabular}[t]{l}$1$\end{tabular}}}}%
    \put(0,0){\includegraphics[width=\unitlength,page=16]{generateurs.pdf}}%
    \put(0.35560464,0.08499892){\color[rgb]{0,0,0}\makebox(0,0)[lt]{\lineheight{1.25}\smash{\begin{tabular}[t]{l}$\reflectbox{1}$\end{tabular}}}}%
    \put(0,0){\includegraphics[width=\unitlength,page=17]{generateurs.pdf}}%
    \put(0.85512621,0.27670869){\color[rgb]{0,0,0}\makebox(0,0)[lt]{\lineheight{1.25}\smash{\begin{tabular}[t]{l}$g$\end{tabular}}}}%
    \put(0,0){\includegraphics[width=\unitlength,page=18]{generateurs.pdf}}%
    \put(0.90368561,0.27609116){\color[rgb]{0,0,0}\makebox(0,0)[lt]{\lineheight{1.25}\smash{\begin{tabular}[t]{l}$\reflectbox{g}$\end{tabular}}}}%
    \put(0.76705276,0.27733431){\color[rgb]{0,0,0}\makebox(0,0)[lt]{\lineheight{1.25}\smash{\begin{tabular}[t]{l}$\reflectbox{i}$\end{tabular}}}}%
    \put(0,0){\includegraphics[width=\unitlength,page=19]{generateurs.pdf}}%
    \put(0.61035754,0.08492){\color[rgb]{0,0,0}\makebox(0,0)[lt]{\lineheight{1.25}\smash{\begin{tabular}[t]{l}$g$\end{tabular}}}}%
    \put(0,0){\includegraphics[width=\unitlength,page=20]{generateurs.pdf}}%
    \put(0.65571597,0.08630296){\color[rgb]{0,0,0}\makebox(0,0)[lt]{\lineheight{1.25}\smash{\begin{tabular}[t]{l}$\reflectbox{g}$\end{tabular}}}}%
    \put(0,0){\includegraphics[width=\unitlength,page=21]{generateurs.pdf}}%
  \end{picture}%
\endgroup%

\end{center}
where red dots mean trivial paths, namely a point staying at its place at all time of the path, and red paths display the movie of the path. We refer the reader to the more detailed Sec.~2.1.1 of \cite{JulesMarco} that use same pictures. 
There exists the following morphism \cite{BGG11,BPS21,JulesMarco}:
\begin{equation}\label{eq_localsystem_of_Heisenberg}
    \varphi^{\mathbb{H}_g}_{n} : \begin{array}{ccc} 
     \pi_{n,g} & \xrightarrow{}  & \mathbb{H}_g = \langle a_1,b_1,\ldots, a_g,b_g ,\sigma | [a_i,a_j],[b_i,b_j], [\sigma,a_i],[\sigma,b_i], [a_i,b_j]\sigma^{-2\delta_{ij}}\rangle \\
    (\alpha_i, \beta_i) & \mapsto & (a_i, b_i), \text{ for } i=1,\ldots, g \\
    \sigma_i & \mapsto & \sigma \text{ for } i= 1 ,\ldots, n-1.
\end{array}
\end{equation}
For an integer $r \in \N$, we fix notation for the following quotients of the Heisenberg group:
\begin{align}\label{eq_quotients_of_Heisenberg}
\Heis_{g,r} := & \Heisg \big/ \langle \sigma^r \rangle\\
\Heismod_{g,r} := & \Heisg \big/ \langle \sigma^r,a_i^r, b_i^r, i= 1, \ldots ,g  \rangle,
\end{align}
We designate them respectively the {\em Heisenberg group modulo $r$} and the {\em finite Heisenberg group} (modulo $r$). We denote by $\varphi_{n,r}$ and $\overline{\varphi}_{n,r}$ the composition of $\phiHeis_n$ with projection onto resp. $\Heis_{g,r}$ and $\Heismodmodr$. We introduce the following notations for the corresponding relative twisted homology modules of configuration spaces:
\begin{align}
    \calH_n & := \HBM_n\left( \Conf_n(\varSigma_{g,1}), \Conf_n(\varSigma_{g,1})^-; \phiHeis_n\right)\\
    \calH_{n,r} & := \HBM_n\left( \Conf_n(\varSigma_{g,1}), \Conf_n(\varSigma_{g,1})^-; \varphi_{n,r}\right) \\
    \calHmodr_{n,r} & := \HBM_n\left( \Conf_n(\varSigma_{g,1}), \Conf_n(\varSigma_{g,1})^-; \overline{\varphi}_{n,r}\right). 
    \end{align}\label{eq_def_twisted_homology_modules}
\end{notation}
In \cite{BGG11,BPS21,JulesMarco} it is shown that the isotopic action of $\Mod(\varSigma)$ on $\Conf_n(\varSigma)$ stabilizes the kernel of the morphism $\phiHeis_n$ (and hence of $\varphi_{n,r}$ and $\overline{\varphi}_{n,r}$) . Namely $\phiHeis_n$ satisfies \eqref{H}.

The case $n=1$ in the above notation deserves a particular treatment as $\pi_{1,g}$ is simply $\pi_1(\varSigma_{g,1}, \xi_1)$ and does not have any $\sigma_i$ as a generator. Nevertheless, note that we have an embedding $\pi_1(\varSigma_{g,1}) \to \pi_{2,g}$ where $\gamma$ is mapped to $\lbrace \gamma, \xi_2 \rbrace$. It is clear that the image of $\pi_1(\varSigma_{g,1})$ is stable under the $\mathrm{Mod}(\varSigma_{g,1})$ action. Alternatively, we note that the Heisenberg group can be defined as the following natural quotient of $\pi_1(\Sigma_{g,1})\times \Z:$ 
\begin{proposition}
We have the following isomorphism:
$$\Heisg \simeq \pi_1(\Sigma_{g,1})\times\langle \sigma \rangle /\langle [\alpha,\beta]=\sigma^{2\omega(\alpha,\beta)}, \ \forall \alpha,\beta \in \pi_1(\Sigma_{g,1})\rangle$$
where $\omega$ is the intersection form on $H_1(\Sigma_{g,1}).$

Moreover, the kernel of the map 
$$\phiHeis_1:\pi_1(\Sigma_{g,1})\longrightarrow \Heis_g$$
is stable under $\Mod(\Sigma_{g,1}).$
\end{proposition}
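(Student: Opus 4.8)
The plan is to prove the two assertions in turn, getting the second out of the first. Write $F:=\pi_1(\Sigma_{g,1})$ for the free group on the standard generators $\alpha_1,\beta_1,\dots,\alpha_g,\beta_g$ (so that $\phiHeis_1(\alpha_i)=a_i$, $\phiHeis_1(\beta_i)=b_i$ and the $H_1$-classes of the $\alpha_i,\beta_i$ form a symplectic basis for $\omega$), and let $Q$ denote the right-hand side of the claimed isomorphism, that is, the quotient of $F\times\langle\sigma\rangle$ by the subgroup $R$ generated by all $[\alpha,\beta]\,\sigma^{-2\omega(\alpha,\beta)}$, $\alpha,\beta\in F$ (here $\omega(\alpha,\beta)$ is $\omega$ evaluated on the classes of $\alpha,\beta$ in $H_1(\Sigma_{g,1})=F^{\ab}$). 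I would first observe that $R$ is normal, since conjugating $[\alpha,\beta]\,\sigma^{-2\omega(\alpha,\beta)}$ by $\gamma\in F$ produces $[\gamma\alpha\gamma^{-1},\gamma\beta\gamma^{-1}]\,\sigma^{-2\omega(\alpha,\beta)}$, another element of the same generating family because conjugation preserves homology classes. Then I would define $\psi\colon\Heisg\to Q$ on generators by $a_i\mapsto\bar\alpha_i$, $b_i\mapsto\bar\beta_i$, $\sigma\mapsto\bar\sigma$, checking that every defining relator of $\Heisg$ dies in $Q$: $\sigma$ is central in $F\times\langle\sigma\rangle$, hence in $Q$, so $[\sigma,a_i]=[\sigma,b_i]=1$; and because $\omega(\alpha_i,\alpha_j)=\omega(\beta_i,\beta_j)=0$ while $\omega(\alpha_i,\beta_j)=\delta_{ij}$, the relations of $Q$ yield $[\bar\alpha_i,\bar\alpha_j]=[\bar\beta_i,\bar\beta_j]=1$ and $[\bar\alpha_i,\bar\beta_j]=\bar\sigma^{2\delta_{ij}}$.

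Next I would construct the inverse $\phi\colon Q\to\Heisg$ from the homomorphism $F\times\langle\sigma\rangle\to\Heisg$, $\alpha_i\mapsto a_i$, $\beta_i\mapsto b_i$, $\sigma\mapsto\sigma$ (well defined because $\sigma$ is central in $\Heisg$). The one thing to verify is that this map kills every relator $[\alpha,\beta]\,\sigma^{-2\omega(\alpha,\beta)}$, equivalently that $[\alpha,\beta]=\sigma^{2\omega(\alpha,\beta)}$ holds in $\Heisg$ for all $\alpha,\beta\in F$, not merely for basis elements; this is the crux of the argument. It follows from the fact that $\Heisg$ is $2$-step nilpotent: the generating commutators $[a_i,a_j]$, $[b_i,b_j]$ and $[a_i,b_j]=\sigma^{2\delta_{ij}}$ all lie in the central subgroup $\langle\sigma\rangle$, so $[\Heisg,\Heisg]\subseteq\langle\sigma\rangle=Z(\Heisg)$. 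Hence the commutator $(x,y)\mapsto[x,y]$ is biadditive and antisymmetric, and vanishes whenever one argument lies in $\langle\sigma\rangle$; it therefore descends to a pairing on $\Heisg/\langle\sigma\rangle\cong\Z^{2g}$, is determined by its values on $a_i,b_i$, and there agrees with $(x,y)\mapsto\sigma^{2\omega(x,y)}$ by the presentation. Expanding arbitrary $\alpha,\beta$ in the basis and using biadditivity then gives $[\alpha,\beta]=\sigma^{2\omega(\alpha,\beta)}$ in general, so $\phi$ is well defined, and $\psi,\phi$ are mutually inverse because they are so on the respective generating sets; this proves the isomorphism.

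For the stability statement, I would note that under this isomorphism $\phiHeis_1\colon\pi_1(\Sigma_{g,1})\to\Heisg$ is identified with the composite $F\hookrightarrow F\times\langle\sigma\rangle\twoheadrightarrow Q$, $\gamma\mapsto\overline{(\gamma,1)}$ (both maps send $\alpha_i\mapsto a_i$, $\beta_i\mapsto b_i$, hence coincide on $F$). Given $f\in\Mod(\Sigma_{g,1})$, let $f_*\in\Aut(F)$ be the induced automorphism and extend it to $F\times\langle\sigma\rangle$ by fixing $\sigma$. Since $\Mod(\Sigma_{g,1})$ acts on $H_1(\Sigma_{g,1})$ through $\mathrm{Sp}(2g,\Z)$, we have $\omega(f_*\alpha,f_*\beta)=\omega(\alpha,\beta)$, so $f_*$ carries the relator $[\alpha,\beta]\,\sigma^{-2\omega(\alpha,\beta)}$ to $[f_*\alpha,f_*\beta]\,\sigma^{-2\omega(f_*\alpha,f_*\beta)}$, again of the same form; thus $f_*$ preserves $R$ and descends to an automorphism $\bar f$ of $\Heisg=Q$ with $\bar f\circ\phiHeis_1=\phiHeis_1\circ f_*$. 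As $\bar f$ is injective, $\phiHeis_1(f_*\gamma)=1$ iff $\phiHeis_1(\gamma)=1$, i.e. $f_*(\Ker\phiHeis_1)=\Ker\phiHeis_1$, as wanted. I expect the main obstacle to be exactly the crux singled out in the second paragraph — verifying that the a priori stronger defining relations of $Q$ are already consequences of the standard presentation of $\Heisg$; once that is in place, the remainder is routine manipulation of group presentations.
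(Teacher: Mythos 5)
Your proposal is correct and follows essentially the same route as the paper: both directions of the isomorphism are established by checking relations, with the crux — that $[\alpha,\beta]=\sigma^{2\omega(\alpha,\beta)}$ holds in $\Heisg$ for arbitrary words — handled in the paper by an induction on word lengths, which is exactly the biadditivity of the commutator in a $2$-step nilpotent group that you invoke. The stability of $\Ker \varphi_1^{\Heisg}$ is likewise deduced in the paper from $\Mod(\Sigma_{g,1})$ preserving $\omega$, just as you argue.
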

The group ring $\Z[\Heis_g]$ is hence a quantum torus in the variable $\sigma$ associated with the group $H_1(\Sigma_{g,1})$ endowed with the natural bilinear intersection form. 
\begin{proof}
Let $G=\pi_1(\Sigma_{g,1})\times\langle \sigma \rangle /\langle [\alpha,\beta]=\sigma^{2\omega(\alpha,\beta)}, \ \forall \alpha,\beta \in \pi_1(\Sigma_{g,1})\rangle.$ It is clear from the presentation of $G$ that $[G,G]$ is cyclic, generated by $\sigma.$ If $\alpha_i,\beta_i$ is a system of standard generators of $\pi_1(\Sigma_{g,1}),$ let $a_i=\phiHeis_1(\alpha_i)$ and $b_i=\phiHeis_1(\beta_i).$ It is easy to see that the relations between the $a_i$ and $b_i$ in $\Heisg$ are satisfied, since $\omega(\alpha_i,\beta_i)=1$ and all other generators are represented by disjoint curves. Hence $G$ is a quotient of $\Heisg.$

To see that $G$ is actually isomorphic to $\Heisg,$ we notice that for words $\alpha$ and $\beta$ in the generators $a_i,b_i$ of $\Heisg,$ one has again that $[\alpha,\beta]=\sigma^{2\omega(\alpha,\beta)},$ as can be shown by a simple induction on the lengths of the words $\alpha$ and $\beta.$ Hence the relations in $G$ are already relations in $\Heisg.$

Finally, it is clear that the kernel of $\phiHeis_1$ is stable under the $\Mod(\varSigma)$ action, since $\Mod(\varSigma)$ respects the intersection form $\omega.$
\end{proof}

For the following notation we step back to the following general case:
\[
\varphi : \pi_{n,g} \to G,
\]
where $G$ could be thought of as either $\Heisg$, $\Heis_{g,r}$ or $\Heismodmodr$. We recall the notation: $K_G:= \Ker(\Mod \to \Aut(G)) \subset \Mod(\varSigma_{g,1})$ and: 
\[
\rho^G_n : K_G \to \Aut_{\Z\left[G\right]} (\calH_n)
\]
the non-crossed representation of $K_G$, see Sec.~\ref{sec_uncross}. Recall also: $M_G := \im(\Mod \to \Aut(G)) \subset \Aut(G)$, and:
\[
\rhoext^G_n : \Mod(\varSigma_{g,1}) \to \GL_{N_{g,n}}\left(\Z\left[G \rtimes M_G \right] \right), 
\]
is the uncrossed representation of $\Mod(\varSigma_{g,1})$ arising from Prop.~\ref{prop_uncross_in_general}. The definition of this extension requires $\calH_n$ (resp. $\calH_{n,r}$ or $\calHmodr_{n,r}$) to be free on $\Z\left[G \right]$, see Theorem~\ref{thm_structure_twisted_homology}, with a given dimension $N_{g,n}:=\binom{2g+n-1}{n}$ also prescribed by the theorem. 


\subsubsection{Linearization of $\Heisg$ and related quotients}

In this subsection we study the possibilities for linearizing Heisenberg representations of mapping class groups of surfaces. We begin with a negative result.

\begin{proposition}\label{lem:noEmbedding} 
Let $G$ be a group that contains a non-abelian torsion free nilpotent subgroup. Then there is no faithful algebra embedding $\varphi : \Z[G] \longmapsto M_N(\C)$ for any $N\geq 1.$
\end{proposition}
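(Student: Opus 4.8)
The plan is to argue by contradiction: suppose $\varphi\colon \Z[G]\longmapsto M_N(\C)$ is a faithful algebra embedding, where $G$ contains a non-abelian torsion free nilpotent subgroup $H$. Restricting $\varphi$ to $\Z[H]\subset \Z[G]$, we get a faithful algebra embedding $\Z[H]\hookrightarrow M_N(\C)$, so it suffices to derive a contradiction assuming $H$ itself is non-abelian, torsion free and nilpotent. The first reduction is to pass to a non-abelian \emph{two-step} nilpotent quotient: since $H$ is non-abelian nilpotent, some quotient of the form $H/\Gamma_k H$ with $k$ minimal is non-abelian (indeed $\Gamma_2 H\neq \{1\}$ while $\Gamma_c H=\{1\}$ for $c$ the nilpotency class, so pick $k$ so that $H/\Gamma_k H$ is non-abelian); but a quotient may introduce torsion, so instead I would work directly inside $H$ and exhibit the obstruction there.

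\textbf{Key steps.} The heart of the matter is that in a torsion free non-abelian nilpotent group one can find two elements $x,y$ whose commutator $z:=[x,y]$ is central, non-trivial, and of infinite order, and such that $z$ commutes with both $x$ and $y$. Concretely, take $x,y\in H$ with $[x,y]\neq 1$ chosen so that $[x,y]$ lies in the last non-trivial term of the lower central series (this is possible: if $[x,y]\notin \Gamma_c H$ for all choices, one descends; formally, pick $x,y$ with $[x,y]\in\Gamma_{c-1}H\setminus\Gamma_c H=\Gamma_{c-1}H\setminus\{1\}$, which is central). Then $z=[x,y]$ is central and has infinite order since $H$ is torsion free. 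Now consider the subalgebra of $M_N(\C)$ generated by $\varphi(x),\varphi(y)$: writing $A=\varphi(x)$, $B=\varphi(y)$, $Z=\varphi(z)$ we have $Z$ central, invertible, of infinite multiplicative order, and $AB=ZBA$, i.e. $ABA^{-1}B^{-1}=Z$. Taking determinants of $ABA^{-1}B^{-1}=Z$ gives $1=\det(Z)=\det(ABA^{-1}B^{-1})$, while taking the trace after conjugation-invariance, or better, comparing eigenvalues: the relation $AB=ZBA$ forces $\operatorname{tr}(AB)=\operatorname{tr}(ZBA)=\operatorname{tr}(ZAB)$, so $\operatorname{tr}((I-Z)AB)=0$, and iterating with higher powers one finds $\operatorname{tr}((I-Z^m)(AB)^?)\,$-type identities. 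The cleanest route is: since $Z$ is central in the image, decompose $\C^N$ into generalized eigenspaces of $Z$; on each such eigenspace $Z$ acts as a scalar $\lambda$, and the eigenspace is preserved by $A$ and $B$ (as $Z$ is central, $A,B$ commute with $Z$ up to the unipotent part, but one can first pass to a common block-triangular form). The relation $AB=\lambda BA$ on a finite dimensional space with $A,B$ invertible forces $\lambda$ to be a root of unity (take determinants: $\det(AB)=\lambda^{\dim}\det(BA)$, so $\lambda^{\dim}=1$). Hence every eigenvalue of $Z$ is a root of unity, so $Z$ has finite multiplicative order, contradicting that $z$ has infinite order and $\varphi$ is faithful.

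\textbf{Main obstacle.} The delicate point is the centralness and the block-diagonalization: $Z=\varphi(z)$ is central in $\varphi(\Z[H])$, but a priori not in all of $M_N(\C)$, so I cannot immediately simultaneously diagonalize. The fix is to restrict attention to the subalgebra $R\subseteq M_N(\C)$ generated by $\varphi(H)$ (or just by $A,B$), in which $Z$ \emph{is} central; then $\C^N$ is an $R$-module, and on any composition factor (simple $R$-module) $Z$ acts by a scalar $\lambda$ by Schur's lemma, while $A,B$ act invertibly, so the determinant argument $\lambda^{\dim}=1$ applies on that factor. Since $Z$ is invertible and its action on the whole module has all composition-factor-eigenvalues being roots of unity, $Z$ is a root of unity times a unipotent; but $Z=\varphi(z)$ and $z$ has infinite order, and if $Z=\zeta U$ with $\zeta$ a root of unity and $U$ unipotent $\neq I$, then some power $Z^m=U^m\neq I$ is unipotent non-identity, which however has infinite order as a matrix — this is fine, \emph{unless} we also use that $z$ is central in $H$ forces $Z$ to be semisimple after all (a central element of a torsion free nilpotent group generating an infinite cyclic central subgroup whose group ring embeds as $\C[t,t^{-1}]$, which has no unipotent units). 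The key structural input that rescues the argument is precisely Proposition~\ref{lem:noEmbedding}'s hypothesis that $H$ is torsion free and nilpotent, ensuring $\langle z\rangle\cong\Z$ sits centrally, so $\varphi$ restricted to $\Z[\langle z\rangle]\cong\Z[t,t^{-1}]$ must be faithful, forcing $Z$ to not be a root of unity — contradicting the previous paragraph. This interplay is the part requiring the most care; everything else is the standard determinant/Schur computation.
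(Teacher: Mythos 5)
Your proposal follows essentially the same strategy as the paper: pick $x,y$ in the nilpotent subgroup so that $z=[x,y]$ is a nontrivial \emph{central} element (hence of infinite order by torsion-freeness), use the relation $AB=ZBA$ together with Schur's lemma and determinants on $Z$-stable pieces to show every eigenvalue of $Z=\varphi(z)$ is a root of unity, and then contradict faithfulness of the algebra map. The only place your write-up wobbles is the endgame. As you yourself observe, "all eigenvalues of $Z$ are roots of unity" does not make $Z$ of finite multiplicative order, so the sentence "so $Z$ has finite multiplicative order" is not yet justified; and your patch --- "$z$ central forces $Z$ to be semisimple because $\C[t,t^{-1}]$ has no unipotent units" --- as stated conflates a property of the abstract ring with a matrix-theoretic property of $Z$; it only becomes an argument once you use faithfulness to transport the matrix relation back into the group ring. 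The clean way to finish, which is what the paper does and which makes all of this unnecessary, is Cayley--Hamilton: since every eigenvalue of $Z$ is a root of unity, the characteristic polynomial of $Z$ divides $(X^{M}-1)^{k}$ for suitable $M,k$, so $(Z^{M}-I)^{k}=0$; thus $\varphi$ kills $(z^{M}-1)^{k}$, which is a nonzero element of $\Z[G]$ because $\Z[\langle z\rangle]\cong \Z[t,t^{-1}]$ is a domain and $z$ has infinite order. This exhibits an explicit nonzero kernel element and sidesteps any discussion of semisimplicity or of the multiplicative order of the matrix $Z$.
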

\begin{proof}
Let $N$ be a torsion free nilpotent subgroup of $G.$ Since $N$ is nilpotent and non-abelian, let $k\geq 2$ so that $\Gamma^k N=1$ and $\Gamma^{k-1}N \neq 1.$ Since $\Gamma^{k-1}N \neq 1,$ there exists $a\in N,$ $b\in \Gamma^{k-2}N$ so that $[a,b]=\sigma \neq 1.$ Note that $\sigma \in \Gamma^{k-1}N,$ so $\sigma$ commutes with $a$ and with $b$.

Choose a representation $\varphi : \Z[G] \longmapsto M_N(\C).$ We have that $\varphi(\sigma)$ commutes with $\varphi(a)$ and $\varphi(b),$ so any eigenspace $V_{\lambda}$ of $\varphi(\sigma)$ is stable by $\varphi(a)$ and $\varphi(b).$ This implies that $\varphi(\sigma)|_{V_{\lambda}}=[\varphi(a)|_{V_{\lambda}},\varphi(b)|_{V_{\lambda}}]$ is a commutator, and therefore $\varphi(\sigma)|_{V_{\lambda}}$ has determinant $1,$ hence $\lambda$ is a root of unity. Hence all eigenvalues of $\varphi(\sigma)$ are roots of unity. Consider $N$ such that $\lambda^N=1$ for any eigenvalue $\lambda$ of $\sigma,$ and $k$ such that each eigenvalue of $\sigma$ has multiplicity at most $k.$ Then the characteristic polynomial $\chi_{\varphi(\sigma)}$ is a divisor of $(X^N-1)^k,$ and by Cayley-Hamilton theorem we have that $(X^N-1)^k$ is an annihilating polynomial for $\varphi(\sigma).$ However $\sigma$ is of infinite order in $G.$ Hence $\varphi$ is not faithful over $\Z[G].$
\end{proof}

\begin{coro}\label{coro:no_embedding}
Any representation $\Z[\mathbb{H}_g]\to M_d(\C)$ contains $(\sigma^{2N}-1)^k$ in its kernel for some $N,k\geq 1.$
\end{coro}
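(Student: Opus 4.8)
The plan is to apply Proposition~\ref{lem:noEmbedding} directly, using the fact that $\Heis_g$ is itself a finitely generated torsion-free nilpotent group which is non-abelian (since $g\geq 1$). First I would recall from the presentation of $\Heis_g$ and the isomorphism $\Heis_g \simeq \pi_1(\Sigma_{g,1})\times\langle\sigma\rangle/\langle[\alpha,\beta]=\sigma^{2\omega(\alpha,\beta)}\rangle$ that $\Heis_g$ is nilpotent of class $2$, with commutator subgroup $\langle\sigma\rangle\cong\Z$, and that $\sigma=[a_1,b_1]$ is of infinite order; this makes $\Heis_g$ non-abelian and torsion-free. Hence Proposition~\ref{lem:noEmbedding} applies with $G=N=\Heis_g$: there is no faithful algebra map $\Z[\Heis_g]\to M_d(\C)$.

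To extract the explicit annihilating polynomial $(\sigma^{2N}-1)^k$, I would re-run the argument of the proof of Proposition~\ref{lem:noEmbedding} in this concrete case rather than merely quoting its conclusion. Given any algebra map $\varphi:\Z[\Heis_g]\to M_d(\C)$, the element $\sigma$ is central in $\Heis_g$, so $\varphi(\sigma)$ commutes with everything in the image; restricting to an eigenspace $V_\lambda$ of $\varphi(\sigma)$, the identity $\sigma=[a_1,b_1]$ forces $\varphi(\sigma)|_{V_\lambda}$ to be a commutator of invertible operators, hence of determinant $1$, so $\lambda^{\dim V_\lambda}=1$ and in particular $\lambda$ is a root of unity. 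Taking $N$ so that $\lambda^{2N}=1$ for all such $\lambda$ (the factor $2$ is cosmetic and matches the statement) and $k=\max_\lambda \dim V_\lambda$, the characteristic polynomial of $\varphi(\sigma)$ divides $(X^{2N}-1)^k$, so by Cayley--Hamilton $(X^{2N}-1)^k$ annihilates $\varphi(\sigma)$, i.e. $(\sigma^{2N}-1)^k\in\Ker\varphi$.

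There is essentially no obstacle here: the corollary is an immediate specialization of the preceding proposition, and the only thing to check is that $\Heis_g$ satisfies the hypotheses (non-abelian, torsion-free, nilpotent), which is transparent from its presentation. The one point requiring a word of care is that $\sigma$ genuinely has infinite order in $\Heis_g$ — this is exactly what makes the conclusion non-vacuous — and this follows from the description of $\Z[\Heis_g]$ as a quantum torus in $\sigma$ over $\Z[H_1(\Sigma_{g,1})]$, or equally from the isomorphism recorded just above, in which $\langle\sigma\rangle$ maps injectively. I would therefore keep the proof to two or three sentences, citing Proposition~\ref{lem:noEmbedding} and spelling out the choice of $N$ and $k$.
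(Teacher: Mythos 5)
Your proposal is correct and follows essentially the same route as the paper: apply Proposition~\ref{lem:noEmbedding} to $G=\Heisg$ (non-abelian, torsion-free, $2$-nilpotent) and then re-read its proof to extract the explicit annihilating polynomial of $\varphi(\sigma)$. The one slip is that in the paper's presentation $[a_1,b_1]=\sigma^{2}$, not $\sigma$, so the element playing the role of the central commutator is $\sigma^{2}$ (this is exactly how the paper phrases it); as you anticipate, this only shifts the exponent and the argument goes through verbatim, since the eigenvalues of $\varphi(\sigma^{2})$ being roots of unity forces those of $\varphi(\sigma)$ to be roots of unity as well, yielding $(\sigma^{2N}-1)^{k}\in\Ker\varphi$.
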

\begin{proof}
It is a consequence of Lemma \ref{lem:noEmbedding} applied to $G=\mathbb{H}_g$ since the Heisenberg group $\mathbb{H}_g$ is a non-abelian torsion free $2$-nilpotent group. In the proof of Lemma \ref{lem:noEmbedding} take $\sigma$ to be $\sigma^2 \in \Heisg$, it implies the second part of this corollary. Hence there does not exist any faithful representation of $\Z\left[\Heisg \right].$
\end{proof}

\begin{remark}[On faithfulness of Heisenberg representations]\label{rem_on_faithfulness_of_Heis_rep}	
This corollary clarifies the fact that even if one proves the representation of $K_{\Heisg}$ on $\calH_n$ to be faithful, it would not necessarily imply the linearity of mapping class group, since the natural persistence of kernel Prop.~\ref{prop_uncross_in_general}~(ii) is never satisfied. . 

We suggest that an alternative strategy might be to study the representation $\rho_n^{\Heis_{g,r}}$ which has coefficients in $\Z[\Heis_{g,r}],$ for some integer $r>>g.$ We will see in Proposition \ref{prop_faithfulRepHeisModr} that $\Z[\Heis_{g,r}]$ has a faithful representation in $M_d(\C)$ for some integer $d.$
\end{remark}

Next we describe a faithful representation of $\Heisg \rtimes \Aut^+(\Heisg)$. Applying this to the coefficients of the representation $\tilde{\rho}_n^{\Heis_g}$ provides a complex representation of $\mathrm{Mod}(\Sigma_{g,1}).$ We remark that the extension to $\Z[\Heisg \rtimes \Aut^+(\Heisg)]$ of the representation of $\Heisg \rtimes \Aut^+(\Heisg)$ will be unfaithful, due to the previous argument.  

We recall that any element in $\mathbb{H}_g$ has a normal form: any $x\in \mathbb{H}_g$ can be written in a unique way $x=\left(\underset{i=1}{\overset{g}{\prod}}a_i^{m_i}\right)\left(\underset{i=1}{\overset{g}{\prod}}b_i^{n_i}\right)\sigma^l$ where $m_i, n_i, l\in \Z.$ This fact (which is easily derived from the presentation) implies that the group $\mathbb{H}_g$ admits a faithful representation in $\mathrm{GL}_{g+2}(\Q),$ obtained by mapping the element $\left(\underset{i=1}{\overset{g}{\prod}}a_i^{m_i}\right)\left(\underset{i=1}{\overset{g}{\prod}}b_i^{n_i}\right)\sigma^l$ to the matrix $$\begin{pmatrix}
1 & m_1 & \ldots & m_g & \frac{l}{2}
\\ 0 & 1 & \ldots & 0 & n_g
\\ 0 & 0 & \ddots & 0 & \vdots
\\ 0 & & & 1 & n_1
\\ 0 & & & & 1
\end{pmatrix}.$$
This representation is called {\em the tautological representation of $\mathbb{H}_g.$} We recall the representation $\rhoext^{\Heisg}_n$ with coefficients in $\Z[\mathbb{H}_g \rtimes \mathrm{Aut}^+(\mathbb{H}_g)]$ (recalling $M_{\Heisg} \subset \Aut^+(G)$). We would also like to study linear representations of $\mathbb{H}_g \rtimes \mathrm{Aut}^+(\mathbb{H}_g).$ It turns out that $\mathbb{H}_g \rtimes \mathrm{Aut}^+(\mathbb{H}_g)$ is also a linear group. We recall that $\mathrm{Aut}^+(\mathbb{H}_g)=H_1(\Sigma_{g,1},\Z) \rtimes \mathrm{Sp}_{2g}(\Z)$ \cite[Lemma 15]{BPS21}. Let $J$ be the map $\Z^{2g} \longmapsto \Z^{2g}$ defined by $J(X,Y)=(-Y,X)$ for $X,Y \in \Z^g,$ so that $\mathrm{Sp}_{2g}(\Z)$ is the group of invertible integer matrices $M$ that satisfy $M^{-1}=J M^T J^{-1}.$ 
\begin{proposition}[Supra-tautological representation of the Heisenberg group]\label{prop:suprataut}
There is a unique injective morphism $\iota^{\Heisg} : \mathbb{H}_g \rtimes \mathrm{Aut}^+(\mathbb{H}_g) \longmapsto \mathrm{GL}_{2+2g}(\Z)$ which satisfies:
\begin{itemize}
\item[-] For $x=\left(\underset{i=1}{\overset{g}{\prod}}a_i^{m_i}\right)\left(\underset{i=1}{\overset{g}{\prod}}b_i^{n_i}\right)\sigma^l \in \mathbb{H}_g,$ 
$$\iota^{\Heisg}(x)=\begin{pmatrix}
1 & (JX)^T & l
\\ 0 & I_{2g} & X
\\ 0 & 0 & 1\end{pmatrix},$$
where $X=(m_1 \ldots m_g n_1 \ldots n_g).$

\item[-]For $Y \in H_1(\Sigma_{g,1},\Z)\simeq \Z^{2g},$ 
$$\iota^{\Heisg}(Y)=\begin{pmatrix}
1 & 0 & 0 
\\ 0 & I_{2g} & Y 
\\ 0 & 0 & 1
\end{pmatrix}$$
\item[-]For $M\in \mathrm{Sp}_{2g}(\Z),$
$$\iota^{\Heisg}(M)=\begin{pmatrix}
1 & 0 & 0 \\
0 & M & 0 \\
0 & 0 & 1
\end{pmatrix}$$
\end{itemize}
We will call this representation the {\em supra-tautological representation} of $\Heisg \rtimes \Aut^+(\Heisg)$.
\end{proposition}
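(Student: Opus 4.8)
The plan is to prove the three assertions of the proposition in turn: uniqueness of $\iota^{\Heisg}$, existence (i.e.\ that the three displayed formulas do patch together into a group homomorphism), and injectivity.

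For uniqueness, the key point is that, by \cite[Lemma~15]{BPS21}, $\mathrm{Aut}^+(\Heisg)=H_1(\Sgo,\Z)\rtimes\mathrm{Sp}_{2g}(\Z)$, so the group $\Heisg\rtimes\mathrm{Aut}^+(\Heisg)$ is generated by the three subgroups $\Heisg$, $H_1(\Sgo,\Z)$ and $\mathrm{Sp}_{2g}(\Z)$; since $\Heisg$ is in turn generated by $a_1,b_1,\dots,a_g,b_g,\sigma$, the three formulas in the statement prescribe $\iota^{\Heisg}$ on a generating set, so at most one morphism can satisfy them.

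For existence I would define $\iota^{\Heisg}$ on this generating set by the displayed formulas and check that it respects the defining relations of $\Heisg\rtimes\mathrm{Aut}^+(\Heisg)$; this yields the desired homomorphism on the whole group, and the displayed formula on a general $x\in\Heisg$ then follows by multiplicativity from the values on $a_i,b_i,\sigma$. There are four families of relations. (1) The relations of the Heisenberg presentation: writing $\iota^{\Heisg}(\sigma)=I_{2g+2}+E$ with $E$ the matrix having a single $1$ in the upper-right corner, and $\iota^{\Heisg}(a_i)=I+A_i$, $\iota^{\Heisg}(b_i)=I+B_i$ with $A_i,B_i$ square-zero nilpotent, one gets $[\,I+A_i,I+B_j\,]=I+(A_iB_j-B_jA_i)$, and one checks $A_iB_j-B_jA_i=2\delta_{ij}E=\iota^{\Heisg}(\sigma)^{2\delta_{ij}}-I$ using only the explicit shape of $J$ (in particular $J^TJ=I_{2g}$); the relations $[A_i,A_j]=[B_i,B_j]=0$ and the centrality of $E$ are immediate from the block form. (2) The internal relations of $H_1(\Sgo,\Z)$ and of $\mathrm{Sp}_{2g}(\Z)$: the matrices $\iota^{\Heisg}(Y)$ are mutually commuting unipotents, and $M\mapsto\mathrm{diag}(1,M,1)$ is visibly a homomorphism $\mathrm{Sp}_{2g}(\Z)\to\mathrm{GL}_{2g+2}(\Z)$. (3) The semidirect-product relations inside $\mathrm{Aut}^+(\Heisg)$, i.e.\ $\iota^{\Heisg}(M)\iota^{\Heisg}(Y)\iota^{\Heisg}(M)^{-1}=\iota^{\Heisg}(MY)$, which is the one-line block computation that $\mathrm{diag}(1,M,1)$ conjugates the last-column block $Y$ into $MY$. (4) The relations recording the action of $\mathrm{Aut}^+(\Heisg)$ on $\Heisg$, namely $\iota^{\Heisg}(\phi)\iota^{\Heisg}(x)\iota^{\Heisg}(\phi)^{-1}=\iota^{\Heisg}(\phi\cdot x)$ for $\phi$ a generator of $\mathrm{Sp}_{2g}(\Z)$ or of $H_1(\Sgo,\Z)$. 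For $\phi=M$, conjugating the matrix of $x$ by $\mathrm{diag}(1,M,1)$ replaces $X$ by $MX$ and $(JX)^T$ by $(JX)^TM^{-1}$, and (using that $\mathrm{Sp}_{2g}(\Z)$ acts on $\Heisg$ through its action on $\Heisg/\langle\sigma\rangle\simeq\Z^{2g}$, fixing $\sigma$) the required identity is $(JX)^TM^{-1}=(JMX)^T$, which boils down to $M^{-1}=JM^TJ^{-1}$ — exactly the description of $\mathrm{Sp}_{2g}(\Z)$ recalled just before the statement. For $\phi=Y$, conjugating by $\iota^{\Heisg}(Y)$ leaves $X$ and $(JX)^T$ fixed and shifts the corner entry by $-(JX)^TY=\pm\omega(X,Y)$, which I would match with the effect on the corner of the corresponding transvection automorphism $z\mapsto z\sigma^{\pm\omega(Y,\overline z)}$ of $\Heisg$ from \cite[Lemma~15]{BPS21}.

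For injectivity I would argue as follows: suppose $\iota^{\Heisg}(g)=I_{2g+2}$ and write $g=h\phi$ with $h\in\Heisg$ and $\phi=(Y,M)\in H_1(\Sgo,\Z)\rtimes\mathrm{Sp}_{2g}(\Z)$. Reading the central $2g\times 2g$ block of $\iota^{\Heisg}(h)\iota^{\Heisg}(\phi)=I_{2g+2}$ forces $M=I_{2g}$, so $\phi=Y$; reading the last column forces $X(h)+Y=0$; reading the first row forces $(JX(h))^T=0$, hence $X(h)=0$ since $J$ is invertible, hence $Y=0$; finally the upper-right entry forces the $\sigma$-exponent of $h$ to vanish, so $h=1$ and $\phi=1$, i.e.\ $g=1$.

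The only genuinely delicate step is the fourth family of relations in the existence part: one must extract from \cite[Lemma~15]{BPS21} the precise formulas for the two pieces of the $\mathrm{Aut}^+(\Heisg)$-action on $\Heisg$ and match them with conjugation by $\mathrm{diag}(1,M,1)$ and by $\iota^{\Heisg}(Y)$. Reassuringly, each match is governed by an identity already on hand — the symplectic relation $M^{-1}=JM^TJ^{-1}$ in the first case, and the pairing $(JX)^TY=\omega(X,Y)$ in the second — so the actual work reduces to careful bookkeeping of signs and of the correct (quadratic in $X$) normalization of the corner entry of $\iota^{\Heisg}(x)$ for a general $x\in\Heisg$ written in normal form.
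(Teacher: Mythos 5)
Your proof follows essentially the same route as the paper's: you verify that each of the three blocks ($\Heisg$, $\Z^{2g}$, $\mathrm{Sp}_{2g}(\Z)$) maps to a representation, you check the two compatibility families using exactly the two identities the paper uses ($M^{-1}=JM^TJ^{-1}$ for the symplectic part and $(JX)^TY=\omega(X,Y)$ for the translation part), and your injectivity argument reads off the same blocks in the same order. The only genuine differences are cosmetic: you add an explicit (trivial) uniqueness step, and you verify the Heisenberg relations on the generators via square-zero nilpotents rather than the paper's direct computation of $\iota(X)\iota(Y)=\iota(Y)\iota(X)\iota(\sigma)^{2(JX)^TY}$; these are equivalent.

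There is, however, one point you defer to ``bookkeeping'' that is not bookkeeping, and your own closing sentence shows you sensed it. Your existence argument correctly produces \emph{a} homomorphism determined by the values on $a_i,b_i,\sigma$, but your claim that ``the displayed formula on a general $x\in\Heisg$ then follows by multiplicativity'' is false for the formula as stated. Multiplying out $\prod_i\iota(a_i)^{m_i}\prod_i\iota(b_i)^{n_i}\iota(\sigma)^l$ gives corner entry $l+\sum_i m_in_i$, not $l$: concretely, $\iota(a_1)\iota(b_1)$ has corner $(Je_1)^Tf_1=\omega(e_1,f_1)=1$, while the displayed formula applied to the normal form of $a_1b_1$ gives corner $0$. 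Equivalently, the normal-form formula with corner $l$ is not multiplicative, since the matrix corner transforms by the cocycle $\omega(X,X')=m\cdot n'-n\cdot m'$ while the $\sigma$-exponent in normal form transforms by $-2n\cdot m'$; these differ by the coboundary of the quadratic form $m\cdot n$, which is exactly the ``quadratic in $X$'' correction you allude to. So either the first bullet must be read as prescribing $\iota^{\Heisg}$ only on the generators, or the corner entry in the statement should be $l+\sum_i m_in_i$. Note that the paper's own proof shares this blind spot: it only checks the commutator relation $[\iota(X),\iota(Y)]=\iota(\sigma)^{2\omega(X,Y)}$, which confirms the relations on generators but does not establish the displayed formula on general normal forms. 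None of this affects the compatibility with $\Aut^+(\Heisg)$ or the injectivity argument, both of which go through with either normalization of the corner.
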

\begin{proof}
We first check that the restriction of $\iota^{\Heisg}$ to $\Aut^+(\Heisg)\simeq \Z^{2g} \rtimes \mathrm{Sp}_{2g}(\Z)$ is a representation. It is clear that the restriction to $\mathrm{Sp}_{2g}(\Z)$ and to $\Z^{2g}$ are representations, so this follows from the fact that for $M\in $
$\mathrm{Sp}_{2g}(\Z)$ and $Y \in \Z^{2g},$ one has
$$\iota^{\Heisg}(M)\iota^{\Heisg}(Y)\iota^{\Heisg}(M^{-1})=\begin{pmatrix}1 & 0 & 0 \\ 0 & I_{2g} & MY \\ 0 & 0 & 1
\end{pmatrix}=\iota^{\Heisg}((M Y))$$
We also check that the restriction of $\iota^{\Heisg}$ to $\Heisg$ is a representation. Indeed, it is clear that the image of $\sigma$ is central in the image of the representation, and for $X,Y \in \Z^{2g}$ a direct computation shows that
$$\iota^{\Heisg}(X)\iota^{\Heisg}(Y)=\iota^{\Heisg}(Y)\iota^{\Heisg}(X)\iota^{\Heisg}(\sigma)^{2(JX)^TY}$$
which corresponds to the relations in $\Heisg$ since the intersection form is expressed as $\omega(X,Y)=(JX)^T Y.$

Then, to see that $\iota^{\Heisg}$ is a well defined representation of $\Heisg \rtimes \Aut^+(\Heisg)$, one has to check:
\[
\iota^{\Heisg}(f) \iota^{\Heisg}(g) \iota^{\Heisg}(f^{-1}) = \iota^{\Heisg}(f(g))
\]
for $f \in \Aut^+(\Heisg)$ and $g \in \Heisg$.
Since $\mathrm{Sp}_{2g}(\Z)$ and $\Z^{2g}$ generate $\Aut^+(\Heisg)$ (\cite[Lemma~16]{BPS21}), we only need to check it for $f\in \mathrm{Sp}_{2g}(\Z)$ or $f\in \Z^{2g}.$

For $M\in \mathrm{Sp}_{2g}(\Z)$ and $x=\left(\underset{i=1}{\overset{g}{\prod}}a_i^{m_i}\right)\left(\underset{i=1}{\overset{g}{\prod}}b_i^{n_i}\right)\sigma^l \in \Heisg,$ we have that 
$$\iota^{\Heisg}(M)\iota^{\Heisg}(x) \iota^{\Heisg}(M)^{-1}=\begin{pmatrix}1 & (JX)^TM^{-1} & l \\ 0 & I_{2g} & MX \\ 0 & 0 & 1 \end{pmatrix}=\begin{pmatrix}1 & (JMX)^T & l \\ 0 & I_{2g} & MX \\ 0 & 0 & 1 \end{pmatrix}.$$
For $Y \in \Z^{2g}$ and $x$ as above, we have
$$\iota^{\Heisg}(Y)\iota^{\Heisg}(x) \iota^{\Heisg}(Y)^{-1}=\begin{pmatrix}1 & (JX)^T & l-(JX)^T Y \\ 0 & I_{2g} & X \\ 0 & 0 & 1  \end{pmatrix} $$
This matches the action of $\Z^{2g}$ on $\Heisg,$ as $-(JX)^TY=(JY)^TX=\omega(Y,X).$ 

Finally, we check that this is a faithful representation. Let $z \in \Heisg\rtimes \Aut^+(\Heisg)$ which is in the kernel of $\iota^{\Heisg}.$ Looking at the middle diagonal block of $\iota^{\Heisg}(z),$ we get that the automorphism part of $z$ is in $\Z^{2g}.$ Next one can get that $z\in \Heisg$ as the two blocks just above the diagonal are zero, and finally it is clear that $\iota^{\Heisg}$ is faithful on $\Heisg.$

\end{proof}

\begin{remark}
The above supra-tautological representation of $\Heisg \rtimes \Aut^+(\Heisg)$ endows $\calH_n \otimes_{\iota^{\Heisg}} \C^{2+2g}$ with a linear action of $\Mod(\varSigma_{g,1})$. It is a natural extension of Torelli group representations constructed in the first version of \cite{BPS21} to the entire mapping class group. 

We note that while the authors were in the process of writing this paper, a new version of \cite{BPS21} appeared. In \cite[Section 4.2]{BPS21} a very closely related representation was defined, but the authors had discovered this construction independently. 
\end{remark}

Proposition \ref{prop:suprataut} provides us with a linearization of the representations $\rho_n^{\Heis_g}$ that extend to the whole mapping class group, however the kernel does not naturally persist. However, we show that the group rings of the groups $\Heis_{g,r}$ are linear:

\begin{proposition}\label{prop_faithfulRepHeisModr} Let $r\geq 2,$ let $V$ be a $r$-dimensional $\C$-vector space with basis $e_0,\ldots,e_{r-1}.$ 

Let $s_1,\ldots ,s_g,t_1,\ldots ,t_g$ be algebraically independent complex numbers. 

There is a unique faithful morphism of algebras $\iota^{\Heis_{g,r}}: \Z[\Heis_{g,r}] \to \mathrm{End}(V^{\otimes g+1}),$ defined by:
\begin{itemize}
\item[-]$\iota^{\Heis_{g,r}}(\sigma)(e_{j_1}\otimes \ldots \otimes e_{j_g}\otimes e_{j_{g+1}})=e_{j_1}\otimes \ldots \otimes e_{j_g}\otimes e_{j_{g+1}+1}$ where by $j_{g+1}+1$ we mean $j_{g+1}+1 \ \mathrm{mod} \ r.$
\item[-]$\iota^{\Heis_{g,r}}(a_i)(e_{j_1}\otimes \ldots \otimes e_{j_g}\otimes e_{j_{g+1}})=s_i (e_{j_1}\otimes \ldots \otimes e_{j_g}\otimes e_{j_{g+1}+2j_i}).$
\item[-]$\iota^{\Heis_{g,r}}(b_i)(e_{j_1}\otimes \ldots \otimes e_{j_g}\otimes e_{j_{g+1}})=t_i (e_{j_1}\otimes \ldots e_{j_i+1} \otimes \ldots \otimes e_{j_g}\otimes e_{j_{g+1}}).$ 
\end{itemize}
Hence the vector spaces $\calH_{n,r} \otimes_{\iota^{\Heis_{g,r}}} V^{\otimes g+1}$ are endowed with a linear action of $K_{\Heis_{g,r}}$ with the same kernel as the representation $\rho_n^{\Heis_{g,r}}$ of $K_{\Heis_{g,r}}$ upon $\calH_{n,r}$.
\end{proposition}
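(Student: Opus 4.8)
The plan is to verify directly that the three assignments on $\sigma$, $a_i$, $b_i$ respect the defining relations of $\Heis_{g,r}$, then establish faithfulness by exhibiting, for any nontrivial $x\in\Heis_{g,r}$, a basis vector on which $\iota^{\Heis_{g,r}}(x)$ acts nontrivially, and finally invoke Proposition~\ref{prop_linearRep}(ii) to transfer the conclusion to the homological representation.

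\emph{Step 1: Well-definedness.} I would check that each operator is invertible (all are monomial/permutation-type operators scaled by the $s_i,t_i$, hence invertible) and that the images satisfy the presentation \eqref{eq_def_Heisenberg_group} of $\Heis_{g,r}$, i.e. $[a_i,a_j]$, $[b_i,b_j]$, $[\sigma,a_i]$, $[\sigma,b_i]$ map to $\Id$, that $[a_i,b_j]=\Id$ for $i\neq j$, that $[a_i,b_i]=\iota^{\Heis_{g,r}}(\sigma)^2$, and that $\iota^{\Heis_{g,r}}(\sigma)^r=\Id$. The last is immediate since $\iota^{\Heis_{g,r}}(\sigma)$ shifts the last tensor factor cyclically by $1$ modulo $r$. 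Centrality of $\sigma$ is clear as it only touches the $(g+1)$-st factor by a cyclic shift, which commutes with the shift $e_{j_i}\mapsto e_{j_i+1}$ defining $b_i$ and with the $j_i$-dependent shift defining $a_i$ (the shift amount $2j_i$ is unchanged by $j_{g+1}\mapsto j_{g+1}+1$). The $a_i$ commute among themselves and with the $b_j$ for $j\neq i$ since they act on disjoint tensor slots (plus the last slot, where the shift amounts add commutatively). The only substantial computation is $[a_i,b_i]$: applying $b_i$ then $a_i$ scales by $s_it_i$ and shifts the last factor by $2(j_i+1)$, while applying $a_i$ then $b_i$ scales by $s_it_i$ and shifts by $2j_i$; so $a_ib_ia_i^{-1}b_i^{-1}$ shifts the last factor by $2$, matching $\iota^{\Heis_{g,r}}(\sigma)^2$. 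This gives the algebra morphism $\iota^{\Heis_{g,r}}:\Z[\Heis_{g,r}]\to\End(V^{\otimes g+1})$, and uniqueness is forced by the formulas on generators.

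\emph{Step 2: Faithfulness over $\Z[\Heis_{g,r}]$.} Using the normal form recalled before Proposition~\ref{prop:suprataut} (adapted modulo $r$: every element of $\Heis_{g,r}$ is uniquely $(\prod a_i^{m_i})(\prod b_i^{n_i})\sigma^l$ with $m_i,n_i,l\in\Z/r$), I note that $\iota^{\Heis_{g,r}}$ sends the group element $(\prod a_i^{m_i})(\prod b_i^{n_i})\sigma^l$ to a monomial operator: it multiplies by $\prod s_i^{m_i}t_i^{n_i}$ and sends $e_{j_1}\otimes\cdots\otimes e_{j_{g+1}}$ to $e_{j_1+n_1}\otimes\cdots\otimes e_{j_g+n_g}\otimes e_{j_{g+1}+l+2\sum_i m_i(j_i+\text{(shift already incurred)})}$, all indices mod $r$. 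The key point is that distinct group elements give operators whose nonzero matrix entries sit in distinct positions or carry distinct monomials in the algebraically independent $s_i,t_i$: the exponents $(m_i,n_i)$ are read off from the monomial $\prod s_i^{m_i}t_i^{n_i}$ by algebraic independence, the $n_i$ are also visible in the permutation of the $i$-th slot, and once these are fixed, $l$ is read off from the shift in the last slot (taking, say, the action on $e_0\otimes\cdots\otimes e_0$). Hence the images of the group elements of $\Heis_{g,r}$ are $\C$-linearly independent in $\End(V^{\otimes g+1})$, which is exactly faithfulness of $\iota^{\Heis_{g,r}}$ as an algebra map on $\Z[\Heis_{g,r}]$ (indeed even on $\C[\Heis_{g,r}]$).

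\emph{Step 3: Conclusion via persistence of the kernel.} With $\iota=\iota^{\Heis_{g,r}}$ faithful over $\Z[\Heis_{g,r}]$, Proposition~\ref{prop_linearRep}(ii) applies to $G=\Heis_{g,r}$: the linearized representation on $\calH_{n,r}\otimes_{\iota^{\Heis_{g,r}}}V^{\otimes g+1}$ has the same kernel as $\rho_n^{\Heis_{g,r}}$ restricted to $K_{\Heis_{g,r}}$ (both kernels being contained in $K_{\Heis_{g,r}}$). I expect the main obstacle to be purely bookkeeping in Step~2, namely carefully tracking the cumulative shift in the last tensor coordinate induced by a product $(\prod a_i^{m_i})(\prod b_i^{n_i})\sigma^l$ so that the argument separating distinct group elements is airtight; the relation-checking of Step~1 and the invocation of Proposition~\ref{prop_linearRep} in Step~3 are routine.
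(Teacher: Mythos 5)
Your Steps 1 and 3 are fine and match the paper: the relation check (including the computation that $a_ib_i$ and $b_ia_i$ differ by a shift of $2$ in the last slot) is exactly the paper's first paragraph, and the final appeal to Proposition~\ref{prop_linearRep}(ii) is what the ``Hence'' in the statement intends. The problem is Step 2. The group $\Heis_{g,r}=\Heisg/\langle\sigma^r\rangle$ is \emph{infinite}: only $\sigma$ is killed modulo $r$, so the normal form is $\left(\prod a_i^{m_i}\right)\left(\prod b_i^{n_i}\right)\sigma^l$ with $m_i,n_i\in\Z$ and only $l\in\Z/r$. You have silently replaced it by the \emph{finite} Heisenberg group $\Heismod_{g,r}=\Heisg/\langle\sigma^r,a_i^r,b_i^r\rangle$. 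Consequently your key claim --- that the images of the group elements are $\C$-linearly independent in $\End(V^{\otimes g+1})$ --- is false: an infinite family cannot be independent in a finite-dimensional space, and concretely $\iota^{\Heis_{g,r}}(a_i^r)=s_i^r\,\mathrm{id}$ is colinear with $\iota^{\Heis_{g,r}}(1)$. (Even within your finite model the claim fails for even $r$: the elements $1$ and $a_1^{r/2}$ induce the same permutation of the basis, since the last-slot shift $2j_1\cdot r/2\equiv 0 \bmod r$, so their images are colinear, hence dependent.) Your argument only shows that distinct group elements have \emph{distinct} images, which is weaker than what faithfulness of the algebra map requires.

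The missing step, which is the heart of the paper's proof and the real reason the transcendental parameters $s_i,t_i$ are introduced, is the following. Two elements $h,h'$ have colinear images exactly when their coordinates $(m_i,n_i,l)$ agree modulo $r$ in the $m_i,n_i$ and exactly in $l$; elements with different reductions have linearly independent images (by the support/monomial dichotomy you describe). So one reduces to a $\Z$-linear combination supported on a single colinearity class, and after translating by a group element, to a combination of elements $\left(\prod a_i^{rn_i}\right)\left(\prod b_i^{rm_i}\right)$, whose images are the scalars $\left(\prod s_i^{rn_i}\right)\left(\prod t_i^{rm_i}\right)\mathrm{id}$. Only at this point does algebraic independence of the $s_i,t_i$ enter in the essential way: a vanishing integer combination of distinct such monomials forces all coefficients to vanish. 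Without this grouping-then-Laurent-polynomial step your proof of faithfulness does not go through.
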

\begin{proof}

It is easily seen that $\iota^{\Heis_{g,r}}(\sigma^{r})=id_{V^{\otimes g+1}},$ that for any $1\leq i \leq g,$ one has $$\iota^{\Heis_{g,r}}(a_i)\iota^{\Heis_{g,r}}(b_i)=\iota^{\Heis_{g,r}}(\sigma^{2}) \iota^{\Heis_{g,r}}(b_i)\iota^{\Heis_{g,r}}(a_i),$$ and that any other pair of generators of $\Heis_{g,r}$ have commuting images, which shows that $\iota^{\Heis_{g,r}}$ is indeed a representation of $\Heis_{g,r}.$ Hence we focus on showing injectivity.

Assume that $\iota^{\Heis_{g,r}}(x)=0$ for some $x=\underset{h\in \Heis_{g,r}}{\sum}a_h h\in \Z[\Heis_{g,r}].$ We recall that any element $h$ of $\Heis_{g,r}$ can be written in canonical form $h=\left(\prod a_i^{n_i} \right) \left(\prod b_i^{m_i} \right) \sigma^k$ where $n_i,m_i \in \Z$ and $0\leq k <r.$  Let $h'=\left(\prod a_i^{n_i'} \right) \left(\prod b_i^{m_i'} \right) \sigma^{k'}$ be another element of $\Heis_{g,r}.$  Now from the definition of $\iota^{\Heis_{g,r}}$ notice that if for all $1\leq i\leq g,$ one has $k=k',$ $n_i'=n_i \ \mathrm{mod} \ r$ and $m_i'=m_i \ \mathrm{mod} \ r,$ then $\iota^{\Heis_{g,r}}(h)$ and $\iota^{\Heis_{g,r}}(h')$ are colinear. On the other hand, if $h_1,\ldots,h_t$ are elements of $\Heis_{g,r}$ such that the reduction of their canonical coordinates $(n_i \ \mathrm{mod} \ r,m_i \ \mathrm{mod} \ r,k)$ are all different, then $\iota^{\Heis_{g,r}}(h_1),\ldots ,\iota^{\Heis_{g,r}}(h_t)$ are linearily independent over $\C$ in $\mathrm{End}(V^{\otimes g+1}).$ Hence, it suffices to consider the case where $x=x_1 h_1 + \ldots +x_n h_n$ where all the elements $h_1,\ldots,h_n$ have the same canonical coordinates $(n_i,m_i)$ modulo $r$ and same coordinates $k.$ Up to multiplying by an element of $\Heis_{g,r},$ it suffices to consider the case where $x$ is a $\Z$ linear combination of elements of the form $\left(\prod a_i^{r n_i} \right) \left(\prod b_i^{r m_i} \right)$ where $n_i,m_i \in \Z$.

Now, notice that $$\iota^{\Heis_{g,r}}\left(\left(\prod a_i^{r n_i} \right) \left(\prod b_i^{r m_i} \right)  \right)=\left(\prod s_i^{r n_i} \right) \left( \prod t_i^{r m_i} \right) \mathrm{id}_{V^{\otimes g+1}}$$
As the $s_i,t_i$ are algebraically independent, it shows that $\iota^{\Heis_{g,r}}(x)=0$ implies that $x=0.$
\end{proof}

For the above representation of $\Heis_{g,r}$ to extend to $\Heis_{g,r} \rtimes M_{\Heis_{g,r}}$ and hence to obtain linear representations of the entire mapping class group, a solution is studied in \cite{JulesMarco}. The result is a projective representation of the Torelli group (that can be unprojectivized using the adjoint action). In \cite{JulesMarco} it is shown that the obtained representations arise from {\em non semi-simple topological quantum field theories}. In the more particular case of the finite Heisenberg group, the extension is even more natural. However they use the unique irreducible representation of $\Heisg$ in dimension $r^g$ while the above has a greater dimension and is not irreducible. 

\begin{remark}
	For $G$ a finite group, the regular representation is a faithful representation of $\Z[G],$ hence $\Heismod_{g,r}$ and $\Heismod_{g,r}\rtimes \mathrm{Aut}^+(\Heismod_{g,r})$ admit faithful representations of their group rings.
	Alternatively, a faithful representation of $\Heismod_{g,r}$ can be obtained by the previous construction setting $s_i=t_i=1.$ To get a faithful representation of $\Heismod_{g,r}\rtimes \mathrm{Aut}^+(\Heismod_{g,r}),$ one can simply construct the induced representation.
	
	A closely related linearization of $\Heismod_{g,r}$ was considered in \cite{JulesMarco}.
\end{remark}

The conclusion of this section is that if one shows the representation of $K_{\mathbb{H}_g}$ on $\calH_n$ is faithful, a further argument is required to deduce the linearity of $\Mod(\Sigma_{g,1})$, even though the supratautological linearization provides linear representations built out of a faithful linearization of $\mathbb{H}_g$. However, obtaining the faithfulness of the action on $\calH_{n,r}$ would imply linearity of $K_{\Heis_{g,r}}.$ 

\subsection{An extension of $\rho^{\mathbb{H}_g}_1$ to closed surfaces}\label{S:closed_surfaces}
In this section, we will construct a version of the representations $\rho_1$ from previous section which will be a representation of (a subgroup of) the mapping class group of a \emph{closed} surface  $\Sigma_g$ of genus $g.$ The idea is to make use of the Birman exact sequence \cite{Bir,FarbMargalit}:
\begin{equation}\label{E:Birman_exact_sequence}
1 \longrightarrow \pi_1(\Sigma_g,*) \overset{\push}{\longrightarrow}  \mathrm{Mod}(\Sigma_{g},*) \overset{r}{\longrightarrow} \mathrm{Mod}(\Sigma_g) \longrightarrow 1
\end{equation}
Here, $\mathrm{Mod}(\Sigma_{g},*)$ is the mapping class group of $\Sigma_g$ with a base point $*$, namely defined from homeomorphisms fixing $*$.
We recall that the map $r$ is the forgetful map which sends isotopy classes of homeomorphisms fixing the base point to isotopy classes of homeomorphisms. The map $\push$ is called the \emph{point-pushing} morphism and it sends an element $x\in \pi_1(\Sigma_g,*)$ to a mapping class in $\mathrm{Mod}(\Sigma_{g},*)$ for which the induced action on $\pi_1(\Sigma_g)$ is the conjugation by $x.$

We also need to restrict $\rho^{\Heisg}_1$ to the absolute homology. Let $\widehat{\Sigma}_{g,1}$ be the cover of $\Sigma_{g,1}$ associated to the morphism $\varphi_1^{\Heis_g}.$ Let
$$\calH_1'=H_1(\Sigma_{g,1},\varphi_1^{\Heis_g})$$ be the absolute homology (as opposed to $\calH_1$ which is the homology relative to $\partial^- \Sigma_{g,1}$). 
Notice that the absolute homology is embedded in the relative one since in the long exact sequence of the pair, the term $H_1(\partial^- \Sigma_{g,1})$ is trivial. We note that this subspace $\calH_1'$ is fixed by any element in $\mathrm{Mod}(\Sigma_{g,1}),$ and has a natural structure of $\Z[\Heis_g]$-module.  This defines, by restriction of $\rho_1^{\Heis_g}$ to $\calH_1'$:
\[
\rho'^{\Heis_g}_1 : K_{\Heis_g} \to \mathrm{Aut}_{\Z[\Heis_g]} (\calH_1').
\]

We also recall that the mapping class group of $\Sigma_g$ with a puncture $*$ that is involved in \eqref{E:Birman_exact_sequence} is related to $\Mod(\Sigma_{g,1})$ by the following:
\begin{equation}\label{E:capping}
\mathrm{Mod}(\Sigma_g,*)=\mathrm{Mod}(\Sigma_{g,1})/\langle \tau_{\delta }\rangle,
\end{equation}
where $\tau_{\delta}$ denotes the Dehn twist along a simple closed curve $\delta$ which is parallel to the boundary component of $\Sigma_{g,1}.$ We let $\overline{K}_{\Heisg} \subset \mathrm{Mod}(\Sigma_g,*) $ be the image of $K_{\Heisg} \subset \Mod(\Sigma_{g,1})$ under the quotient \eqref{E:capping}, and $K_{\Heis_g}' := r(\overline{K}_{\Heis_g})$ be the corresponding subgroup of $\Mod(\Sigma_g)$, where $r$ is involved in \eqref{E:Birman_exact_sequence}. Let us define
\begin{equation}\label{E:PAut}
\mathrm{PAut}_{Z[\Heis_g]}(\calH_1'):=\mathrm{Aut}_{Z[\Heis_g]}(\calH_1')/\lbrace \sigma^k \mathrm{id}_{\calH_1'}, k\in \Z \rbrace.
\end{equation}
Notice that since any $[\gamma]\in H_1(\Sigma_{g,1},\Heisg)$ can be isotoped to be disjoint and in the interior of $\delta,$ we have:
$$ \rho^{\Heisg}_1(\tau_{\delta})=\sigma^{-2g} \mathrm{id}_{\calH_1'}$$
as a consequence of Proposition \ref{P:twisted_transvection_formula_1pt} that will be established later on. It shows that the representation $
\rho'^{\Heis_g}_1$ descends to:
\begin{equation}\label{E:rep_for_punctured_mcg}
\rho'^{\Heis_g}_1 : \overline{K}_{\Heis_g} \to \mathrm{PAut}_{Z[\Heis_g]}(\calH_1') .
\end{equation}
Next we show that it also descends to a representation of $K'_{\Heis_g}$. 

Let $\push(x) \in p(\pi_1(\Sigma_g))$ be an element of $\Mod(\Sigma_g,*)$. Notice that it has a homeomorphism representative fixing a small neighborhood of $*$ and hence can be regarded as an element of $\Mod(\Sigma_{g,1})$ still denoted $\push(x)$ by abuse of notation. It is an element of $\Mod(\Sigma_{g,1})$ in the preimage of $\push(x)$ by \eqref{E:capping}. We look at the action of $\push(x)$ on $\calH_1'.$ Let $[\gamma] \in \calH_1',$ where $\gamma$ represents a closed loop in $\widehat{\Sigma}_g$, the regular cover defined from $\varphi_1^{\Heisg}$. By standard Fox calculus, we get:
$$\rho'^{\Heis_g}_1(\push(x))([\gamma])=[x\gamma x^{-1}]=[x]+\varphi_1^{\Heis_g}(x) [\gamma]-\varphi_1^{\Heis_g}(x\gamma x^{-1})[x]= \varphi_1^{\Heis_g}(x) [\gamma]$$
since $\gamma \in \mathrm{Ker}\varphi_1^{\Heis_g}$ and $x\gamma x^{-1} \in \langle \sigma^k, k\in\Z \rangle.$ So $\push(x)$ is sent to $\varphi_1^{\Heisg}(x) \mathrm{id}_{\calH_1'}$ by $\rho'^{\Heis_g}_1$. 

Now let us take $f\in K'_{\Heis_g}.$ Choose a lift of $f$ in $\mathrm{Mod}(\Sigma_g,*).$ Note that two different choices of lift differ by an element of $\push(\pi_1(\Sigma_g))\cap \overline{K}_{\Heis_g}.$ Since the automorphism of $\Heis_g$ induced by $\push(x)$ is the conjugation by $\varphi_1^{\Heis_g}(x)$, we have that $\push(x)\in \overline{K}_{\Heis_g}$ if and only if $\varphi_1^{\Heis_g}(x)\in Z(\Heis_g)=\langle \sigma \rangle.$
Therefore we get the following proposition.

\begin{proposition}
	\label{prop:closedcase} The action of $\mathrm{Mod}(\Sigma_{g,1})$ on $\calH_1'$ induces a representation of $K_{\Heis_g}'\subset \mathrm{Mod}(\Sigma_g):$
	
$$\rho_1'^{\Heis_g}:K_{\Heis_g}' \longrightarrow \mathrm{PAut}_{Z[\Heis_g]}(\calH_1').$$
\end{proposition}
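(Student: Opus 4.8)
The plan is to observe that the statement is precisely the descent, along the forgetful homomorphism $r$ of the Birman exact sequence \eqref{E:Birman_exact_sequence}, of the representation \eqref{E:rep_for_punctured_mcg} of $\overline{K}_{\Heis_g}$ that has been constructed just above. Recall from \eqref{E:capping} that $\mathrm{Mod}(\Sigma_g,*) = \mathrm{Mod}(\Sigma_{g,1})/\langle \tau_\delta \rangle$, and that the action of $K_{\Heis_g}$ on $\calH_1'$ already descends to $\rho_1'^{\Heis_g}\colon \overline{K}_{\Heis_g} \to \mathrm{PAut}_{Z[\Heis_g]}(\calH_1')$ because $\rho_1^{\Heis_g}(\tau_\delta) = \sigma^{-2g}\,\mathrm{id}_{\calH_1'}$ is central (a consequence of Proposition~\ref{P:twisted_transvection_formula_1pt}), hence trivial in $\mathrm{PAut}_{Z[\Heis_g]}(\calH_1')$ by \eqref{E:PAut}. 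So everything reduces to checking that this homomorphism factors through $r|_{\overline{K}_{\Heis_g}}\colon \overline{K}_{\Heis_g} \to K'_{\Heis_g} = r(\overline{K}_{\Heis_g})$.

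First I would identify the kernel of $r|_{\overline{K}_{\Heis_g}}$. By exactness of \eqref{E:Birman_exact_sequence} one has $\Ker r = \push(\pi_1(\Sigma_g))$, hence $\Ker\bigl(r|_{\overline{K}_{\Heis_g}}\bigr) = \push(\pi_1(\Sigma_g)) \cap \overline{K}_{\Heis_g}$, which is a normal subgroup of $\overline{K}_{\Heis_g}$ as the kernel of a group homomorphism, and the first isomorphism theorem gives $K'_{\Heis_g} \cong \overline{K}_{\Heis_g}\big/\bigl(\push(\pi_1(\Sigma_g)) \cap \overline{K}_{\Heis_g}\bigr)$. By the universal property of this quotient it therefore suffices to show that $\rho_1'^{\Heis_g}$ is trivial on $\push(\pi_1(\Sigma_g)) \cap \overline{K}_{\Heis_g}$.

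This last point is exactly the point-pushing computation recorded before the statement: for $x \in \pi_1(\Sigma_g)$, Fox calculus gives $\rho_1'^{\Heis_g}(\push(x)) = \varphi_1^{\Heis_g}(x)\,\mathrm{id}_{\calH_1'}$, while $\push(x)$ induces on $\Heis_g$ the conjugation by $\varphi_1^{\Heis_g}(x)$, which is trivial exactly when $\varphi_1^{\Heis_g}(x) \in Z(\Heis_g) = \langle \sigma \rangle$; hence $\push(x) \in \overline{K}_{\Heis_g}$ forces $\varphi_1^{\Heis_g}(x) = \sigma^k$ for some $k \in \Z$, so that $\rho_1'^{\Heis_g}(\push(x)) = \sigma^k\,\mathrm{id}_{\calH_1'}$ represents the identity in $\mathrm{PAut}_{Z[\Heis_g]}(\calH_1')$ by the definition \eqref{E:PAut}. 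Combining the three steps, $\rho_1'^{\Heis_g}$ descends to a group homomorphism $K'_{\Heis_g} \to \mathrm{PAut}_{Z[\Heis_g]}(\calH_1')$, as claimed. I do not expect a genuine obstacle here: the substantive inputs — the transvection formula forcing $\tau_\delta$ to act by a power of $\sigma$, and the description $\Aut^+(\Heis_g) = H_1(\Sigma_{g,1},\Z) \rtimes \mathrm{Sp}_{2g}(\Z)$ used to pin down which point-pushes lie in $\overline{K}_{\Heis_g}$ — are already available, and what remains is the formal observation that passing to the projectivization $\mathrm{PAut}$ absorbs the residual $\sigma^k$ ambiguity, together with a routine application of the universal property of quotient groups.
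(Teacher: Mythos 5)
Your proposal is correct and follows essentially the same route as the paper: the paper's argument (given in the discussion preceding the proposition) likewise descends first to $\overline{K}_{\Heis_g}$ using $\rho_1^{\Heis_g}(\tau_\delta)=\sigma^{-2g}\mathrm{id}_{\calH_1'}$, and then to $K'_{\Heis_g}$ by observing that two lifts of $f\in K'_{\Heis_g}$ differ by a point-push $\push(x)\in\overline{K}_{\Heis_g}$, which forces $\varphi_1^{\Heis_g}(x)\in\langle\sigma\rangle$ and hence an action by $\sigma^k\mathrm{id}_{\calH_1'}$, trivial in $\mathrm{PAut}_{Z[\Heis_g]}(\calH_1')$. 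Your packaging of the second step via the first isomorphism theorem and the universal property of the quotient is just a cleaner phrasing of the same well-definedness check.
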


\begin{remark}
	\label{remark:Magnus}
	\begin{enumerate}
		\item We note that the above construction may also be used to construct a version of the Magnus representation for the Torelli group of closed surfaces. Let us write
		$$\calH_1^{ab}:=H_1(\widehat{\Sigma}_{g,1},\Z),$$ where $\widehat{\Sigma}_{g,1}$ is the maximal abelian cover of $\Sigma_{g,1}.$ Cover transformations give $\calH_1^{ab}$ the structure of a $\Z[H_1(\Sigma_{g,1},\Z)]$-module. The (absolute) Magnus representation is the natural representation 
		$$\rho_1^{ab}: \mathcal{I}(\Sigma_{g,1}) \longrightarrow \Aut_{\Z[H_1(\Sigma_{g,1},\Z)]}(\calH_1^{ab}).$$
		Note that the twist along the boundary component is in the kernel of this representation, hence we can think of it as a representation of $\mathcal{I}(\Sigma_g,*)$ instead. Moreover, any element of $\pi_1(\Sigma_g)$ is sent by the point-pushing map to an element of  $\mathcal{I}(\Sigma_g,*).$ Those elements are sent by $\rho_1^{ab}$ to maps of the form $h \mathrm{id}_{\mathcal{H}_1^{ab}},$ where $h\in H_1(\Sigma_g,\Z).$ We write $\mathrm{PAut}_{\Z[H_1(\Sigma_{g,1},\Z)]}(\calH_1^{ab})$ to be the quotient of $\mathrm{PAut}_{\Z[H_1(\Sigma_{g,1},\Z)]}(\calH_1^{ab})$ by such elements. Similarly to the discussion before, we get a representation 
		\[
		\rho_1'^{\ab} : \mathcal{I}(\Sigma_g) \to \mathrm{PAut}_{\Z[H_1(\Sigma_{g,1})]}(H_1(\Sigma_{g,1}; \ab))
		\]
		where $\ab: \pi_1(\Sigma_{g,1}) \to H_1(\Sigma_{g,1})$ is the Magnus local system. The authors do not know if this representation has appeared anywhere else in the litterature.
		\item We note that Suzuki has found elements in the kernel of Magnus representations for $g\geq 2,$ as commutators of Dehn twists along separating curves in $\Sigma_{g,1}$ for which the geometric intersection is non zero but the twisted intersection form vanishes.
		We note that the elements found in \cite{S05b} for $g=2$ would not yield elements in the kernel of our representation $\rho_1'^{ab}:$ indeed, if we fill the boundary component, then the two curves found by Suzuki have now zero geometric intersection. Therefore we ask:
		
		\begin{question}
			Is the representation $\rho_1'^{ab}$ faithful for $g=2 ?$
		\end{question} 
		A positive answer would give an independent proof of the linearity of the mapping class group of a closed surface of genus $2,$ which is known by Bigelow and Budney \cite{BB01}, using Lawrence representations.
		\item Compared to the representation $\rho_1^{\Heis_g}$ the added issue is that the $\Z[\Heis_g]$-module $\calH_1'$ may not be free. For example, it is known that the first homology of the maximal abelian cover of a surface $\Sigma_g$ of genus $g\geq 2$ is not free over $\Z[H_1(\Sigma_g)].$ However, there might still be some stable submodule $M\subset \calH_1',$ such that either $M$ is free or $\calH_1'/M$ is free. This is the case for the homology of the maximal abelian cover, see \cite[Theorem E and F]{Put}. Moreover in general, notice that the Poincaré duality for manifolds with boundary shows:
		\[
		\calH_1' \simeq H^1(\Sigma_{g,1},\partial \Sigma_{g,1}, \varphi_1^{\Heisg}),
		\]
		while finding a $CW$-complex adapted to twisted homologies is doable for the relative to the boundary case. In configuration spaces and Borel--Moore homology, it was done in \cite{Bianchietal} for the non twisted case, and in \cite{Pierre} for the case of punctured disks. This could help finding a basis of the free part of $\calH_1'$. 
	\end{enumerate} 
\end{remark}

\section{Intersection form and faithfulness criterion}\label{S:intersection_form}

\subsection{Twisted intersection form}\label{S:intersection_form_def}

In this section, for $g>0$, we construct an intersection pairing for the twisted homology with coefficients in $G$ for an arbitrary local system on $\Conf_n(\Sigma_{g,1})$:
\[
\varphi: \pi_{n,g} \to G.
\]
Of course $G$ will later be $\Heis_g$ (or $\Heis_{g,r}$ and $\Heismodmodr$). We use the notation $\calH_n$ for the twisted homology constructed in this case, even though we usually use this notation for the precise case $G=\mathbb{H}_g$. This abusive notation is adopted for all this section. We want to define a bilinear (\textit{intersection}) form on spaces $\calH_n$. Poincaré duality for manifolds with boundary does not provide self dualities. We thus define the space:
\[
\dualH_n := \Hnot_n\left( \Conf_n(\varSigma_{g,1}), \Conf^+_n(\varSigma_{g,1}); \varphi \right),
\]
where:
\[
\Conf^+_n(\varSigma_{g,1}):= \lbrace \lbrace z_1 , \ldots , z_n \rbrace  \in \Conf_n(\varSigma_{g,1}) | \exists i, z_i \in \partial^+ \varSigma_{g,1} \rbrace
\]
where $\partial^+ \varSigma_{g,1}$ is the complement of $\partial^- \varSigma_{g,1}$ in $\partial \varSigma_{g,1}$. 
We highlight the fact that this new notation involves \textit{standard} singular homologies rather than Borel--Moore ones. 

Now for $(c,d) \in \calH_n \times \dualH_n$ we can assume they can be represented by a pair of chains still denoted $c$ and $d$ where $c$ is a chain of the pair $\left(\widehat{\Conf}_n(\varSigma_{g,1}) , \hat{p}^{-1}({\Conf}^-_n(\varSigma_{g,1}) \right)$ and $d$ one of the pair $\left(\widehat{\Conf}_n(\varSigma_{g,1}) , \hat{p}^{-1}({\Conf}^+_n(\varSigma_{g,1}) \right)$ that are in a generic transversal intersection position so that they have a finite number of intersection points (recalling both are middle dimension chains of $\Conf_n(\varSigma_{g,1})$). This is thanks to the fact that $d$ must be compactly supported, $c$ is locally finite for the ambient topology (natural consequence of being a Borel--Moore chain) and that $c$ and $d$ are disjoint in a neighborhood of the boundary of $\partial \Conf_n(\Sigma_{g,1}).$ We define:
\[
\begin{array}{rcl}
\langle \cdot, \cdot \rangle_{\varphi} : \calH_n \times \dualH_n & \to & \Z[G] \\
(c,d) & \mapsto & \sum_{h \in G} \langle c,h\cdot d \rangle h
\end{array}
\]
where:
\[
\begin{array}{rcl}
\langle \cdot, \cdot \rangle : \calH_n \times \dualH_n & \to & \Z
\end{array}
\]
is the usual $\Z$-bilinear algebraic intersection form, existing since an orientation of $\Conf_n(\varSigma_{g,1})$ determines one on any of its regular cover. We omit the dependence in $n$ in the notation when no confusion arises. For the above sum to be finite, one of the two classes involved must be compactly supported which motivates the choice of the standard homology for the dual homology $\dualH_n$.

\begin{proposition}\label{P:properties_twisted_intersection}
	The intersection form $\langle \cdot , \cdot \rangle_{\varphi}$ has the following properties:
	\begin{enumerate}
		\item It is preserved by the action of the group $K_{G}:= \Ker(\Mod(\varSigma_{g,1}) \to \Aut(G))$, i.e. for $f \in K_{G}$:
		\[
		\langle \rho_n^G(f)(c) , \rho_n^G(f)(d) \rangle_{\varphi} = \langle c,d \rangle_{\varphi} .
		\]
		\item One has:
		\[
		\varepsilon\left( \langle c,d \rangle_{\varphi} \right) = \langle \hat{p}(c) , \hat{p}(d) \rangle
		\]
		where $\varepsilon: \Z[G] \to \Z$ is the augmentation morphism and $\hat{p}$ is the regular cover map associated with $\pi_{n,g} \to G$. 
	\end{enumerate}
\end{proposition}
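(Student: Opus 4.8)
The plan is to deduce both assertions from the corresponding facts about the ordinary $\Z$-valued intersection number $\langle\cdot,\cdot\rangle$ on the cover $\widehat{\Conf}_n(\varSigma_{g,1})$, exploiting the $G$-equivariance built into the definition of $\langle\cdot,\cdot\rangle_{\varphi}$. Throughout I would fix representatives $c,d$ as in the definition, chosen so that each translate $h\cdot d$ meets $c$ transversally in finitely many points and all but finitely many translates are disjoint from $c$; this is the same genericity already invoked to make $\langle\cdot,\cdot\rangle_{\varphi}$ well defined.

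For (1), I would let $f\in K_{G}$ and take $f_G$ to be the chosen lift of $f$ to $\widehat{\Conf}_n(\varSigma_{g,1})$ fixing the base point over $x$, so that $\rho_n^G(f)=(f_G)_*$. First, since $f$ fixes $\partial\varSigma_{g,1}$ pointwise and is orientation-preserving, $f_G$ is an orientation-preserving homeomorphism that preserves $\hat p^{-1}(\Conf_n^-(\varSigma_{g,1}))$ and $\hat p^{-1}(\Conf_n^+(\varSigma_{g,1}))$, hence it preserves the ordinary relative intersection number. Next, because $f\in K_{G}$ acts trivially on $G$, the lift $f_G$ commutes with every deck transformation $h\in G$, as $f_G\circ h\circ f_G^{-1}=f_*(h)=h$. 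Combining these, for each $h$ I get
\[
\langle (f_G)_*c,\ h\cdot (f_G)_*d\rangle=\langle (f_G)_*c,\ (f_G)_*(h\cdot d)\rangle=\langle c,\ h\cdot d\rangle ,
\]
and summing over $h\in G$ yields $\langle\rho_n^G(f)(c),\rho_n^G(f)(d)\rangle_{\varphi}=\langle c,d\rangle_{\varphi}$.

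For (2), applying $\varepsilon$ to the definition gives $\varepsilon\bigl(\langle c,d\rangle_{\varphi}\bigr)=\sum_{h\in G}\langle c,\ h\cdot d\rangle$, a finite sum. I would then observe that $\sum_{h\in G} h\cdot d$, as a locally finite chain, is exactly the full preimage $\hat p^{-1}(\hat p(d))$; equivalently, every transverse intersection point $y$ of $\hat p(c)$ with $\hat p(d)$ lifts, one local sheet of $\hat p(c)$ at a time, to an intersection point of $c$ with a unique translate $h\cdot d$, and conversely every intersection point of $c$ with some $h\cdot d$ projects to such a $y$. Since $\hat p$ is an orientation-preserving local diffeomorphism (an orientation of $\Conf_n(\varSigma_{g,1})$ lifts to the cover), this correspondence preserves local intersection signs, so $\sum_{h\in G}\langle c,\ h\cdot d\rangle=\langle \hat p(c),\hat p(d)\rangle$. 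Stated invariantly, this is the projection formula $\langle \hat p_{*}c,\hat p_{*}d\rangle=\langle c,\hat p^{*}\hat p_{*}d\rangle$ for the covering $\hat p$.

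The routine parts are the genericity bookkeeping and its compatibility with the $\partial^{\pm}\varSigma_{g,1}$ decompositions, which are already built into the setup. I expect the main obstacle to be the sheet-counting step in (2): one must check that $\hat p(c)$, though possibly not globally locally finite (the cover being infinite), is locally finite near the compact support of $\hat p(d)$, so that $\langle \hat p(c),\hat p(d)\rangle$ is a well-defined finite signed count, and that the sign bookkeeping under the local homeomorphism $\hat p$ matches the term-by-term identification with $\sum_{h}\langle c,h\cdot d\rangle$.
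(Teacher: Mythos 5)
Your proposal is correct and follows essentially the same route as the paper: part (1) is proved by the identical two-step manipulation (the lift of $f\in K_G$ commutes with deck transformations, and preserves the ordinary intersection number), and part (2) by the same sheet-counting identification of intersection points of $\hat p(c)$ and $\hat p(d)$ with intersection points of $c$ against the translates $h\cdot d$, with the augmentation collapsing the $\pm h$ contributions to signs.
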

\begin{proof}
	Let $\hat{f}$ be a homeomorphism of $\Conf_n(\Sigma_{g,1})$ that projects to an element $f$ with isotopy class in $K_{G}$. Then:
	\[
	\langle \rho_n(f)(c) , \rho_n(f)(d) \rangle_{\varphi} = \sum_{h \in G} \langle \hat{f}(c),h\cdot \hat{f}(d) \rangle h = \sum_{h \in G} \langle \hat{f}(c), \hat{f}(h\cdot d) \rangle h = \langle c,d \rangle_{\varphi}
	\]
	the second equality relying on the fact that $\hat{f}$ commutes with that of the deck transformations group $G$ which is by definition of $K_G$, the third one is the algebraic intersection number that is typically preserved by orientation preserving homeomorphisms. It proves the first item. 

	As for the last item, let us note that intersection points of $\hat{p}(c)$ and $\hat{p}(d)$ lift to $G$-orbits of intersection points between $c$ and  translates of $d$ by cover transformations. Moreover, the augmentation morphism transform the contributions of those intersection points to $\langle c,d\rangle$ (belonging to $\pm G \subset \Z[G]$) into signs, thus recovering the algebraic intersection.
\end{proof}

This pairing could be interpreted in terms of twisted Poincaré duality, and it is non degenerate, which is the purpose of next section.

\subsection{Dual families and non degeneracy}
\label{sec:inter_form_def}
In Sec.~\ref{S:twisted_homologies_structure} we have defined twisted diagrammatic classes in $\calH_n$ from three inputs: a family of arcs called an $m$-multisimplex, a partition of $m$ and a thread. In this section we define another kind of twisted diagrammatic classes lying in $\dualH_n$. Let $\bfk=(k_1,\ldots,k_m)$ be a partition of $n$ and $\bfsimp = (\simp_1,\ldots,\simp_m)$ a family of $m$ disjoint arcs with ends in $\partial^+\varSigma_{g,1}$. Let's assume for commodity that $\Gamma_1$ has an end in $\xi_1$, and every $\Gamma_i$ has an end in $\xi_{k_1+\ldots + k_{i_1} +1}$. we define the $\bfk$-parallelized $\bfsimp$ to be:
\[
\bfsimp^{[\bfk]} := \left(\simp_1(1) , \ldots , \simp_1(k_1) , \simp_2(1) , \ldots , \simp_2 (k_2) , \ldots , \simp_m (1) , \ldots, \simp_m(k_m) \right)
\]
where $\Gamma_i(1):= \Gamma_i$ and for $1\le j\le k_i$, $\Gamma_i(j)$ is a left-parallel of $\Gamma_i(j-1)$ with starting end in $\xi_{k_1 + \ldots + k_{i-1} + j}$. By \textit{left-parallel} we mean that the arc is slightly pushed to the left (according to its orientation) staying relative to $\partial^+ \varSigma_{g,1}$. Here is an example showing a family of $3$ disjoint arcs relative to $\partial^+ \varSigma$ and its $(2,3,1)$-parallelization:
\begin{center}
	\def\svgwidth{0.8 \columnwidth}
\begingroup%
  \makeatletter%
  \providecommand\color[2][]{%
    \errmessage{(Inkscape) Color is used for the text in Inkscape, but the package 'color.sty' is not loaded}%
    \renewcommand\color[2][]{}%
  }%
  \providecommand\transparent[1]{%
    \errmessage{(Inkscape) Transparency is used (non-zero) for the text in Inkscape, but the package 'transparent.sty' is not loaded}%
    \renewcommand\transparent[1]{}%
  }%
  \providecommand\rotatebox[2]{#2}%
  \newcommand*\fsize{\dimexpr\f@size pt\relax}%
  \newcommand*\lineheight[1]{\fontsize{\fsize}{#1\fsize}\selectfont}%
  \ifx\svgwidth\undefined%
    \setlength{\unitlength}{302.09939071bp}%
    \ifx\svgscale\undefined%
      \relax%
    \else%
      \setlength{\unitlength}{\unitlength * \real{\svgscale}}%
    \fi%
  \else%
    \setlength{\unitlength}{\svgwidth}%
  \fi%
  \global\let\svgwidth\undefined%
  \global\let\svgscale\undefined%
  \makeatother%
  \begin{picture}(1,0.21410173)%
    \lineheight{1}%
    \setlength\tabcolsep{0pt}%
    \put(0,0){\includegraphics[width=\unitlength,page=1]{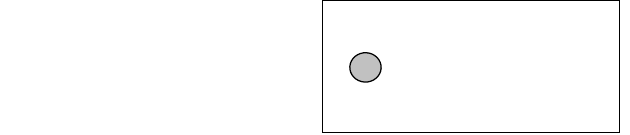}}%
    \put(0.57021862,0.1017713){\color[rgb]{0,0,0}\makebox(0,0)[lt]{\lineheight{1.25}\smash{\begin{tabular}[t]{l}$1$\end{tabular}}}}%
    \put(0,0){\includegraphics[width=\unitlength,page=2]{multiarc.pdf}}%
    \put(0.80803865,0.10066255){\color[rgb]{0,0,0}\makebox(0,0)[lt]{\lineheight{1.25}\smash{\begin{tabular}[t]{l}$g$\end{tabular}}}}%
    \put(0,0){\includegraphics[width=\unitlength,page=3]{multiarc.pdf}}%
    \put(0.88834656,0.10069089){\color[rgb]{0,0,0}\makebox(0,0)[lt]{\lineheight{1.25}\smash{\begin{tabular}[t]{l}$\reflectbox{g}$\end{tabular}}}}%
    \put(0.70521051,0.10217885){\color[rgb]{0,0,0}\makebox(0,0)[lt]{\lineheight{1.25}\smash{\begin{tabular}[t]{l}$\ldots$\end{tabular}}}}%
    \put(0.64751583,0.10038307){\color[rgb]{0,0,0}\makebox(0,0)[lt]{\lineheight{1.25}\smash{\begin{tabular}[t]{l}$\reflectbox{1}$\end{tabular}}}}%
    \put(0,0){\includegraphics[width=\unitlength,page=4]{multiarc.pdf}}%
    \put(0.05817979,0.10205378){\color[rgb]{0,0,0}\makebox(0,0)[lt]{\lineheight{1.25}\smash{\begin{tabular}[t]{l}$1$\end{tabular}}}}%
    \put(0,0){\includegraphics[width=\unitlength,page=5]{multiarc.pdf}}%
    \put(0.29599984,0.10094503){\color[rgb]{0,0,0}\makebox(0,0)[lt]{\lineheight{1.25}\smash{\begin{tabular}[t]{l}$g$\end{tabular}}}}%
    \put(0,0){\includegraphics[width=\unitlength,page=6]{multiarc.pdf}}%
    \put(0.37630775,0.10097338){\color[rgb]{0,0,0}\makebox(0,0)[lt]{\lineheight{1.25}\smash{\begin{tabular}[t]{l}$\reflectbox{g}$\end{tabular}}}}%
    \put(0.19317166,0.10246133){\color[rgb]{0,0,0}\makebox(0,0)[lt]{\lineheight{1.25}\smash{\begin{tabular}[t]{l}$\ldots$\end{tabular}}}}%
    \put(0.13547698,0.10066555){\color[rgb]{0,0,0}\makebox(0,0)[lt]{\lineheight{1.25}\smash{\begin{tabular}[t]{l}$\reflectbox{1}$\end{tabular}}}}%
    \put(0,0){\includegraphics[width=\unitlength,page=7]{multiarc.pdf}}%
  \end{picture}%
\endgroup%

\end{center}

Such $\bfsimp^{[\bfk]}$ is then a family of $n$ disjoint arcs, each of them having an end in a different coordinate of the base point. We define $\hat{\bfsimp}^{[\bfk]}$ to be the unique lift to $\widehat{\Conf}_n(\varSigma_{g,1})$ of the hypercube embedding:
\[
\bfsimp^{[\bfk]} : I^n \to \Conf_n(\varSigma_{g,1}),
\]
that contains $\hat{\underline{\xi}}$. Notice that no thread is used to choose this lift, since $\bfsimp^{[\bfk]}$ has been chosen so to contain the base point $\underline{\xi}$. Since the faces of such a hypercube lies in (the lifted) $\Conf_n(\varSigma_{g,1})^+$, then $\hat{\bfsimp}^{[\bfk]}$ defines a class in $\dualH_n$. The reader should pay attention that it does not need the Borel--Moore homology, but simply the standard singular one.  

\begin{definition} 
	Let $\bfa=(a_1,\ldots,a_g),\bfb=(b_1,\ldots,b_g) \in \N^g$ such that $a_1+\ldots+a_g+b_1+\ldots+b_g=n.$ We define the classes $\tilde{\bfsimp}^\dagger(\bfa,\bfb)$ in $\dualH_n$ as the $(\bfa,\bfb)$-parallelization of the multiarc $A_1\cup \ldots \cup A_g \cup B_1 \cup \ldots B_g$ represented below:
	\begin{center}
		\def\svgwidth{0.4 \columnwidth}
\begingroup%
  \makeatletter%
  \providecommand\color[2][]{%
    \errmessage{(Inkscape) Color is used for the text in Inkscape, but the package 'color.sty' is not loaded}%
    \renewcommand\color[2][]{}%
  }%
  \providecommand\transparent[1]{%
    \errmessage{(Inkscape) Transparency is used (non-zero) for the text in Inkscape, but the package 'transparent.sty' is not loaded}%
    \renewcommand\transparent[1]{}%
  }%
  \providecommand\rotatebox[2]{#2}%
  \newcommand*\fsize{\dimexpr\f@size pt\relax}%
  \newcommand*\lineheight[1]{\fontsize{\fsize}{#1\fsize}\selectfont}%
  \ifx\svgwidth\undefined%
    \setlength{\unitlength}{147.41278821bp}%
    \ifx\svgscale\undefined%
      \relax%
    \else%
      \setlength{\unitlength}{\unitlength * \real{\svgscale}}%
    \fi%
  \else%
    \setlength{\unitlength}{\svgwidth}%
  \fi%
  \global\let\svgwidth\undefined%
  \global\let\svgscale\undefined%
  \makeatother%
  \begin{picture}(1,0.43355299)%
    \lineheight{1}%
    \setlength\tabcolsep{0pt}%
    \put(0,0){\includegraphics[width=\unitlength,page=1]{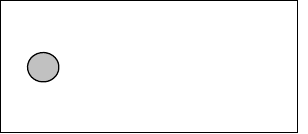}}%
    \put(0.11923035,0.20392831){\color[rgb]{0,0,0}\makebox(0,0)[lt]{\lineheight{1.25}\smash{\begin{tabular}[t]{l}$1$\end{tabular}}}}%
    \put(0,0){\includegraphics[width=\unitlength,page=2]{dualbasis.pdf}}%
    \put(0.60660525,0.20165609){\color[rgb]{0,0,0}\makebox(0,0)[lt]{\lineheight{1.25}\smash{\begin{tabular}[t]{l}$g$\end{tabular}}}}%
    \put(0,0){\includegraphics[width=\unitlength,page=3]{dualbasis.pdf}}%
    \put(0.77118372,0.20171419){\color[rgb]{0,0,0}\makebox(0,0)[lt]{\lineheight{1.25}\smash{\begin{tabular}[t]{l}$\reflectbox{g}$\end{tabular}}}}%
    \put(0.39587503,0.20476352){\color[rgb]{0,0,0}\makebox(0,0)[lt]{\lineheight{1.25}\smash{\begin{tabular}[t]{l}$\ldots$\end{tabular}}}}%
    \put(0.27763883,0.20108335){\color[rgb]{0,0,0}\makebox(0,0)[lt]{\lineheight{1.25}\smash{\begin{tabular}[t]{l}$\reflectbox{1}$\end{tabular}}}}%
    \put(0,0){\includegraphics[width=\unitlength,page=4]{dualbasis.pdf}}%
    \put(0.04982157,0.09838997){\color[rgb]{0,0.0627451,0.97647059}\makebox(0,0)[lt]{\lineheight{1.25}\smash{\begin{tabular}[t]{l}$A_1$\end{tabular}}}}%
    \put(0.59363069,0.10050946){\color[rgb]{0,0.0627451,0.97647059}\makebox(0,0)[lt]{\lineheight{1.25}\smash{\begin{tabular}[t]{l}$A_g$\end{tabular}}}}%
    \put(0.31313313,0.32367824){\color[rgb]{0.18823529,0.78823529,0.22745098}\makebox(0,0)[lt]{\lineheight{1.25}\smash{\begin{tabular}[t]{l}$B_1$\end{tabular}}}}%
    \put(0.81380869,0.30396817){\color[rgb]{0.18823529,0.78823529,0.22745098}\makebox(0,0)[lt]{\lineheight{1.25}\smash{\begin{tabular}[t]{l}$B_g$\end{tabular}}}}%
  \end{picture}%
\endgroup%

	\end{center}
	
\end{definition}

We want to show that elements $\hat{\bfsimp}(\bfa,\bfb)$ and $\hat{\bfsimp}^{\dagger}(\bfa,\bfb)$ are dual for the pairing $\langle \cdot, \cdot \rangle_{\varphi_n^{\Heis_g}}$. But first we describe a natural protocole to compute the pairing between a twisted diagrammatic class and a parallelized family of arcs. 

Let $\bfsimp = (\simp_1 , \ldots , \simp_m)$ be an $m$-multisimplex, $\bfk = (k_1 , \ldots , k_m)$ be a partition of $n$, and let $\hat{\bfsimp}^{(\bfk)}_{\thread}$ be the associated class in $\calH_n$ after the choice of an adapted thread $\thread$. On the other side, let $\bfsimp_+^{[\bfk_+]}$ be the $\bfk_+ = (k_{+,1}, \ldots , k_{+,l})$-parallelized family of adapted arcs $\bfsimp_+ = (\simp_{+,1} , \ldots , \simp_{+,l})$, and $\hat{\bfsimp}_+^{[\bfk_+]}$ the associated class in $\dualH_n$. We explain how to compute $\langle \hat{\bfsimp}^{(\bfk)}_{\thread}, \hat{\bfsimp}_+^{[\bfk_+]} \rangle_{\varphi}$. An \textit{intersecting configuration} is a configuration $\lbrace x_1 , \ldots , x_n \rbrace \in \Conf_n(\varSigma_{g,1})$ that lies in the intersection of diagrams $\bfsimp$ and $\bfsimp_+^{[\bfk_+]}$, with exactly one $x_i$ on each of the $n$ arcs of $\bfsimp_+^{[\bfk_+]}$ and $k_1$ of them on $\simp_1$, $k_2$ on $\simp_2$ and so on until $k_m$ of them on $\simp_m$. It is an intersection point between ${\bfsimp}^{(\bfk)}$ and $\bfsimp_+^{[\bfk_+]}$. Let $P$ be the set of all such intersecting points. Each intersecting configuration $\underline{x} \in P$ contributes to one term of the pairing, since there exists only one point in the fiber of $\underline{x}$ where $\hat{\bfsimp}^{(\bfk)}_{\thread}$ and the pre image by $\hat{p}$ of $\bfsimp_+^{[\bfk_+]}$ intersect. Hence:
\[
\langle \hat{\bfsimp}^{(\bfk)}_{\thread}, \hat{\bfsimp}_+^{[\bfk_+]} \rangle_{\varphi} = \sum_{\underline{x} \in P} \varepsilon_{\underline{x}} h_{\underline{x}},
\]
where for a fixed $\underline{x}$, one has $\varepsilon_{\underline{x}} = \pm 1$ is the sign of the corresponding intersection in fibers, and $h_{\underline{x}} \in G$ is the unique element such that the intersection is between $\hat{\bfsimp}^{(\bfk)}_{\thread}$ and $h_{\underline{x}} \cdot \hat{\bfsimp}_+^{[\bfk_+]}$. Thus 
\[
\varepsilon_{\underline{x}}= \langle \hat{\bfsimp}^{(\bfk)}_{\thread} , h_{\underline{x}} \cdot \hat{\bfsimp}_+^{[\bfk_+]} \rangle.
\]

For a given $\underline{x} \in P$ we define a loop $\delta_{\underline{x}}$ of $\Conf_n(\varSigma_{g,1})$ as the following composition of paths:
\begin{itemize}
	\item First relating $\underline{\xi}$ and $\underline{x}$ following the thread $\thread$ then part of $\bfsimp$ (it is unique up to isotopy fixing ends),
	\item Second relating $\underline{x}$ and $\underline{\xi}$ along $\bfsimp_+^{[\bfk_+]}$.
\end{itemize}
\begin{lemma}\label{L:monomials_in_pairing}
	One has:
	\begin{equation}
	h_{\underline{x}} = \varphi (\delta_{\underline{x}})
	\end{equation}
\end{lemma}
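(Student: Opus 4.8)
The plan is to unwind the definition of the two lifted chains at a point of the fiber over the intersecting configuration $\underline{x}$, and then to invoke the standard dictionary between lifts of loops and the deck group of the cover. Write $\hat{p}\colon \widehat{\Conf}_n(\varSigma_{g,1}) \to \Conf_n(\varSigma_{g,1})$ for the $\varphi$-cover, on which $G$ acts as the deck group, with the convention fixed in Section~\ref{S:twisted_homologies_structure}: for a loop $\delta$ based at $\underline{\xi}$, the lift of $\delta$ starting at $\hat{\underline{\xi}}$ ends at $\varphi(\delta)\cdot\hat{\underline{\xi}}$.

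First I would pin down the point $\hat{x}_c \in \hat{p}^{-1}(\underline{x})$ lying on $\hat{\bfsimp}^{(\bfk)}_{\thread}$. That chain is, by construction, the lift of the embedded cube $\bfsimp^{\times\bfk}\colon \mathring{I}^n \to \Conf_n(\varSigma_{g,1})$ through $\hat{\gamma}$, and $\hat{\gamma}$ is the endpoint of the lift of $\thread$ started at $\hat{\underline{\xi}}$; since $\mathring{I}^n$ is contractible, moving the configuration points along the arcs of $\bfsimp$ from the thread's endpoint to $\underline{x}$ gives a path inside the cube, unique up to homotopy rel endpoints, whose lift started at $\hat{\gamma}$ ends at a well-defined point $\hat{x}_c$. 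Thus $\hat{x}_c$ is the endpoint of the lift, started at $\hat{\underline{\xi}}$, of the first half $\alpha$ of $\delta_{\underline{x}}$ (the thread followed by that arc piece). Symmetrically, $\hat{\bfsimp}_+^{[\bfk_+]}$ is the lift of the hypercube $\bfsimp_+^{[\bfk_+]}$ through $\hat{\underline{\xi}}$ itself, so the point of $\hat{p}^{-1}(\underline{x})$ lying on it is the endpoint $\hat{x}_d$ of the lift, started at $\hat{\underline{\xi}}$, of the path $\beta$ running from $\underline{\xi}$ to $\underline{x}$ inside $\bfsimp_+^{[\bfk_+]}$; hence the second half of $\delta_{\underline{x}}$ is $\beta^{-1}$ and $\delta_{\underline{x}} = \alpha\cdot\beta^{-1}$.

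Now $h_{\underline{x}}$ is by definition the unique element of $G$ with $\hat{x}_c \in h_{\underline{x}}\cdot\hat{\bfsimp}_+^{[\bfk_+]}$; since the hypercube $\bfsimp_+^{[\bfk_+]}$ is an embedding, its $h_{\underline{x}}$-translate contains a single point over $\underline{x}$, namely $h_{\underline{x}}\cdot\hat{x}_d$, so that $\hat{x}_c = h_{\underline{x}}\cdot\hat{x}_d$. Lift the loop $\delta_{\underline{x}} = \alpha\cdot\beta^{-1}$ starting at $\hat{\underline{\xi}}$: the lift of $\alpha$ ends at $\hat{x}_c$, and then the lift of $\beta^{-1}$ starting at $\hat{x}_c = h_{\underline{x}}\cdot\hat{x}_d$ is the $h_{\underline{x}}$-translate of the lift of $\beta^{-1}$ starting at $\hat{x}_d$, the latter ending at $\hat{\underline{\xi}}$; hence the former ends at $h_{\underline{x}}\cdot\hat{\underline{\xi}}$. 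So the lift of $\delta_{\underline{x}}$ from $\hat{\underline{\xi}}$ ends at $h_{\underline{x}}\cdot\hat{\underline{\xi}}$, while by the covering-space dictionary it ends at $\varphi(\delta_{\underline{x}})\cdot\hat{\underline{\xi}}$. Freeness of the $G$-action then forces $h_{\underline{x}} = \varphi(\delta_{\underline{x}})$.

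The geometric content is nil beyond this standard monodromy computation; the only delicate points are bookkeeping. One must fix, once and for all and compatibly with Section~\ref{S:twisted_homologies_structure}, the handedness convention relating $\pi_1(\Conf_n(\varSigma_{g,1}))$, the deck group and the $G$-action on $\Z[G]$, so that no spurious inverse appears; and one must observe that the ``in-cube'' paths used to define $\hat{x}_c$ and $\hat{x}_d$ are well defined up to homotopy rel endpoints --- immediate since $I^n$ and $\mathring{I}^n$ are contractible --- so that these points of the fiber are unambiguous. I expect this conventions check to be the main (and essentially only) obstacle.
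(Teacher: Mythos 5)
Your argument is correct and is essentially the paper's own proof: both lift the loop $\delta_{\underline{x}}$ from $\hat{\underline{\xi}}$, note that its first half lands on the fiber point of $\hat{\bfsimp}^{(\bfk)}_{\thread}$ over $\underline{x}$ and its second half stays in the translate $h_{\underline{x}}\cdot\hat{\bfsimp}_+^{[\bfk_+]}$, so the lift terminates at $h_{\underline{x}}\cdot\hat{\underline{\xi}} = \varphi(\delta_{\underline{x}})\cdot\hat{\underline{\xi}}$. Your version merely makes the bookkeeping of the two fiber points and the freeness of the deck action more explicit.
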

\begin{proof}
	There exists $h_{\underline{x}} \in G$ such that $\hat{\bfsimp}^{(\bfk)}_{\thread}$ and $h_{\underline{x}} \cdot \hat{\bfsimp}_+^{[\bfk_+]}$ intersect in the fiber of $\underline{x}$ by the lifting property of regular cover. Let's lift the path $\delta_{\underline{x}}$ to the cover $\widehat{\Conf}_n(\varSigma_{g,1})$ starting at $\hat{\underline{\xi}}$, it is unique and denoted $\hat{\delta}_{\underline{x}}$. The first part of the path is the lift of the thread so that the path reaches a point in the fiber of $\underline{x}$ lying precisely in $\hat{\bfsimp}^{(\bfk)}_{\thread}$, by definition of the thread. Then the second part of the loop lies in a lift of ${\bfsimp}_+^{[\bfk_+]}$, and more particularly in $h_{\underline{x}} \cdot \hat{\bfsimp}_+^{[\bfk_+]}$ for the $h_{\underline{x}}$ we are seeking, by connectedness. Hence $\hat{\delta}_{\underline{x}}$ ends at $h_{\underline{x}} \cdot \hat{\underline{\xi}}$ which proves the lemma. 
\end{proof}


\begin{proposition}\label{P:dual_families}
	Let $(\bfa=(a_1,\ldots,a_g),\bfb=(b_1,\ldots,b_g))$ and $(\bfa'=(a'_1,\ldots,a'_g),\bfb=(b'_1,\ldots,b'_g))$ be two partitions of $n$. One has:
	\[
	\langle \hat{\bfsimp}(\bfa,\bfb), \hat{\bfsimp}^{\dagger}(\bfa',\bfb') \rangle_{\varphi} = \delta_{(\bfa,\bfb),(\bfa',\bfb')}
	\]
	where the right term is a Kronecker symbol for lists. 
\end{proposition}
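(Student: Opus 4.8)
The plan is to evaluate the pairing directly using the combinatorial protocol set up just before the statement. Write $\bfsimp$ for the $2g$-multisimplex underlying $\hat{\bfsimp}(\bfa,\bfb)$, equipped with the partition $\bfk=(a_1,b_1,\ldots,a_g,b_g)$ of $n$, and let $\bfsimp_+=(A_1,\ldots,A_g,B_1,\ldots,B_g)$ with partition $\bfk_+=(a_1',b_1',\ldots,a_g',b_g')$, so that $\hat{\bfsimp}^{\dagger}(\bfa',\bfb')=\hat{\bfsimp}_+^{[\bfk_+]}$. By that protocol together with Lemma~\ref{L:monomials_in_pairing},
\[
\langle \hat{\bfsimp}(\bfa,\bfb), \hat{\bfsimp}^{\dagger}(\bfa',\bfb')\rangle_{\varphi}=\sum_{\underline{x}\in P}\varepsilon_{\underline{x}}\,\varphi(\delta_{\underline{x}}),
\]
so everything reduces to identifying the set $P$ of intersecting configurations, the local signs $\varepsilon_{\underline{x}}$, and the loops $\delta_{\underline{x}}$.

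First I would pin down the geometry in the standard pictures. Up to isotopy, the arc of $\bfsimp$ carrying the $a_i$ points meets $A_j$ in one point if $i=j$ and not at all otherwise, meets no $B_j$, and symmetrically the arc carrying the $b_i$ points meets only $B_i$, in one point. After parallelizing, the $a_j'$ copies of $A_j$ meet the ``$a_i$-arc'' of $\bfsimp$ in $a_j'$ points (on the $a_j$-arc exactly when $i=j$, at $a_j'$ distinct points linearly ordered consistently with the simplex orientation and with the order of the parallel copies), and in nothing else. From this one reads off that an intersecting configuration $\underline{x}\in P$ must place all $a_j'$ points coming from copies of $A_j$ on the $a_j$-arc of $\bfsimp$, and all $b_j'$ points from copies of $B_j$ on the $b_j$-arc; but the partition $\bfk$ forces exactly $a_j$ (resp.\ $b_j$) of the $n$ configuration points onto that arc. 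Hence $P\neq\varnothing$ already forces $a_j=a_j'$ and $b_j=b_j'$ for every $j$, i.e.\ $(\bfa,\bfb)=(\bfa',\bfb')$, which gives the off-diagonal vanishing for any $\varphi$.

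Next, assuming $(\bfa,\bfb)=(\bfa',\bfb')$, I would argue that $|P|=1$: on each handle the matching condition forces the $a_j$ points of $\hat{\bfsimp}(\bfa,\bfb)$ on the $a_j$-arc to be the $a_j$ crossing points of that arc with the $a_j$ parallel copies of $A_j$, and since the orderings of $\Delta^{a_j}$ and of the tuple of parallel copies of $A_j$ both agree with the linear order of those crossings along the arc, this determines a single point of $\Delta^{a_j}\times I^{a_j}$ in that region; the same holds for the $b$'s, so collecting over all handles yields a unique $\underline{x}\in P$. For this $\underline{x}$ the loop $\delta_{\underline{x}}$ is null-homotopic in $\Conf_n(\Sigma_{g,1})$: the portion of the thread and of $\bfsimp$ used to reach $\underline{x}$ and the return portion along $\bfsimp_+^{[\bfk_+]}$ cobound an embedded disk inside the relevant handle enclosing neither a handle nor another strand, whence $\varphi(\delta_{\underline{x}})=1$. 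Finally the sign $\varepsilon_{\underline{x}}$ is computed from a local model: near each of the $n$ points the surface is an oriented plane in which the support of $\hat{\bfsimp}(\bfa,\bfb)$ is one coordinate direction and that of the dual class the other; with the orientation of $\Conf_n(\Sigma_{g,1})$ induced from $\Sigma_{g,1}$ and the chosen orientations of the cube $I^n$ and the product of simplices, each local contribution is $+1$, and there is no global permutation sign precisely because the two orderings coincide. Thus $\varepsilon_{\underline{x}}=+1$ and the pairing equals $1\in\Z[G]$, which is $\delta_{(\bfa,\bfb),(\bfa',\bfb')}$.

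I expect the only real work to be in this last paragraph: verifying the standard-position intersection pattern, checking that $\delta_{\underline{x}}$ is null-homotopic (so no spurious $\sigma$-power appears), and the orientation bookkeeping for $\varepsilon_{\underline{x}}$. The structural steps — the reduction to counting $P$ and the off-diagonal vanishing — are immediate consequences of the protocol and Lemma~\ref{L:monomials_in_pairing}.
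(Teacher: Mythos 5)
Your proposal is correct and follows essentially the same route as the paper's proof: off-diagonal vanishing because no intersecting configuration exists when the partitions differ, and on the diagonal a unique intersecting configuration with null-homotopic loop $\delta_{\underline{x}}$ (so $h_{\underline{x}}=1$) and positive local signs, giving pairing $1$. The paper states these facts more tersely (citing the sign formula of \cite[Appendix~B]{BPS21}), while you spell out the handle-by-handle bookkeeping, but the argument is the same.
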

\begin{proof}
	The reader would easily check that if partitions $(\bfa,\bfb)$ and $(\bfa',\bfb')$ are different, there does not exist any intersecting configuration and hence the pairing is zero. It remains to compute $\langle \hat{\bfsimp}(\bfa,\bfb), \hat{\bfsimp}^{\dagger}(\bfa,\bfb) \rangle_{\varphi}$ for a given partition $(\bfa,\bfb)$. There exists a unique intersecting configuration $\underline{x}$, for which the loop $\delta_{\underline{x}}$ is trivial so that $h_{\underline{x}} = 1$. All intersections of arcs involved are positive, and $\delta_{\underline{x}}$ never permutes the base point so that:
	\[
	\langle \hat{\bfsimp}(\bfa,\bfb), \hat{\bfsimp}^{\dagger}(\bfa,\bfb) \rangle_{\varphi}  = 1. 
	\]
	which is a direct consequence of the formula provided by \cite[Appendix~B]{BPS21}. 
\end{proof}

In other words, the above proposition means the basis:
\[
\left\lbrace \hat{\bfsimp}(\bfa,\bfb) \Biggm| 
\begin{array}{l}
\bfa=(a_1,\ldots,a_g),\bfb=(b_1,\ldots,b_g) \in \N^{\times g} \\
a_1+b_1+\ldots+a_g+b_g=n
\end{array} \right\rbrace \subset \calH_n 
\]
admits
\[
\left\lbrace \hat{\bfsimp}^{\dagger}(\bfa,\bfb) \Biggm| 
\begin{array}{l}
\bfa=(a_1,\ldots,a_g),\bfb=(b_1,\ldots,b_g) \in \N^{\times g} \\
a_1+b_1+\ldots+a_g+b_g=n
\end{array} \right\rbrace \subset \dualH_n
\]
as a dual family. Thus the following is a straightforward consequence of the fact that the $\Z[G]$-module $\calH_n$ is free with a basis admitting a dual family. 

\begin{coro}\label{C:pairing_non_degenerate}
	The pairing $\langle \cdot, \cdot \rangle_{\varphi}$ is non degenerate on the left. 
\end{coro}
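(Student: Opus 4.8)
The plan is to use freeness of $\calH_n$ together with the dual family produced in Proposition~\ref{P:dual_families} to read off the coefficients of a left-null class. Concretely, suppose $c\in\calH_n$ satisfies $\langle c,d\rangle_{\varphi}=0$ for every $d\in\dualH_n$. By Theorem~\ref{thm_structure_twisted_homology} we may write uniquely
\[
c=\sum_{(\bfa,\bfb)} \lambda_{(\bfa,\bfb)}\,\hat{\bfsimp}(\bfa,\bfb),\qquad \lambda_{(\bfa,\bfb)}\in\Z[G],
\]
the sum running over partitions $\bfa=(a_1,\ldots,a_g),\bfb=(b_1,\ldots,b_g)\in\N^{\times g}$ with $a_1+b_1+\cdots+a_g+b_g=n$. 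The goal is to show every $\lambda_{(\bfa,\bfb)}$ vanishes by pairing $c$ against the dual classes $\hat{\bfsimp}^{\dagger}(\bfa',\bfb')$.

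First I would record the (entirely routine) linearity properties of $\langle\cdot,\cdot\rangle_{\varphi}$ needed to make this work. From the defining formula $\langle c,d\rangle_{\varphi}=\sum_{h\in G}\langle c,h\cdot d\rangle\,h$, a reindexing of the sum by deck transformations gives, for $g\in G$,
\[
\langle g\cdot c,d\rangle_{\varphi}=g\,\langle c,d\rangle_{\varphi},\qquad \langle c,g\cdot d\rangle_{\varphi}=\langle c,d\rangle_{\varphi}\,g^{-1},
\]
so the pairing is $\Z[G]$-linear on the left and conjugate-linear (with respect to the anti-involution $*$) on the right; in particular it is additive in each slot. Consequently
\[
\langle c,\hat{\bfsimp}^{\dagger}(\bfa',\bfb')\rangle_{\varphi}
=\sum_{(\bfa,\bfb)}\lambda_{(\bfa,\bfb)}\,\langle \hat{\bfsimp}(\bfa,\bfb),\hat{\bfsimp}^{\dagger}(\bfa',\bfb')\rangle_{\varphi}
=\lambda_{(\bfa',\bfb')},
\]
where the last equality is exactly Proposition~\ref{P:dual_families} (the Kronecker symbol for lists kills all but one term). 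Since the left-hand side is assumed to be $0$ for every $(\bfa',\bfb')$, we conclude $\lambda_{(\bfa',\bfb')}=0$ for all $(\bfa',\bfb')$, hence $c=0$.

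There is essentially no serious obstacle here: the statement is a formal consequence of freeness of $\calH_n$ (Theorem~\ref{thm_structure_twisted_homology}) and the existence of a dual family inside $\dualH_n$ (Proposition~\ref{P:dual_families}). The only point deserving care is the bookkeeping of the twisted linearity above — in particular keeping track of the anti-involution $*$ on the right-hand slot — and the observation that the classes $\hat{\bfsimp}^{\dagger}(\bfa,\bfb)$ genuinely live in $\dualH_n$ (singular, compactly supported homology) so that they are legitimate test elements for the pairing; both were already set up in Section~\ref{sec:inter_form_def}. One may optionally remark that the same argument does \emph{not} immediately give non-degeneracy on the right, since the $\hat{\bfsimp}^{\dagger}(\bfa,\bfb)$ are only known to form a dual family and not a priori a $\Z[G]$-basis of $\dualH_n$.
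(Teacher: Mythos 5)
Your argument is correct and is precisely the one the paper intends: the corollary is stated there as a ``straightforward consequence'' of the freeness of $\calH_n$ with the basis $\hat{\bfsimp}(\bfa,\bfb)$ (Theorem~\ref{thm_structure_twisted_homology}) admitting the dual family $\hat{\bfsimp}^{\dagger}(\bfa,\bfb)$ (Proposition~\ref{P:dual_families}), and you have simply spelled out the coefficient-extraction and the twisted linearity that make this work. No gaps.
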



Using the fact that the Borel--Moore homology is fully concentrated in its middle dimension, and by application of the universal coefficients spectral sequence, one can show that actually the pairing is perfect, however we won't use it in the present paper. We conclude by giving a formula to compute signs of intersections in the precise case where $G= \mathbb{H}_g$. Hence, we replace $\varphi$ by:
\[
\varphi_n^{\Heis_g} : \pi_{n,g} \to \mathbb{H}_g 
\]
and the signs $\varepsilon_{\underline{x}}$ appearing in the following lemma are only involved in the pairing $\langle \cdot , \cdot \rangle_{\varphi_n^{\mathbb{H}_g}}$.

\begin{lemma}
	One has:
	\[
	\varepsilon_{\underline{x}} = \varepsilon_{x_1} \cdots \varepsilon_{x_n} \eta(h_{\underline{x}})
	\]
	where $\varepsilon_{x_i}$ is the sign of intersection in $\varSigma_{g,1}$ between the arcs supporting $\bfsimp$ and $\bfsimp_+^{[\bfk_+]}$ over which $x_i$ is lying, and:
	\[
	\begin{array}{rcl}
	\eta : \Heis_g & \to & \lbrace \pm 1 \rbrace \\
	g \in \Heis_g & \mapsto & 1 \text{ if } g \text{ is a generator that is not } \sigma \\
	\sigma & \mapsto & -1
	\end{array} 
	\]
\end{lemma}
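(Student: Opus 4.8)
The plan is to reduce the statement to the computation of a single transverse intersection sign inside $\Conf_n(\varSigma_{g,1})$ itself, to factor that sign into a product of local contributions and one global permutation contribution, and then to recognise the permutation contribution as $\eta(h_{\underline{x}})$.

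First I would use that the covering $\hat p\colon \widehat{\Conf}_n(\varSigma_{g,1})\to\Conf_n(\varSigma_{g,1})$ is a local diffeomorphism compatible with the fixed orientations, so $\varepsilon_{\underline{x}}$ equals the sign of the transverse intersection at $\underline{x}=\{x_1,\dots,x_n\}$ between the images of the product of simplices $\bfsimp^{\times\bfk}$ and of the hypercube $\bfsimp_+^{[\bfk_+]}$, viewed as middle-dimensional submanifolds of $\Conf_n(\varSigma_{g,1})$. Using the canonical splitting $T_{\underline{x}}\Conf_n(\varSigma_{g,1})=\bigoplus_{i=1}^n T_{x_i}\varSigma_{g,1}$ (well defined after a local ordering of the $x_i$, and orientation-coherent because transposing two $2$-dimensional summands is orientation preserving), the tangent space to $\bfsimp^{\times\bfk}$ is $\bigoplus_i\R v_i$ and that to $\bfsimp_+^{[\bfk_+]}$ is $\bigoplus_i\R w_i$, where $v_i$ (resp.\ $w_i$) is tangent at $x_i$ to the arc of $\bfsimp$ (resp.\ of $\bfsimp_+^{[\bfk_+]}$) through $x_i$. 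Reading the orientation of $\Conf_n(\varSigma_{g,1})$ off block by block --- and normalising the orientations of the two chains exactly as in \cite[Appendix~B]{BPS21}, i.e.\ so that Proposition~\ref{P:dual_families} returns $+1$ --- I expect to obtain
\[
\varepsilon_{\underline{x}}=\operatorname{sgn}(\pi_{\underline{x}})\cdot\varepsilon_{x_1}\cdots\varepsilon_{x_n},
\]
where $\varepsilon_{x_i}=\operatorname{sgn}(v_i,w_i)$ is the local sign in the oriented plane $T_{x_i}\varSigma_{g,1}$ (the quantity in the statement) and $\pi_{\underline{x}}\in\Sn$ is the permutation comparing the ordering of $x_1,\dots,x_n$ dictated by $\bfsimp^{\times\bfk}$ with the one dictated by $\bfsimp_+^{[\bfk_+]}$.

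Next I would identify $\operatorname{sgn}(\pi_{\underline{x}})$ with $\eta(h_{\underline{x}})$. By construction $\delta_{\underline{x}}$ is the concatenation of a path from $\underline{\xi}$ to $\underline{x}$ running first along $\thread$ and then along $\bfsimp$, followed by a path from $\underline{x}$ back to $\underline{\xi}$ along $\bfsimp_+^{[\bfk_+]}$; tracking the labels of the base-point coordinates along the two halves produces exactly the two orderings of $x_1,\dots,x_n$ above, so the permutation that $\delta_{\underline{x}}$ induces on the $n$ strands is $\pi_{\underline{x}}$. Hence $\operatorname{sgn}(\pi_{\underline{x}})$ is the image of $\delta_{\underline{x}}$ under the homomorphism $\pi_{n,g}\to\Sn\to\{\pm1\}$ (strand permutation followed by signature). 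It then remains to check that this homomorphism equals $\eta\circ\varphi_n^{\Heis_g}$: first, $\eta$ is a well-defined homomorphism $\Heis_g\to\{\pm1\}$ since it kills every defining relator (commutators go to $1$ and $\eta([a_i,b_j]\sigma^{-2\delta_{ij}})=(-1)^{-2\delta_{ij}}=1$), so both maps are homomorphisms $\pi_{n,g}\to\{\pm1\}$, and by \cite{BGG11} it suffices to compare them on the generators $\alpha_i,\beta_i$ ($1\le i\le g$) and $\sigma_k$ ($1\le k\le n-1$); since $\alpha_i,\beta_i$ do not permute strands both maps send them to $1=\eta(a_i)=\eta(b_i)$, while $\sigma_k$ transposes strands $k$ and $k+1$, so both send it to $-1=\eta(\sigma)$. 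Combining this with Lemma~\ref{L:monomials_in_pairing} ($h_{\underline{x}}=\varphi_n^{\Heis_g}(\delta_{\underline{x}})$) and the displayed formula gives $\varepsilon_{\underline{x}}=\varepsilon_{x_1}\cdots\varepsilon_{x_n}\,\eta(h_{\underline{x}})$.

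The step I expect to be the real work is the orientation bookkeeping in the first display: with the naive product orientation on $\Conf_n(\varSigma_{g,1})$ one picks up a spurious global sign (a power of $-1$ depending only on $n$, arising when the two $n$-frames are interleaved against an interleaved product orientation), so one must pin down the orientation conventions --- consistently with \cite[Appendix~B]{BPS21} and Proposition~\ref{P:dual_families} --- so that this spurious sign is trivial; and one must verify carefully that the two point-orderings really are the transports of the standard ordering of $\underline{\xi}$ along the two halves of $\delta_{\underline{x}}$, so that the residual permutation is genuinely the braid permutation of $\delta_{\underline{x}}$ and not some twist of it.
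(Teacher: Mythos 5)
Your argument is correct in outline, but it is worth knowing that the paper does not actually prove this lemma: its entire proof is the single sentence that the formula is ``an immediate generalization of the formula given in [BPS21, Appendix~B]''. So you have supplied the argument that the paper delegates to a citation. What you do is the natural way to establish it: reduce to a transverse intersection sign downstairs via the orientation-compatible covering, split $T_{\underline{x}}\Conf_n(\varSigma_{g,1})\cong\bigoplus_i T_{x_i}\varSigma_{g,1}$ (legitimately, since permuting oriented $2$-dimensional blocks preserves orientation), factor the sign as $\operatorname{sgn}(\pi_{\underline{x}})\prod_i\varepsilon_{x_i}$, and then identify $\operatorname{sgn}(\pi_{\underline{x}})$ with $\eta(h_{\underline{x}})$ by checking that $\eta$ kills all defining relators of $\Heis_g$ and that $\eta\circ\varphi_n^{\Heis_g}$ agrees with the strand-permutation signature on the generators $\alpha_i,\beta_i,\sigma_k$ of $\pi_{n,g}$. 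That last comparison of characters, combined with Lemma~\ref{L:monomials_in_pairing}, is exactly the content hiding behind the $\sigma\mapsto-\sigma$ substitution in Equation~\eqref{E:formula_for_the_pairing}. The two points you flag as delicate are indeed the only real content beyond bookkeeping: the global sign $(-1)^{n(n-1)/2}$ from interleaving the two $n$-frames, which must be absorbed into the orientation conventions (and is, once one normalizes so that Proposition~\ref{P:dual_families} yields $+1$, consistently with [BPS21, Appendix~B]), and the identification of the residual permutation with the braid permutation of $\delta_{\underline{x}}$, which holds because the two orderings of the points of $\underline{x}$ are by construction the transports of the labelled base point along the two halves of $\delta_{\underline{x}}$. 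I see no gap; your write-up is more informative than the paper's.
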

\begin{proof}
	This is an immediate generalization of the formula given in \cite[Appendix B]{BPS21}
\end{proof}

Finally this last lemma and Lemma \ref{L:monomials_in_pairing} imply:
\begin{equation}\label{E:formula_for_the_pairing}
\langle \hat{\bfsimp}^{(\bfk)}_{\thread}, \hat{\bfsimp}_+^{[\bfk_+]} \rangle_{\varphi_n^{\Heis_g}} = \sum_{\underline{x} \in P} \varepsilon_{x_1} \cdots \varepsilon_{x_n} \varphi_n^{\Heis_g}(\delta_{\underline{x}})_{|\sigma = -\sigma}, 
\end{equation}

where $\varphi_n^{\Heis_g}(\delta_{\underline{x}})_{|\sigma = -\sigma}$ is the element $\varphi^{\Heis_g}(\delta_{\underline{x}}) \in \Heis_g$ to which we apply the involution of $\Z[\Heis_g]$ that fixes all generators of $\Heis_g$ except $\sigma$ that is sent to $-\sigma$. 

\subsection{Alternative construction of the representations}\label{S:alternative_construction}
In this paragraph, we will describe another way of defining the homological representations. It has some similarities with the way quantum representations of mapping class groups are constructed. Indeed, we will rebuild the representations from the data of the twisted intersection form defined in the previous section, using the spirit of the \textit{universal construction} similar to the way Witten-Reshetikhin-Turaev representations are defined in \cite{BHMV}. This can be used to show that some homological representations are isomorphic by showing that the involved pairing are isomorphic, which is typically easier to compute (see Theorem \ref{thm:pure_braid_group}, Theorem \ref{thm:pure_braid_group2} and Theorem \ref{thm:pure_braid_group3} for examples). 

Let us fix a group $G$ and a morphism $\varphi^G: \pi_{n,g} \longrightarrow G$ such that the kernel is preserved by $\Mod(\Sigma_{g,1})$-action, and as in Section \ref{sec_uncross}, let $K_G$ be the kernel of the map $\Mod(\Sigma_{g,1})\longrightarrow \Aut(G).$

Let $V_n$ be the free $\Z[G]$-module spanned by $n$-diagrammatic twisted classes defined in Def.~\ref{def:diagrammatic_twisted_class}. Let $V_n^{\dagger}$ be the free $\Z[G]$-module spanned by dual $n$-diagrammatic classes : they are a disjoint unions of $n\ge 1$ simple arcs on $\varSigma_{g,1}$ with boundary on $\partial^+ \varSigma_{g,1}$. We assume each arc of such a dual twisted class has an end in one of the $\xi_i$'s. We can define a $\Z[G]$-linear form $\langle \cdot , \cdot \rangle_G$ on $V\times V^{\dagger}$ by the formula from Equation~\eqref{E:formula_for_the_pairing}. That formula can be directly adapted to the present case by similarly defining the paths and signs involved. In \eqref{E:formula_for_the_pairing}, it is given for a particular twisted class and a particular dual one but the definition persists. 

\begin{equation} \langle \Gamma,\Gamma^{\dagger} \rangle= \underset{\underline{x} \in I(\Gamma,\Gamma^{\dagger})}{\sum}  \varepsilon_{\underline{x}} \varphi^{G}(\delta_{\underline{x}}) \label{eq:twisted_intersection} 
\end{equation}
where $I(\gamma,\delta)$ stands for the $n$-uple of intersection points of $\Gamma$ and $\Gamma^{\dagger}$.

The definition of $\langle \cdot , \cdot \rangle_G$ on general elements of $V_n$ and $V_n^{\dagger}$ follows by multilinearity. Notice $\Mod(\Sigma_{g,1})$ has a natural action on isotopy classes of colored (resp. dual) $n$-twisted classes, sending an arc to its image by a representative homeomorphism, as the boundary is pointwise fixed by such a homeomorphism we stay in the same class of arcs.

\begin{proposition}\label{P:Universal_construction_of_homol_reps}
	The quotient $H_{n,g}=V_n /\Ker(\langle \cdot, \cdot \rangle_G)$ of $V_n$ by the left kernel of $\langle \cdot, \cdot \rangle_G$ is a free $\Z[G]$-module isomorphic to $\calH_n^G,$ and the natural action of $\Mod(\Sigma_{g,1})$ on $H_{n,g}$ induces a representation $ K_G \longrightarrow \Aut_{\Z[G]}(H_{n,g})$ which is isomorphic to $\rho_n^G.$
\end{proposition}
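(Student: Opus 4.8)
The plan is to construct an explicit $\Z[G]$-linear surjection $\Phi\colon V_n \twoheadrightarrow \calH_n^G$ whose kernel is exactly the left kernel of $\langle\cdot,\cdot\rangle_G$ and which intertwines the natural $\Mod(\Sigma_{g,1})$-actions on source and target; the proposition will then follow by transport of structure. First I would define $\Phi$ on the free generators of $V_n$ by sending a diagrammatic twisted class $\bfsimp^{(\bfk)}_{\thread}$ to the Borel--Moore class it represents in $\calH_n^G$, as constructed in Section~\ref{S:twisted_homologies_structure} following \cite[Sec.~2.2]{JulesMarco}, and extend $\Z[G]$-linearly. Surjectivity of $\Phi$ is immediate from Theorem~\ref{thm_structure_twisted_homology}, since the distinguished basis $\{\hat{\bfsimp}(\bfa,\bfb)\}$ is itself made of diagrammatic twisted classes and hence lies in $\im\Phi$. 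Symmetrically I would introduce the $\Z[G]$-linear map $\Psi\colon V_n^\dagger \to \dualH_n$ sending each dual diagrammatic class (a family of $n$ disjoint arcs with one endpoint at each $\xi_i$) to the homology class of the corresponding hypercube embedding $I^n\to\Conf_n(\Sigma_{g,1})$, exactly as in Sec.~\ref{sec:inter_form_def}; its image contains the dual family $\{\hat{\bfsimp}^\dagger(\bfa,\bfb)\}$.

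The heart of the argument is the compatibility of the two pairings:
\[
\langle v,w\rangle_G \;=\; \langle \Phi(v),\Psi(w)\rangle_\varphi \qquad\text{for all } v\in V_n,\ w\in V_n^\dagger .
\]
After isotoping the supporting arcs of $v$ and $w$ into generic transverse position, both sides are evaluated by the same finite sum over intersecting configurations $\underline x$: the topological pairing equals $\sum_{\underline x}\varepsilon_{\underline x} h_{\underline x}$, and Lemma~\ref{L:monomials_in_pairing}, together with the sign rule underlying \eqref{E:formula_for_the_pairing}, identifies $h_{\underline x}=\varphi^G(\delta_{\underline x})$ and $\varepsilon_{\underline x}$ with the sign in the combinatorial formula \eqref{eq:twisted_intersection}; the two expressions then coincide by $\Z[G]$-bilinearity. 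I expect this to be the main obstacle, because Lemma~\ref{L:monomials_in_pairing} and the sign analysis were stated for specific basis classes and, in the sign part, only for $G=\Heis_g$, so one must check that they hold verbatim for arbitrary arcs, arbitrary threads, and an arbitrary local system $\varphi$.

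Granting compatibility, I would deduce $\Ker(\langle\cdot,\cdot\rangle_G)=\Ker\Phi$ inside $V_n$: if $\Phi(v)=0$ then $\langle v,w\rangle_G=\langle 0,\Psi(w)\rangle_\varphi=0$ for every $w$; conversely, expanding $\Phi(v)=\sum_{(\bfa,\bfb)}c_{\bfa,\bfb}\,\hat{\bfsimp}(\bfa,\bfb)$ in the basis of Theorem~\ref{thm_structure_twisted_homology} and using Proposition~\ref{P:dual_families} with $\Z[G]$-linearity gives $\langle v,\hat{\bfsimp}^\dagger(\bfa,\bfb)\rangle_G=c_{\bfa,\bfb}$, so if all pairings against $V_n^\dagger$ vanish then $\Phi(v)=0$. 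Consequently $\Phi$ induces an isomorphism $H_{n,g}=V_n/\Ker(\langle\cdot,\cdot\rangle_G)\xrightarrow{\ \sim\ }\calH_n^G$ of $\Z[G]$-modules, and in particular $H_{n,g}$ is free.

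Finally, for the $\Mod(\Sigma_{g,1})$-equivariance, I would observe that a homeomorphism representing $f\in\Mod(\Sigma_{g,1})$ carries the multisimplex, partition and thread data of a diagrammatic class onto that of its image, so by naturality of the construction of Section~\ref{S:twisted_homologies_structure} one has $\Phi(f\cdot v)=f_G(\Phi(v))$; thus $\Phi$ intertwines the permutation action on $V_n$ with the crossed action on $\calH_n^G$. In particular $\Ker\Phi=\Ker(\langle\cdot,\cdot\rangle_G)$ is stable under $\Mod(\Sigma_{g,1})$ (one can also see this directly from $K_G$-invariance of the pairing, Proposition~\ref{P:properties_twisted_intersection}(1)), the crossed action descends to $H_{n,g}$, and its restriction to $K_G$ is $\Z[G]$-linear by definition of $K_G$. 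Transporting this representation along the isomorphism $H_{n,g}\cong\calH_n^G$ recovers precisely $\rho_n^G$, which completes the plan.
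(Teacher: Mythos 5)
Your proposal is correct and follows essentially the same route as the paper's proof: the key observation that the combinatorial formula computes the topological twisted intersection pairing, combined with left non-degeneracy (your use of Proposition~\ref{P:dual_families} is exactly how Corollary~\ref{C:pairing_non_degenerate} is obtained) and the spanning statement of Theorem~\ref{thm_structure_twisted_homology}, then equivariance to identify the representation with $\rho_n^G$. The extra care you take in spelling out the maps $\Phi$, $\Psi$ and in flagging that the sign/monomial analysis must be checked for general arcs and local systems is a welcome refinement, but not a different argument.
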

\begin{proof}
	The formula $\langle \cdot , \cdot \rangle_G$ actually computes the twisted intersection of the homology classes naturally associated with an $n$-diagrammatic twisted class on the one hand and with a dual one on the other. The dual one consists in assigning an embedding of a hypercube in $(\Conf_n(\varSigma_{g,1}) , \Conf_n^+(\varSigma_{g,1}) )$ with the product of arcs, and choosing the only lift of it containing $\hat{\underline{\xi}}$.
	
	The pairing is hence invariant under ambient isotopies of diagrams. Note that one could also check this from a direct computation from Equation \ref{eq:twisted_intersection}, showing that removing a bigon between the two $n$-classes does not change the form $\langle \cdot , \cdot \rangle_G.$ (We will not attempt to do this computation). A simple restatement of Proposition~\ref{P:properties_twisted_intersection}-(1) is that the current $\langle \cdot , \cdot \rangle_G$ is equivariant under the action of $\Mod(\varSigma_{g,1})$, and again alternatively, one can see it directly from Equation \ref{eq:twisted_intersection}. Since the pairing $\langle \cdot, \cdot \rangle_G$ is non degenerate on the left (Coro.~\ref{C:pairing_non_degenerate}), and since the homology classes of $n$-twisted classes span $\mathcal{H}_n$ (Theorem~\ref{thm_structure_twisted_homology}), the map that sends a twisted $n$-class to the corresponding homology class in $\mathcal{H}_n$ induces an isomorphism $V_n/\Ker \langle \cdot,\cdot \rangle \simeq \mathcal{H}_n.$ The fact that the $\Mod(\Sigma_{g,1})$-action on diagrammatic twisted $n$-classes induces a representation $K_G \longrightarrow \Aut_{\Z[G]}(\calH_n)$ follows from the fact that $\langle \cdot , \cdot\rangle_G$ is $K_G$-invariant, and this representation is isomorphic to $\rho_n^G$ since the matrix coefficients of $\rho_n^G(f)$ for any $f\in \mathrm{Mod}(\Sigma_{g,1})$ are given by the intersections of the classes $f_*(\mathbf{\hat{\Gamma}(a,b)})$ and $\mathbf{\Gamma^{\dagger}(a',b')}.$  
\end{proof}

\subsection{Intersection of arcs in a surface and a faithfulness criterion}
\label{sec:criterion}

Let $\Gamma_-$ and $\Gamma_+$ be two oriented arcs of the surface $\varSigma_{g,1}$, the first one being relative to $\partial^- \varSigma_{g,1}$ the second one relative to $\partial^+ \varSigma_{g,1}$. Namely they are proper embeddings of $I$ into $\varSigma_{g,1}$ with ends in $\partial^- \varSigma_{g,1}$ resp. $\partial^+ \varSigma_{g,1}$. We furthermore assume $\Gamma_+$ has its starting end in $\xi_1$ the first coordinate of the base point standing in $\partial^+ \varSigma_{g,1}$. 

For $n\in \N$ , we are going to assign them $\Gamma_-^{(n)}$ and $\Gamma_+^{[n]}$ respectively which are classes in $\calH_n$ and $\dualH_n$ respectively. We define the \textit{$G$-twisted intersection pairing} between two such arcs as follows:
\[
\langle \Gamma_-, \Gamma_+ \rangle_{G,n} := \langle  \Gamma_-^{(n)} , \Gamma_+^{[n]} \rangle_{\varphi}.
\]
We define the classes of interest:
\begin{itemize}
	\item The class $\Gamma_-^{(n)}$ is the diagrammatic twisted class (Def.~\ref{def:diagrammatic_twisted_class}) made of the $1$-multisimplex $\Gamma_-$ labeled by $n$, to which we add a thread $\tilde{x}_{\Gamma_-}$ that is given by the unique $n$-multipath going from $\lbrace \xi_1 , \ldots , \xi_n \rbrace$ to the arc $\Gamma_-$ following the boundary counterclockwise.
	\item The class $\Gamma_+^{[n]}$ is the $n$-parallelized $\Gamma_+$. 
\end{itemize}

\begin{definition}\label{D:definite_intersection}
	For $n \in \N$, we say that the pairing of arcs $\langle \cdot , \cdot \rangle_{G,n}$ is \textit{definite} if we have:
	\[
	\forall \Gamma_-,\Gamma_+, \ \langle \Gamma_- , \Gamma_+ \rangle_{G,n} = 0 \iff \Gamma_- \pitchfork \Gamma_+ = 0
	\]
	where $\Gamma_-,\Gamma_+$ are relative arcs as described at the beginning of this section (for which the pairing is well defined), and $\Gamma_- \pitchfork \Gamma_+ = 0$ means that both arcs have a null geometric intersection, namely they can be isotoped off one of each other (by an isotopy fixing ends). 
	
\end{definition}
Roughly speaking the $G$-twisted intersection pairing is definite if and only if it detects the geometric intersection between arcs.
In the above definition the converse implication is easy and true for any intersection pairing. If the intersection pairing turns out to be definite, then Theorem \ref{T:definite_implies_faithful} below will show that the corresponding homological representation of the mapping class group of the surface is faithful. First we need the following lemma: 


\begin{lemma}\label{lem_f_not_trivial_makes_arc_intersect}
	Let $\Sigma$ be a compact connected oriented surface of genus $g\geq 2.$
	\begin{itemize}
		\item Let $f\in \Mod(\Sigma)$ be a non central mapping class on $\Sigma$. Then there exists a simple closed curve $c$ such that $c \pitchfork f(c) \neq 0$. Moreover $c$ can be chosen non-separating or separating of a given genus. 
		\item If $\Sigma=\Sigma_{g,1},$ and $f \in \Mod(\Sigma)$ is not trivial, then there exists an arc (rel. to boundary) $\Gamma$ such that $\Gamma \pitchfork f(\Gamma) \neq 0$. Again, $\Gamma$ can be chosen non separating, or separating of any given genus.
	\end{itemize}
\end{lemma}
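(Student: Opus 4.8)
The plan is to reduce everything to a statement about the action of $f$ on the first homology of the surface together with a careful use of the change-of-coordinates principle and the classification of mapping classes. I will treat the two bullets in parallel, since the arc case for $\Sigma_{g,1}$ and the curve case are closely related: an arc $\Gamma$ with endpoints on $\partial\Sigma_{g,1}$ determines, after doubling or after joining the endpoints along the boundary, an associated simple closed curve, and geometric intersection of arcs is detected by geometric intersection of these associated curves. So I would first establish the curve statement and then transfer it.

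\textbf{Step 1 (non-separating case via homology).} Suppose first that $f$ acts non-trivially on $H_1(\Sigma;\Z)$. Since $f_*$ is a symplectic automorphism, if $f_*\neq \mathrm{id}$ there is a homology class $x$ with $f_*(x)\neq x$, hence a primitive class $x$ with $\widehat\omega(x, f_*(x))\neq 0$ (using non-degeneracy of the intersection form: if $f_*(x)-x\neq 0$ pair it with something). Realize $x$ by a non-separating simple closed curve $c$; then the algebraic intersection number of $c$ and $f(c)$ is nonzero, so \emph{a fortiori} $c\pitchfork f(c)\neq 0$. This already handles all $f\notin \mathcal I(\Sigma)$, and gives a non-separating $c$.

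\textbf{Step 2 (the Torelli case).} The remaining case is $f\in \mathcal I(\Sigma)$, $f$ non-central (for closed $\Sigma$ of genus $g\ge 2$ the center of $\Mod(\Sigma)$ is trivial, and for $\Sigma_{g,1}$ the center is generated by $\tau_\delta$; in the arc statement we are told $f$ is merely non-trivial, but $\tau_\delta$ fixes every arc rel.\ $\partial$, so WLOG $f$ is non-central in either case). The key input is the following fact, which I would cite or prove: if $f\in\Mod(\Sigma)$ is non-central, then $f$ does not fix the isotopy class of every simple closed curve. Indeed, if $f$ fixed every isotopy class of simple closed curve, it would act trivially on the curve complex, and by Ivanov's theorem (for $g\ge 3$) or direct arguments in low genus, $f$ would be central (the identity, or $\tau_\delta$ in the boundary case). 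So pick $c_0$ with $f(c_0)\neq c_0$ up to isotopy. If $c_0\pitchfork f(c_0)\neq 0$ we are done with this $c_0$; the trouble is when $c_0$ and $f(c_0)$ are disjoint but non-isotopic. In that case I would use the connectedness of $\Sigma\setminus(c_0\cup f(c_0))$-complementary regions to slide: there is a curve $c$ that intersects $c_0$ and such that $f(c)$ is forced to intersect $c_0\cup f(c_0)$, hence eventually one engineers $c\pitchfork f(c)\neq 0$. Concretely, the cleanest route is: since $f$ is non-central there exists a pair of curves $a,b$ with $\hat i(a,b)=1$ such that $\{f(a),f(b)\}$ is not isotopic to $\{a,b\}$; inspecting the subsurface filled by $a\cup b$ (a one-holed torus) and using that $f$ is not supported there, one produces the desired $c$. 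This step is where I expect to spend real effort, and it is precisely the point for which the acknowledgment thanks I.~Agol — so I would follow that argument: embed a handle, use a curve running through it, and observe that its image cannot be pushed off itself because $f$ moves the handle.

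\textbf{Step 3 (prescribing the topological type).} To get $c$ (resp.\ $\Gamma$) non-separating \emph{or} separating of any prescribed genus $h$ with $1\le h\le \lfloor g/2\rfloor$ (resp.\ the relevant range for arcs), I would use the change-of-coordinates principle: once we have \emph{some} $c$ with $c\pitchfork f(c)\neq 0$, stabilize by taking a curve $c'$ of the desired type that contains $c$'s essential behavior — e.g.\ band-sum $c$ with small curves in handles disjoint from the support of the intersection, so that $c'$ still satisfies $c'\pitchfork f(c')\neq 0$ while having the prescribed separating genus. More robustly: run Steps 1–2 but at the outset fix a curve $c$ of the desired topological type and argue that the \emph{set} of such curves cannot all be fixed by a non-central $f$ (again via the action on the relevant subcomplex of the curve complex, which is connected for $g\ge 2$, $g\ge 3$ as needed); then repeat the bigon/slide argument within that family.

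\textbf{Step 4 (from curves to arcs on $\Sigma_{g,1}$).} For the second bullet, given non-trivial $f\in\Mod(\Sigma_{g,1})$, cap the boundary to get $\widehat\Sigma_{g}$ with a marked point; apply Steps 1–3 to get a curve $c$ with $c\pitchfork f(c)\neq 0$ in $\widehat\Sigma_g$, chosen of the desired topological type and, by general position, avoiding the marked point and a collar. Cut $\widehat\Sigma_g$ along an arc from the marked point to $c$ to reopen the boundary and convert $c$ into an arc $\Gamma$ rel.\ $\partial\Sigma_{g,1}$; since geometric intersection number is a topological invariant insensitive to such surgery away from the support, $\Gamma\pitchfork f(\Gamma)\neq 0$. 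Alternatively, and more directly, one works in $\Sigma_{g,1}$ throughout: an essential arc rel.\ $\partial$ and its ``double'' curve have the same stabilizer behavior, and the curve-complex argument of Step 2 has an arc-complex analogue (the arc complex of $\Sigma_{g,1}$ is connected and $\Mod(\Sigma_{g,1})$ acts on it with a rigidity statement), so the non-trivial $f$ moves some arc of the prescribed type, and the bigon-removal/slide argument finishes.

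\textbf{Main obstacle.} The genuinely delicate point is Step 2: upgrading ``$f$ moves \emph{some} curve'' to ``$f$ moves \emph{some} curve so that the curve and its image intersect.'' Being moved but remaining disjoint from the image is exactly the scenario one must rule out by a clever choice, and the honest way through is the Agol-style handle argument rather than pure curve-complex formalism. Everything else — Step 1, the transfer to arcs, and the prescription of topological type — is routine once that core lemma is in hand.
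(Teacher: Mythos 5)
There is a genuine gap, and it sits exactly where you locate it: Step 2. Your proposal never actually rules out the scenario in which $f$ moves some isotopy class of curve but every curve is disjoint from its image; the phrases ``one engineers the desired $c$'' and ``follow the Agol-style handle argument'' are placeholders, not arguments. Observe that the hypothesis you must exclude is precisely ``$d(c,f(c))\le 1$ for every vertex $c$ of the curve graph,'' so the ``pure curve-complex formalism'' you set aside is in fact the whole proof: the paper quotes the fact that $\sup\{d(c,f(c))\mid c\in\mathcal{C}(\Sigma)\}=+\infty$ for every non-central $f$ (Rafi--Schleimer, proof of their Corollary 1.1), picks $c$ with $d(c,f(c))\ge 4$, and then gets the prescribed topological type for free: any curve $c'$ of the desired type at distance $1$ from $c$ satisfies $d(c',f(c'))\ge 2$ by the triangle inequality, i.e.\ $c'\pitchfork f(c')\neq 0$. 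Your Steps 3 and 4, also left as sketches, are likewise absorbed by this mechanism (for arcs the paper collapses $\partial\Sigma_{g,1}$, takes a curve downstairs with intersection number larger than $2$ with its image, and lifts it to an arc, losing at most one intersection point).

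Two of your intermediate claims are moreover false. First, in Step 1 the deduction ``$f_*\neq\mathrm{id}$ implies some $x$ has $\widehat\omega(x,f_*x)\neq 0$'' does not follow from non-degeneracy (pairing $f_*x-x$ against $y$ controls $\widehat\omega(y,f_*x)$, not $\widehat\omega(y,f_*y)$), and it is simply wrong: $f_*=-\mathrm{id}$ is symplectic with $\widehat\omega(x,f_*x)=0$ for all $x$, and it is realized by a hyperelliptic involution, which is non-central on closed surfaces of genus $\ge 3$. So the homological shortcut does not dispose of all non-Torelli classes and you are thrown back on the unproved Step 2 more often than you acknowledge. Second, the parenthetical assertion that $\tau_\delta$ fixes every arc rel $\partial$ is false in the convention relevant to this paper (isotopies fixing endpoints, as required for the associated homology classes); indeed, were it true, the second bullet of the lemma would itself be false for $f=\tau_\delta$. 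The paper instead isolates the subgroup generated by $\tau_\delta$ (and the elliptic involution when $g=2$) as a separate, easy case in which one exhibits a suitable arc by hand.
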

\begin{proof}
	The proof of the first point relies on curve complex machinery. Let $\mathcal{C}(\Sigma)$ be the curve graph of $\Sigma,$ (see \cite{Harvey} for a definition) and let $d$ be the distance in the curve graph. Note that any simple closed curve $c$ is at distance at most $1$ from a non separating (resp. separating of genus $k$) one $c'.$ If $d(c,f(c))\geq 4,$ then $d(c',f(c'))\geq 2$ by triangular inequality and the fact that $f$ induces an isometry of $\mathcal{C}(\Sigma).$ Hence, it is enough to show that for $f$ non central, then $d(c,f(c))\geq 4$ for some simple closed curve $c.$ However, $\sup \lbrace d(c,f(c)) \ | \ c\in \mathcal{C}(\Sigma) \rbrace=+\infty$ for any non central $f\in \mathrm{Mod}(\Sigma),$ see \cite[Proof of Corollary 1.1]{RafiSchleimer}.
	
	For the second point, identify $\Sigma_{g,0}$ with the quotient space $\Sigma_{g,1}/\partial \Sigma_{g,1}.$ If $f$ is not in the group generated by the Dehn twist along the boundary (and the elliptic involution if $g=2$) then $f\in \Mod (\Sigma_{g,1})$ induces a non central element of $\Mod (\Sigma_g).$ By the previous argument, there is a non separating curve $c$ in $\Sigma_g$ (resp. separating curve of genus $k$, where $1\leq k\leq g-1$ is fixed), such that the geometric intersection number $i(c,f(c))$ is larger than $2.$ (indeed, it can be picked arbitrarily large). Let $\gamma$ be an arc in $\Sigma_{g,1}$ that maps to $c$ under contraction of $\partial \Sigma_{g,1}.$ Then $i(\gamma,f(\gamma))\geq i(c,f(c))-1 >0$ (where the $-1$ accounts for the new point of intersection that might be created by contracting $\partial \Sigma_{g,1}.$)
	
	Finally, if $f$ is in the group generated by the Dehn twist along the boundary (and the elliptic involution if $g=2$), it is easy to produce arcs that satisfy the condition.
\end{proof}

\begin{theorem}\label{T:definite_implies_faithful}
	If $\langle \cdot , \cdot \rangle_{G,n}$ is definite, then the action of $K_{G} \subset \Mod(\varSigma_{g,1})$ on $\calH_n$ is faithful and its extension to an action of $\Mod(\varSigma_{g,1})$ on $\calH_n \otimes_{\Z[G]} \Z[G \rtimes M_{G}]$ is too. 
\end{theorem}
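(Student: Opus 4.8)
The plan is to deduce the second assertion from the first, and then to prove that $\rho^G_n$ is faithful on $K_G$ by a contradiction argument in the spirit of Bigelow. For the reduction, recall that $\calH_n$ is a free $\Z[G]$-module by Theorem~\ref{thm_structure_twisted_homology}, so Proposition~\ref{prop_uncross_in_general} applies and gives $\Ker(\rhoext^G_n)=\Ker(\rho^G_n)$; hence as soon as $\rho^G_n$ is faithful on $K_G$, so is the uncrossed extension $\rhoext^G_n$ of $\Mod(\varSigma_{g,1})$ acting on $\calH_n\otimes_{\Z[G]}\Z[G\rtimes M_G]$. Everything therefore reduces to showing $\Ker(\rho^G_n)=\{1\}$.

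Assume then that $f\in K_G$ acts trivially on $\calH_n$ and, towards a contradiction, that $f\neq\mathrm{id}$; we may assume $g\geq 2$ so that Lemma~\ref{lem_f_not_trivial_makes_arc_intersect} is available (the case $g=1$ is straightforward). First I would fix a homeomorphism representing $f$ that is the identity on a collar of $\partial\varSigma_{g,1}$, and apply Lemma~\ref{lem_f_not_trivial_makes_arc_intersect} to obtain a non-separating arc $\Gamma$ rel $\partial^-\varSigma_{g,1}$ with $\Gamma\pitchfork f(\Gamma)\neq 0$, placed in minimal position with $f(\Gamma)$. The crux is the following topological claim: there exists an arc $\Gamma_+$ rel $\partial^+\varSigma_{g,1}$ with $\Gamma_+\pitchfork\Gamma=0$ but $\Gamma_+\pitchfork f(\Gamma)\neq 0$. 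To prove it I would cut $\varSigma_{g,1}$ along $\Gamma$: the result $\varSigma_0$ is connected (as $\Gamma$ is non-separating) and contains $\partial^+\varSigma_{g,1}$ in its boundary, and $f(\Gamma)$ is cut into finitely many properly embedded sub-arcs of $\varSigma_0$, at least one of which, $\delta$, has an endpoint on one of the two copies of $\Gamma$ in $\partial\varSigma_0$ (because $\Gamma\pitchfork f(\Gamma)\neq 0$). Since $\Gamma$ and $f(\Gamma)$ are in minimal position, the bigon/half-bigon criterion forces $\delta$ to be essential in $\varSigma_0$; one then routes $\Gamma_+$ from $\partial^+\varSigma_{g,1}$, across $\delta$ once, and back to $\partial^+\varSigma_{g,1}$, staying inside $\varSigma_0$, and slides its endpoints along $\partial^+\varSigma_{g,1}$ so that it starts at $\xi_1$, so that the arc pairing $\langle\,\cdot\,,\Gamma_+\rangle_{G,n}$ is defined. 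I expect this topological claim — in particular the point that such a strand $\delta$ is essential and is crossed by an arc with both endpoints on $\partial^+\varSigma_{g,1}$ inside the cut surface — to be the main obstacle; it is the surface analogue of Bigelow's noodle-versus-fork geometric lemma.

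To finish, I would argue as follows. Because $f$ is the identity on a collar of the boundary, it fixes the base point $\underline\xi$ together with its chosen lift in the cover, and it carries the standard thread of $\Gamma$ (running counterclockwise along $\partial\varSigma_{g,1}$) to the standard thread of $f(\Gamma)$; hence $\rho^G_n(f)$ maps the diagrammatic class $\Gamma^{(n)}\in\calH_n$ to $f(\Gamma)^{(n)}$. As $\rho^G_n(f)=\mathrm{id}_{\calH_n}$ by hypothesis, this gives $f(\Gamma)^{(n)}=\Gamma^{(n)}$ in $\calH_n$, and pairing with $\Gamma_+^{[n]}\in\dualH_n$ yields $\langle f(\Gamma),\Gamma_+\rangle_{G,n}=\langle\Gamma,\Gamma_+\rangle_{G,n}$. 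On the other hand, definiteness of $\langle\,\cdot\,,\,\cdot\,\rangle_{G,n}$ (Definition~\ref{D:definite_intersection}) together with the topological claim forces $\langle\Gamma,\Gamma_+\rangle_{G,n}=0$ (since $\Gamma\pitchfork\Gamma_+=0$) while $\langle f(\Gamma),\Gamma_+\rangle_{G,n}\neq 0$ (since $f(\Gamma)\pitchfork\Gamma_+\neq 0$), a contradiction. Hence $f=\mathrm{id}$, so $\Ker(\rho^G_n)=\{1\}$ and, by the first paragraph, $\Ker(\rhoext^G_n)=\{1\}$, which proves the theorem.
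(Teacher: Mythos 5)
Your overall strategy --- reduce the second claim to the first via Proposition~\ref{prop_uncross_in_general}, then derive a contradiction by pairing a class fixed by $f$ against a dual class, using invariance of the pairing and definiteness --- is exactly the mechanism of the paper's proof, and your final computation is correct. The problem is the step you yourself flag as the main obstacle: the construction, by cutting along $\Gamma$, of an arc $\Gamma_+$ rel $\partial^+\varSigma_{g,1}$ with $\Gamma_+\pitchfork\Gamma=0$ and $\Gamma_+\pitchfork f(\Gamma)\neq 0$. As written this is a genuine gap. First, the essentialness of the chosen strand $\delta$ in $\varSigma_0$ is not fully justified: a strand whose endpoints lie on the two \emph{different} copies of $\Gamma$ can cut off a disk of $\varSigma_0$ without producing a bigon or half-bigon between $\Gamma$ and $f(\Gamma)$ in $\varSigma_{g,1}$, so minimal position alone does not settle it. Second, and more seriously, even granting that $\delta$ is essential in $\varSigma_0$, an arc $\Gamma_+$ meeting $\delta$ once need not satisfy $\Gamma_+\pitchfork f(\Gamma)\neq 0$: the relevant quantity is the minimal intersection over isotopies in all of $\varSigma_{g,1}$, which are allowed to push $\Gamma_+$ across $\Gamma$, and a single transverse intersection point with one strand of $f(\Gamma)$ may be removable by a half-bigon through $\partial\varSigma_{g,1}$ or by an isotopy leaving $\varSigma_0$. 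So the ``surface analogue of Bigelow's noodle-versus-fork lemma'' would require an actual proof, and it is the hardest point of your argument.

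The paper avoids this entirely by choosing the test arcs the other way around. Apply Lemma~\ref{lem_f_not_trivial_makes_arc_intersect} to get an arc $\Gamma_+$ rel $\partial^+\varSigma_{g,1}$ with $\Gamma_+\pitchfork f(\Gamma_+)\neq 0$, and let $\Gamma_-$ be a small parallel pushoff of $\Gamma_+$ inside a tubular neighborhood, with endpoints slid to $\partial^-\varSigma_{g,1}$. Then $\Gamma_-\pitchfork\Gamma_+=0$ is automatic, and $\Gamma_-\pitchfork f(\Gamma_+)=\Gamma_+\pitchfork f(\Gamma_+)\neq 0$ because the pushoff has the same intersection pattern with $f(\Gamma_+)$ as $\Gamma_+$ itself. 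The chain
$0=\langle\Gamma_-,\Gamma_+\rangle_{G,n}=\langle f(\Gamma_-),f(\Gamma_+)\rangle_{G,n}=\langle\Gamma_-,f(\Gamma_+)\rangle_{G,n}$
(invariance from Proposition~\ref{P:properties_twisted_intersection}, then triviality of $\rho_n^G(f)$ on $\calH_n$) then contradicts definiteness, with no cut-and-route argument needed. I recommend you replace your topological claim by this pushoff trick; the rest of your write-up, including the reduction of the uncrossed statement to Proposition~\ref{prop_uncross_in_general}, then goes through essentially verbatim.
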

\begin{proof}
	Let $f \in K_{G}$ be non trivial. Then there exists $\Gamma_+$ an arc relative to $\partial^+ \varSigma_{g,1}$ such that $\Gamma_+ \pitchfork f(\Gamma_+) \neq 0$, from Lemma~\ref{lem_f_not_trivial_makes_arc_intersect}. One can slide $\Gamma_+$ slightly along a small regular tubular neighborhood of itself in such a way that in the end we get an arc $\Gamma_-$ that is a \textit{parallel} of $\Gamma_+$ having the same intersection points with $f(\Gamma_+)$ and such that $\Gamma_- \pitchfork \Gamma_+ = 0$. Suppose $f$ acts trivially on $\calH_n$, then one has:
	\begin{align*}
	0= \langle \Gamma_- , \Gamma_+ \rangle_{G,n} & = \langle f(\Gamma_-) , f(\Gamma_+) \rangle_{G,n} \\
	& = \langle \Gamma_- , f(\Gamma_+) \rangle_{G,n},
	\end{align*}
	the first equality is Prop.~\ref{P:properties_twisted_intersection}, the second line is the triviality of the action of $f$ on $\calH_n$. Then $$\langle \Gamma_- , f(\Gamma_+) \rangle_{G,n} \neq 0$$
	if the pairing is definite. It is a contradiction. 
	
	The second part of the theorem is the persistence of the kernel established in Prop.~\ref{prop_uncross_in_general}.
\end{proof}

The next two sections are devoted to finding elements in the kernel of the pairing, namely pairs of intersecting arcs with $G$-twisted pairing zero when the surface is $\Sigma_{g,1}$.


\section{The single-point case}\label{S:single_point}

In this section, we denote by $\rho_1$ the representation arising from one point of configuration and the local system determined by $\varphi_1^{\Heis_g}:\pi_1(\Sigma_{g,1}) \longrightarrow \Heis_g.$ Similarly, we denote by $\rho_{1,k}$ the representation coming from the local system $\varphi_1^{\Heis_{g,k}}:\pi_1(\Sigma_{g,1})\longrightarrow \Heis_{g,k}=\Heis_g/\langle \sigma^{2k}\rangle.$ We recall that:
\[
\begin{array}{rcl}
\rho_1 : K_{\Heis_g} & \to & \Aut_{\Z[\Heis_g]}(\calH_1),
\end{array}
\]

where $\calH_1$ can be defined by the standard homology rather than Borel--Moore, since in the case of one point, it is the homology of the surface which is compact so both homologies coincide. 

The following proposition is important since many computations rely on it.

\begin{proposition}\label{prop:HeisenbergSimpleCurve} Let $\gamma \in \pi_1(\Sigma_{g,1})$ representing a separating simple closed curve of genus $k$ on the surface $\Sigma_{g,1}.$ Set $\varepsilon=1$ if the orientation of $\gamma$ is compatible with the orientation of $\partial \Sigma_{g,1}$ and $\varepsilon=-1$ otherwise. Then $\varphi_{\mathbb{H}_g}(\gamma)=\sigma^{2\varepsilon k}$.
\end{proposition}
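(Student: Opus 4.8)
The plan is to reduce the computation of $\varphi_{\mathbb{H}_g}(\gamma)$ to the multiplicativity of commutators in $\mathbb{H}_g$, by first writing $\gamma$ (up to free homotopy) as an explicit product of $k$ commutators adapted to the subsurface it bounds. Concretely: since $\gamma$ separates $\Sigma_{g,1}$ and has genus $k$, it bounds an embedded subsurface $S\subset\Sigma_{g,1}$ with $S\cong\Sigma_{k,1}$ and $\partial S=\gamma$, namely the component obtained by cutting along $\gamma$ that does not contain $\partial\Sigma_{g,1}$ (when $k=g$, $S=\Sigma_{g,1}$ and $\gamma$ is boundary-parallel). Picking a point $p\in\gamma$ and a standard generating system $x_1,y_1,\ldots,x_k,y_k$ of $\pi_1(S,p)$, one has $\gamma=\prod_{i=1}^{k}[x_i,y_i]$ in $\pi_1(S,p)$, with $\gamma$ carrying the orientation it inherits as $\partial S$; joining $p$ to $\xi_1$ by an arc and pushing forward along $S\hookrightarrow\Sigma_{g,1}$ shows that $\gamma$ is conjugate in $\pi_1(\Sigma_{g,1})$ to $\prod_{i=1}^{k}[x_i,y_i]$, where $x_i,y_i$ now denote simple closed curves of $\Sigma_{g,1}$ supported in $S$.

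Then I would simply apply $\varphi:=\varphi_{\mathbb{H}_g}$, which is a group homomorphism, to get $\varphi(\gamma)=\prod_{i=1}^{k}[\varphi(x_i),\varphi(y_i)]$, and invoke the identity $[\varphi(\alpha),\varphi(\beta)]=\sigma^{2\omega(\alpha,\beta)}$ valid for all $\alpha,\beta\in\pi_1(\Sigma_{g,1})$ — this is precisely what is established in the proof of the description of $\mathbb{H}_g$ as a central quotient of $\pi_1(\Sigma_{g,1})\times\langle\sigma\rangle$ (with $\omega$ the intersection form on $H_1(\Sigma_{g,1})$). This gives $\varphi(\gamma)=\sigma^{2\sum_{i=1}^{k}\omega(x_i,y_i)}$; in particular $\varphi(\gamma)\in\langle\sigma\rangle$ is central, so this value is independent of the representative chosen in the conjugacy class of $\gamma$, and it only remains to compute $\sum_{i=1}^{k}\omega(x_i,y_i)$.

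For the last step, since $x_1,y_1,\ldots,x_k,y_k$ is a standard generating system of the oriented surface $S$ (with the orientation induced from $\Sigma_{g,1}$), one has $\omega(x_i,y_i)=1$ for all $i$ when $\gamma$ is given its $\partial S$-orientation, and hence $\omega(x_i,y_i)=\varepsilon$ for all $i$ in general, where $\varepsilon=+1$ exactly when the orientation of $\gamma$ agrees with its orientation as $\partial S$ (reversing the orientation of $\gamma$ replaces the commutator product by its inverse and flips every sign at once). This matches the statement's sign convention, as one checks in the extreme case $k=g$: there $\gamma$ is isotopic to $\partial\Sigma_{g,1}$, its $\partial S$-orientation is the given orientation of $\partial\Sigma_{g,1}$, and $\varphi(\partial\Sigma_{g,1})=\prod_{i=1}^{g}[\varphi(\alpha_i),\varphi(\beta_i)]=\sigma^{2g}$ with $\varepsilon=+1$. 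Thus $\varphi_{\mathbb{H}_g}(\gamma)=\sigma^{2\varepsilon k}$. The only point that needs genuine care is this orientation bookkeeping — identifying ``the orientation of $\gamma$ is compatible with that of $\partial\Sigma_{g,1}$'' with the sign of $\omega(x_i,y_i)$ — while the rest is functoriality of $\varphi$ together with the already-established commutator identity in $\mathbb{H}_g$.
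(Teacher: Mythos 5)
Your proof is correct, but it takes a different route from the paper's. The paper argues by equivariance: since $\varphi_1^{\Heisg}(f(\gamma))=f_*(\varphi_1^{\Heisg}(\gamma))$ and $\Aut^+(\Heisg)$ acts trivially on $\langle\sigma\rangle$, and since oriented separating curves of a fixed genus form a single $\Mod(\Sigma_{g,1})$-orbit, it suffices to evaluate $\varphi$ on the single explicit word $\bigl([\alpha_1^{-1},\beta_1^{-1}]\cdots[\alpha_k^{-1},\beta_k^{-1}]\bigr)^{\varepsilon}$. You instead work intrinsically: you write an \emph{arbitrary} $\gamma$ (up to conjugacy, which is harmless since the value lands in the center) as $\prod_{i=1}^k[x_i,y_i]$ via the standard presentation of the genus-$k$ subsurface it bounds, and then invoke the general identity $[\varphi(\alpha),\varphi(\beta)]=\sigma^{2\omega(\alpha,\beta)}$, which the paper has indeed already established (by induction on word length, in the proposition identifying $\Heisg$ with a central quotient of $\pi_1(\Sigma_{g,1})\times\langle\sigma\rangle$). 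Your approach avoids the change-of-coordinates principle for separating curves and is in that sense more self-contained; the paper's approach avoids having to discuss the subsurface presentation and pushes all the work onto a one-line computation. Both proofs treat the sign/orientation bookkeeping somewhat lightly: the paper simply asserts that its chosen representative is compatibly oriented iff $\varepsilon=1$, and you anchor your convention by the boundary relation in the case $k=g$ together with the observation that reversing the orientation of $\gamma$ inverts the answer; since there are only two orientations and the two candidate values $\sigma^{\pm 2k}$ are distinct, this single check does pin down the sign, so the argument is complete.
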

In the above, the orientation of $\gamma$ and $\partial \Sigma_{g,1}$ are compatible if they are both induced by the same orientation on the connected component of $\Sigma_{g,1}\setminus \gamma$ that contains them both.
\begin{proof}
	Recall that two separating simple closed curves of a given genus on $\Sigma_{g,1}$ are in the same orbit under mapping class group action; moreover two oriented such curves are in the same mapping class group orbit if and only if the orientation matches.
	
	Moreover, for $f\in \mathrm{Mod}(\Sigma_{g,1})$ we have $\varphi_1^{\Heisg}(f(\gamma))=f_*(\varphi_1^{\Heisg}(\gamma))$ where $f_*$ is the induced automorphism of $\Heisg.$ Note that $\Aut^+(\Heisg)$ is the identity on $\langle \sigma \rangle:$ therefore to prove the formula it is sufficient to verify it for a single element in each mapping class group orbit of oriented separating simple closed curve.
	
	We conclude noting that for each $k,$ and for each $\varepsilon \in \lbrace \pm 1 \rbrace,$ the element $\left([\alpha_1^{-1},\beta_1^{-1}]\ldots[\alpha_k^{-1},\beta_k^{-1}]\right)^{\varepsilon}$ represents a separating simple closed curve of genus $k.$ Moreover, its orientation is compatible with that of $\partial \Sigma_{g,1}$ if and only if $\varepsilon=1.$ Then we compute
	$$\varphi_{\Heisg}\left([\alpha_1^{-1},\beta_1^{-1}]\ldots[\alpha_k^{-1},\beta_k^{-1}]\right)=[a_1^{-1},b_1^{-1}]\ldots[a_k^{-1},b_k^{-1}]=\sigma^{2k},$$
	by the presentation of $\Heisg$ given in \ref{eq_localsystem_of_Heisenberg}.
 
\end{proof}

\subsection{Action of Dehn twists: twisted-transvection formula and consequences}\label{S:twisted_transvections}

We first give a formula for an action of a Dehn twist along a separating close curve on the homology class of a simple relative arc, similar to Suzuki's transvection formula for the Magnus representation of the Torelli group  \cite{S05a,S05b}. Let $\alpha$ be a separating simple closed curve, we denote by $\Sigma^{int}(\alpha)$ the surface with one boundary component separated by $\alpha$ and the genus of $\Sigma^{int}(\alpha)$ by $g(\alpha).$ We say then that $\alpha$ is separating of genus $g(\alpha).$

Let us consider $\alpha$ a separating simple closed curve of genus $k,$ and $\beta$ an arc in $\Sigma$ with endpoints in $\partial^- \Sigma_{g,1}.$ The arc $\beta$ gives rise to a homology class $[\beta] \in \calH_1.$ Note that up to isotopy, we can assume that all intersection points of $\beta$ and $\alpha$ happen at a single point $p\in \Sigma_{g,1}.$ We call $\hat{p}$ the lift of $p$ to $\hat{\Sigma}_{g,1}$ corresponding to following the lift starting from basepoint of the first subarc of $\beta \setminus \lbrace p \rbrace.$

We can decompose $\beta$ into a collection of arcs, lying in the internal or external connected component of $\Sigma_{g,1}\setminus \alpha$ and with endpoints on $\partial^{-} \Sigma_{g,1}$ or on $\alpha$. Let $[\beta]=[\beta]_{ext}+[\beta]_{int}$ be the associated decomposition in $C_1(\Sigma_{g,1},\partial \Sigma_{g,1},\varphi_1^{\Heis_g})$. 

We can also make the simple closed curve $\alpha$ into an arc with endpoints in $\partial^+ \Sigma$ by opening $\alpha$ along an arc left parallel to the first subarc of $\beta \setminus \lbrace p \rbrace.$ We call the resulting arc $\alpha'.$  Finally, let $[\alpha] \in C_1(\Sigma_{g,1},\partial\Sigma_{g,1},\varphi_1^{\Heisg})$ be the chain corresponding to the lift of $\alpha$ starting at $\hat{p}.$

\begin{proposition}\label{P:twisted_transvection_formula_1pt}
With the notations above, one has:
$$\rho_1(\tau_{\alpha})([\beta])=[\beta]_{ext} +\sigma^{-2k}[\beta]_{int} + \langle \alpha',\beta \rangle_{\varphi_1^{\Heis_1}} [\alpha].$$
\end{proposition}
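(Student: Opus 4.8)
The plan is to compute $\rho_1(\tau_\alpha)([\beta])$ at the chain level in the cover $\widehat{\Sigma}_{g,1}$ attached to $\varphi_1^{\Heis_g}$, combining the elementary description of how a Dehn twist acts on an arc transverse to its core with Proposition~\ref{prop:HeisenbergSimpleCurve}: the latter says $\varphi_1^{\Heis_g}(\alpha)=\sigma^{-2k}$ (orienting $\alpha$ as $\partial\Sigma^{int}(\alpha)$, which is the orientation making the exponent negative), and in particular this element is \emph{central} in $\Heis_g$ — this centrality is precisely what collapses the computation to a three-term formula.

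First I would put $\alpha$ and $\beta$ in minimal position with all their intersection points inside a small disk around $p$, and cut $\beta$ at the crossing points $x_1,\dots,x_{2m}$ (all lifting to the chosen $\widehat p$) into consecutive sub-arcs $s_1,\dots,s_{2m+1}$, alternately lying in the exterior component $\Sigma^{ext}$ (odd index, since $\beta$ begins and ends on $\partial^-\Sigma_{g,1}\subset\Sigma^{ext}$) and in $\Sigma^{int}(\alpha)$ (even index); this is the decomposition giving $[\beta]_{ext}=\widehat{s_1}+\widehat{s_3}+\cdots$ and $[\beta]_{int}=\widehat{s_2}+\widehat{s_4}+\cdots$ in $C_1(\Sigma_{g,1},\partial\Sigma_{g,1},\varphi_1^{\Heis_g})$, with $\widehat{s_j}$ the piece of the lift $\widehat\beta$ over $s_j$ determined by the thread. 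A right Dehn twist sends $\beta$ to an arc homotopic rel endpoints to $s_1\,\alpha^{\epsilon_1}\,s_2\,\alpha^{\epsilon_2}\cdots\alpha^{\epsilon_{2m}}\,s_{2m+1}$, where $\epsilon_i$ is the sign of the $i$-th crossing; since $\alpha$ is two-sided and separating and $\beta$ crosses the twisting annulus in a fixed direction at consecutive intersections, the $\epsilon_i$ alternate, $\epsilon_i=(-1)^{i+1}$ up to a global sign fixed by the handedness of the twist. Because $\varphi_1^{\Heis_g}(\alpha)=\sigma^{-2k}$ is central, the lift of $s_j$ appearing in $\widehat{\tau_\alpha(\beta)}$ is the one appearing in $\widehat\beta$ translated by the central factor $\sigma^{-2k(\epsilon_1+\cdots+\epsilon_{j-1})}$; the exponent $\epsilon_1+\cdots+\epsilon_{j-1}$ is the signed intersection number of the initial piece of $\beta$ with $\alpha$, hence equals $0$ when $s_j$ is exterior and $1$ when $s_j$ is interior. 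So the sub-arc part of $\widehat{\tau_\alpha(\beta)}$ is exactly $[\beta]_{ext}+\sigma^{-2k}[\beta]_{int}$, and the remaining part is built from the spliced-in loops $\alpha^{\epsilon_i}$ near $\alpha$, each of which is a translate of $\pm[\alpha]$; thus $\rho_1(\tau_\alpha)([\beta])=[\beta]_{ext}+\sigma^{-2k}[\beta]_{int}+c\,[\alpha]$ for some $c\in\Z[\Heis_g]$.

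To identify $c$, I would argue in either of two ways. \emph{Directly:} the loop $\alpha^{\epsilon_i}$ spliced at $x_i$ contributes $\epsilon_i$ times the monomial $\varphi_1^{\Heis_g}(\delta_{x_i})$, where $\delta_{x_i}$ runs along the thread and $\beta$ up to $x_i$ and then back to the base point along the arc $\alpha'$ obtained by opening $\alpha$ along a left-parallel of $s_1$; by Lemma~\ref{L:monomials_in_pairing} and formula~\eqref{eq:twisted_intersection} the sum of these contributions is exactly $\langle\alpha',\beta\rangle_{\varphi_1^{\Heis_g}}$. \emph{By exactness:} $\widehat{\tau_\alpha(\beta)}$ is a relative cycle, and a direct computation of $\partial\bigl([\beta]_{ext}+\sigma^{-2k}[\beta]_{int}\bigr)$ modulo $C_0(\partial^-\Sigma_{g,1})$ gives $(1-\sigma^{-2k})\bigl(\sum_i\epsilon_i\,\varphi_1^{\Heis_g}(s_1\cdots s_i)\bigr)\widehat p$, while $\partial[\alpha]=(\sigma^{-2k}-1)\widehat p$; since $\Z[\Heis_g]$ is a domain and $1-\sigma^{-2k}\neq0$, the coefficient of $[\alpha]$ is forced to equal $\sum_i\epsilon_i\,\varphi_1^{\Heis_g}(s_1\cdots s_i)$, which one recognizes again as $\langle\alpha',\beta\rangle_{\varphi_1^{\Heis_g}}$ via~\eqref{eq:twisted_intersection}. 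Either way this yields the stated formula.

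The hard part is exactly this last identification: one must match the levels of the spliced copies of $\alpha$ against the conventions defining $\alpha'$, the thread, and the base point in the twisted intersection pairing, so as to land on precisely $\langle\alpha',\beta\rangle_{\varphi_1^{\Heis_g}}$ with no stray monomial, and one must pin down the orientation and handedness conventions so that the winding factor comes out as $\sigma^{-2k}$ and not $\sigma^{+2k}$. Everything else is bookkeeping that is kept harmless by the centrality of $\varphi_1^{\Heis_g}(\alpha)$ — the one structural input the argument genuinely relies on.
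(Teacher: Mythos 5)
Your proposal is correct and follows essentially the same route as the paper: both arguments compute the lifted Dehn twist at the chain level using the decomposition $[\beta]=[\beta]_{ext}+[\beta]_{int}$, the centrality of $\varphi_1^{\Heis_g}(\alpha)=\sigma^{-2k}$ from Proposition~\ref{prop:HeisenbergSimpleCurve}, and the identification of the coefficient of $[\alpha]$ with the monomials of $\partial[\beta]_{ext}=\langle\alpha',\beta\rangle\,\hat p$, i.e.\ with the twisted pairing. Your alternative ``by exactness'' identification of the coefficient (comparing $\partial[\alpha]=(\sigma^{-2k}-1)\hat p$ with $\partial([\beta]_{ext}+\sigma^{-2k}[\beta]_{int})$) is a pleasant extra consistency check not in the paper, but the substance of the argument is the same.
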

\begin{proof}
 Note that while $[\beta]$ is a cycle, $[\beta]_{ext}$ and $[\beta]_{int}$ are just chains; however one has $\partial [\beta]_{ext}=\langle \alpha',\beta \rangle \hat{p}.$ Indeed the boundary of the first arc of $[\beta]_{ext}$ contributes $+\hat{p},$ and subsequent arcs contribute exactly the monomials involved in formula \eqref{E:formula_for_the_pairing}. Notice that in latter formula, $\varepsilon_{x_i}$ are here the usual sign of arc intersections while the involution $\sigma \to -\sigma$ does not change anything in the single point case since only squared $\sigma$ are involved. We also note that $[\alpha]$ is just a chain, but one has $\partial [\alpha]=(\sigma^{-2k}-1)\hat{p},$ since $\alpha$ is mapped to $\sigma^{-2k}$ by $\varphi_1^{\Heis_1}$ (Prop. \ref{prop:HeisenbergSimpleCurve}). 
Note that the preimage of the simple closed curve $\alpha$ in $\hat{\Sigma}_{g,1}$ is homeomorphic to $\R,$ and obtained by concatenating different lifts of $\alpha$ (hence it is made of concatenations of translated $[\alpha]$). Moreover, a diffeomorphism representing the isotopy class of $\tau_{\alpha}$ is obtained as the identity on $\Sigma_{g,1}$ minus an annulus neighborhood of $\alpha,$ and on that annulus homeomorphic to $S^1 \times [0,1]$ it acts by $t$-translation on $S^1 \times \lbrace t \rbrace.$ The lift of the latter is the identity on the preimage of $\Sigma^{ext}(\alpha),$ it translates by the cover transformation $\sigma^{-2k}$ on the preimage of $\Sigma^{int}(\alpha)$. It remains to study its action on the preimage of the annulus (homeomorphic to $\R \times [0,1]$): it acts by $t$-translation on $\R\times \lbrace t \rbrace.$ Combining this description with the formula for $\partial [\beta]_{ext},$ we get exactly the formula of Proposition \ref{P:twisted_transvection_formula_1pt}.
\end{proof}

\begin{coro}\label{C:transvection_mod2k_formula} Let $\alpha$ be an oriented separating simple closed curve on $\Sigma_{g,1}$ of genus $k.$ Let $\tau_{\alpha}$ be the associated Dehn twist. Then for any $[\beta] \in H_1(\Sigma_{g,1},\partial\Sigma_{g,1},\mathbb{H}_g)$ we have 
$$\rho_1(\tau_{\alpha})([\beta])=[\beta]+\langle \alpha,\beta\rangle_{\varphi_1^{\Heis_1}} [\alpha] \ \mathrm{mod} \ \sigma^{2k}-1 .
$$
In other words, as $\calH_{1,2k}$ designates the twisted homology by $\Heis_{g,2k},$ one has:
$$\rho^{\Heis_{g,2k}}_1(\tau_{\alpha})([\beta])=[\beta]+\langle \alpha,\beta\rangle_{\varphi_1^{\Heis_{g,2k}}} [\alpha]  .
$$
\end{coro}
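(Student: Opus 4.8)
The plan is to obtain the corollary by reducing the twisted transvection formula of Proposition~\ref{P:twisted_transvection_formula_1pt} modulo the ideal generated by $\sigma^{2k}-1$. First I would record the ring-theoretic facts that make this reduction meaningful: since $\sigma$ is central in $\Heis_g$, the element $\sigma^{2k}-1$ generates a two-sided ideal of $\Z[\Heis_g]$, this ideal is precisely the augmentation ideal of the central subgroup $\langle \sigma^{2k}\rangle$, and hence $\Z[\Heis_g]/(\sigma^{2k}-1)\cong \Z[\Heis_{g,2k}]$. The only elementary computation needed is the congruence $\sigma^{-2k}\equiv 1 \pmod{\sigma^{2k}-1}$, which holds because $\sigma^{-2k}-1=-\sigma^{-2k}(\sigma^{2k}-1)$.

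Next I would prove the first displayed formula by applying this reduction to the identity of Proposition~\ref{P:twisted_transvection_formula_1pt},
\[
\rho_1(\tau_{\alpha})([\beta])=[\beta]_{ext} +\sigma^{-2k}[\beta]_{int} + \langle \alpha',\beta \rangle_{\varphi_1^{\Heis_1}} [\alpha],
\]
regarded as an equality in $\calH_1$ and then read modulo $(\sigma^{2k}-1)\calH_1$. The congruence $\sigma^{-2k}\equiv 1$ turns $[\beta]_{ext}+\sigma^{-2k}[\beta]_{int}$ into $[\beta]_{ext}+[\beta]_{int}=[\beta]$, so the internal/external splitting, whose only purpose was to expose the factor $\sigma^{-2k}$, collapses after reduction. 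For the last term I would observe that the chain $[\alpha]$ has boundary $\partial[\alpha]=(\sigma^{-2k}-1)\hat p$ by Proposition~\ref{prop:HeisenbergSimpleCurve}, which vanishes modulo $\sigma^{2k}-1$, so $[\alpha]$ becomes a genuine cycle; moreover, since the arc $\alpha'$ was produced by opening the closed curve $\alpha$ along an auxiliary arc parallel to part of $\beta$, different choices of this opening change $\langle \alpha',\beta\rangle_{\varphi_1^{\Heis_1}}$ only by the deck transformation $\sigma^{\pm 2k}$ coming from a loop around $\alpha$, hence leave it unchanged modulo $\sigma^{2k}-1$. This choice-free residue is exactly what the statement denotes $\langle \alpha,\beta\rangle_{\varphi_1^{\Heis_1}}$, and collecting the three terms gives the first formula.

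Finally, the second formula is just a transcription of the first into the coefficient ring $\Z[\Heis_{g,2k}]$: the representation $\rho_1^{\Heis_{g,2k}}$ is $\rho_1$ followed by the change of coefficients along $\Z[\Heis_g]\to\Z[\Heis_{g,2k}]$, because $\varphi_1^{\Heis_{g,2k}}$ is $\varphi_1^{\Heis_g}$ composed with the projection $\Heis_g\to\Heis_{g,2k}$ and, in the one-point case, $\calH_{1,2k}$ is the ordinary homology of the corresponding cover, so $\calH_{1,2k}\cong \calH_1\otimes_{\Z[\Heis_g]}\Z[\Heis_{g,2k}]$ equivariantly for $\Mod(\Sigma_{g,1})$ (both modules being free of the same rank by Theorem~\ref{thm_structure_twisted_homology}); the twisted intersection pairing is natural under the same reduction, so $\langle \alpha,\beta\rangle_{\varphi_1^{\Heis_g}}$ maps to $\langle \alpha,\beta\rangle_{\varphi_1^{\Heis_{g,2k}}}$. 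I do not foresee a genuine difficulty here: the only places asking for a careful line are the compatibility of the twisted homologies and of the intersection pairing under the quotient of the group ring, and the remark that $[\alpha]$, a mere chain in $\calH_1$, descends to an honest cycle in $\calH_{1,2k}$ so that the right-hand side of the second formula makes sense.
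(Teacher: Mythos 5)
Your proposal is correct and follows essentially the same route as the paper: the corollary is obtained by reducing the twisted transvection formula of Proposition~\ref{P:twisted_transvection_formula_1pt} modulo $\sigma^{2k}-1$, noting that $\sigma^{-2k}\equiv 1$ collapses the interior/exterior splitting, that $[\alpha]$ becomes a genuine cycle (equal to $[\alpha']$ up to the now-trivialized ambiguity), and that everything passes to the $\Heis_{g,2k}$-twisted homology by naturality. Your write-up is in fact somewhat more explicit than the paper's about the ring-theoretic reduction and the choice-independence of $\langle\alpha',\beta\rangle$, but the underlying argument is identical.
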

\begin{proof}
It is a reduction mod $\sigma^{2k}-1$ of Prop.~\ref{P:twisted_transvection_formula_1pt}. We remark that modulo $\sigma^{2k}-1,$ the simple closed curve $\alpha$ is a cycle, and equal to the $\alpha'$ of Proposition \ref{P:twisted_transvection_formula_1pt}. Indeed, a generator of $\pi_1(\alpha)$ is sent to $\sigma^{\pm 2k}$ via $\varphi_1^{\Heis_g}$ thanks to Prop. \ref{prop:HeisenbergSimpleCurve}. It should depend on a choice of relation to the base point, but for two different choices, the results are conjugated, and $\sigma$ is central. Hence modulo $\sigma^{2k}-1$ the fundamental group of $\alpha$ is trivialized which is the condition to lift manifolds to the associated cover. 
\end{proof}
Let us now consider two separating curves $\alpha_1$ and $\alpha_2,$ of genera $k_1$ and $k_2.$ We will write $\Sigma^{int,int}(\alpha_1,\alpha_2)$ for the connected components of $\Sigma \setminus (\alpha_1 \cup \alpha_2)$ which lies in the interior of $\alpha_1$ and of $\alpha_2.$ Similarly we define $\Sigma^{int,ext}(\alpha_1,\alpha_2),\Sigma^{ext,int}(\alpha_1,\alpha_2)$ and $\Sigma^{ext,ext}(\alpha_1,\alpha_2).$
\begin{proposition}\label{prop:close_Dehn_twists}
Assume that $\rho_1(\tau_{\alpha_1})=\rho_1(\tau_{\alpha_2}).$ Then $\Sigma^{int,ext}(\alpha_1,\alpha_2)$ and $\Sigma^{ext,int}(\alpha_1,\alpha_2)$ are unions of disks.
    
\end{proposition}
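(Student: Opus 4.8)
\emph{The plan.} By the symmetry of the statement in $\alpha_1$ and $\alpha_2$ — the hypothesis $\rho_1(\tau_{\alpha_1})=\rho_1(\tau_{\alpha_2})$ is unchanged and the two regions $\Sigma^{int,ext}(\alpha_1,\alpha_2)$ and $\Sigma^{ext,int}(\alpha_1,\alpha_2)$ get interchanged — it is enough to prove that $\Sigma^{int,ext}(\alpha_1,\alpha_2)=\Sigma^{int}(\alpha_1)\cap\Sigma^{ext}(\alpha_2)$ is a union of disks. I will put $\alpha_1,\alpha_2$ in minimal position; if they are isotopic then after an isotopy both regions are empty, so I assume they are not isotopic, and I may also assume $g(\alpha_1)\geq 1$ since otherwise $\Sigma^{int}(\alpha_1)$ is a disk and there is nothing to prove.

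First I would feed into the twisted transvection formula (Proposition~\ref{P:twisted_transvection_formula_1pt}) only relative arcs $\beta$ disjoint from $\alpha_2$. Such a $\beta$ lies in $\Sigma^{ext}(\alpha_2)$, so $[\beta]_{int(\alpha_2)}=0$ and $\langle\alpha_2',\beta\rangle_{\varphi_1^{\Heis_g}}=0$, hence $\rho_1(\tau_{\alpha_2})([\beta])=[\beta]$; combining this with $\rho_1(\tau_{\alpha_1})=\rho_1(\tau_{\alpha_2})$ and the transvection formula for $\alpha_1$ forces
\[
(\sigma^{-2g(\alpha_1)}-1)[\beta]_{int(\alpha_1)}+\langle\alpha_1',\beta\rangle_{\varphi_1^{\Heis_g}}[\alpha_1]=0 \quad\text{in }\calH_1
\]
for every relative arc $\beta$ disjoint from $\alpha_2$; equivalently, $\rho_1(\tau_{\alpha_1})$ restricts to the identity on the image in $\calH_1$ of the twisted homology of $\Sigma^{ext}(\alpha_2)$.

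Then I would argue by contraposition. Assume a component $R$ of $\Sigma^{int}(\alpha_1)\cap\Sigma^{ext}(\alpha_2)$ is not a disk, and build a relative arc $\beta$ disjoint from $\alpha_2$ with $\rho_1(\tau_{\alpha_1})([\beta])\neq[\beta]$, contradicting the displayed identity. Since $\Sigma^{ext}(\alpha_2)$ is connected and contains $\partial\Sigma_{g,1}$, it is not contained in $\Sigma^{int}(\alpha_1)$, so the frontier of $R$ meets $\alpha_1$; a short analysis of the complementary regions of $\alpha_1\cup\alpha_2$ (using minimality and the fact that two separating curves cobounding an annulus are isotopic) will produce a properly embedded essential, non-$\partial$-parallel arc $\delta$ in $R$ with both endpoints on $\partial R\cap\alpha_1$ and meeting $\alpha_1$ only there. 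Pushing the ends of $\delta$ slightly across $\alpha_1$ and joining them to $\partial^-\Sigma_{g,1}$ through the connected surface $\Sigma^{ext}(\alpha_2)$, then minimizing the geometric intersection with $\alpha_1$, yields $\beta$ with $\beta\cap\alpha_2=\emptyset$ and $\beta\cap\Sigma^{int}(\alpha_1)=\delta$, so that $[\beta]_{int(\alpha_1)}=[\delta]$.

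Finally I would check that $\rho_1(\tau_{\alpha_1})([\beta])-[\beta]=(\sigma^{-2g(\alpha_1)}-1)[\delta]+\langle\alpha_1',\beta\rangle_{\varphi_1^{\Heis_g}}[\alpha_1]$ is nonzero in $\calH_1$, which I expect to do by pairing this class against an explicitly chosen dual arc (a pushoff meeting $\delta$ but controlled near $\alpha_1$) and invoking the non-degeneracy of the twisted intersection form (Corollary~\ref{C:pairing_non_degenerate}) together with the equivariance in Proposition~\ref{P:properties_twisted_intersection}. The hard parts will be the combinatorial position analysis producing $\delta$ with controlled intersection with $\alpha_1$ (notably handling near-annular $R$), and, above all, this non-vanishing step: showing that the transvection correction $\langle\alpha_1',\beta\rangle[\alpha_1]$ cannot cancel $(\sigma^{-2g(\alpha_1)}-1)[\delta]$, which is precisely where the essentiality of $\delta$ must be used.
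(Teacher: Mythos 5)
Your strategy is the same as the paper's -- feed arcs into the twisted transvection formula (Proposition~\ref{P:twisted_transvection_formula_1pt}) and detect the discrepancy between the $\sigma^{-2k_1}$-scaling of $\tau_{\alpha_1}$ on the interior of $\alpha_1$ and the trivial action of $\tau_{\alpha_2}$ on the exterior of $\alpha_2$ -- and your reduction in the first two steps is correct: for $\beta$ disjoint from $\alpha_2$ the hypothesis does force $(\sigma^{-2g(\alpha_1)}-1)[\beta]_{int}+\langle\alpha_1',\beta\rangle_{\varphi_1^{\Heis_g}}[\alpha_1]=0$. But the argument is not closed: the step you yourself flag as the hard part, namely that the correction term $\langle\alpha_1',\beta\rangle[\alpha_1]$ cannot cancel $(\sigma^{-2g(\alpha_1)}-1)[\delta]$, is exactly the content of the proposition and is left as a hope ("pair against an explicitly chosen dual arc") rather than an argument. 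As written, nothing prevents a conspiracy between the two terms, and non-degeneracy of the pairing alone does not decide it -- you would still need to produce a dual class that pairs nontrivially with $[\delta]$ and trivially with $[\alpha_1]$, which you have not done.

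The device that closes this gap in the paper is to stop working in $\calH_1$ and instead push everything into the relative module $H_1\bigl(\Sigma^{int,ext}(\alpha_1,\alpha_2),\,\Sigma\setminus\Sigma^{int,ext}(\alpha_1,\alpha_2);\varphi_1^{\Heis_g}\bigr)$. Both $\rho_1(\tau_{\alpha_1})$ and $\rho_1(\tau_{\alpha_2})$ descend to this quotient (each twist preserves the corresponding subsurface), and there the nuisance terms die automatically: $[\alpha_1]$ and $[\beta]_{ext}$ are supported in the complement of the region, so $\tau_{\alpha_1}$ acts by multiplication by $\sigma^{-2k_1}$ and $\tau_{\alpha_2}$ by the identity, with no correction to control. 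The whole problem then reduces to exhibiting a class in that relative module not annihilated by $\sigma^{-2k_1}-1$, which an arc carrying a nonzero homology class in a positive-genus component provides (freeness of the twisted homology, Theorem~\ref{thm_structure_twisted_homology}, rules out torsion). Your instinct to "choose the dual arc controlled near $\alpha_1$" is secretly this projection; making it explicit is what turns your plan into a proof. A secondary, fixable issue: your contrapositive aims at "some component of $R$ is not a disk," but the detection mechanism only sees components of positive genus (the paper's proof argues exactly that case), so you would still need to say something about planar non-disk components or settle, as the paper implicitly does, for the positive-genus statement.
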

\begin{proof}
Assume to the countrary that $\Sigma^{int,ext}(\alpha_1,\alpha_2)$ contains a component of genus at least one. Note that Proposition \ref{P:twisted_transvection_formula_1pt} implies that $\rho_1(\tau_{\alpha_1})$ acts by multiplication by $\sigma^{-2k_1}$ on $H_1(\Sigma^{int}(\alpha_1),\Sigma \setminus \Sigma^{int}(\alpha_1); \varphi_1^{\Heis_g}),$ while $\rho_1(\tau_{\alpha_2})$ acts as the identity on $H_1(\Sigma^{ext}(\alpha_2),\Sigma \setminus \Sigma^{ext}(\alpha_2); \varphi_1^{\Heis_g}).$ So their action differ on $H_1(\Sigma^{int,ext}(\alpha_1,\alpha_2),\Sigma \setminus \Sigma^{int,ext}(\alpha_1,\alpha_2); \varphi_1^{\Heis_g}),$ as soon as this space is not the trivial module, which is the case if $\Sigma^{int,ext}(\alpha_1,\alpha_2)$ contains a positive genus surface (just take an arc representing a non zero homology class in that component).
\end{proof}

\begin{remark}
We recall that $\rho_1$ is faithful if and only if it distinguishes all separating Dehn twists. The above proposition gives a strong restriction on separating simple closed curves  satisfying $\rho_1(\tau_{\alpha_1})=\rho_1(\tau_{\alpha_2})$
\end{remark}

\begin{proposition}\label{P:infinite_order}
Images of separating Dehn twists by $\rho_1$ have infinite order
\end{proposition}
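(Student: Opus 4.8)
The plan is to feed a separating Dehn twist into the twisted transvection formula (Proposition~\ref{P:twisted_transvection_formula_1pt}) and read off a sub-quotient of $\calH_1$ on which it acts by a power of $\sigma$; since $\sigma$ has infinite order in $\Heis_g$, this already forces the matrix to have infinite order. Fix a separating simple closed curve $\alpha$ of genus $k$, $1\le k\le g$ (the genus-$0$ case being vacuous), and write $\Sigma^{int}=\Sigma^{int}(\alpha)\cong\Sigma_{k,1}$, with $\partial\Sigma^{int}=\alpha$. Note that $\tau_\alpha$ lies in $K_{\Heis_g}$ — a separating twist is in the Johnson kernel, hence acts trivially on the $2$-step nilpotent quotient $\Heis_g$ of $\pi_1(\Sigma_{g,1})$ — so $\rho_1(\tau_\alpha)$ is defined, as is used throughout this section.

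The first step is to record, exactly as in the proof of Proposition~\ref{prop:close_Dehn_twists}, that $\rho_1(\tau_\alpha)$ acts by multiplication by $\sigma^{-2k}$ on the relative twisted homology module $M:=H_1\big(\Sigma^{int},\,\Sigma_{g,1}\setminus\Sigma^{int};\,\varphi_1^{\Heis_g}\big)$, and that $M$ is a quotient of $\calH_1$ compatibly with the $\tau_\alpha$-action. For the action, one isotopes $\tau_\alpha$ to be supported in an annular neighbourhood of a push-off of $\alpha$ into $\Sigma^{int}$; it is then the identity on $\Sigma_{g,1}\setminus\Sigma^{int}$, it preserves that pair, and since every relative class in $\Sigma^{int}$ can be isotoped to lie in the interior of that push-off, Proposition~\ref{P:twisted_transvection_formula_1pt} (as in the computation of $\rho_1(\tau_\delta)$ in Section~\ref{S:closed_surfaces}) shows it acts on $M$ by the scalar $\sigma^{-2k}$: the $[\beta]_{ext}$- and $\langle\alpha',\beta\rangle[\alpha]$-contributions of the formula vanish for such classes. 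For the quotient statement, the inclusion of pairs $(\Sigma_{g,1},\partial^-\Sigma_{g,1})\hookrightarrow(\Sigma_{g,1},\Sigma_{g,1}\setminus\Sigma^{int})$ induces a map $\calH_1\to H_1(\Sigma_{g,1},\Sigma_{g,1}\setminus\Sigma^{int};\varphi_1^{\Heis_g})\cong M$ (excision for the isomorphism), surjective because $\Sigma_{g,1}\setminus\Sigma^{int}$ is connected and contains $\partial^-\Sigma_{g,1}$.

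The second step is to note that $M$ is a non-zero module which is free — or at least torsion free — over the group ring of $\mathrm{im}(\pi_1(\Sigma^{int})\to\Heis_g)$, a copy of the genus-$k$ Heisenberg group, by the structure result Theorem~\ref{thm_structure_twisted_homology} applied to $\Sigma^{int}$ with the restricted Heisenberg local system (in its version for homology relative to the full boundary). Since $\sigma$ has infinite order in $\Heis_g$, the ring element $\sigma^{-2km}-1$ is non-zero for every $m\ge 1$, so multiplication by it is non-zero on the torsion-free module $M$; that is, $\rho_1(\tau_\alpha)^m=\sigma^{-2km}\cdot\mathrm{id}_M$ is not the identity on $M$, a fortiori not on $\calH_1$. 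Hence $\rho_1(\tau_\alpha)$ has infinite order, which is the claim.

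The only delicate point is the bookkeeping of the first step: one must make sure that the quotient $\calH_1\twoheadrightarrow M$ is genuinely $\tau_\alpha$-equivariant and that $\rho_1(\tau_\alpha)$ acts on $M$ by the pure scalar $\sigma^{-2k}$, rather than by a transvection carrying an extra $[\alpha]$-term. Both are arranged by supporting the twist strictly inside $\Sigma^{int}(\alpha)$, away from $\alpha$, and the argument is a special case of the reasoning already spelled out in the proof of Proposition~\ref{prop:close_Dehn_twists}; everything else is formal, using only the freeness coming from Theorem~\ref{thm_structure_twisted_homology} and the infinite order of $\sigma$ in $\Heis_g$.
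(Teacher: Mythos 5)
Your strategy is genuinely different from the paper's. The paper reduces modulo $\sigma$ to the Magnus representation $\rho_1^{\Heis_{g,1}}$, where the transvection formula becomes untwisted, and then uses the linear growth $\rho_1^{\Heis_{g,1}}(\tau_\alpha^n)([\gamma])=[\gamma]+n\langle\gamma,\alpha\rangle[\alpha]$ together with an arc $\gamma$ having $\langle\gamma,\alpha\rangle\neq 0$. You instead exploit exactly the part of Proposition~\ref{P:twisted_transvection_formula_1pt} that dies under that reduction, namely the scalar $\sigma^{-2k}$ acting on the interior subquotient $M=H_1(\Sigma^{int}(\alpha),\Sigma_{g,1}\setminus\Sigma^{int}(\alpha);\varphi_1^{\Heis_g})$. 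Your first step (equivariance of $\calH_1\twoheadrightarrow M$ and the scalar action $\sigma^{-2k}$ on $M$) is exactly the assertion already used in the proof of Proposition~\ref{prop:close_Dehn_twists}, so it is fine by the paper's own standards.

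The gap is in your second step. Theorem~\ref{thm_structure_twisted_homology} has no ``version for homology relative to the full boundary,'' and the statement you want to import from it is false as a freeness claim: writing $L$ for the image of $\pi_1(\Sigma^{int})$ in $\Heis_g$, the module $H_1(\Sigma_{k,1},\partial\Sigma_{k,1};\varphi)$ is computed by the two-term complex $\Z[L]\xrightarrow{\,w\,}\Z[L]^{2k}$ with $w$ the vector of Fox derivatives of the surface relator (entries such as $1-\sigma^2b_1$ and $a_1-\sigma^2$), all of which lie in the augmentation ideal; the cokernel is therefore not free, and its torsion-freeness is not something you can cite. What you actually need is weaker and true --- that $M\neq 0$ and that $\sigma^{2km}-1$ does not annihilate $M$ for any $m\geq 1$ --- but it requires an argument. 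Two ways to supply it: (i) chain-level, show that $\lambda\,(1-\sigma^2 b_1)\in(\sigma^{2km}-1)\Z[L]$ forces $\lambda\in(\sigma^{2km}-1)\Z[L]$, because $\sigma^2 b_1$ has infinite order in $L/\langle\sigma^{2km}\rangle$ so $1-\sigma^2b_1$ is a non-zero-divisor there, and then use that $\Z[L]$ is a domain; or (ii) geometrically, avoid $M$ altogether: apply the transvection formula to $\tau_\alpha^m$ to get $\rho_1(\tau_\alpha^m)([\beta])-[\beta]=(\sigma^{-2km}-1)[\beta]_{int}+c\,[\alpha]$, and pair this against a dual arc contained in the interior of $\Sigma^{int}(\alpha)$, disjoint from $\alpha$ and from $[\beta]_{ext}$, chosen to meet $\beta$ once; the pairing equals $(\sigma^{-2km}-1)$ times a nonzero monomial in the domain $\Z[\Heis_g]$, hence is nonzero. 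With either patch your proof closes, and it has the advantage over the paper's of working uniformly from the $\sigma$-twisting rather than from the transvection term.
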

\begin{proof}
It is enough to show this for the representation $\rho_1^{\Heis_{g,1}},$ with coefficients in $\Heis_g/\langle \sigma \rangle \simeq H_1(\Sigma_{g,1}).$ In fact, this representation is exactly the Magnus representation.

It is a well-known fact that the image of separating Dehn twists by the Magnus representation have infinite order. For the sake of completedness, let us give an argument using the transvection formula:

Let $\alpha$ be a separating closed curve in $\Sigma_{g,1},$ and $\gamma$ is an arc in $\Sigma_{g,1}$ with endpoints in $\partial\Sigma_{g,1}$ 
then 
$$\rho_1^{\Heis_{g,1}}(\tau_{\alpha})([\gamma])=[\gamma]+\langle \gamma, \alpha \rangle_{\Heis_{g,1}}[\alpha],$$
and thus
$$\rho_1^{\Heis_{g,1}}(\tau_{\alpha}^n)([\gamma])=[\gamma]+ n\langle \gamma, \alpha \rangle_{\Heis_{g,1}}[\alpha].$$
Therefore it suffices to show that there is an arc $\gamma$ such that $\langle \gamma, \alpha \rangle\neq 0.$ One can realize this with an arc that intersects $\alpha$ in exactly $2$ points, and such that a loop formed by a subarc of $\gamma$ and a subarc of $\alpha$ joining those $2$ points represents a non zero homology class.
\end{proof}

\subsection{Discussions on the kernel}
 \label{sec:kernel_1pt}
\subsubsection{Intersection form has kernel}
\label{sec:inter_form_kernel}
As explained in Theorem \ref{T:definite_implies_faithful}, a strategy to prove faithfulness of one of the representations $\rho_n^{\Heis_g}$ for $n\geq 1$ would be to show that the associated intersection form is definite. Unfortunately, for $n=1$ we prove the contrary. 
\begin{theorem}\label{T:kernel_n=1} The intersection form $\langle \cdot , \cdot \rangle_{\Heis_g}$ on $\mathcal{H}_1 \times \mathcal{H}_1^{\dagger}$ has kernel for any $g\geq 6.$
\end{theorem}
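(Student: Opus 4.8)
The plan is to exhibit, for every $g\geq 6$, an explicit pair of simple oriented arcs $\Gamma_-$ (with endpoints on $\partial^-\Sigma_{g,1}$) and $\Gamma_+$ (with endpoints on $\partial^+\Sigma_{g,1}$, starting at $\xi_1$) such that $\Gamma_-\pitchfork\Gamma_+\neq 0$ while $\langle\Gamma_-,\Gamma_+\rangle_{\Heis_g,1}=\langle\Gamma_-^{(1)},\Gamma_+^{[1]}\rangle_{\varphi_1^{\Heis_g}}=0$. Such a pair exactly witnesses that $\kerarc(\langle\cdot,\cdot\rangle_{1,\Heis_g})\neq\emptyset$, i.e. that the intersection form on $\calH_1\times\calH_1^{\dagger}$ has (a nontrivial ``arc'') kernel; in particular it shows that Bigelow's strategy, through Theorem~\ref{T:definite_implies_faithful}, cannot be carried out for $\rho_1^{\Heis_g}$.

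\textbf{Step 1: combinatorial shape of the pairing.} By Lemma~\ref{L:monomials_in_pairing} and formula~\eqref{E:formula_for_the_pairing} (in the one-point case the involution $\sigma\mapsto-\sigma$ plays no role, since only even powers of $\sigma$ occur in the $\delta_{\underline{x}}$'s), one has
\[
\langle\Gamma_-,\Gamma_+\rangle_{\Heis_g,1}=\sum_{x\in\Gamma_-\cap\Gamma_+}\varepsilon_x\,\varphi_1^{\Heis_g}(\delta_x)\ \in\ \Z[\Heis_g],
\]
where $\varepsilon_x=\pm1$ is the local intersection sign in $\Sigma_{g,1}$ and $\delta_x$ is the loop based at $\underline{\xi}$ obtained by following the thread and then $\Gamma_-$ up to $x$, and coming back along $\Gamma_+$. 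Since $\Heis_g$ is torsion-free, this sum vanishes in $\Z[\Heis_g]$ if and only if the intersection points can be matched in pairs $(x,y)$ with $\varphi_1^{\Heis_g}(\delta_x)=\varphi_1^{\Heis_g}(\delta_y)$ and $\varepsilon_x=-\varepsilon_y$. Using the normal form of $\Heis_g$, together with Proposition~\ref{prop:HeisenbergSimpleCurve} to evaluate $\varphi_1^{\Heis_g}$ on the separating sub-loops, the element $\varphi_1^{\Heis_g}(\delta_x)$ is recorded by the pair $(v_x,c_x)$, where $v_x\in H_1(\Sigma_{g,1})\cong\Z^{2g}$ is the homology class of $\delta_x$ and $c_x\in\Z$ is the exponent of $\sigma$; the cancellation condition becomes $v_x=v_y$, $c_x=c_y$, $\varepsilon_x=-\varepsilon_y$.

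\textbf{Step 2: the configuration.} I would then draw $\Gamma_-,\Gamma_+$ meeting in an even number of points, organised into cancelling pairs as above. The model is Suzuki's construction of pairs of separating simple closed curves with vanishing \emph{Magnus} twisted intersection (the Magnus pairing being the image of $\langle\cdot,\cdot\rangle_{\Heis_g,1}$ under $\Z[\Heis_g]\to\Z[H_1(\Sigma_{g,1})]$, $\sigma\mapsto1$). The extra requirement $g\geq 6$ reflects the fact that the Heisenberg pairing also remembers the central exponents $c_x$: one must cancel \emph{both} the homology vectors $v_x$ and the powers of $\sigma$ while keeping the arcs \emph{simple} and \emph{essentially intersecting}. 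Concretely, I expect to use one block of handles to realise the required values of the $v_x$ (as in the $H_1$-level construction) and further handles whose contribution to the $\delta_x$'s is purely central — a power of $\sigma$, by Proposition~\ref{prop:HeisenbergSimpleCurve} — used to correct the $\sigma$-exponents; counting the handles needed for each yields the bound. The vanishing of the pairing is then purely the bookkeeping of Step~1 plus the relations of $\Heis_g$.

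\textbf{Step 3: non-triviality of the geometric intersection — the main obstacle.} The delicate part is certifying that the arcs as drawn are in minimal position, i.e. that $\Gamma_-\pitchfork\Gamma_+$ equals the number of drawn crossings and in particular is nonzero: one must rule out that the various algebraic cancellations above reflect the existence of an isotopy pulling $\Gamma_-$ off $\Gamma_+$. The cleanest self-contained route is the bigon criterion for arcs on a surface with boundary: check that $\Sigma_{g,1}\setminus(\Gamma_-\cup\Gamma_+)$ contains no innermost bigon (nor half-bigon at the boundary). A possible alternative certificate is to evaluate the corresponding twisted intersection number in a different local system — e.g. onto a finite quotient of $\pi_1(\Sigma_{g,1})$, or onto $H_1$ with a suitable coefficient ring — and show it is nonzero; but producing a bigon-free global picture is the route that makes the explicit nature of the construction (and the $g\ge 6$ hypothesis) genuinely unavoidable, and is where the bulk of the verification lies. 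Once Steps~2--3 are completed, $(\Gamma_-,\Gamma_+)\in\kerarc(\langle\cdot,\cdot\rangle_{1,\Heis_g})$ and the theorem follows.
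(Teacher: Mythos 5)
Your strategy is exactly the paper's: produce one explicit pair of intersecting arcs whose twisted pairing vanishes, compute the pairing pointwise via formula~\eqref{E:formula_for_the_pairing}, and certify minimal position by the bigon criterion. Steps~1 and~3 are sound as stated (with the minor remark that the cancellation criterion in Step~1 is just the definition of the group ring $\Z[\Heis_g]$ — a $\Z$-linear combination of group elements vanishes iff the coefficients at each group element sum to zero — and does not require torsion-freeness). But the proposal has a genuine gap: the theorem is an existence statement whose entire content is the explicit example, and you never produce it. Step~2 is written in the conditional ("I would then draw\dots", "I expect to use\dots"), so the crucial object — the pair $(\Gamma_-,\Gamma_+)$ together with the verification that its contributions cancel and that no bigon exists — is missing. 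Without it there is no proof.

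For comparison, the paper's example is simpler than what you anticipate: the two arcs meet in exactly $8$ points, and the arcs are arranged so that \emph{every} loop $\delta_x$ is null-homologous in $\Sigma_{g,1}$, hence is sent by $\varphi_1^{\Heis_g}$ to a central element $\sigma^{2m}$ (Proposition~\ref{prop:HeisenbergSimpleCurve}); there is no need to cancel nontrivial homology vectors $v_x$ at all. The eight contributions are
\[
\sigma^{-2}-\sigma^{-4}+\sigma^{-2}-1+\sigma^{-4}-\sigma^{-2}+1-\sigma^{-2}=0,
\]
and the genus budget $g\geq 6$ is spent on the handles needed to realise the various exponents $\sigma^{-2k}$ while keeping both arcs simple and the complement free of bigons (minimality then follows from the bigon lemma exactly as you propose). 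So your Step~2 overcomplicates the target slightly, but the real issue is that the construction is announced rather than carried out.
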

\begin{proof}
An example of a pair of arcs $\alpha$ and $\beta$ with non-zero geometric intersection but such that $\langle \alpha, \beta \rangle_{1,\Heis_g}=0$ is shown in Figure \ref{fig:example_kernel_intersection}.
\begin{figure}[h]
  \centering
  \def \svgwidth{0.5\columnwidth}
  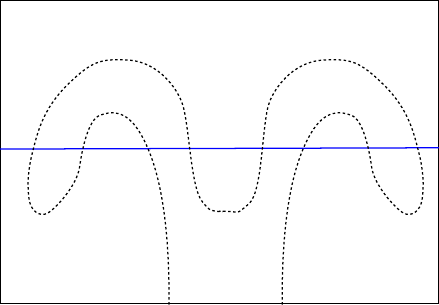
  \caption{Two disjoint arcs on which the intersection form $\langle \cdot , \cdot \rangle_{\Heis_g}$ vanishes. The surface $\Sigma_{g,1}$ is represented by a square with handles added. If a region is colored by an integer $k,$ it means that one needs to add $k$ handles in that region. In red, the thread for $\beta$ is represented.}
  \label{fig:example_kernel_intersection}
  \end{figure}
 Note that the two arcs are in minimal intersection positions by the bigon lemma since there is no disk in $\Sigma_{g,1}$ bounded by two subarcs of $\alpha$ and $\beta.$ Therefore their geometric intersection number is $8.$ Computing the intersection using Formula \ref{E:formula_for_the_pairing}, we get
$$ \langle \alpha, \beta \rangle_{\Heis_g}= \sigma^{-2}-\sigma^{-4}+\sigma^{-2}-1 +\sigma^{-4}-\sigma^{-2}+1-\sigma^{-2}=0.$$
In the above computation, we have ordered the contributions of intersection points in $\alpha \cap \beta$ following the orientation of $\alpha.$
\end{proof}
We note that Long and Paton \cite{LongPaton} and Bigelow \cite{Big99} have used the kernel of the twisted intersection pairing associated to the Burau representations to give kernel elements of the Burau representation of $B_n$ for $n\geq 6.$ The same kind of argument appears also in Suzuki's proof \cite{S02, S05b} that the Magnus representation of the Torelli group $\mathcal{I}(\Sigma_{g,1})$ has kernel for $g\geq 2$. The argument of Suzuki uses curves $\alpha$ and $\beta$ that are orthogonal for the twisted intersection pairing, and a transvection formula for the action of Dehn twists to show that $[\tau_{\alpha},\tau_{\beta}]$ is in the kernel of the Magnus representation.

However, in the proof of Theorem \ref{T:kernel_n=1}, we found non-disjoint \emph{arcs} $\alpha$ and $\beta$ with zero twisted intersection. It is not clear how to associate kernel elements of $\rho_1$ to this pair of arcs. One might be tempted to study the commutator $[\tau_{\alpha'},\tau_{\beta'}]$ of the Dehn twists along $\alpha',\beta',$ which simple closed curves obtained by closing $\alpha$ and $\beta$ using sub-arcs of $\partial\Sigma_{g,1}.$ However, we have computed that $[\tau_{\alpha'},\tau_{\beta'}]$ is not in the kernel of $\rho_1.$ One issue is that Proposition \ref{P:twisted_transvection_formula_1pt} only gives a \emph{twisted} transvection formula for Dehn-twist, and the factors $\sigma^{2k}$ in the formula prevents $\rho_1([\tau_{\alpha'},\tau_{\beta'}])$ to vanish, even when the twisted intersection of $\alpha$ and $\beta$ vanishes.

However, the above proposition shows that one can not apply Bigelow's strategy (Theorem \ref{T:definite_implies_faithful}) in the single point case to prove the faithfulness of the representation $\rho_1.$  

\subsubsection{Kernel elements for the Heisenberg $ \mod \sigma^{2k}-1$ representation}\label{S:kernel_n=1_mod}
\label{sec:kernelmodsigma2K}
We recall that $\rho_{1,k}$ stands for the representation obtained from $\rho_1$ by reducing coefficients mod $\sigma^{2k}-1$ which is equivalent to the homological representations associated with the $\Heis_{g,2k}$-twisted representation. We find kernel elements of this representation, provided that the genus is large enough:
\begin{proposition}\label{prop:kernel_modsigma2k}
 Let $g\geq 3k,$ then the representation $\rho_{1,k}$ has kernel.   
\end{proposition}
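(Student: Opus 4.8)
The plan is to exploit the mod $\sigma^{2k}-1$ transvection formula from Corollary~\ref{C:transvection_mod2k_formula}, which says that $\rho_{1,k}(\tau_\alpha)([\beta]) = [\beta] + \langle \alpha,\beta\rangle_{\varphi_1^{\Heis_{g,2k}}}[\alpha]$ for any separating simple closed curve $\alpha$ of genus $k$. Because this is now an \emph{untwisted} transvection (no extra powers of $\sigma$ spoil it, exactly unlike the $n=1$ full-Heisenberg case in Section~\ref{sec:inter_form_kernel}), the classical Suzuki-type argument applies: if $\alpha_1,\alpha_2$ are two separating simple closed curves, each of genus $k$, which are disjoint (or more generally have vanishing $\varphi_1^{\Heis_{g,2k}}$-twisted intersection pairing $\langle \alpha_1,\alpha_2\rangle_{\varphi_1^{\Heis_{g,2k}}} = 0$ and whose classes $[\alpha_1],[\alpha_2]$ lie in a common complementary subsurface), then the commutator $[\tau_{\alpha_1},\tau_{\alpha_2}]$ acts trivially on $\calH_{1,2k}$. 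Indeed, writing $T_i = \rho_{1,k}(\tau_{\alpha_i})$, one computes $T_1 T_2 T_1^{-1} T_2^{-1}$ on an arbitrary class $[\beta]$ using the transvection formula twice; the cross terms involve $\langle \alpha_1,\alpha_2\rangle$, which vanishes by hypothesis, so the commutator is the identity. One must also check $[\tau_{\alpha_1},\tau_{\alpha_2}]$ is a nontrivial mapping class, which holds as long as $\alpha_1 \cup \alpha_2$ does not bound; since $\tau_{\alpha_1}$ and $\tau_{\alpha_2}$ commute iff $\alpha_1,\alpha_2$ are isotopic or disjoint, and disjoint genus-$k$ separating curves with the same homology class are in fact not isotopic in general, I need to arrange that the two curves are disjoint but not isotopic, so that $[\tau_{\alpha_1},\tau_{\alpha_2}]$ is genuinely trivial in the representation while being nontrivial in $\Mod(\Sigma_{g,1})$.

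The first step is to exhibit two disjoint separating simple closed curves $\alpha_1,\alpha_2$, both of genus $k$, on $\Sigma_{g,1}$, whose $\varphi_1^{\Heis_{g,2k}}$-twisted intersection vanishes; by Proposition~\ref{prop:HeisenbergSimpleCurve} each is sent to $\sigma^{\pm 2k} = 1$ in $\Heis_{g,2k}$, and since they are disjoint the twisted intersection pairing is trivially zero with no monomial contributions. The key point is the genus count: to fit two \emph{disjoint} genus-$k$ separating curves inside $\Sigma_{g,1}$ in such a way that the complementary region separating them has positive genus — so that $[\tau_{\alpha_1},\tau_{\alpha_2}]$ is not trivial by the reasoning of Proposition~\ref{prop:close_Dehn_twists} — one needs roughly $k$ genus for the interior of $\alpha_1$, $k$ genus for the interior of $\alpha_2$, and at least $1$ more genus in the middle region, hence $g \geq 2k+1$; the hypothesis $g \geq 3k$ gives ample room (and matches the table in the summary theorem stated as $g > 3k$, modulo conventions). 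Concretely, place $\alpha_1$ enclosing the first $k$ handles, $\alpha_2$ enclosing handles $k+1$ through $2k$, arranged side by side inside a pair-of-pants-like subsurface so that they are disjoint, non-isotopic, and the complement of $\alpha_1 \cup \alpha_2$ has a component of genus $g-2k \geq k \geq 1$ (for $k\geq 1$).

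Second, verify the commutator is in the kernel: apply Corollary~\ref{C:transvection_mod2k_formula} to the generators $[\beta]$ of the free $\Z[\Heis_{g,2k}]$-module $\calH_{1,2k}$, expand $T_1 T_2 T_1^{-1} T_2^{-1}[\beta]$, and observe all surviving correction terms are multiples of $\langle \alpha_1,\alpha_2\rangle_{\varphi_1^{\Heis_{g,2k}}} = 0$, so the commutator acts as the identity on a spanning set, hence on all of $\calH_{1,2k}$. Third, verify $[\tau_{\alpha_1},\tau_{\alpha_2}]\neq 1$ in $\Mod(\Sigma_{g,1})$: this follows because two Dehn twists along non-isotopic simple closed curves commute only if the curves are disjoint \emph{and} the twists are independent — but one must rule out the degenerate case; invoking the standard fact (Farb–Margalit, \S3.5) that $\tau_{\alpha_1}\tau_{\alpha_2} = \tau_{\alpha_2}\tau_{\alpha_1}$ for disjoint curves means I actually need a subtler configuration: I should instead take $\alpha_1,\alpha_2$ with \emph{nonzero geometric intersection} but vanishing twisted intersection pairing. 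This is exactly the Suzuki phenomenon — and this is the main obstacle: finding genus-$k$ separating curves $\alpha_1,\alpha_2$ on $\Sigma_{g,1}$ that intersect geometrically but satisfy $\langle \alpha_1,\alpha_2\rangle_{\varphi_1^{\Heis_{g,2k}}} = 0$, which requires $g$ large enough (the bound $g\geq 3k$) to have enough handles on either side to make the signed sum of monomials in Formula~\eqref{E:formula_for_the_pairing} telescope to zero while keeping the geometric intersection nonzero. Once such a pair is in hand, $[\tau_{\alpha_1},\tau_{\alpha_2}]\neq 1$ follows from the intersection being nonzero, and the transvection computation above shows it lies in $\Ker(\rho_{1,k})$, completing the proof. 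I expect the explicit construction of the intersecting pair of separating curves with vanishing twisted pairing — essentially a higher-genus analogue of the arc picture in Figure~\ref{fig:example_kernel_intersection}, but now with closed separating curves and the mod-$\sigma^{2k}$ simplification — to be the technical heart of the argument.
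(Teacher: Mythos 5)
Your overall strategy is the paper's: use the untwisted transvection formula of Corollary~\ref{C:transvection_mod2k_formula} to show that the commutator of two Dehn twists along separating curves with vanishing $\Heis_{g,2k}$-twisted pairing dies in $\rho_{1,k}$, and conclude nontriviality of the commutator from the curves having positive geometric intersection. The transvection computation and the nontriviality argument are correct as you sketch them (your first attempt with \emph{disjoint} curves is indeed a dead end, since then the twists already commute in $\Mod(\Sigma_{g,1})$, and you rightly abandon it). But there is a genuine gap: the entire content of the hypothesis $g\geq 3k$ lives in the explicit construction of the pair of curves, and you leave that construction as an expectation (``the technical heart of the argument'') rather than carrying it out. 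A vague appeal to making ``the signed sum of monomials telescope to zero'' does not establish that such a pair exists for $g\geq 3k$.

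The construction the paper uses is in fact very short, and worth knowing because it pins down both the mechanism and the genus count. Take $\alpha$ and $\beta$ to be separating simple closed curves of genus $2k$ each, intersecting in exactly \emph{two} points, such that the two subarcs joining these points cobound a subsurface (a ``bigon'') of genus $k$. By Formula~\eqref{E:formula_for_the_pairing} and Proposition~\ref{prop:HeisenbergSimpleCurve}, the two intersection points contribute $+1$ and $-\sigma^{2k}\equiv -1$ modulo $\sigma^{2k}-1$, so $\langle\alpha,\beta\rangle_{\Heis_{g,2k}}=0$, while the bigon criterion shows the geometric intersection is nonzero. The genus budget is then transparent: $k$ handles interior to $\alpha$ only, $k$ in the overlap, $k$ interior to $\beta$ only, whence $g\geq 3k$. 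Note also that your passing choice of genus-$k$ (rather than genus-$2k$) curves is more delicate: with two intersection points the cancellation forces the overlap region to carry genus $k$, which for genus-$k$ curves leaves the non-overlapping parts of each interior planar and risks a degenerate configuration; the paper's genus-$2k$ choice avoids this. Finally, a minor point: the transvection formula is stated for genus-$k$ curves, so when applying it to genus-$2k$ curves one should observe that $\sigma^{4k}\equiv 1$ modulo $\sigma^{2k}-1$, so the same untwisted formula holds.
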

\begin{proof}
\begin{figure}[h]
  \centering
  \def \svgwidth{0.4\columnwidth}
\begingroup%
  \makeatletter%
  \providecommand\color[2][]{%
    \errmessage{(Inkscape) Color is used for the text in Inkscape, but the package 'color.sty' is not loaded}%
    \renewcommand\color[2][]{}%
  }%
  \providecommand\transparent[1]{%
    \errmessage{(Inkscape) Transparency is used (non-zero) for the text in Inkscape, but the package 'transparent.sty' is not loaded}%
    \renewcommand\transparent[1]{}%
  }%
  \providecommand\rotatebox[2]{#2}%
  \newcommand*\fsize{\dimexpr\f@size pt\relax}%
  \newcommand*\lineheight[1]{\fontsize{\fsize}{#1\fsize}\selectfont}%
  \ifx\svgwidth\undefined%
    \setlength{\unitlength}{210.6230488bp}%
    \ifx\svgscale\undefined%
      \relax%
    \else%
      \setlength{\unitlength}{\unitlength * \real{\svgscale}}%
    \fi%
  \else%
    \setlength{\unitlength}{\svgwidth}%
  \fi%
  \global\let\svgwidth\undefined%
  \global\let\svgscale\undefined%
  \makeatother%
  \begin{picture}(1,0.69267517)%
    \lineheight{1}%
    \setlength\tabcolsep{0pt}%
    \put(0,0){\includegraphics[width=\unitlength,page=1]{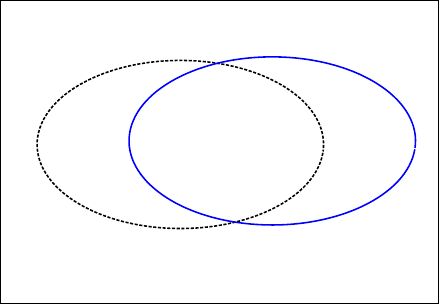}}%
    \put(0.43165685,0.33734507){\makebox(0,0)[lt]{\lineheight{1.25}\smash{\begin{tabular}[t]{l}$k$\end{tabular}}}}%
    \put(0.17267142,0.35353164){\makebox(0,0)[lt]{\lineheight{1.25}\smash{\begin{tabular}[t]{l}$k$\end{tabular}}}}%
    \put(0.79135804,0.34453909){\makebox(0,0)[lt]{\lineheight{1.25}\smash{\begin{tabular}[t]{l}$k$\end{tabular}}}}%
    \put(0.1671245,0.56977571){\color[rgb]{0,0,0}\makebox(0,0)[lt]{\lineheight{1.25}\smash{\begin{tabular}[t]{l}$\beta$\end{tabular}}}}%
    \put(0.68621807,0.57876824){\color[rgb]{0,0,1}\makebox(0,0)[lt]{\lineheight{1.25}\smash{\begin{tabular}[t]{l}$\alpha$\end{tabular}}}}%
    \put(0.17908273,0.07613033){\makebox(0,0)[lt]{\lineheight{1.25}\smash{\begin{tabular}[t]{l}$g-3k$\end{tabular}}}}%
  \end{picture}%
\endgroup%

  \caption{The images of the Dehn twists $\tau_{\alpha}$ and $\tau_{\beta}$ by $\rho_{1,k}$ commute}
  \label{fig:example_kernel_modk}
  \end{figure}
Figure \ref{fig:example_kernel_modk} shows two separating curves $\alpha$ and $\beta$ of genus $2k$ such that $\langle \alpha,\beta \rangle_{\Heis_{g,k}} =0.$ Indeed, the two curves have only two intersection points, which form a bigon of genus $k.$ Applying Formula \eqref{E:formula_for_the_pairing} easily shows that the two contributions are $+1$ and $-\sigma^{2k}=-1,$ and therefore $\langle \alpha,\beta \rangle_{\Heis_{g,k}} =0.$ (Here, let us note that we make an arbitrary choice of lift and orientations of $\alpha$ and $\beta$, which has no effect on the vanishing of $\langle \alpha,\beta \rangle_{\Heis_{g,k}}$).

Now let $[\gamma] \in H_1(\Sigma_{g,1},\partial\Sigma_{g,1},\Heis_{g,k}).$ By Corollary \ref{C:transvection_mod2k_formula} we have
$$\rho_{1,k}(\tau_{\alpha}\tau_{\beta})([\gamma])=[\gamma]+\langle \beta, \gamma \rangle_{\Heis_{g,k}} [\beta] +\langle \alpha, \gamma \rangle_{\Heis_{g,k}} [\alpha]+ \langle \beta, \gamma \rangle_{\Heis_{g,k}} \langle \alpha, \beta \rangle_{\Heis_{g,k}}[\alpha].$$
Since $\langle \alpha, \beta \rangle_{\Heis_{g,k}}=0,$ and since the only non symmetric term in $\alpha, \beta$ in the above formula factors by $\langle \alpha, \beta \rangle_{\Heis_{g,k}}$, we get that $\rho_{1,k}(\tau_{\alpha}\tau_{\beta})=\rho_{1,k}(\tau_{\beta}\tau_{\alpha}).$ Thus $[\tau_{\alpha},\tau_{\beta}]\in \mathrm{Ker} \rho_{1,k}.$ We recall that Dehn twists along curves with positive geometric intersection never commute in the mapping class group, hence $\rho_{1,k}$ is not faithful.
\end{proof}

\section{The several-points case}
\label{sec:several_pts}
\subsection{Kernel in the intersection form $\mod \sigma^{2k},$ where $k<g$}\label{sec:kernel_sev_pts}

In this section, we show that the twisted intersection form on $\calH_{n,2k}$ is not definite for any $n\geq 1$ and any $k<g.$ Indeed, we have:
\begin{proposition}\label{prop:kernel_inter_form_npt}
	Let $\alpha$ and $\beta$ be two arcs in $\Sigma_{g,1}$ with endpoints in $\partial \Sigma_{g,1}^-$ and $\partial \Sigma_{g,1}^+$ respectively. Let $\alpha^{(n)} \in \calH_{n,2k}$ and $\beta^{[n]} \in \calH^{\dagger}_{n,2k}$ be the associated homology classes.
	
	Assume that $\alpha$ and $\beta$ meet in exactly two points, and that there are arcs $a$ of $\alpha$ and $b$ of $\beta$ such that the bigon $a\cup b$ is a separating simple closed curve of genus $d,$ where $d$ divides $k.$ Then $\langle \alpha^{(n)},\beta^{[n]}\rangle_{\Heis_{g,2k}}=0,$ although $\alpha$ and $\beta$ have non-zero geometric intersection.
\end{proposition}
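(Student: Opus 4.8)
The plan is to compute $\langle \alpha^{(n)},\beta^{[n]}\rangle_{\Heis_{g,2k}}$ directly using the combinatorial formula \eqref{E:formula_for_the_pairing}, exploiting the fact that $\alpha$ and $\beta$ meet in only two points. Since the geometric intersection is exactly two, the set $P$ of intersecting configurations consists of pairs: for each of the two intersection points $x_1, x_2$ in $\alpha \cap \beta$, we place one of the $n$ points there and distribute the rest. Actually, more carefully: an intersecting configuration has $n$ points, each on one of the $n$ parallel copies of $\beta$, all lying on the single arc $\alpha$. Since $\alpha$ and $\beta$ intersect in two points, near each of those two points there is a whole ``cluster'' of intersection points between $\alpha$ and the parallel push-offs $\beta(1),\ldots,\beta(n)$. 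So an intersecting configuration is a choice, for each parallel copy $\beta(j)$, of which of the two clusters its intersection point with $\alpha$ lies in — but with the constraint that the $n$ points are honestly distributed and the thread/ordering conventions are respected. First I would set up this bookkeeping precisely, identifying $P$ with a set indexed by how the $n$ strands split between the two clusters.

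Next I would compute the monomial $h_{\underline{x}} = \varphi_n^{\Heis_g}(\delta_{\underline{x}})$ for each configuration via Lemma \ref{L:monomials_in_pairing}. The key geometric input is the hypothesis that $a \cup b$ (one subarc of $\alpha$ together with one subarc of $\beta$) closes up to a separating simple closed curve of genus $d$: by Proposition \ref{prop:HeisenbergSimpleCurve}, this loop maps under $\varphi_1^{\Heis_g}$ to $\sigma^{\pm 2d}$. Moving the intersection point of one strand from one cluster to the other corresponds precisely to inserting a copy of this loop (one transition through the bigon), so it changes $h_{\underline{x}}$ by a factor $\sigma^{\pm 2d}$, and it also flips the sign $\varepsilon_{\underline{x}}$ (the two intersection points of a bigon have opposite signs, and via the $\eta$/involution $\sigma \mapsto -\sigma$ the factor becomes $(-\sigma)^{\pm 2d} = \sigma^{\pm 2d}$, but the raw sign contribution flips). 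I would organize the sum so that configurations are grouped by how many strands sit in the ``first'' cluster; summing over all $2^n$-type choices and tracking signs, the pairing becomes something like $\sum_{j} (\pm 1)^{j} \binom{n}{j}_{?} \sigma^{2dj}$ up to the usual $q$-binomial refinements coming from the ordering of strands within a cluster — but the essential point is only that each transition contributes the \emph{same} factor $-\sigma^{2d}$ (after the $\sigma\mapsto -\sigma$ involution this is $-\sigma^{2d}$), giving a total of the form $C\cdot(1 - \sigma^{2d})^{?}$ or more precisely a telescoping/geometric-type cancellation.

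Then I would reduce modulo $\sigma^{2k} - 1$: since $d \mid k$, we have $\sigma^{2k} = (\sigma^{2d})^{k/d} \equiv 1$, so $\sigma^{2d}$ is a $(k/d)$-th root of unity in $\Z[\Heis_{g,2k}]$, hence $1 - \sigma^{2d}$ divides $1 - \sigma^{2k} = 0$ in the quotient... no — rather, the explicit cancellation in the $n=1$ model (two contributions $+1$ and $-\sigma^{2d}$, giving $1 - \sigma^{2d}$, which is \emph{not} itself zero mod $\sigma^{2k}-1$ unless $d = k$) suggests I should be more careful. The cleaner route, matching the proof strategy of Proposition \ref{prop:kernel_modsigma2k}, is: the whole pairing $\langle \alpha^{(n)}, \beta^{[n]}\rangle_{\Heis_g}$ over $\Z[\Heis_g]$ equals (up to a unit) a product over the $n$ strands of factors, each a geometric-type sum; after reduction mod $\sigma^{2k}$, using that $\sigma^{2k}=1$ and the fact that the relevant factor telescopes to $1 - \sigma^{2kd/\gcd}$ or, when one sums a full geometric series of length a multiple of the order of $\sigma^{2d}$, to $0$. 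I expect the \textbf{main obstacle} to be getting the strand-ordering/thread conventions exactly right so that the sign and $q$-power bookkeeping in \eqref{E:formula_for_the_pairing} produces a genuine geometric series $1 + \zeta + \zeta^2 + \cdots + \zeta^{n}$ (or the appropriate $q$-analogue) with $\zeta = -\sigma^{2d}$ a root of unity of order dividing $n+1$ after the reduction — equivalently, pinning down exactly why the hypothesis ``$d \mid k$'' (rather than $d = k$) already forces vanishing for \emph{all} $n \geq 1$, which presumably uses that for $n \geq 1$ there is at least one transition available and the geometric sum over the available transitions, combined with $\sigma^{2k} \equiv 1$, kills it. Once the combinatorial identity is isolated, the verification that $\alpha \pitchfork \beta \neq 0$ is immediate from the bigon criterion since $a \cup b$ being a genus-$d \geq 1$ separating curve means the two intersection points do not cobound an embedded disk.
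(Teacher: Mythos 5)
Your proposal stalls exactly where you sense it does, and the direct combinatorial route is not how the paper closes the argument. The paper's proof is a deformation argument: let $\beta'$ be the arc obtained from $\beta$ by replacing the subarc $b$ with an arc parallel to $a$, keeping the same two intersection points with $\alpha$. Then $\beta'$ can be isotoped off $\alpha$, so $\langle \alpha^{(n)},\beta'^{[n]}\rangle_{\Heis_{g,2k}}=0$ by isotopy invariance of the pairing. On the other hand, the intersecting configurations for $(\alpha,\beta)$ and $(\alpha,\beta')$ are in natural bijection with matching signs, and for each such $\underline{x}$ the loop $\delta'_{\underline{x}}$ differs from $\delta_{\underline{x}}$ only by inserting loops in which one configuration point traverses the separating curve $a\cup b$ while the others stay put; by the argument of Proposition \ref{prop:HeisenbergSimpleCurve} each insertion multiplies the monomial by the central element $\sigma^{\pm 2d}$. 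If $\sigma^{2d}=1$ in $\Heis_{g,2k}$, the two pairings coincide term by term and the result follows — with no bookkeeping of clusters, winding indices $A_{j,j'}$, or $q$-binomials. Your plan to sum the series explicitly instead would force you to control the half-twist contributions between strands landing in different clusters, which is precisely the part you leave at the level of ``I expect'' and ``presumably''; as written the proposal does not reach a proof.

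Your discomfort with the divisibility hypothesis is, however, well placed, and it is the one substantive observation in your write-up. The insertion factor is $\sigma^{\pm 2d}$, which is trivial in $\Heis_{g,2k}=\Heisg/\langle\sigma^{2k}\rangle$ precisely when $k\mid d$, not when $d\mid k$; your remark that for $n=1$ the pairing is $1-\sigma^{\pm 2d}$ up to a unit, and that this is nonzero in the group ring whenever $d$ is a proper divisor of $k$, is correct and shows that no resummation can rescue the statement in that case. The paper's own argument uses $\sigma^{2d}\equiv 1$, and its one-point prototype (Proposition \ref{prop:kernel_modsigma2k}) takes a bigon of genus exactly $k$. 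So the hypothesis must be read with the divisibility reversed, i.e.\ $d$ a multiple of $k$ (with $d=k$ the minimal case, consistent with the bound $g>k$ announced in the introduction); under that reading the deformation argument above is complete, and your sum would also collapse, since every term would acquire the same central monomial.
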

\begin{proof}
	The fact that $\alpha$ and $\beta$ have non-zero geometric intersection follows from the bigon lemma \cite[Proposition 1.7]{FarbMargalit}. To show that $\langle \alpha^{(n)},\beta^{[n]}\rangle_{\Heis_{g,2k}}=0,$ notice that by Equation \ref{E:formula_for_the_pairing}, the intersection $\langle \alpha^{(n)},\beta^{[n]}\rangle_{\Heis_{g,2k}}$ is the same as the intersection $\langle \alpha^{(n)},\beta'^{[n]}\rangle_{\Heis_{g,2k}}$ where $\beta'$ is an arc obtained from $\beta$ by replacing the subarc $b$ by an arc parallel to $a$ (while keeping the same two intersection points between $\alpha$ and $\beta'$ as those between $\alpha$ and $\beta$). Indeed, for any $\underline{x},$ a $n$-uple of intersection points between $\alpha$ and the $n$-th parallelization of $\beta$ or $\beta',$ the loop $\delta_{\underline{x}}'$ that appears in Equation \ref{E:formula_for_the_pairing} for $\langle \alpha^{(n)},\beta'^{[n]}\rangle_{\Heis_{g,2k}}$ is obtained from $\delta_{\underline{x}}$ by inserting loops that have one point going along $a\cup b$ and all other configuration points constant. These loops are sent to $\sigma^{\pm 2d}$ by the same argument as Proposition \ref{prop:HeisenbergSimpleCurve}. Since $\sigma$ is central in $\Heis_g$ and we are working modulo $\sigma^{2k}-1,$ we get:
	$$\varphi_n^{\Heis_{g,2k}}(\delta_{\underline{x}})=\varphi_n^{\Heis_{g,2k}}(\delta_{\underline{x}}'),$$
	for any such $\underline{x},$ and therefore:
	$$\langle \alpha^{(n)},\beta'^{[n]}\rangle_{\Heis_{g,2k}}=\langle \alpha^{(n)},\beta'^{[n]}\rangle_{\Heis_{g,2k}}=0,$$
	since $\langle \alpha^{(n)},\beta'^{[n]}\rangle_{\Heis_{g,2k}}$ depends only on the isotopy classes (rel. boundary) of $\alpha$ and $\beta'$ and $\alpha$ and $\beta'$ can be isotoped to be disjoint. 
\end{proof}
\begin{remark}\label{remark:more_curves_in_kernel_n_pts}
	The proof of Proposition \ref{prop:kernel_inter_form_npt} shows that many more examples of geometrically intersecting pairs of arcs $(\alpha,\beta)$ with vanishing twisted intersection $\langle \alpha^{(n)},\beta^{[n]}\rangle_{\Heis_{g,2k}}$ can be constructed. For instance, one could assume $\alpha$ and $\beta$ to intersect so that they form $m$ "bigons" with genus $d_1,\ldots,d_m,$ where all $d_i$ divide $k.$
\end{remark}
\subsection{A simplified formula for the $n$ points intersection form}
\label{sec:formula_sev_pts}
Next, we give a formula that simplifies the computation of the twisted intersection form on $\calH_n$ for $n>2,$ for some specific types of arcs. We will fix a $g$-holed sphere $S$ embedded in $\Sigma_{g,1}$ so that $\Sigma_{g,1}$ is the union of $S$ and $g$ one-holed tori. We will compute $\langle \alpha^{(n)},\beta^{[n]} \rangle$ when $\alpha^{(n)}\in \calH_n$ and $\beta^{[n]} \in \calH_n^{\dagger}$ are homology classes associated to two oriented separating arcs $\alpha$ and $\beta,$ with the added assumption that $\alpha$ and $\beta$ belong to the $g$-holed sphere $S.$ Because of this assumption, any loop in $\Sigma_{g,1}$ consisting of a subarc of $\alpha$ and a subarc of $\beta$ will be separating.

Let  $x_0,x_1,\ldots ,x_{k-1}$ be the points of intersection of $\alpha$ and $\beta,$ where the label corresponds to their order on $\alpha.$  We will assume that the sign of $x_0$ as an intersection point $\alpha$ and $\beta$ is $+1.$ We will also consider intersection points of $\alpha$ with a $2$-parallel cable $\beta^{[2]}$ of $\beta,$ in which case we will write $x_0,\ldots,x_{k-1}$ for the intersection points of $\alpha$ with the left component of $\beta^{[2]}$ and $\overline{x_0},\ldots,\overline{x_{k-1}}$ for the intersection points with the right component of $\beta^{[2]}.$ We recall that a basepoint $\xi=\{\xi_1,\ldots,\xi_n\}$ of $\Conf_n(\Sigma_{g,1})$ is fixed, where $\xi_1,\ldots,\xi_n$ are points on the interval $\partial^+ \Sigma_{g,1}.$

For $j\in \lbrace 0,\ldots,k-1\rbrace,$ let $\eta_j$ be the oriented loop which consists of connecting $\xi_1$ to $x_j$ along $\alpha$ then $x_j$ to $\xi_1$ in $\beta.$ We can write $\varphi_1^{\Heis_g}(\eta_j)=\sigma^{2n_j}$ for some integer $n_j\in \Z,$ since $\eta_j$ is a loop in $S.$ This is due to the loop being separating and Prop.~\ref{prop:HeisenbergSimpleCurve}. 

Next, for $j<j' \in \lbrace 0,\ldots,k-1 \rbrace,$ we consider the $2$-braid $\beta_{j,j'}$ in $\Sigma_{g,1}$ which consists of connecting $( \xi_1,\xi_2)$ along $\alpha$ to $(x_j,\overline{x_{j'}}),$ then $(x_j,\overline{x_{j'}})$ to $(\xi_1,\xi_2)$ or $(\xi_2,\xi_1)$ along $\beta^{[2]}.$ 

Let us write $\beta_{j,j'}(t)=\lbrace y_1(t),y_2(t)\rbrace,$ where $y_i(t)$ are continuous path in $\Sigma_{g,1}.$  Since the $2$-braid $\beta_{j,j'}$ can be considered a $2$-braid in the $g$-holed sphere $S,$ we can consider its index $A_{j,j'},$ which can be defined as the number of half turns that the vector $\overset{\longrightarrow}{y_1(t)y_2(t)}$ does along $\beta_{j,j'}.$ Alternatively, $A_{j,j'}$ is defined by the formula:
$$\beta_{j,j'}=(\eta_j,\xi_2)\sigma_1^{A_{j,j'}}(\eta_{j'},\xi_2),$$
considered as an identity of $2$-braids in $\Sigma_{g,1}.$ Here $\sigma_1$ is the element braid which exchanges $\xi_1$ and $\xi_2,$ introduced in Equation \ref{eq_localsystem_of_Heisenberg}. Note that we have
$$\varphi_2^{\Heis_g}(\beta_{j,j'})=\sigma^{2n_j+2n_{j'}+A_{j,j'}}.$$
We claim that the data $\lbrace n_j, 0\leq j \leq k-1 \rbrace$ and $\lbrace A_{j,j'}, 0\leq j < j' \leq k-1 \rbrace$ is sufficient to compute the $n$-point twisted intersection $\langle \alpha^{(n)},\beta^{[n]} \rangle_{\Heis_g}$ for any $n\geq 2:$
\begin{proposition}
	Let $\alpha$ and $\beta$ as above, and $n\geq 2.$ Then
	$$\langle \alpha^{(n)},\beta^{[n]}\rangle_{\Heis_g}=\underset{0\leq i_1,\ldots,i_n \leq k-1}{\sum}(-1)^{\underset{j=1}{\overset{n}{\sum}} i_j}\sigma^{2\underset{j=1}{\overset{n}{\sum}} n_j}(-\sigma)^{\underset{1\leq l<j\leq n}{\sum}A_{i_l,i_j}}.$$
\end{proposition}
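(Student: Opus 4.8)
The strategy is to apply the general pairing formula \eqref{E:formula_for_the_pairing} directly, using the combinatorial data $\{n_j\}$ and $\{A_{j,j'}\}$ to identify the monomial contributions of each intersecting configuration. First I would identify the set $P$ of intersecting configurations. Since $\alpha$ meets $\beta$ in $k$ points $x_0,\ldots,x_{k-1}$, the arc $\alpha^{(n)}$ (the $1$-multisimplex $\alpha$ labeled by $n$) meets the $n$-parallelization $\beta^{[n]}$ in configurations $\underline{x}=\{x_{i_1},\ldots,x_{i_n}\}$ where the $j$-th strand of $\beta^{[n]}$ (ordered from the first, innermost parallel copy outward) passes through $x_{i_j}$, with $i_j\in\{0,\ldots,k-1\}$. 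So $P$ is indexed by tuples $(i_1,\ldots,i_n)\in\{0,\ldots,k-1\}^n$. A key point, which I would verify from the parallelization picture, is that distinct indices are allowed and that repeated indices $i_l=i_j$ are also allowed (the parallel copies are disjoint, so configurations with several points on the "same" intersection spot of $\alpha$ still occur, one on each parallel copy), which is why the sum runs over all of $\{0,\ldots,k-1\}^n$ and not just over tuples with distinct entries.

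Next I would compute, for a fixed configuration $\underline{x}$, the sign $\varepsilon_{\underline{x}}$ and the group element $\varphi_n^{\Heis_g}(\delta_{\underline{x}})$. For the sign: by the lemma preceding \eqref{E:formula_for_the_pairing}, $\varepsilon_{\underline{x}}=\varepsilon_{x_{i_1}}\cdots\varepsilon_{x_{i_n}}\eta(h_{\underline{x}})$; I would argue that the sign of $\alpha$ meeting $\beta$ at $x_j$ alternates with $j$ along $\alpha$ (since consecutive intersection points of two arcs in a surface have opposite signs when the arcs are in a reasonable position, using the convention $\varepsilon_{x_0}=+1$), giving $\varepsilon_{x_{i_1}}\cdots\varepsilon_{x_{i_n}}=(-1)^{\sum_j i_j}$; and the $\eta(h_{\underline{x}})$ factor, which records the power of $\sigma$ appearing in $h_{\underline{x}}$, is exactly what turns $\sigma$ into $-\sigma$ in the final formula. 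For the group element: the loop $\delta_{\underline{x}}$ travels the thread and then $\alpha$ out to $\underline{x}$, then back along $\beta^{[n]}$. Decomposing this $n$-strand loop strand by strand and pair by pair, I would show $\varphi_n^{\Heis_g}(\delta_{\underline{x}})=\sigma^{2\sum_j n_{i_j}}\sigma^{\sum_{l<j}A_{i_l,i_j}}$ — the diagonal terms $2n_{i_j}$ coming from each strand's own separating loop $\eta_{i_j}$ (here Prop.~\ref{prop:HeisenbergSimpleCurve} and the centrality of $\sigma$ are used), and the off-diagonal $A_{i_l,i_j}$ coming from the linking/winding of strand $l$ around strand $j$, read off from the defining relation $\beta_{j,j'}=(\eta_j,\xi_2)\sigma_1^{A_{j,j'}}(\eta_{j'},\xi_2)$ and the fact that $\varphi^{\Heis_g}(\sigma_1)=\sigma$. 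Substituting these into \eqref{E:formula_for_the_pairing} and moving the sign $\eta(h_{\underline{x}})=(-1)^{\sum_{l<j}A_{i_l,i_j}}$ inside to combine $\sigma^{\sum A_{i_l,i_j}}$ with it into $(-\sigma)^{\sum A_{i_l,i_j}}$ yields precisely the claimed formula.

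The main obstacle, I expect, is the bookkeeping in the group-theoretic computation of $\varphi_n^{\Heis_g}(\delta_{\underline{x}})$: carefully justifying that an $n$-strand braid whose underlying permutation and winding data are governed by the pairwise $2$-braids $\beta_{i_l,i_j}$ has Heisenberg image equal to the product of the $2$-strand contributions, i.e. that there is no extra $3$-strand (or higher) correction. This is plausible because $\Heis_g$ is $2$-step nilpotent and $\sigma$ is central, so all commutators land in $\langle\sigma\rangle$ and one only ever sees pairwise interactions; but making this precise requires writing $\delta_{\underline{x}}$ as an explicit word in the generators $\alpha_i,\beta_i,\sigma_i$ and using the presentation \eqref{eq_localsystem_of_Heisenberg} together with an induction on $n$ (peeling off one strand at a time, as in the proof of the $n=1,2$ cases). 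The remaining ingredients — the alternation of intersection signs and the identification of each $\eta_{i_j}$ and $\beta_{i_l,i_j}$ with the appropriate power of $\sigma$ — are routine given Prop.~\ref{prop:HeisenbergSimpleCurve} and the definitions of $n_j$ and $A_{j,j'}$.
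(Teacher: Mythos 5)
Your proposal follows the same overall strategy as the paper: index the intersecting configurations by tuples $(i_1,\ldots,i_n)\in\{0,\ldots,k-1\}^n$, get the sign $(-1)^{\sum_j i_j}$ from the alternation of intersection signs along the separating arc $\alpha$, and identify the monomial $\varphi_n^{\Heis_g}(\delta_{\underline{x}})=\sigma^{2\sum_j n_{i_j}+\sum_{l<j}A_{i_l,i_j}}$ before applying the involution $\sigma\mapsto-\sigma$ built into Equation~\eqref{E:formula_for_the_pairing}. The one place where you diverge is exactly the step you flag as the main obstacle: you propose to compute $\varphi_n^{\Heis_g}(\delta_{\underline{x}})$ by writing the loop as an explicit braid word and peeling off strands by induction, invoking $2$-step nilpotency to rule out higher-strand corrections. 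The paper dissolves this difficulty more cheaply: since $\alpha$ and $\beta$ lie in the $g$-holed sphere $S$, the loop $\delta_{\underline{x}}$ lives in $\Conf_n(S)$, and by Proposition~\ref{prop:HeisenbergSimpleCurve} the image of $\pi_1(\Conf_n(S))$ under $\varphi_n^{\Heis_g}$ is the abelian group $\langle\sigma\rangle$, so the restriction factors through $H_1(\Conf_n(D_g))\cong\Z^{g+1}$. The class of $\delta_{\underline{x}}$ there is determined by the windings around the holes (giving $\sigma^{2\sum_j n_{i_j}}$) and the total winding number (which is $\sum_{l<j}A_{i_l,i_j}$ by the defining relation for $\beta_{j,j'}$), with no further bookkeeping needed. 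Both routes are valid; the abelianization argument is the shorter one and is precisely what makes the pairwise data $\{n_j\},\{A_{j,j'}\}$ sufficient. One small point in your favor: you correctly write the diagonal factor as $\sigma^{2\sum_j n_{i_j}}$, whereas the statement (and the paper's displayed intermediate formula) carry a typo $n_j$ in place of $n_{i_j}$.
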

\begin{proof} We apply the pairing formula given in Equation \ref{E:formula_for_the_pairing}. The intersection points $\underline{x}$ between $\alpha^{(n)}$ and $\beta^{[n]}$ are in correspondance with $n$-tuples $(i_1,\ldots,i_n)$ with $0\leq i_j \leq k-1,$ the index $i_j$ indicating that the intersection point chosen for the $j$-th parallel cable of $\beta$ and $\alpha$ is $x_j.$ The sign $\varepsilon_j$ of that intersection point is $(-1)^{i_j}.$ Indeed, we have chosen the sign of $x_0$ to be $+1,$ and $\alpha$ being separating, the sign of intersection points $x_0,x_1,\ldots,x_{k-1}$ alternate. It remains to show that for the loop $\delta_{\underline{x}}$ appearing in Equation \ref{E:formula_for_the_pairing}, we have
	$$\varphi_n^{\Heis_g}(\delta_{\underline{x}})=\sigma^{2\underset{j=1}{\overset{n}{\sum}} n_j}\sigma^{\underset{1\leq l<j\leq n}{\sum}A_{i_l,i_j}}.$$
	Note that the loop $\delta_{\underline{x}}$ is actually a loop in $\Conf_n(S),$ and that the image of $\pi_1(\Conf_n(S))$ by $\varphi_n^{\Heis_g}$ is $\langle \sigma \rangle$ (thanks to Proposition \ref{prop:HeisenbergSimpleCurve}) which is abelian. So it is sufficient to know to which element of the abelianization $\pi_1(\Conf_n(S))=\pi_1(\Conf_n(D_g))$ the loop $\delta_{\underline{x}}$ corresponds. However, it is well-known that this abelianization is $\Z^{g+1},$ where each of the first $g$ factors correspond to looping one point around one hole, and the last factor corresponds to the total winding number.
	The total winding number of $\delta_{\underline{x}}$ is exactly $ \underset{1\leq l<j\leq n}{\sum}A_{i_l,i_j},$ and it contributes $\sigma^{\underset{1\leq l<j\leq n}{\sum}A_{i_l,i_j}}$ to $\varphi_n^{\Heis_g}(\delta_{\underline{x}}),$ while looping around the holes is accounted by the factor $\sigma^{2\underset{j=1}{\overset{n}{\sum}} n_j},$ thanks to Proposition \ref{prop:HeisenbergSimpleCurve}.
	
\end{proof}

\section{Recovering some Lawrence representations of braid groups}
\label{sec:Lawrence}

In this section we are interested in recovering \textit{Lawrence representations} of braid groups as subrepresentations of $\rho_n^{\Heis_g}$. We first recall what we mean by Lawrence representations and what we need to know. The $n$-th (colored) Lawrence representation is the homological representation of the pure braid group $\mathcal{PB}_k$:
\begin{equation}\label{E:colored_Lawrence}
\mathcal{L}^c_{k,n} : \mathcal{PB}_k \to \Aut_{\Z[s_1^{\pm 1},\ldots,s_k^{\pm 1}, \sigma^{\pm 1}]} \left( H_n^{BM}(\mathrm{Conf}_n(D_k),\mathrm{Conf}_n(D_k)^-;\varphi) \right).
\end{equation}
In the above, the local system on $\mathrm{Conf}_n(D_k)$ is such that $\varphi((\gamma,\xi_2,\ldots,\xi_n))=s_i$ if $\gamma$ is a loop based at $\xi_1$ going once counterclockwise around the puncture $w_i$, and $\varphi(\sigma_{i,i+1})=\sigma,$ if $\sigma_{i,i+1}$ is the standard braid generator of $D_{k+n}$ that exchanges points $\xi_i$ and $\xi_{i+1}.$ The local system is described in \cite[Def.~2.5]{JulesVerma} (where $\sigma$ is $t$). Now these precise relative versions of Lawrence representations are defined in \cite[Lemma~6.35]{JulesVerma}. By relative version we mean that the homology involved is relative to part of the boundary while original Lawrence representations were defined on absolute homologies. The exponent $c$ refers to the fact that these representations are colored and hence must be restricted to the pure braid group. Under the morphism:
\begin{equation}\label{E:uncolored}
\Z[s_1^{\pm 1},\ldots,s_k^{\pm 1}, \sigma^{\pm 1}] \to \Z[s^{\pm 1}, \sigma^{\pm 1}], s_i \mapsto s,
\end{equation} 
one obtains the Lawrence representations of braid groups:
\begin{equation}\label{E:Lawrence_rep}
\mathcal{L}_{k,n} : \mathcal{B}_k \to \Aut_{\Z[s^{\pm 1}]} \left( H_n^{BM}(\mathrm{Conf}_n(D_k),\mathrm{Conf}_n(D_k)^-;\varphi') \right),
\end{equation}
where $\varphi'$ is $\varphi$ postcomposed with \eqref{E:uncolored}. They are well defined on the whole braid group, see \cite[Lemma~6.33]{JulesVerma}. Let $D_k^\circ$ denote the disk with $k$-holes that is with $k$ disks removed (rather than punctures). Notice that it has the same fundamental group as $D_k$ and their configuration spaces are also homotopy equivalent.

\begin{equation*}
\vcenter{\hbox{\begin{tikzpicture}[scale=0.5, every node/.style={scale=0.75},decoration={
    markings,
    mark=at position 0.5 with {\arrow{>}}}
    ] 
    
    \filldraw[color=red!60, fill=red!5,thick](-4,0) circle (0.5);  
    \filldraw[color=red!60, fill=red!5,thick](-1,0) circle (0.5);  
    \node[red] at (0.5,0) {$\cdots$}; 
    \filldraw[color=red!60, fill=red!5,thick](2,0) circle (0.5);
    \filldraw[color=red!60, fill=red!5,thick](5,0) circle (0.5);
    \draw[dashed,green!70!black,postaction=decorate,thick] (-4,-3)--(-4,0) node[midway,left] {$i_1$};
    \node[red] (w1) at (-4,0) {$\bullet$};
    \draw[dashed,green!70!black,postaction=decorate,thick] (-1,-3)--(-1,0) node[midway,left] {$i_2$};
    \node[red] (w2) at (-1,0) {$\bullet$};
    \draw[dashed,green!70!black,postaction=decorate,thick] (2,-3)--(2,0) node[midway,left] {$i_{k-1}$};
    \node[red] (wk1) at (2,0) {$\bullet$};
    \draw[dashed,green!70!black,postaction=decorate,thick] (5,-3)--(5,0) node[midway,left] {$i_{k}$};
    \node[red] (wk1) at (5,0) {$\bullet$};

    \draw[dashed,blue!70!black,postaction=decorate,thick] (-5,-3)--(-5,0.5) .. controls (-4.5,1.5) and (-3.5,1.5) .. (-3,0.5) node[midway,above] {$j_1$} --(-3,-3);
    \draw[dashed,blue!70!black,postaction=decorate,thick] (-2,-3)--(-2,0.5) .. controls (-1.5,1.5) and (-0.5,1.5) .. (-0,0.5)  node[midway,above] {$j_2$} --(-0,-3);
    \draw[dashed,blue!70!black,postaction=decorate,thick] (1,-3)--(1,0.5) .. controls (1.5,1.5) and (2.5,1.5) .. (3,0.5)  node[midway,above] {$j_{k-1}$} --(3,-3);
    \draw[dashed,blue!70!black,postaction=decorate,thick] (4,-3)--(4,0.5)  .. controls (4.5,1.5) and (5.5,1.5) .. (6,0.5) node[midway,above] {$j_k$} --(6,-3);

\draw[gray, thick] (-6,-3) -- (-6,3);
\draw[gray, thick] (7,-3) -- (7,3);
\draw[magenta, thick] (-6,-3) -- (7,-3);
\draw[gray, thick] (-6,3) -- (7,3);
\end{tikzpicture}}},
\end{equation*}

We denote by $A_{G}(i_1,\ldots,i_k)$ the diagram in green in the above picture and by $A_{B}(j_1,\ldots,j_k)$ the one in blue. By adding to them handles namely paths rejoining base points, they define homology classes and Theorem~\ref{thm_structure_twisted_homology} adapts to our situations so that we have:
\begin{equation}\label{E:structure_punctured_disk}
\calH_n(D_k) = \Span_{\Z[G]} \langle A_{G}(i_1,\ldots,i_k), i_1 + \cdots + i_k = n \rangle, 
\end{equation}
\begin{equation}\label{E:structure_holed_disk}
\calH_n(D_k^\circ) = \Span_{\Z[G]} \langle A_{B}(j_1,\ldots,j_k), j_1 + \cdots + j_k = n \rangle 
\end{equation}
Hence these modules are free on $\Z[G]$ for any $\varphi: \pi_1(\Conf_n(D_k)) \to G$. It is true up to any convenient choice of threads so that we do not draw them (two different choices lead to a diagonal change of basis given by multiplication by an invertible element). In the Lawrence case, when $\Z[G]$ is the ring of Laurent polynomials, one has that $\calH_n(D_k^\circ)$ injects in $\calH_n(D_k)$, and the diagonal change of basis between the $A_G$ and the $A_B$ is precisely (up to multiplication by a monomial) that given in \cite[Prop.~7.4]{JulesVerma} (and one should remove the quantum binomials in the formula). It is a diagonal change of basis but not invertible when working over the ring $\Z[G].$

We recall the new approach to these representations provided by Theorem~\ref{P:Universal_construction_of_homol_reps} that reconstructs homological representations from a free module spanned by diagrams after killing the kernel of a pairing. 

\begin{remark}\label{R:universal_construction_of_Lawrence}
We let $V(D_k)$ (resp. $V(D^\circ_k)$) be the free  $\Z[s_1^{\pm 1},\ldots,s_k^{\pm 1}, \sigma^{\pm 1}]$-module of diagrammatic twisted classes in $D_k$ (resp. in $D_k^\circ$). 
 Then $\calH_n(D_k)$ (resp. $\calH_n(D_k^\circ)$) is the quotient of $V(D_k)$ (resp. $V(D_k^\circ)$) by the left kernel of the pairing defined by the formula in Section~\ref{sec:inter_form_def} where the local system is the one defined above yielding coefficients in $\Z[s_1^{\pm 1},\ldots,s_k^{\pm 1}, \sigma^{\pm 1}]$. The representation of $\mathcal{L}_{k,n}^c$ is constructed using the action of the mapping class group in the basis and the pairing. We denote by $\mathcal{L}_{k,n}^{c,\circ}$ the representation on $\calH_n(D_k^\circ)$
\end{remark}

\begin{remark}\label{R:Lawrence_faithfulness}
Thanks to \cite{Big00}, the representation $\mathcal{L}^\circ_{k,2}$ (uncolored) is faithful when restricted to the representation of $\calB_k$ on a submodule of $\calH_2(D_k^\circ)$ constructed from the absolute homology. The fact that it is a submodule is a consequence of the short exact sequence in the proof of \cite[Corollary~7.1]{JulesVerma}. Hence $\mathcal{L}_{k,2}^{c,\circ}$ is a faithful representation of $\mathcal{PB}_k$ because it specializes to $\mathcal{L}^\circ_{k,2}$. Actually $\mathcal{L}_{k,n}^c$ is faithful too for all $n > 2,$ as a consequence of the surjective map in the same short exact sequence just mentioned since they all surject to the $n=2$ case. 
\end{remark}

\subsection{Evaluated Lawrence representations as a subrepresentation of a pure braid group on $g$ strands made of separating twists}
\label{sec:pure_braid_Lawrence}
Let $S$ be a disk with $g$ holes embedded in $\Sigma_{g,1},$ so that the boundary components of $S$ consist of the boundary component of $\Sigma_{g,1}$ and $g$ non parallel separating closed curves $c_1,\ldots,c_g,$ each of genus $1.$ The surface $\Sigma_{g,1}$ is then the union of $S$ and $g$ one-holed tori. Here is a picture of $S$:

\begin{equation*}
\vcenter{\hbox{\begin{tikzpicture}[scale=0.5, every node/.style={scale=0.75},decoration={
    markings,
    mark=at position 0.5 with {\arrow{>}}}
    ] 
    
    \filldraw[color=red!60, fill=red!5,thick](-4,0) circle (0.5);  
    \filldraw[color=red!60, fill=red!5,thick](-2,0) circle (0.5);  
    \node[red] at (0,0) {$\cdots$}; 
    \filldraw[color=red!60, fill=red!5,thick](2,0) circle (0.5);
    \filldraw[color=red!60, fill=red!5,thick](4,0) circle (0.5);
    \node[red,above] at (-4,0.5) {$c_1$};
    \node[red,above] at (-2,0.5) {$c_2$};
    \node[red,above] at (2,0.5) {$c_{g-1}$};
    \node[red,above] at (4,0.5) {$c_g$};

\draw[gray, thick] (-5,-3) -- (-5,3);
\draw[gray, thick] (5,-3) -- (5,3);
\draw[magenta, thick] (-5,-3) -- (5,-3);
\draw[gray, thick] (-5,3) -- (5,3);
\end{tikzpicture}}},
\end{equation*}
where to embed it into $\Sigma_{g,1}$, one has to connect sum a torus along each red disk. 

Note that $\mathrm{Mod}(S)$ is isomorphic to the framed pure braid group on $g$ strands denoted by $\mathcal{PB}_g^f.$ Moreover, we have that the quotient
$$\mathrm{Mod}(S)/\langle \tau_{c_1},\ldots,\tau_{c_g} \rangle \simeq \mathcal{PB}_g$$ 
is a pure braid group on $g$ strands.

Note that $\mathrm{Mod}(S)$ is generated by Dehn-twists along simple closed curves in $S,$ however, any simple closed curve in $S$ is separating in $\Sigma_{g,1}.$ Therefore, $\mathrm{Mod}(S)\subset K_{\Heis_g},$ since any separating twist is in $K_{\Heis_g},$ and the representation $\rho_n^{\Heis_g}$ is defined on $\mathrm{Mod}(S).$

Restricted to this subgroup, the representation $\rho_n^{\Heis_g}$ admits a subrepresentation. Indeed, let 
$$\calH^S_{n}=H_n^{BM}(\mathrm{Conf}_n(S),\mathrm{Conf}_n(S)^-;(\varphi_n^{\Heis_g})_{|\pi_1(\Conf_n(S))})$$
where $\mathrm{Conf}_n(S)^-$ denotes configurations of $n$ points in $S$ such that at least one point belongs to the interval $\partial^-\Sigma_{g,1}.$ It is clear that $\mathrm{Mod}(S)$ stabilizes this subspace, since $\mathrm{Mod}(S)$ stabilizes $S.$ Let $L$ be the image of $(\varphi_n^{\Heis_g})_{|\pi_1(\Conf_n(S))}$, we have recalled in \eqref{E:structure_holed_disk} that the $\Z[L]$-modules $\calH^S_n$ are free. Let us call $\rho_n^S$ this subrepresentation of $\rho_n|_{\mathrm{Mod}(S)}.$ 

Our claim is that the representation $\rho_n^S$ recovers an evaluated Lawrence representation of a pure braid group. 

\begin{theorem}
	\label{thm:pure_braid_group} The representation $\rho_n^S$ of $\mathrm{Mod}(S)$ induces a representation of the pure braid group $\mathcal{PB}_g$ which is isomorphic to the $n$-th Lawrence representation $\mathcal{L}_{g,n}$ from \eqref{E:Lawrence_rep} restricted to $\mathcal{PB}_g,$ and evaluated at $s=\sigma^{-2}.$
\end{theorem}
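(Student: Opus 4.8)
The plan is to use the ``universal construction'' perspective of Proposition~\ref{P:Universal_construction_of_homol_reps} (in its version for punctured/holed disks, Remark~\ref{R:universal_construction_of_Lawrence}) and exhibit a direct isomorphism of pairings. First I would set up the identification of the surfaces: $\Conf_n(S)$ is a configuration space on the $g$-holed disk $D_g^\circ$, and its fundamental group has abelianization $\Z^{g+1}$, the first $g$ generators being the loops of one point around the holes $c_1,\ldots,c_g$ and the last being the total winding number. The restriction $(\varphi_n^{\Heis_g})_{|\pi_1(\Conf_n(S))}$ factors through this abelianization since the image $L$ is cyclic (generated by $\sigma$) by Proposition~\ref{prop:HeisenbergSimpleCurve}: indeed any loop of a point around $c_i$ is a separating simple closed curve of genus $1$ in $\Sigma_{g,1}$, hence maps to $\sigma^{\pm 2}$, and the winding number generator maps to $\sigma^{\pm 1}$. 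So $L=\langle\sigma\rangle$ and the local system $(\varphi_n^{\Heis_g})_{|\pi_1(\Conf_n(S))}$ is exactly the Lawrence local system $\varphi'$ of \eqref{E:Lawrence_rep} postcomposed with the evaluation $s_i\mapsto \sigma^{-2}$ (the $-2$ rather than $+2$ coming from the orientation convention in Proposition~\ref{prop:HeisenbergSimpleCurve} and the sign conventions for the loops around holes), and $\sigma\mapsto\sigma$.

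Next I would carry this through on the level of homology and the representation. By \eqref{E:structure_holed_disk} the module $\calH_n^S$ is free over $\Z[L]=\Z[\sigma^{\pm1}]$ on the basis of diagrams $A_B(j_1,\ldots,j_g)$ with $j_1+\cdots+j_g=n$, which is exactly the basis underlying the colored Lawrence representation $\mathcal{L}^{c,\circ}_{g,n}$ of $\mathcal{PB}_g$ on $\calH_n(D_g^\circ)$ after the evaluation of variables. The key point is that the mapping class group action and the twisted intersection pairing are both computed by the same combinatorial formula \eqref{E:formula_for_the_pairing}/\eqref{eq:twisted_intersection}: for $f\in\mathrm{Mod}(S)$ the matrix of $\rho_n^S(f)$ in the $A_B$-basis is given by intersection numbers $\langle f_*(A_B(\mathbf{j})), A_G(\mathbf{i})^\dagger\rangle$, and these intersection numbers depend only on the braid $f$ viewed in $\mathcal{PB}_g$ and on the local system, which we have just identified with the (evaluated) Lawrence one. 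Thus the map $\mathrm{Mod}(S)\to \mathcal{PB}_g$ sending a Dehn twist $\tau_{c_i}$ to $1$ (these act trivially on $\calH_n^S$: a loop around $c_i$ has, in the present rel.-boundary homology, nothing to act on transversally — more precisely $\tau_{c_i}$ is supported in a collar of $c_i$ which can be pushed off every diagram supported in $S$, so $\rho_n^S(\tau_{c_i})=\mathrm{id}$) descends $\rho_n^S$ to a well-defined representation of $\mathcal{PB}_g$, and the resulting pairing is isomorphic to the Lawrence pairing; invoking Proposition~\ref{P:Universal_construction_of_homol_reps} (and its disk analogue) identifies the two quotients, hence the two representations.

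The main obstacle I anticipate is the bookkeeping of conventions: matching the Lawrence local system's generators $s_i$ (loops around the punctures $w_i$) with the loops around the holes $c_i$ of $S$, getting the sign/exponent right so that the evaluation is genuinely $s=\sigma^{-2}$ and not $s=\sigma^{2}$ or $s=\sigma^{\mp 1}$, and checking that the $\sigma$-variable (the $\sigma_{i,i+1}$ of the braid group versus the $\sigma$ of $\Heis_g$) is carried through without an extra scaling. A secondary technical point is to justify carefully that $\rho_n^S(\tau_{c_i})=\mathrm{id}$, i.e. that the framing twists act trivially on the \emph{relative} homology $\calH_n^S$ (this is the analogue of the computation $\rho_1(\tau_\delta)=\sigma^{-2g}\,\mathrm{id}$ for the outer boundary, but here $c_i$ is an inner boundary of $S$ that is separating of genus $1$ in $\Sigma_{g,1}$; the point is that $\mathrm{Conf}_n(S)^-$ uses $\partial^-\Sigma_{g,1}$, which sits on the outer boundary, so a collar push of $c_i$ stays within $S$ and within the relative subspace). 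Once these conventions are pinned down the isomorphism is formal, being an equality of the defining combinatorial formulas.
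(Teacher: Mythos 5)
Your proposal is correct and follows essentially the same route as the paper: reduce to $\mathcal{PB}_g$ by observing that the $\tau_{c_i}$ act trivially on $\calH_n^S$, then invoke the universal construction (Proposition~\ref{P:Universal_construction_of_homol_reps} via Remark~\ref{R:universal_construction_of_Lawrence}) so that it suffices to match the local systems, which is done exactly as you describe using Proposition~\ref{prop:HeisenbergSimpleCurve} for the genus-one separating curves $c_i$. The extra care you take with the abelianization of $\pi_1(\Conf_n(S))$ and the sign conventions is a welcome elaboration of the paper's terser argument, not a deviation from it.
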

\begin{proof}
	First, notice that $\tau_{c_1},\ldots,\tau_{c_n}$ act trivially on $\calH_n^S,$ since $S$ lies in the exterior of each of the separating curves $c_1,\ldots,c_n.$
	So $\rho_n^S$ can be considered as a representation of 
	$$\mathcal{PB}_g\simeq \mathrm{Mod}(S)/\langle \tau_{c_1},\ldots,\tau_{c_g} \rangle.$$
	By Proposition \ref{P:Universal_construction_of_homol_reps} for Lawrence representations (summarized in Rem.~\ref{R:universal_construction_of_Lawrence}), it suffices to check that the twisted intersection forms of $\rho_n^S$ is the twisted intersection form for the Lawrence representation evaluated at $q=\sigma^{-2}.$
	By the definition of the twisted intersection given in Section \ref{sec:inter_form_def}, we only need to check that the local system
	$$\varphi_n^{\Heis_g}|_{\pi_1(\Conf_n(S))}: \pi_1(\Conf_n(S))\longrightarrow L$$
	coincides with the one for the evaluated Lawrence representation. However, it follows by Proposition \ref{prop:HeisenbergSimpleCurve} that this is the case, since the boundary components $c_1,\ldots,c_g$ each bound a one-holed torus. 
\end{proof}

\begin{remark}
It is not known to the authors whether the Lawrence representation involved in the above theorem is still faithful after the evaluation.
\end{remark}

\subsection{A pure braid subgroup on $g$ strands of $\Mod(\Sigma_{g,1})$ acts faithfully}\label{S:Pg_faithful}

We draw the surface $\Sigma_{g,1}$ from above:
\[
\vcenter{\hbox{\begin{tikzpicture}[scale=0.5, every node/.style={scale=0.75},decoration={
    markings,
    mark=at position 0.5 with {\arrow{>}}}
    ] 
    \coordinate (u) at (0,5);
    \coordinate (d) at (0,0);
    \node[above] at (u) {$U$};
    \node at (u) {$\bullet$};
    \node at (d) {$\bullet$};
    \node[below] at (d) {$D$};
    \coordinate (r) at (15.5,2.5);
    \node at (r) {$\bullet$};
    \node[right] at (r) {$R$};
    \draw[thick] (0,0)--(14,0) .. controls (16,0) and (16,5) .. (14,5) -- (0,5);
    \draw[thick](2.5,2.5) circle (0.5);  
    \draw[thick](5,2.5) circle (0.5);  
    \node[thick] at (7.5,2.5) {$\cdots$}; 
    \draw[thick](10,2.5) circle (0.5);
    \draw[thick](12.5,2.5) circle (0.5);
    \draw[thick,gray!60!white] (u) to[bend right=20] (d);
    \node[above] at (2.5,3) {$c_1$};
    \node[above] at (5,3) {$c_2$};
    \node[above] at (10,3) {$c_{g-1}$};
    \node[above] at (12.5,3) {$c_g$};
    \draw[thick,magenta] (u) to[bend left=20] (d);
    
        \draw[thick] (2.5,2) to[bend left=20] node[midway,right] {$\alpha_1$} (2.5,0);
    \draw[thick,dashed,gray!60!white] (2.5,2) to[bend right=20] (2.5,0);
        \draw[thick] (5,2) to[bend left=20] node[midway,right] {$\alpha_2$} (5,0);
    \draw[thick,dashed,gray!60!white] (5,2) to[bend right=20] (5,0);
        \draw[thick] (10,2) to[bend left=20] node[midway,right] {$\alpha_{g-1}$} (10,0);
    \draw[thick,dashed,gray!60!white] (10,2) to[bend right=20] (10,0);
        \draw[thick] (12.5,2) to[bend left=20] node[midway,right] {$\alpha_g$} (12.5,0);
    \draw[thick,dashed,gray!60!white] (12.5,2) to[bend right=20] (12.5,0);
%
\end{tikzpicture}}}
\]
Namely we assume a coordinate system $(xyz)$ such that points $(UDR)$ form the $(xy)$ plane of equation $z=0$ which also contains curves $(c_1,\ldots,c_g)$, hence the $z$ axis arrives orthogonally to the eye of the reader. Let $$S_g=\Sigma_{g,1} \cap \{ z \ge  0 \},$$ then $S_g$ is a disk with $g$ holes. It is thus a sphere with $g+1$ holes but we give a particular status to the boundary component containing points $U,D,R$, hence made of the magenta arc and the arc that goes from $U$ to $D$ passing by $R$, we call this circle the boundary of the disk. In the figure we drew in light gray arcs that are not in $S_g$. 

\begin{definition}\label{R:Mod(S)_is_braids}
For $g>1$ notice that no boundary component of $S_g$ bounds a punctured disk, nor an annulus in $\Sigma_{g,1}$, so that \cite[Theorem~3.18]{FarbMargalit} claims that $G_g:=\Mod(S_g)$ injects in $\Mod(\Sigma_{g,1})$. The latter subgroup $G_g$ is isomorphic to a framed pure braid group on $g$ strands. We let:
\[
P_g := G_g \big/ \langle \tau_{c_1} , \ldots, \tau_{c_g} \rangle
\]
be the quotient of $G_g$ by the group generated by Dehn twists along curves $c_i$'s, and we notice that $P_g$ is isomorphic to a pure braid group on $g$ strands. The above quotient has a natural section, so that one can think of $P_g$ as a subgroup of $G_g$ or of $\Mod(\Sigma_{g,1})$. 
\end{definition}

We let $p_{i,j}$ be a path in $S_g$ joining $c_i$ and $c_j$ avoiding all the $\alpha_j$'s and $t_{i,j}$ be a simple closed curve of $S_g$ bounding a tubular neighborhood of $c_i \cup p_{i,j} \cup c_j$. It is well known that $G_g$ is generated by the Dehn twists along the $t_{i,j}$'s and along the $c_i$'s. 

We study the representation $\rho_n^{\Heis_g}$ restricted to $G_g$, we recall the morphism:
\begin{equation}\label{E:mcg_on_Heisg_recall}
\Mod(\Sigma_{g,1}) \to \Aut(\Heis_g), [f] \mapsto f_*. 
\end{equation}
\begin{proposition}\label{P:action_of_Gg_on_Heis}
Let $[f] \in G_g$, for any $i \in \{1,\ldots,g\}$, we have: 
\begin{align*}
& f_*(b_i) = b_i, \\
& f_*(a_i) = a_i b^{\boldsymbol{m}^i(f)},
\end{align*}
where:
\[
b^{\boldsymbol{m}^i(f)} = b_1^{m^i_1(f)} \cdots b_g^{m^i_g(f)},
\]
so that ${\boldsymbol{m}^i(f)}$ is a notation for $(m^i_1(f),\ldots,m^i_g(f)) \in \Z^g$. 

Let $M(f)$ be the matrix with $i$-th column being ${\boldsymbol{m}^i(f)}$. We furthermore have that $M(f)$ is symmetric and that for $f_1,f_2 \in G$: 
\begin{align*}
& M(f_1 \circ f_2) = M(f_1) + M(f_2), 
\end{align*} 
\end{proposition}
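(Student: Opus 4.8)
\emph{Proof plan.} The idea is to pin down the action of $f\in G_g$ on a system of standard generators of $\pi_1(\Sigma_{g,1},*)$ adapted to the splitting $\Sigma_{g,1}=S_g\cup W$ with $W:=\overline{\Sigma_{g,1}\setminus S_g}$, and then to get the symmetry of $M(f)$ and the cocycle relation formally out of the two-step nilpotency of $\Heisg$.

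First I would fix conventions. With $*\in\partial\Sigma_{g,1}$, the $g$ disjoint non-separating curves $c_1,\dots,c_g$ form a cut system, so they can be realised as the ``$b$-curves'' of a geometric basis: choose standard generators $\alpha_1,\beta_1,\dots,\alpha_g,\beta_g$ of $\pi_1(\Sigma_{g,1},*)$ with $\beta_i$ a based representative of $c_i$ and with $\varphi_1^{\Heisg}(\alpha_i)=a_i$, $\varphi_1^{\Heisg}(\beta_i)=b_i$. Then $\pi_1(S_g,*)$ is the free subgroup generated by $\beta_1,\dots,\beta_g$, and since the $c_i$ are pairwise disjoint the $b_i$ pairwise commute in $\Heisg$, so $\varphi_1^{\Heisg}$ maps $\pi_1(S_g,*)$ onto the abelian subgroup $\langle b_1,\dots,b_g\rangle\cong\Z^g$; in particular this restriction factors through $H_1(S_g)=\Z^g$. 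I would also arrange each $\alpha_i$ so that the part of a representative lying in $S_g$ is a single properly embedded sub-arc.

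Next, the action on generators. Represent $f\in G_g=\Mod(S_g)$ by a homeomorphism which is the identity on $W$ and fixes $\partial S_g=c_1\cup\dots\cup c_g\cup\partial\Sigma_{g,1}$ pointwise. Since $f$ fixes $c_i$ pointwise, $f_*(\beta_i)$ is $\beta_i$ conjugated by an element of $\pi_1(S_g,*)$, so applying $\varphi_1^{\Heisg}$ and using that $b_i$ lies in the abelian group $\langle b_1,\dots,b_g\rangle$ gives $f_*(b_i)=b_i$. For $\alpha_i$, $f$ leaves the part outside $S_g$ unchanged and replaces the $S_g$-sub-arc by one with the same endpoints, which amounts to splicing a loop $w_i(f)\in\pi_1(S_g,*)$ onto $\alpha_i$; arranging the splice on the right, $f_*(\alpha_i)=\alpha_i\,w_i(f)$, hence $f_*(a_i)=a_i\,b^{\boldsymbol{m}^i(f)}$, where $b^{\boldsymbol{m}^i(f)}:=\varphi_1^{\Heisg}(w_i(f))=b_1^{m^i_1(f)}\cdots b_g^{m^i_g(f)}$ is the monomial determined by the class of $w_i(f)$ in $H_1(S_g)$. (Equivalently, one may check these formulas directly on the generating set $\{\tau_{c_i}\}\cup\{\tau_{t_{k,l}}\}$ of $G_g$ using the standard rule for the action of a Dehn twist on $\pi_1$ --- the correction terms are conjugates of $c_i$'s or $t_{k,l}$'s by loops contained in $S_g$, so their $\varphi_1^{\Heisg}$-images lie in $\langle b_1,\dots,b_g\rangle$ --- and then extend by the cocycle relation proved below.) The main obstacle is precisely this step: one must choose the generators carefully enough that the modification of $\alpha_i$ is \emph{exactly} right multiplication by an element of $\langle b_1,\dots,b_g\rangle$, so that no factor of $\sigma$ is left over.

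Finally, symmetry and additivity are formal. Because $f$ preserves $\Ker\varphi_1^{\Heisg}$, $f_*$ is an automorphism of $\Heisg$, and $\Heisg$ is two-step nilpotent with central commutator subgroup $\langle\sigma^{2}\rangle$, so commutators are bi-multiplicative. Applying $f_*$ to the relation $[a_i,a_j]=1$ and expanding (the terms $[a_i,a_j]$ and $[b^{\boldsymbol{m}^i(f)},b^{\boldsymbol{m}^j(f)}]$ are trivial):
\[
1=[\,a_i\,b^{\boldsymbol{m}^i(f)}\,,\,a_j\,b^{\boldsymbol{m}^j(f)}\,]=[a_i,b^{\boldsymbol{m}^j(f)}]\,[b^{\boldsymbol{m}^i(f)},a_j]=\sigma^{2m^j_i(f)}\,\sigma^{-2m^i_j(f)},
\]
so $m^j_i(f)=m^i_j(f)$ since $\sigma$ has infinite order, i.e.\ $M(f)$ is symmetric. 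For the cocycle relation, use that $[f]\mapsto f_*$ is a homomorphism \eqref{E:mcg_on_Heisg_recall} together with $(f_1)_*(b_j)=b_j$:
\[
(f_1\circ f_2)_*(a_i)=(f_1)_*\big(a_i\,b^{\boldsymbol{m}^i(f_2)}\big)=a_i\,b^{\boldsymbol{m}^i(f_1)}\,b^{\boldsymbol{m}^i(f_2)}=a_i\,b^{\boldsymbol{m}^i(f_1)+\boldsymbol{m}^i(f_2)},
\]
whence $\boldsymbol{m}^i(f_1\circ f_2)=\boldsymbol{m}^i(f_1)+\boldsymbol{m}^i(f_2)$ for all $i$, that is, $M(f_1\circ f_2)=M(f_1)+M(f_2)$.
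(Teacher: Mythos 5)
Your proof is correct, and its geometric core coincides with the paper's: both arguments rest on splitting $\alpha_i$ into the sub-arc lying in $S_g$, which $f$ replaces by an arc with the same endpoints (so that $f_*(\alpha_i)$ differs from $\alpha_i$ by a right factor in $\pi_1(S_g,*)$, whose image under $\varphi_1^{\Heisg}$ lies in the abelian subgroup $\langle b_1,\dots,b_g\rangle$), and the sub-arc outside $S_g$, which $f$ fixes. The differences are organizational. The paper first writes $f_*(a_i)=a_i b^{\boldsymbol{m}^i(f)}\sigma^{l_i(f)}$ by observing that $f_*$ covers the symplectic action on $H_1(\Sigma_{g,1})$, deduces the symmetry of $M(f)$ from $\omega([f(a_i)],[f(a_j)])=0$, and only then invokes the arc-splitting argument to prove $l_i(f)=0$; you obtain the full formula in one geometric step and derive the symmetry algebraically from $f_*([a_i,a_j])=1$ together with the bimultiplicativity of commutators in the $2$-step nilpotent group $\Heisg$. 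These two symmetry arguments are equivalent, since the commutator pairing of $\Heisg$ is $\sigma^{2\omega(\cdot,\cdot)}$. Finally, the delicate point you flag --- that the correction loop must appear as a genuine right factor lying in $\pi_1(S_g,*)$, not merely up to conjugation, since commuting it past $a_i$ would reintroduce a power of $\sigma$ --- is precisely the point the paper also treats somewhat informally, via the ambient isotopy rejoining the endpoints of $A_i^-=\alpha_i\cap\{z\le 0\}$; your proposal is at the same level of rigor as the published argument there.
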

\begin{proof}
We recall that the action on $\Heis_g/\langle \sigma \rangle$ induced by $\mathrm{Mod}(\Sigma_{g,1})$ is exactly the homological action of $\mathrm{Mod}(\Sigma_{g,1})$ on $H_1(\Sigma_{g,1},\Z).$

Let $[f]\in G_g,$, thus $f$ is supported on $S_g$ and in particular it stabilizes the subsurface $S_g.$  Moreover, the fundamental group of $S_g$ is a free group of rank $g,$ generated by the loops $\beta_1,\ldots,\beta_g,$ introduced in Notation \ref{not_Heisenberg_local_systems}. Hence the image of the loops $\underline{\beta_i} := \{ \beta_i, \xi_2, \ldots, \xi_n \}$ in $\mathrm{Conf}_n(\Sigma_{g,1})$ are compositions of loops $\underline{\beta_j}^{\pm 1},$ and therefore mapped to an element of $B=\langle b_1,\ldots,b_g \rangle.$ Note that the latter subgroup $B$ of $\Heis_g$ is isomorphic to $\Z^g.$ Moreover $f$ fixes the loops $c_i$ pointwise so that $f$ stabilizes the homology class of $\beta_i$ for any $i\in \lbrace 1, \ldots,g \rbrace.$ Hence $f_*(b_i)=b_i.$ 

Finally, note that the homology class $[a_i]$ must be sent by $f$ to another homology class with algebraic intersection $0$ with all $[b_j], i\neq j,$ and algebraic intersection $1$ with $[b_i].$ Therefore, we must have 
$$f_*(a_i)=a_ib^{\mathbf{m}^i(f)}\sigma^{l_i(f)},$$
for some $\mathbf{m}^i(f)\in \Z^g$ and some $l_i(f) \in \Z.$	
	
The fact that the matrix $M(f)$ is symmetric is a consequence of the fact that the homological action of $f$ on $H_1(\Sigma_{g,1})$ respects the intersection form. Indeed, one has
$$0=\omega([f(a_i)],[f(a_j)])=\omega([a_i]+\underset{k}{\sum}m^i_k(f)[b_k],[a_j]+\underset{k}{\sum}m^j_k(f)[b_k])=m^j_i(f)-m^i_j(f).$$

Finally, it is clear from the formula for $f_*$ that $f\longrightarrow M(f)$ is a morphism. 

Now let $A_i^- := \alpha_i \cap \{z\le 0\}$. Notice that $A_i^-$ is fixed by $f$. Let $P_i$ and $Q_i$ be the ends of $A_i^-$. There is an ambient isotopy of $\Sigma_{g,1}$ supported on $S_g$ that rejoins $P_i$ and $Q_i$. This shows that $f(\alpha_i)$ is isotopic to $\alpha_i$ followed by a loop in $S_g$. Then if $\alpha'_i$ is the conjugation of $\alpha_i$ (traveled with negative $z$ first) by a path from $D$ to $\alpha_i$ in the plane $\{z=0\}$, we recall that $\underline{\alpha'_i} := \{\alpha'_i, \xi_2, \ldots, \xi_n \}$ is a generator sent to $a_i$ by the local system $\varphi_n^{\Heisg}$. Its image is thus a loop with image $a_i$ composed with a loop with image in $B$ thanks to the ambient isotopy just discussed. This implies that $l_i=0,$ and concludes the proof. 
\end{proof}

Let $h=a_1^{x_1} \cdots a_g^{x_g} b_1^{y_1} \cdots b_g^{y_g} \sigma^{s} \in \Heis_g$, according to the proposition, we have:
\[
f_*(h) = a_1^{x_1} \cdots a_g^{x_g} \prod_i b_i^{y_i + \sum_j x_j m^j_i(f)} \sigma^{s}     
\]
and as a consequence we refine the restriction of \eqref{E:mcg_on_Heisg_recall} to:
\[
\begin{array}{ccc}
G_g & \to & \Z^{g(g+1)/2} \\ 
f & \mapsto & f_* = (m^j_i(f))_{1 \le i \le j \le g}.
\end{array}
\]
It is easy to check that this is surjective, namely that $M(\tau_{c_i}) = E_{i,i}$ and $M(\tau_{t_{i,j}}) = E_{i,j} + E_{j,i}+E_{i,i}+E_{j,j},$ which recovers generators of symmetric matrices (with a standard notation used for the canonical generators of spaces of matrices).
\begin{lemma}
The group $[G_g,G_g]$ is in $K_{\Heis_g}$.
\end{lemma}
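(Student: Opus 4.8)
The plan is to read everything off Proposition~\ref{P:action_of_Gg_on_Heis}. That proposition shows that the homomorphism $G_g \to \Aut(\Heisg)$, $f \mapsto f_*$, is controlled entirely by the symmetric matrix $M(f)$: indeed $f_*$ fixes each $b_i$, sends $a_i$ to $a_i b^{\boldsymbol{m}^i(f)}$ where $\boldsymbol{m}^i(f)$ is the $i$-th column of $M(f)$, and fixes $\sigma$ (the center $\langle\sigma\rangle$ is pointwise fixed by $\Aut^+(\Heisg)$, and $f_*\in\Aut^+(\Heisg)$ since $f\in G_g\subset\Mod(\Sigma_{g,1})$). As $a_1,\ldots,a_g,b_1,\ldots,b_g,\sigma$ generate $\Heisg$, the automorphism $f_*$ is determined by $M(f)$.

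First I would observe that $f_* = \mathrm{id}_{\Heisg}$ if and only if $M(f)=0$. The ``if'' direction is immediate from the formulas above. For ``only if'', if $f_*=\mathrm{id}$ then $a_i b^{\boldsymbol{m}^i(f)} = a_i$, i.e.\ $b^{\boldsymbol{m}^i(f)}=1$ in $\Heisg$; since $\langle b_1,\ldots,b_g\rangle\cong\Z^g$ is a free abelian subgroup of $\Heisg$ (as recorded in the proof of Proposition~\ref{P:action_of_Gg_on_Heis}), this forces $\boldsymbol{m}^i(f)=0$ for every $i$, hence $M(f)=0$. Therefore
\[
G_g \cap K_{\Heisg} = \{\, f\in G_g \mid f_*=\mathrm{id}_{\Heisg} \,\} = \Ker\bigl( M : G_g \to \mathrm{Sym}_g(\Z) \bigr),
\]
where $\mathrm{Sym}_g(\Z)$ is the additive group of symmetric $g\times g$ integer matrices.

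Then I would conclude: by Proposition~\ref{P:action_of_Gg_on_Heis}, $M$ is a group homomorphism from $G_g$ into the \emph{abelian} group $\mathrm{Sym}_g(\Z)$, so its kernel contains the commutator subgroup $[G_g,G_g]$. Combining with the identification above gives $[G_g,G_g]\subseteq \Ker M = G_g\cap K_{\Heisg}\subseteq K_{\Heisg}$, which is the claim. I do not expect a genuine obstacle here; the only point that deserves care is the assertion that $f_*$ fixes $\sigma$, which holds because $\Mod(\Sigma_{g,1})$ acts through $\Aut^+(\Heisg)$ and the latter is the identity on the center $Z(\Heisg)=\langle\sigma\rangle$ (as used, e.g., in the proof of Proposition~\ref{prop:HeisenbergSimpleCurve}). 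Everything else is bookkeeping with the normal form in $\Heisg$.
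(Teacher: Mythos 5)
Your proof is correct and is essentially the paper's argument: both rest on Proposition~\ref{P:action_of_Gg_on_Heis} showing that $f\mapsto f_*$ factors through the additive (hence abelian) group of symmetric integer matrices via $f\mapsto M(f)$, so the image of $G_g$ in $\Aut(\Heisg)$ is abelian and commutators act trivially. The paper states this in one line; your extra verification that $f_*=\mathrm{id}$ iff $M(f)=0$ is fine but not needed for the containment $[G_g,G_g]\subseteq K_{\Heisg}$.
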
 
\begin{proof}
Since the image of $G_g$ in $\Aut(\Heis_g)$ is an abelian group, $[G_g,G_g]$ acts trivially on $\Heis_g$ which is the definition of $K_{\Heis_g}$. 
\end{proof}

Restricted to this subgroup, the representation $\rho_n^{\Heis_g}$ admits a subrepresentation. Indeed, let 
$$\calH^{S_g}_{n}=H_n^{BM}(\mathrm{Conf}_n(S_g),\mathrm{Conf}_n(S_g)^-;(\varphi_n^{\Heis_g})_{|\pi_1(\Conf_n(S_g))})$$
where $\mathrm{Conf}_n(S_g)^-$ denotes configurations of $n$ points in $S_g$ such that at least one point belongs to the interval $\partial^-\Sigma_{g,1}.$ It is clear that $\mathrm{Mod}(S_g)$ stabilizes this subspace, since $\mathrm{Mod}(S_g)$ stabilizes $S_g.$ Let $L$ be the image of $(\varphi_n^{\Heis_g})_{|\pi_1(\Conf_n(S_g))}$, we note that following \eqref{E:structure_holed_disk}, the $\Z[L]$-modules $\calH^{S_g}_n$ are free. Let us call $\rho_n^{S_g}$ this subrepresentation of $\rho^{\Heis_g}_n|_{\mathrm{Mod}(S_g)}.$ 

\begin{theorem}\label{T:Pg=full_Lawrence}
	\label{thm:pure_braid_group2} The representation $\rho_n^{S_g}$ of $\mathrm{Mod}(S_g)$ induces a representation of the pure braid group $P_g$ with coefficients in a ring of Laurent polynomials $\Z[b_1^{\pm 1}, \ldots, b_g^{\pm 1},\sigma^{\pm 1}]$. It is isomorphic to the $n$-th colored Lawrence representation $\mathcal{L}^{c,\circ}_{g,n}$ of $\mathcal{PB}_g$ (from \eqref{E:colored_Lawrence}, see Rem.~\ref{R:universal_construction_of_Lawrence} for the case with holes rather than punctures) under the obvious isomorphism of rings $$\Z[b_1^{\pm 1}, \ldots, b_g^{\pm 1},\sigma^{\pm 1}]\simeq \Z[s_1^{\pm 1}, \ldots, s_g^{\pm 1},\sigma^{\pm 1}].$$.
\end{theorem}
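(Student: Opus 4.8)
The plan is to use the ``universal construction'' of homological representations from Proposition~\ref{P:Universal_construction_of_homol_reps} (recalled for Lawrence representations in Remark~\ref{R:universal_construction_of_Lawrence}) and to show that the twisted intersection form computing $\rho_n^{S_g}$ matches the one computing $\mathcal{L}^{c,\circ}_{g,n}$ after identifying rings. This is the strategy already used in the proof of Theorem~\ref{thm:pure_braid_group}, so most of the work is to identify the relevant local system precisely.

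First I would observe that the Dehn twists $\tau_{c_1},\ldots,\tau_{c_g}$ act trivially on $\calH_n^{S_g}$, since $S_g$ lies in the exterior of each separating curve $c_i$ and these curves bound subsurfaces disjoint from $S_g$; hence $\rho_n^{S_g}$ factors through $G_g/\langle \tau_{c_1},\ldots,\tau_{c_g}\rangle\simeq P_g$. Next I would identify $\pi_1(\Conf_n(S_g))$ with $\pi_1(\Conf_n(D_g^\circ))$, since $S_g$ is a disk with $g$ holes, the hole $i$ being cut out by the curve $c_i$. The key computation is to determine the restricted local system $\varphi_n^{\Heis_g}|_{\pi_1(\Conf_n(S_g))}\colon \pi_1(\Conf_n(S_g))\to L$. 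A loop $\underline{\beta_i}=\{\beta_i,\xi_2,\ldots,\xi_n\}$ where $\beta_i$ is a small loop going once around the hole $c_i$ is, after filling $c_i$ with a one-holed torus, a loop around that torus; by Proposition~\ref{P:action_of_Gg_on_Heis}'s setup and the description of $\varphi_n^{\Heis_g}$ in Equation~\eqref{eq_localsystem_of_Heisenberg}, it is sent to $b_i$. A loop $\sigma_{j,j+1}$ exchanging two configuration points is sent to $\sigma$. Thus $L=\langle b_1,\ldots,b_g,\sigma\rangle\subset\Heis_g$, which is a free abelian group of rank $g+1$ (the $b_i$ commute with each other and with $\sigma$ in $\Heis_g$, and they are independent in the normal form), so $\Z[L]\simeq\Z[b_1^{\pm1},\ldots,b_g^{\pm1},\sigma^{\pm1}]$. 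Under the isomorphism $b_i\mapsto s_i$, $\sigma\mapsto\sigma$, this is exactly the local system defining the colored Lawrence representation $\mathcal{L}^{c,\circ}_{g,n}$ as recalled in Remark~\ref{R:universal_construction_of_Lawrence} (with holes $D_g^\circ$ rather than punctures $D_g$, which is harmless as their configuration spaces are homotopy equivalent).

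Having matched the local systems, I would invoke Proposition~\ref{P:Universal_construction_of_homol_reps} (equivalently the recollection in Remark~\ref{R:universal_construction_of_Lawrence}): both $\calH_n^{S_g}$ and $\calH_n(D_g^\circ)$ are obtained as the quotient of the free module of diagrammatic twisted classes by the left kernel of the twisted intersection form, and that form is given by the formula of Section~\ref{sec:inter_form_def}, which depends only on the combinatorics of arc intersections and the local system. Since the local systems agree and the underlying surfaces (and their relative boundary data $\partial^-$) are identified, the forms agree, hence the quotient modules and the $P_g=\mathcal{PB}_g$-actions agree; the $\Z[L]$-module $\calH_n^{S_g}$ is free by the structure statement \eqref{E:structure_holed_disk}. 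This gives the desired isomorphism of representations.

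The main obstacle I expect is the careful bookkeeping in identifying the restricted local system: one must check that the generators of $\pi_1(\Conf_n(S_g))$ listed in Notation~\ref{not_Heisenberg_local_systems} (the $\alpha_i$, $\beta_i$, $\sigma_j$ braids) restrict correctly, in particular that no $a_i$ appears in the image --- the $\alpha_i$-type generators have support meeting the one-holed tori glued outside $S_g$ and are \emph{not} in $\pi_1(\Conf_n(S_g))$, so the image is genuinely contained in $\langle b_1,\ldots,b_g,\sigma\rangle$. A secondary subtlety is that the Lawrence representation as classically defined uses absolute homology while here we use homology relative to $\partial^-$; but the excerpt's Remark~\ref{R:universal_construction_of_Lawrence} already works with the relative (Borel--Moore) versions $\mathcal{L}^{c,\circ}_{k,n}$, so this is a matter of citing the right version rather than a genuine difficulty. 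One should also note, as in the remark following Theorem~\ref{thm:pure_braid_group}, that the change of basis between the $A_G$ and $A_B$ diagrams matches \cite[Prop.~7.4]{JulesVerma} up to a monomial, so the diagrammatic bases on both sides correspond.
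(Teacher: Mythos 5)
Your proposal is correct and follows essentially the same route as the paper's proof: identify the restricted local system $L=\langle b_1,\ldots,b_g,\sigma\rangle\subset\Heis_g$, note that the $\tau_{c_i}$ act trivially so the representation factors through $P_g$, and match the twisted intersection forms via the universal construction of Proposition~\ref{P:Universal_construction_of_homol_reps} / Remark~\ref{R:universal_construction_of_Lawrence}. The one step you leave implicit, which the paper states explicitly, is that by Proposition~\ref{P:action_of_Gg_on_Heis} every $f\in G_g$ satisfies $f_*(b_i)=b_i$ (and fixes $\sigma$), so $G_g$ acts trivially on $L$ and the restriction of the a priori \emph{crossed} action to $\calH_n^{S_g}$ is genuinely $\Z[L]$-linear --- this is needed for the statement to make sense, since $G_g\not\subset K_{\Heis_g}$.
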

\begin{proof}
First notice that $L$ is here the subgroup $\langle b_1, \ldots , b_g , \sigma \rangle$ of $\Heis_g$ since $\pi_1(\Conf_n(S_g))$ is generated by classes of loops $\underline{\beta_i}$'s described above and braids $\sigma_{i,i+1}$ described in Not.~\ref{not_Heisenberg_local_systems}. The subgroup $L$ is abelian, which justifies that $\Z[L]=\Z[b_1^{\pm 1}, \ldots, b_g^{\pm 1},\sigma^{\pm 1}]$. 

According to Prop.~\ref{P:action_of_Gg_on_Heis}, one notices that the action of $G_g$ on $L$ is trivial, so that it results in a (non-crossed) $\Z[L]$-linear representation $\rho_n^{S_g}$ of $G_g$ on $\calH^{S_g}_{n}$. 

We also note that $\tau_{c_1},\ldots,\tau_{c_n}$ act trivially on $\calH^{S_g}_{n},$ which means that acts of $\rho_n^{S_g}$ can be considered a representation of $P_g$ rather than of $G_g.$

	Following the construction described in Remark~\ref{R:universal_construction_of_Lawrence}, it is sufficient to check that the twisted intersection form of $\rho_n^{S_g}$ is the twisted intersection form for the Lawrence representation.
	By the definition of the twisted intersection form given in Section \ref{sec:inter_form_def}, we only need to check that the local system
	$$\varphi_n^{\Heis_g}|_{\pi_1(\Conf_n(S_g))}: \pi_1(\Conf_n(S_g))\longrightarrow L$$
	coincides with the one for the Lawrence representation (denoted $\varphi$ in \eqref{E:colored_Lawrence}). It is obvious after identifying $b_i$'s with $s_i$'s. 
\end{proof}
As a consequence, we found a subgroup of $K_{\Heis_g}$ that acts faithfully under $\rho_n^{\Heis_g}$ for $n>1$. 
\begin{coro}\label{C:Pg_is_faithful}
The subgroup $[P_g,P_g]$ of $\Mod(\Sigma_{g,1})$ acts faithfully under $\rho_n^{\Heis_g}$ for $n>1$. Hence the crossed action $\tilde{\rho}_n^{\Heis_g}$ is faithful on $P_g$.
\end{coro}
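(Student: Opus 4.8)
The plan is to deduce Corollary~\ref{C:Pg_is_faithful} directly from Theorem~\ref{thm:pure_braid_group2} together with the faithfulness of Lawrence representations recalled in Remark~\ref{R:Lawrence_faithfulness}. First I would observe that Theorem~\ref{thm:pure_braid_group2} identifies $\rho_n^{S_g}$, viewed as a representation of $P_g$, with the colored Lawrence representation $\mathcal{L}^{c,\circ}_{g,n}$ of $\mathcal{PB}_g$ under the isomorphism $P_g\simeq \mathcal{PB}_g$. By Remark~\ref{R:Lawrence_faithfulness}, $\mathcal{L}^{c,\circ}_{g,n}$ is a faithful representation of $\mathcal{PB}_g$ for every $n>1$ (for $n=2$ this is Bigelow's theorem applied to the absolute-homology submodule, and for $n>2$ it follows from the surjection onto the $n=2$ case in the short exact sequence of \cite[Corollary~7.1]{JulesVerma}). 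Hence $\rho_n^{S_g}$ is faithful on $P_g$ for $n>1$.

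The subtlety is that $\rho_n^{S_g}$ is only defined as a genuine $\Z[L]$-linear representation on the subgroup where the $G_g$-action on $L=\langle b_1,\dots,b_g,\sigma\rangle$ is trivial; but as noted in the proof of Theorem~\ref{thm:pure_braid_group2}, Proposition~\ref{P:action_of_Gg_on_Heis} shows this action \emph{is} trivial on all of $G_g$, so $\rho_n^{S_g}$ is already a linear representation of $G_g$, descending to $P_g$. On the other hand, the representation $\rho_n^{\Heis_g}$ of the whole mapping class group is only defined (as a non-crossed representation) on $K_{\Heis_g}$, and by the Lemma above $[G_g,G_g]\subset K_{\Heis_g}$, hence $[P_g,P_g]\subset K_{\Heis_g}$. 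So the next step is to compare the two: the restriction of $\rho_n^{\Heis_g}$ to $[P_g,P_g]$ is, up to the canonical inclusion $\calH^{S_g}_n\hookrightarrow \calH_n$ as a direct summand (a stable $\Z[L]$-submodule, since $\Mod(S_g)$ stabilizes $\Conf_n(S_g)$), exactly the restriction of $\rho_n^{S_g}$ to $[P_g,P_g]$. Therefore $\rho_n^{\Heis_g}$ restricted to $[P_g,P_g]$ is faithful as soon as $\rho_n^{S_g}$ is faithful on $[P_g,P_g]$, which holds because $\rho_n^{S_g}$ is faithful on the larger group $P_g$.

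Finally, to get the statement about the crossed (uncrossed) representation $\tilde\rho_n^{\Heis_g}$ on all of $P_g$: by Proposition~\ref{prop_uncross_in_general}, $\mathrm{Ker}(\tilde\rho_n^{\Heis_g})=\mathrm{Ker}(\rho_n^{\Heis_g})\subset K_{\Heis_g}$, so $\mathrm{Ker}(\tilde\rho_n^{\Heis_g})\cap P_g \subset K_{\Heis_g}\cap P_g$. An element of $P_g$ lying in $K_{\Heis_g}$ acts trivially on $L$, hence by the surjection $f\mapsto M(f)$ onto symmetric matrices (established just before the Lemma) it lies in the kernel of $P_g\to \Z^{g(g+1)/2}$, which is precisely $[P_g,P_g]$ since $P_g/[P_g,P_g]$ is the abelianization and the map $M$ realizes it. Thus any element of $\mathrm{Ker}(\tilde\rho_n^{\Heis_g})\cap P_g$ lies in $[P_g,P_g]$, where $\tilde\rho_n^{\Heis_g}$ coincides with $\rho_n^{\Heis_g}$, which we just showed is faithful; hence it is trivial. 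The main obstacle is purely bookkeeping: making precise that $\calH^{S_g}_n$ is a \emph{stable direct summand} of $\calH_n$ on which the restricted action agrees with the Lawrence model, and that the abelianization map $P_g\twoheadrightarrow \Z^{g(g+1)/2}$ has kernel exactly $[P_g,P_g]$ and detects $K_{\Heis_g}\cap P_g$ — both of which follow from the already-established Proposition~\ref{P:action_of_Gg_on_Heis} and Theorem~\ref{thm:pure_braid_group2}.
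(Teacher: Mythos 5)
Your proof is correct and follows essentially the same route as the paper: identify $\rho_n^{S_g}$ on $P_g$ with the colored Lawrence representation via Theorem~\ref{thm:pure_braid_group2}, invoke the faithfulness of $\mathcal{L}^{c,\circ}_{g,n}$ for $n>1$ from Remark~\ref{R:Lawrence_faithfulness}, and combine $[P_g,P_g]=P_g\cap K_{\Heis_g}$ with Proposition~\ref{prop_uncross_in_general} to pass to $\tilde\rho_n^{\Heis_g}$. Your write-up is in fact more detailed than the paper's brief proof, notably in justifying $P_g\cap K_{\Heis_g}=[P_g,P_g]$ through the abelianization map $f\mapsto M(f)$ of Proposition~\ref{P:action_of_Gg_on_Heis}.
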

\begin{proof}
We claim that the representations $\mathcal{L}^{c,\circ}_{g,n}$ are faithful for $n>1$ (Remark~\ref{R:Lawrence_faithfulness}). 


Notice that $\rho_n^{\Heis_g}$ is not defined on $P_g$ since $P_g \not\subset K_{\Heis_g}$, only its restriction to the submodule involved in Theorem~\ref{thm:pure_braid_group2} is. Nevertheless, the crossed action $\tilde{\rho}_n^{\Heis_g}$ from Proposition~\ref{prop_uncross_in_general} is defined on $P_g$, its kernel is in $K_{\Heis_g}$, and $[P_g,P_g]=P_g \cap K_{\Heis_g}$ acts faithfully which concludes the proof.

\end{proof}

We note that $\tilde{\rho}_n^{\Heisg}$ being faithful on such a sugroup $P_g$ is a very positive sign in the direction of $\rho_n^{\Heisg}$ being faithful. 

\subsection{Evaluated Lawrence representations as subrepresentations of a pure braid group on $2g$ strands in $\Mod(\Sigma_{g,1})$}
\label{sec:P2g}

We let $\mathcal{V}_{2g}$ be the subsurface of $\Sigma_{g,1}$ depicted in green in the following picture:
\[
\vcenter{\hbox{\begin{tikzpicture}[scale=0.35, every node/.style={scale=0.75},decoration={
    markings,
    mark=at position 0.5 with {\arrow{>}}}
    ]
    \fill[green!80!black] (-10,-5) rectangle ++(1.5,10);
    \fill[green!80!black] (8.5,-5) rectangle ++(1.5,10);
    \fill[green!80!black] (-5.5,-5) rectangle ++(1,10);
    \fill[green!80!black] (4.5,-5) rectangle ++(1,10);
    \fill[green!80!black] (-10,3.5) rectangle ++(20,1.5);
    \fill[green!80!black] (-10,-5) rectangle ++(20,1.5);
    \draw[thick] (-10,-5) rectangle ++(20,10);
    \coordinate (a1g) at (-7,0);
    \coordinate (a1d) at (-3,0);
    \coordinate (agg) at (3,0);
    \coordinate (agd) at (7,0);
    \draw[thick,fill=gray!50!white] (a1g) circle (1cm);
    \draw[thick,fill=gray!50!white] (a1d) circle (1cm);
    \draw[thick,fill=gray!50!white] (agg) circle (1cm);
    \draw[thick,fill=gray!50!white] (agd) circle (1cm);
    \node at (a1g) {$1$};
    \node at (a1d) {\reflectbox{$1$}};
    \node at (agg) {$g$};
    \node at (agd) {\reflectbox{$g$}};
    \node at (0,0) {$\cdots$};
    \draw[white,very thick] (-10,-5)--(10,-5);
    \draw[magenta,very thick] (-10,-5)--(10,-5);
    \draw[thick,orange] (-8.5,-3.5) rectangle ++(3,7);
    \node[orange,below] at (-7,3.5) {$c_1$};
    \draw[thick,orange] (-4.5,-3.5) rectangle ++(3,7);
    \node[orange,below] at (-3,3.5) {$d_1$};
    \draw[thick,orange] (5.5,-3.5) rectangle ++(3,7);
    \node[orange,below] at (7,3.5) {$d_g$};
    \draw[thick,orange] (1.5,-3.5) rectangle ++(3,7);
    \node[orange,below] at (3,3.5) {$c_g$};
\end{tikzpicture}}}
\]
The surface $\mathcal{V}_{2g}$ can also be described as a regular neighborhood of the union of $\partial \Sigma_{g,1}$ and $2g-1$ arcs that separate the disks onto which we glued the $g$ handles. It is homeomorphic to a disk with $2g$ holes. We denote by $c_1,d_1,\ldots,c_g,d_g$ the boundary curves of $\mathcal{V}_{2g}$ (other than $\partial\Sigma_{g,1}$). Notice that none of the boundary components of $\mathcal{V}_g$ bounds a disk in $\Sigma_{g,1}$. Nevertheless the boundary components $c_i$ and $d_i$ cobound a cylinder in $\Sigma_{g,1}$ for all $i\in\{1,\ldots,g\}$. According to \cite[Theorem~3.18]{FarbMargalit}, denoting $\mathcal{G}_{2g}:=\Mod(\mathcal{V}_{2g})$, there is a map:
\[
\eta : \mathcal{G}_{2g} \to \Mod(\Sigma_{g,1}),
\]
and its kernel is the free abelian group generated by the products of Dehn twists of the form $\tau_{c_i} \tau_{d_i}^{-1}$, for all $i\in\{1,\ldots ,g\}$. Now notice that $\mathcal{G}_{2g}$ is isomorphic to a framed pure braid group on $2g$ strands, so that:
\[
\mathcal{P}_{2g} := \mathcal{G}_{2g} \big/ \langle \tau_{c_1} , \tau_{d_1}, \ldots, \tau_{c_g},\tau_{d_g} \rangle
\]
is thus isomorphic to a pure braid group on $2g$ strands. There is a section to the exact sequence $1\rightarrow \langle \tau_{c_1} , \tau_{d_1}, \ldots, \tau_{c_g},\tau_{d_g} \rangle  \rightarrow G_{2g} \rightarrow P_{2g} \rightarrow 1,$ so $\mathcal{P}_{2g}$ can be considered a subgroup of  $\mathcal{G}_{2g},$ and even of  $\Mod(\Sigma_{g,1})$ since  $\ker \eta \cap P_{2g}$ is trivial. 

\begin{proposition}\label{P:action_of_Imeta_on_Heis}
Let $[f] \in \im(\eta)$, for any $i \in \{1,\ldots,g\}$, we have: 
\begin{align*}
& f_*(a_i) = a_i \\ 
& f_*(b_i) = b_i a^{\boldsymbol{m}^i(f)} \sigma^{l_i(f)},
\end{align*}
where $l_i \in \Z$ and:
\[
a^{\boldsymbol{m}^i(f)} = a_1^{m^i_1(f)} \cdots a_g^{m^i_g(f)},
\]
so that ${\boldsymbol{m}^i(f)}$ is a notation for $(m^i_1(f),\ldots,m^i_g(f)) \in \Z^g$. 
Let $M(f)$ be the matrix with $i$-th column being ${\boldsymbol{m}^i(f)}$. We furthermore have that $M(f)$ is symmetric and that for $f_1,f_2 \in G$: 
\begin{align*}
& M(f_1 \circ f_2) = M(f_1) + M(f_2), 
& l_i(f_1 \circ f_2) = l_i(f_1) + l_i(f_2).
\end{align*} 
\end{proposition}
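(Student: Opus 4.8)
The plan is to mimic very closely the proof of Proposition~\ref{P:action_of_Gg_on_Heis}, exchanging the roles of the $a_i$'s and the $b_i$'s, and to keep track of the extra $\sigma$-power that appears this time because, unlike in Proposition~\ref{P:action_of_Gg_on_Heis}, the relevant arcs are not necessarily isotopic into the ``half below the holes''. First I would recall that $f_*$ on $\Heis_g/\langle\sigma\rangle$ is exactly the symplectic action of $f$ on $H_1(\Sigma_{g,1};\Z)$; this already forces $f_*(a_i)$ and $f_*(b_i)$ to be of the form $a_i \sigma^{?}$ and $b_i a^{\boldsymbol m^i(f)}\sigma^{l_i(f)}$ up to determining which homology classes get fixed. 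The key geometric input is that $[f]\in\im(\eta)$ means $f$ is supported on $\mathcal V_{2g}$, which is a regular neighbourhood of $\partial\Sigma_{g,1}$ together with the $2g-1$ separating arcs; in particular $f$ fixes pointwise (a curve isotopic to) each $\alpha_i$, because the $\alpha_i$ can be arranged to lie outside $\mathcal V_{2g}$ — they pass through the handles, which $\mathcal V_{2g}$ avoids. Hence $f_*(a_i)=a_i$, which also pins down that the $\sigma$-exponent on $f_*(a_i)$ vanishes.

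Next I would analyse $f_*(b_i)$. The curve $\beta_i$ goes around the $i$-th handle and hence meets $\mathcal V_{2g}$; after applying $f$ it becomes $\beta_i$ followed by a loop supported in $\mathcal V_{2g}$. Since $\pi_1(\mathcal V_{2g})$ maps under $\varphi_n^{\Heis_g}$ into the subgroup generated by the $a_i$'s and $\sigma$ — this follows from Proposition~\ref{prop:HeisenbergSimpleCurve} applied to the boundary curves $c_i,d_i$, each of which bounds a once-holed torus (or is otherwise separating of the appropriate genus), exactly as in the proof of Theorem~\ref{T:Pg=full_Lawrence} — the correction loop is sent to an element of $A:=\langle a_1,\ldots,a_g\rangle\times\langle\sigma\rangle$. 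This gives $f_*(b_i)=b_i\,a^{\boldsymbol m^i(f)}\sigma^{l_i(f)}$ for some $\boldsymbol m^i(f)\in\Z^g$ and $l_i(f)\in\Z$. Unlike the previous proposition, there is no reason for $l_i(f)$ to vanish, so we simply keep it.

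The symmetry of $M(f)$ then follows exactly as before, from the invariance of the algebraic intersection form: computing
\[
0=\omega\bigl([f(b_i)],[f(b_j)]\bigr)=\omega\Bigl([b_i]+\textstyle\sum_k m^i_k(f)[a_k],\,[b_j]+\sum_k m^j_k(f)[a_k]\Bigr)=m^i_j(f)-m^j_i(f),
\]
using $\omega(a_k,b_\ell)=\delta_{k\ell}$ and $\omega(a_k,a_\ell)=\omega(b_k,b_\ell)=0$. Finally, the cocycle/homomorphism identities $M(f_1\circ f_2)=M(f_1)+M(f_2)$ and $l_i(f_1\circ f_2)=l_i(f_1)+l_i(f_2)$ are obtained by composing the two formulas for $f_*$ and reading off the exponents in the normal form: applying $(f_1\circ f_2)_*=f_{1*}\circ f_{2*}$ to $b_i$, one first gets $b_i a^{\boldsymbol m^i(f_2)}\sigma^{l_i(f_2)}$, then applies $f_{1*}$, which fixes each $a_k$ and $\sigma$ and sends $b_i$ to $b_i a^{\boldsymbol m^i(f_1)}\sigma^{l_i(f_1)}$, yielding additivity of both $M$ and $l_i$ (since $\sigma$ is central, the $\sigma$-exponents just add).

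The main obstacle I anticipate is the clean verification that the arcs $\alpha_i$ can genuinely be isotoped off $\mathcal V_{2g}$ (so that $f_*(a_i)=a_i$ with \emph{zero} $\sigma$-power, not merely up to a central factor), and dually that the correction loops for $\beta_i$ land in $\langle a_1,\ldots,a_g,\sigma\rangle$ rather than the full group; both hinge on a careful reading of the picture of $\mathcal V_{2g}$ and on Proposition~\ref{prop:HeisenbergSimpleCurve} for the boundary curves $c_i,d_i$. Everything after that is the same bookkeeping as in Proposition~\ref{P:action_of_Gg_on_Heis}.
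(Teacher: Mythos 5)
Your outline follows the paper's proof of Proposition~\ref{P:action_of_Gg_on_Heis} adapted to $\mathcal{V}_{2g}$, which is exactly what the paper does, and the symmetry and additivity steps are fine. Two points need repair. First, your justification that $\varphi_n^{\Heis_g}(\pi_1(\Conf_n(\mathcal{V}_{2g})))$ lies in $\langle a_1,\ldots,a_g,\sigma\rangle$ cannot go through Proposition~\ref{prop:HeisenbergSimpleCurve}: the boundary curves $c_i,d_i$ of $\mathcal{V}_{2g}$ are \emph{not} separating (each pair $c_i,d_i$ cobounds a cylinder whose core is a nonseparating handle curve), so that proposition does not apply to them. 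The correct statement is obtained by direct computation from Notation~\ref{not_Heisenberg_local_systems}: the based loops $\underline{c_i},\underline{d_i}$ generating $\pi_1(\mathcal{V}_{2g})$ satisfy $\varphi_n^{\Heis_g}(\underline{c_i})=a_i$ and $\varphi_n^{\Heis_g}(\underline{d_i})=a_i^{-1}\sigma^{-2}$, whence the image is the abelian subgroup $\langle a_1,\ldots,a_g,\sigma\rangle$.

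Second, the ``main obstacle'' you flag at the end is real but you already hold the tool to close it, and you should do so explicitly. Pushing a representative of $\alpha_i$ into the cylinder between $c_i$ and $d_i$ (off $\mathcal{V}_{2g}$) shows that the curve itself is fixed pointwise, but the based generator $\alpha_i'$ is that curve conjugated by a path $p$ from the basepoint, and $f(p)p^{-1}$ is a loop supported in $\mathcal{V}_{2g}$ (since $f$ is the identity outside it). Hence a priori $f_*(a_i)=\varphi_n^{\Heis_g}(\gamma)\,a_i\,\varphi_n^{\Heis_g}(\gamma)^{-1}$ for some $\gamma\in\pi_1(\mathcal{V}_{2g})$; since $\varphi_n^{\Heis_g}(\gamma)$ lies in the abelian subgroup $\langle a_1,\ldots,a_g,\sigma\rangle$, which centralizes $a_i$, the conjugation is trivial and $f_*(a_i)=a_i$ with no $\sigma$-correction. (The paper reaches the same conclusion by working with the generators $\underline{c_i}$ supported in $\mathcal{V}_{2g}$ and using that $f$ fixes the $c_i$ and $d_i$ pointwise.) With these two adjustments your argument coincides with the one in the paper.
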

\begin{proof}
The proof is similar to the proof of Proposition \ref{P:action_of_Gg_on_Heis}.
Again, the action on $\Heis_g/\langle \sigma \rangle$ induced by $\mathrm{Mod}(\Sigma_{g,1})$ is the homological action of $\mathrm{Mod}(\Sigma_{g,1})$ on $H_1(\Sigma_{g,1},\Z).$ Since $f$ is supported on $\mathcal{V}_{2g}$, it stabilizes it. Moreover, the fundamental group of $\mathcal{V}_{2g}$ is a free group of rank $2g,$ generated by the loops $c'_1,d'_1,\ldots,d'_g,$ obtained from the corresponding curve conjugated by a path from the leftmost boundary side in the picture to the curve that is supported in $\mathcal{V}_{2g}$. In these loops we suppose that the curves $c_i$'s and $d_i$ are traveled counterclockwise regarding the planar picture orientation. If one defines $\underline{c_i} = \{ c'_i, \xi_2,\ldots,\xi_n\}$ and $\underline{d_i} = \{ d'_i, \xi_2,\ldots,\xi_n\}$ to be loops in $\Conf_n(\Sigma_{g,1})$ notice that:
\[
\varphi_n^{\Heisg} ( \underline{c_i} ) = a_i, \text{  and } \varphi_n^{\Heisg} ( \underline{d}_i ) = a_i^{-1} \sigma^{-2}. 
\]
see Notation \ref{not_Heisenberg_local_systems}. Hence the image of the loops $\underline{c_i}$ by $f$ are sent to compositions of loops $\underline{c_i}^{\pm 1}$ and $\underline{d_i}^{\pm 1}$, but stabilizes $c_i$'s (and $d_i$'s) pointwise so that $f_*(a_i)=a_i$. As in the proof of Proposition \ref{P:action_of_Gg_on_Heis}, the homology class $[b_i]$ must be sent by $f$ to another homology class with algebraic intersection $0$ with all $[a_j], i\neq j,$ and algebraic intersection $1$ with $[a_i].$ Therefore, 
$$f_*(b_i)=b_ia^{\mathbf{m}^i(f)}\sigma^{l_i(f)},$$
for some $\mathbf{m}^i(f)\in \Z^g$ and some $l_i(f) \in \Z.$	
The fact that the matrix $M(f)$ is symmetric, and that $f_*$ that $f\longrightarrow M(f)$ and $f\longrightarrow l_i(f)$ are morphisms, is proved as in the proof of Proposition \ref{P:action_of_Gg_on_Heis}.
\end{proof}

Let $h=a_1^{x_1} \cdots a_g^{x_g} b_1^{x_1} \cdots b_g^{x_g} \sigma^{s} \in \Heis_g$, according to the proposition, we have:
\[
f_*(h) = a_1^{x_1} \cdots a_g^{x_g} \prod_i b_i^{y_i + \sum_j m^j_i(f)} \sigma^{s+\sum_j l_j(f)}
\]
and as a consequence we refine the restriction of \eqref{E:mcg_on_Heisg_recall} to:
\[
\begin{array}{ccc}
\im(\eta) & \to & \Z^{g(g+3)/2} \\ 
f & \mapsto & f_* = (m^j_i(f))_{1 \le i \le j \le g} \oplus (l_i(f))_{i \in \{1, \ldots , g \}}.
\end{array}
\]

\begin{lemma}
The group $[\im(\eta),\im(\eta)]$ is in $K_{\Heis_g}$.
\end{lemma}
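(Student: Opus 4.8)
The plan is to mimic, essentially verbatim, the proof of the analogous statement for $G_g$ that appears just before Theorem~\ref{thm:pure_braid_group2}: the whole content is already contained in Proposition~\ref{P:action_of_Imeta_on_Heis}. That proposition shows that for $[f]\in\im(\eta)$ the induced automorphism $f_*\in\Aut(\Heis_g)$ is completely determined by the symmetric matrix $M(f)$ together with the integers $l_1(f),\dots,l_g(f)$, via the formulas $f_*(a_i)=a_i$ and $f_*(b_i)=b_i\,a^{\boldsymbol{m}^i(f)}\sigma^{l_i(f)}$ (these determine $f_*$ since $a_1,\dots,a_g,b_1,\dots,b_g,\sigma$ generate $\Heis_g$). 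Moreover the proposition states that $f\mapsto M(f)$ and $f\mapsto l_i(f)$ are additive, i.e. the assignment $f\mapsto f_*$ factors as the composite of the group homomorphism $\im(\eta)\to\Z^{g(g+3)/2}$, $f\mapsto (m^j_i(f))_{1\le i\le j\le g}\oplus (l_i(f))_{1\le i\le g}$, displayed immediately before the statement, with an injection into $\Aut(\Heis_g)$.

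Concretely, first I would note that two classes $f_1,f_2\in\im(\eta)$ with $M(f_1)=M(f_2)$ and $l_i(f_1)=l_i(f_2)$ for all $i$ induce the \emph{same} automorphism of $\Heis_g$, by the formulas above. Consequently the image of $\im(\eta)$ in $\Aut(\Heis_g)$ is isomorphic to a subgroup of the abelian group $\Z^{g(g+3)/2}$, hence is abelian. Therefore, for any $f_1,f_2\in\im(\eta)$ the commutator $[f_1,f_2]$ is sent to the identity of $\Aut(\Heis_g)$; since by definition $K_{\Heis_g}=\Ker\!\big(\Mod(\Sigma_{g,1})\to\Aut(\Heis_g)\big)$, this gives $[f_1,f_2]\in K_{\Heis_g}$. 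As $[\im(\eta),\im(\eta)]$ is generated by such commutators and $K_{\Heis_g}$ is a subgroup, we conclude $[\im(\eta),\im(\eta)]\subset K_{\Heis_g}$.

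I do not expect any genuine obstacle here: the only thing being used beyond elementary group theory is Proposition~\ref{P:action_of_Imeta_on_Heis}, which has already been established. The single point worth stating carefully in the write-up is the reduction "abelian image in $\Aut(\Heis_g)$ $\Rightarrow$ commutators act trivially", exactly as in the $G_g$ case.
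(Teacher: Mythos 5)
Your proof is correct and follows the same route as the paper: Proposition~\ref{P:action_of_Imeta_on_Heis} shows the image of $\im(\eta)$ in $\Aut(\Heis_g)$ is abelian (it embeds in $\Z^{g(g+3)/2}$ via $f\mapsto (M(f),(l_i(f))_i)$), so every commutator acts trivially on $\Heis_g$ and hence lies in $K_{\Heis_g}$. Your write-up just spells out the "abelian image $\Rightarrow$ commutators act trivially" step in slightly more detail than the paper does.
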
 
\begin{proof}
Since the image of $\im(\eta)$ in $\Aut(\Heis_g)$ is an abelian group, its derived subgroup acts trivially on $\Heis_g$ which is the definition of $K_{\Heis_g}$. 
\end{proof}

Restricted to $[\mathcal{P}_{2g},\mathcal{P}_{2g}]$, the representation $\rho_n^{\Heis_g}$ admits a subrepresentation. Indeed, let 
$$\calH^{\mathcal{V}_g}_{n}=H_n^{BM}(\mathrm{Conf}_n(\mathcal{V}_{2g}),\mathrm{Conf}_n(\mathcal{V}_{2g})^-;(\varphi_n^{\Heis_g})_{|\pi_1(\Conf_n(\mathcal{V}_{2g}))})$$
where $\mathrm{Conf}_n(\mathcal{V}_{2g})^-$ denotes configurations of $n$ points in $\mathcal{V}_{2g}$ such that at least one point belongs to the interval $\partial^-\Sigma_{g,1}.$ It is clear that $\im(\eta)$ stabilizes this subspace. Let $L$ be the image of $(\varphi_n^{\Heis_g})_{|\pi_1(\Conf_n(\mathcal{V}_{2g}))}$, we note that following \eqref{E:structure_holed_disk}, the $\Z[L]$-modules $\calH^{\mathcal{V}_{2g}}_n$ are free. Let us call $\rho_n^{\mathcal{V}_{2g}}$ this subrepresentation of $\rho^{\Heis_g}_n|_{\im(\eta)}.$ 

\begin{theorem}\label{T:P2g=evaluated_Lawrence}
	\label{thm:pure_braid_group3} The representation $\rho_n^{\mathcal{V}_{2g}}$ of $\im(\eta)$ induces a representation of the pure braid group $\mathcal{P}_{2g}$ with coefficients in a ring of Laurent polynomials $\Z[a_1^{\pm 1}, \ldots, a_g^{\pm 1},\sigma^{\pm 1}]$. It is isomorphic to the $n$-th colored Lawrence representation $\mathcal{L}^{c,\circ}_{2g,n}$ of $\mathcal{PB}_{2g}$ (from \eqref{E:colored_Lawrence}, see Rem.~\ref{R:universal_construction_of_Lawrence} for the case with holes rather than punctures) evaluated by the following morphism of rings: 
$$
\begin{array}{ccc}
\Z[s_1^{\pm 1}, \ldots, s_{2g}^{\pm 1},\sigma^{\pm 1}] & \to &  \Z[a_1^{\pm 1}, \ldots, a_g^{\pm 1},\sigma^{\pm 1}] \\
s_{2k-1} & \mapsto & a_k, \\
s_{2k} & \mapsto & a_k^{-1}\sigma^{-2}, \\
\sigma & \mapsto & \sigma
\end{array}
$$
where $k$ is any integer in $\{1,\ldots,g\}$.
\end{theorem}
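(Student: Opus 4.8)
The plan is to run the same argument as for Theorem~\ref{thm:pure_braid_group2}, with the single family of genus-one separating curves $c_i$ replaced by the two families $c_i,d_i$ bounding the cylinders glued onto $\mathcal{V}_{2g}$. First I would pin down the coefficient ring. Since $\mathcal{V}_{2g}$ is a disk with $2g$ holes, $\pi_1(\Conf_n(\mathcal{V}_{2g}))$ is generated by the loops $\underline{c_i},\underline{d_i}$ around the holes together with the elementary braids $\sigma_{j,j+1}$ of Notation~\ref{not_Heisenberg_local_systems}; by the computation in the proof of Proposition~\ref{P:action_of_Imeta_on_Heis} one has $\varphi_n^{\Heis_g}(\underline{c_i})=a_i$, $\varphi_n^{\Heis_g}(\underline{d_i})=a_i^{-1}\sigma^{-2}$ and $\varphi_n^{\Heis_g}(\sigma_{j,j+1})=\sigma$, so the image $L$ of $\varphi_n^{\Heis_g}|_{\pi_1(\Conf_n(\mathcal{V}_{2g}))}$ is the abelian group $\langle a_1,\ldots,a_g,\sigma\rangle$ and $\Z[L]\cong\Z[a_1^{\pm1},\ldots,a_g^{\pm1},\sigma^{\pm1}]$. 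By Proposition~\ref{P:action_of_Imeta_on_Heis}, $\im(\eta)$ fixes each $a_i$, and $\sigma$ is central, so $\im(\eta)$ acts trivially on $L$ and the a priori crossed action of $\im(\eta)$ on $\calH_n^{\mathcal{V}_{2g}}$ — which is free over $\Z[L]$ by \eqref{E:structure_holed_disk} — is an honest $\Z[L]$-linear representation $\rho_n^{\mathcal{V}_{2g}}$.

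Next I would check that $\rho_n^{\mathcal{V}_{2g}}$ factors through $\mathcal{P}_{2g}$. For each $i$, the curve $c_i$ (resp.\ $d_i$) is a boundary component of $\mathcal{V}_{2g}$ having on its other side the cylinder cobounded by $c_i$ and $d_i$; hence $\eta(\tau_{c_i})$ (resp.\ $\eta(\tau_{d_i})$) can be represented by a homeomorphism of $\Sigma_{g,1}$ supported in an annulus inside that cylinder, disjoint from $\mathcal{V}_{2g}$, and therefore fixes $\Conf_n(\mathcal{V}_{2g})$ pointwise and acts as the identity on $\calH_n^{\mathcal{V}_{2g}}$. In particular $\ker\eta=\langle\tau_{c_i}\tau_{d_i}^{-1}\rangle$ acts trivially, so $\rho_n^{\mathcal{V}_{2g}}$ is well defined on $\im(\eta)$, and since all of $\tau_{c_1},\tau_{d_1},\ldots,\tau_{c_g},\tau_{d_g}$ act trivially it descends to $\mathcal{P}_{2g}=\mathcal{G}_{2g}/\langle\tau_{c_1},\tau_{d_1},\ldots,\tau_{c_g},\tau_{d_g}\rangle$, a pure braid group on $2g$ strands.

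Finally, to identify $\rho_n^{\mathcal{V}_{2g}}$ with the evaluated Lawrence representation I would appeal to the universal construction of Proposition~\ref{P:Universal_construction_of_homol_reps}, in the form recalled for Lawrence representations in Remark~\ref{R:universal_construction_of_Lawrence}. Both $\rho_n^{\mathcal{V}_{2g}}$ and $\mathcal{L}^{c,\circ}_{2g,n}$ postcomposed with the stated ring map $\Z[s_1^{\pm1},\ldots,s_{2g}^{\pm1},\sigma^{\pm1}]\to\Z[a_1^{\pm1},\ldots,a_g^{\pm1},\sigma^{\pm1}]$ are obtained as the quotient of the free module on the same diagrammatic $n$-classes in $D_{2g}^{\circ}\cong\mathcal{V}_{2g}$ by the left kernel of the twisted intersection pairing of \eqref{E:formula_for_the_pairing}/\eqref{eq:twisted_intersection}, equipped with the induced $\mathcal{PB}_{2g}$-action, so the two representations coincide as soon as the two local systems do. Comparing on generators, the Lawrence local system sends the loop around the $(2i-1)$-st hole to $s_{2i-1}$, the loop around the $2i$-th hole to $s_{2i}$, and each $\sigma_{j,j+1}$ to $\sigma$; composing with $s_{2i-1}\mapsto a_i$, $s_{2i}\mapsto a_i^{-1}\sigma^{-2}$, $\sigma\mapsto\sigma$ reproduces exactly the values $a_i$, $a_i^{-1}\sigma^{-2}$, $\sigma$ found above for $\varphi_n^{\Heis_g}|_{\pi_1(\Conf_n(\mathcal{V}_{2g}))}$. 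The one point that genuinely requires care is this last bookkeeping: matching the chosen orientations of the loops $\underline{c_i},\underline{d_i}$ with those implicit in Notation~\ref{not_Heisenberg_local_systems} (which is what fixes the exponents $-1$ and $-2$ in the target of the ring map), and checking that the geometric generators of $\mathcal{G}_{2g}$ — the Dehn twists along curves enclosing pairs of holes — correspond to the standard pure braid generators under $\mathcal{P}_{2g}\cong\mathcal{PB}_{2g}$; both are settled by Proposition~\ref{P:action_of_Imeta_on_Heis} and the defining identification $\mathcal{V}_{2g}\cong D_{2g}^\circ$.
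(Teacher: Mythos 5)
Your proposal is correct and follows essentially the same route as the paper: identify $L=\langle a_1,\ldots,a_g,\sigma\rangle$ via the values of $\varphi_n^{\Heis_g}$ on $\underline{c_i},\underline{d_i},\sigma_{j,j+1}$, use Proposition~\ref{P:action_of_Imeta_on_Heis} to see that $\im(\eta)$ acts trivially on $L$ so the action is honestly $\Z[L]$-linear, and then invoke the universal construction of Proposition~\ref{P:Universal_construction_of_homol_reps}/Remark~\ref{R:universal_construction_of_Lawrence} to reduce the isomorphism with the evaluated $\mathcal{L}^{c,\circ}_{2g,n}$ to a comparison of local systems. Your explicit verification that $\tau_{c_i},\tau_{d_i}$ act trivially (being supported in the complementary cylinders) spells out a step the paper leaves implicit, but the argument is the same.
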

\begin{proof}
First notice that $\Z[L]$ is here $\Z[a_1^{\pm 1}, \ldots, a_g^{\pm 1},\sigma^{\pm 1}]$ since $\pi_1(\Conf_n(\mathcal{V}_{2g}))$ is generated by classes of loops $\underline{c_i}$'s and $\underline{d_i}$'s described above and braids $\sigma_{i,i+1}$ described in Not.~\ref{not_Heisenberg_local_systems}. 

According to Prop.~\ref{P:action_of_Imeta_on_Heis}, one notices that the action of $\im(\eta)$ on $L$ is trivial, so that it results in a (non-crossed) $\Z[L]$-linear representation $\rho_n^{\mathcal{V}_{2g}}$ of $\im(\eta)$ on $\calH^{\mathcal{V}_{2g}}_{n}$. Following the construction described in Remark~\ref{R:universal_construction_of_Lawrence}, it is sufficient to check that the twisted intersection form of $\rho_n^{\mathcal{V}_{2g}}$ is the twisted intersection form for the Lawrence representation. By the definition of the form given in Section \ref{sec:inter_form_def}, we only need to check that the local system
	$$\varphi_n^{\Heis_g}|_{\pi_1(\Conf_n(\mathcal{V}_{2g}))}: \pi_1(\Conf_n(\mathcal{V}_{2g}))\longrightarrow L$$
coincides with the one for the Lawrence representation (denoted $\varphi$ in \eqref{E:colored_Lawrence}). It is clear that it is true at the evaluation mentioned in the statement. 
\end{proof}

\end{document}